\theoremstyle{plain}
\newtheorem{theorem}{Theorem}
\theoremstyle{remark}
\newtheorem{remark}[theorem]{Remark}
\newtheorem{example}[theorem]{Example}
\newtheorem{problem}[theorem]{Problem}
\theoremstyle{plain}
\newtheorem{corollary}[theorem]{Corollary}
\newtheorem{lemma}[theorem]{Lemma}
\newtheorem{proposition}[theorem]{Proposition}
\newtheorem{definition}[theorem]{Definition}
\numberwithin{theorem}{section}
\numberwithin{equation}{section}
\def\N{{\mathbb N}}
\def\Z{{\mathbb Z}}
\def\R{{\mathbb R}}
\def\C{{\mathbb C}}
\def\M{{\mathbb M}}
\def\B{{\mathbb B}}
\newcommand{\A}{{\mathscr A}}
\newcommand{\F}{{\mathscr F}}
\newcommand{\Longra}{\ensuremath{\Longrightarrow}}
\newcommand{\Longlra}{\ensuremath{\Longleftrightarrow}}
\newcommand{\longra}{\ensuremath{\longrightarrow}}
\newcommand{\calL}{{\mathcal L}}
\newcommand{\one}{{{\bf 1}}}
\newcommand{\lb}{\langle}
\newcommand{\rb}{\rangle}
\newcommand{\wh}{\widehat}
\newcommand{\supp}{\text{\rm supp\,}}
\newcommand{\wt}{\widetilde}
\newcommand{\norm}[1]{||#1||}
\newcommand{\ip}[2]{\langle #1, #2 \rangle}
\newcommand{\sign}{\text{sign}}
\newcommand{\tr}{\text{Tr}}
\newcommand{\tx}{\tilde{x}}
\newcommand{\ext}{{\rm ext}}
\newcommand {\Schw}{\mathcal{S}}
\newcommand {\Distr}{\mathcal{D}}
\newcommand{\ud}{\, d}
\newcommand{\Dir}{{\rm Dir}}
\newcommand{\DD}{\Delta_{\Dir}}
\newcommand{\Eo}{E_{\rm odd}}
\newcommand{\odd}{{\rm odd}}
\newcommand{\dist}{{\rm dist}}
\newcommand{\Rad}{{\rm Rad}}
\newcommand{\dom}{\mathscr{O}}
\newcommand{\BIP}{\rm{BIP}}
\renewcommand{\subset}{\subseteq}
\begin{document}

\author{Nick Lindemulder}
\address{Delft Institute of Applied Mathematics\\
Delft University of Technology \\ P.O. Box 5031\\ 2600 GA Delft\\The
Netherlands}
\email{N.Lindemulder@hotmail.nl}

\address{Institute of Analysis \\
Karlsruhe Institute of Technology \\
Englerstraße 2 \\
76131 Karlsruhe\\
Germany}
\email{N.Lindemulder@hotmail.nl}

\author{Mark Veraar}
\address{Delft Institute of Applied Mathematics\\
Delft University of Technology \\ P.O. Box 5031\\ 2600 GA Delft\\The
Netherlands} \email{M.C.Veraar@tudelft.nl}

\thanks{The authors are supported by the VIDI subsidy 639.032.427 of the Netherlands Organisation for Scientific Research (NWO)}

\date\today

\keywords{Functional calculus, Laplace operator, heat equation, inhomogeneous Dirichlet boundary conditions, maximal regularity, mixed-norms, Sobolev, traces, vector-valued, weights, interpolation}

\subjclass[2010]{Primary: 35K50, 47A60; Secondary: 46B70, 46E35, 46E40}

%35K50  View Publications (1973-2009) Boundary value problems for parabolic systems
%46E35  View Publications (1973-now) Sobolev spaces and other spaces of "smooth'' functions, embedding theorems, trace theorems
%46E40  View Publications (1973-now) Spaces of vector- and operator-valued functions
%47A60  View Publications (1973-now) Functional calculus
%46B70  View Publications (1991-now) Interpolation between normed linear spaces [See also 46M35]
%42B25  View Publications (1980-now) Maximal functions, Littlewood-Paley theory
%46E35  View Publications (1973-now) Sobolev spaces and other spaces of "smooth'' functions, embedding theorems, trace theorems
%46E40  View Publications (1973-now) Spaces of vector- and operator-valued functions

\title[The heat equation and functional calculus]{The heat equation with rough boundary conditions and holomorphic functional calculus}

\maketitle
\begin{abstract}
In this paper we consider the Laplace operator with Dirichlet boundary conditions on a smooth domain. We prove that it has a bounded $H^\infty$-calculus on weighted $L^p$-spaces for power weights which fall outside the classical class of $A_p$-weights. Furthermore, we characterize the domain of the operator and derive several consequences on elliptic and parabolic regularity. In particular, we obtain a new maximal regularity result for the heat equation with rough inhomogeneous boundary data.
\end{abstract}

\setcounter{tocdepth}{1}
\tableofcontents

\section{Introduction}

Often solutions to PDEs can have blow-up behavior near the boundary of an underlying domain $\dom\subseteq \R^d$. Using weighted spaces with weights of the form $w_{\gamma}^{\dom}(x) := \dist(x,\partial \dom)^{\gamma}$ for appropriate values of $\gamma$, allows for additional flexibility and even obtain well-posedness for problems which appear ill-posed at first sight. PDEs in weighted spaces have been considered by many authors (see e.g.\ \cite{DongKimweighted15,KimK-H08,Krylovheat,Krylovheatpq}). Moreover, the $H^\infty$-functional calculus properties of differential operators on weighted space have been treated in several papers as well (see e.g.\ \cite{Assaad13,AusMar07,AuMarIII,LRW, Martell04}.

The development of the $H^\infty$-calculus was motivated by the Kato square root problem (see \cite{McI90} for a survey) which was eventually solved in \cite{AHLT}. An $H^\infty$-calculus approach to the solution was obtained later in \cite{AKM06}. Since the work \cite{KWcalc} it has turned out that the $H^\infty$-calculus is an extremely efficient tool in the $L^p$-theory of partial differential equations (see the monographs \cite{DenkKaip, PrSibook} and references therein).

In this paper we study the boundedness of the $H^\infty$-calculus of the Laplace operator with Dirichlet boundary conditions $\Delta_{\Dir}$ for bounded $C^2$-domains $\dom$. This operator and its generalizations have been studied in many papers (see \cite{DDHPV,DHP,KuWe}. Our contribution is that we study $\DD$ and its functional calculus on weighted spaces which do not fall into the classical setting, but which are useful for certain partial differential equations. In particular,
we prove the following result.
\begin{theorem}\label{thm:boundeddomainLaplaceintro}
Let $\dom$ be a bounded $C^2$-domain. Let $p\in (1, \infty)$, $\gamma\in (-1,2p-1)\setminus\{p-1\}$ and set $w^{\dom}_{\gamma}(x) = \dist(x,\partial \dom)^{\gamma}$.
Then the operator $-\DD$ on  $L^p(\dom,w^{\dom}_{\gamma})$ with $D(\DD) = W^{2,p}_{\Dir}(\dom,w^{\dom}_{\gamma})$, has a bounded $H^\infty$-calculus of angle zero. In particular, $\DD$ generates an analytic $C_0$-semigroup on $L^p(\dom,w^{\dom}_{\gamma})$.
\end{theorem}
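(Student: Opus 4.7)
The plan is to reduce to a one-dimensional model problem by two successive reductions — localization, then a tensor-product decomposition in the half-space — after which the genuinely new point is the $H^\infty$-calculus of the Dirichlet Laplacian on $L^p(\R_+, x^\gamma)$ in the non-classical range $\gamma\in(p-1,2p-1)$.

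First I would choose a finite $C^2$-partition of unity subordinate to a cover of $\overline{\dom}$ by interior balls and boundary charts. On interior patches the weight $w^{\dom}_\gamma$ is uniformly comparable to a constant, so the $H^\infty$-calculus there is inherited from the classical unweighted theory of $-\Delta$ on $L^p(\R^d)$. On each boundary chart a $C^2$-flattening diffeomorphism sends $w^{\dom}_{\gamma}$ to the model weight $x_d^\gamma$ on $\R^d_+$ up to bounded, strictly positive factors; a standard perturbation result for the bounded $H^\infty$-calculus under lower-order terms (costing only an arbitrarily small angle, which is acceptable because each model piece will have angle zero) then reduces matters to $-\Delta_{\Dir}$ on $L^p(\R^d_+, x_d^\gamma)$.

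On the half-space I would exploit the commuting sum
$$-\Delta_{\Dir,\R^d_+} \;=\; (-\Delta_{\R^{d-1}})\otimes I \;+\; I\otimes (-\partial_{x_d}^2|_{\Dir}).$$
Since the weight depends only on $x_d$, the tangential operator inherits a bounded $H^\infty$-calculus of angle zero from the unweighted $L^p(\R^{d-1})$-theory by a Fubini / vector-valued extension. Provided the second summand has a bounded $H^\infty$-calculus of angle zero, the Kalton-Weis theorem on sums of commuting sectorial operators delivers the same for the sum. Thus the whole problem reduces to $-\partial_x^2|_{\Dir}$ on $L^p(\R_+, x^\gamma)$ for every admissible $\gamma$.

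For $\gamma\in(-1,p-1)$ the symmetric extension of $x^\gamma$ is an $A_p(\R)$-weight and odd reflection identifies $-\partial_x^2|_{\Dir}$ with $-\partial_x^2$ acting on the odd functions in $L^p(\R,|x|^\gamma)$, after which the classical $A_p$-weighted $H^\infty$-calculus applies. The range $\gamma\in(p-1,2p-1)$ is the genuine novelty and the main obstacle, since there $|x|^\gamma\notin A_p$ and Rubio de Francia extrapolation is unavailable. Here I would exploit the shift $\gamma-p\in(-1,p-1)$, which does lie in the $A_p$-range, via the Dirichlet-preserving isometric isomorphism
$$M\colon L^p(\R_+, x^{\gamma-p})\longrightarrow L^p(\R_+, x^\gamma),\qquad (Mv)(x)=v(x)/x,$$
which conjugates $-\partial_x^2|_{\Dir}$ to the Bessel-type operator $-\partial_x^2+2x^{-1}\partial_x-2x^{-2}$ now acting on an $A_p$-weighted space. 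Alternatively, one could attack the half-line heat kernel
$$\bigl(e^{t\partial_x^2|_{\Dir}}f\bigr)(x)=\int_0^\infty\!\bigl(p_t(x-y)-p_t(x+y)\bigr)f(y)\,dy$$
directly, using pointwise estimates on the boundary-correction term $p_t(x+y)$ to absorb the non-Muckenhoupt singularity of the weight. Either route yields $R$-sectoriality of the resolvent family with the desired angle, which upgrades to bounded $H^\infty$-calculus of angle zero by the Kalton-Weis criterion. The identification $D(\DD)=W^{2,p}_{\Dir}(\dom,w^\dom_\gamma)$ then follows from the resulting resolvent estimates combined with weighted elliptic regularity, and analytic semigroup generation is automatic from the $H^\infty$-calculus of angle zero.
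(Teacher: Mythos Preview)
Your overall architecture---localize to boundary charts, flatten to the half-space, decompose $-\Delta_{\Dir}$ on $\R^d_+$ as an operator sum in the normal and tangential variables, and thereby reduce to the one-dimensional Dirichlet Laplacian on $L^p(\R_+,x^\gamma)$---is exactly the skeleton the paper follows (see Theorems~\ref{thm:domain} and~\ref{thm:boundeddomainLaplaceX}; the paper uses Dore--Venni rather than Kalton--Weis for the sum, but the idea is the same). The treatment of the $A_p$-range via odd reflection also matches Theorem~\ref{thm:domainAp case}.

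The genuine gap is in your handling of the crucial one-dimensional problem for $\gamma\in(p-1,2p-1)$. Your conjugation idea via $M$ is attractive, and the paper does exploit an $M$-isomorphism (Lemma~\ref{lem:isomM}), but only to prove interpolation identities---never to transfer the functional calculus itself. The conjugated operator $-\partial_x^2+2x^{-1}\partial_x-2x^{-2}$ has singular coefficients at the origin, so it is \emph{not} covered by the classical $A_p$-weighted theory for $-\partial_x^2$; you would have to analyze this Bessel-type operator from scratch, which is no easier than the original problem. Your fallback sentence, that direct heat-kernel estimates yield $R$-sectoriality ``which upgrades to bounded $H^\infty$-calculus of angle zero by the Kalton--Weis criterion,'' is simply false: $R$-sectoriality alone does not imply a bounded $H^\infty$-calculus in general. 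The paper's route here (Proposition~\ref{prop:bddHinfty}) is a Kr\'ee-type perturbation argument: one writes $\phi(A)f$ as the unweighted $\phi(A)$ applied to $w_\gamma^{1/p}f$ plus a correction term governed by the kernel $\psi(x,y)=(x/y)^{\gamma/p}-1$, and estimates the correction directly via explicit integrals of the heat kernel. This is where the cancellation in $H_t(x,y)=G_t(x-y)-G_t(x+y)$ is essential.

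Two further points you skate over: the perturbation theorems used in localization (Lemma~\ref{lem:smallperturb}) require identifying $D((-\Delta_{\Dir})^{1/2})$ and $D((-\Delta_{\Dir})^{3/2})$ as concrete Sobolev spaces in the non-$A_p$ range (Theorem~\ref{thm:DD32}), which takes real work; and getting the angle down to zero on the bounded domain is not automatic from the localization---the paper achieves it via compactness of the resolvent and $p,\gamma$-independence of the spectrum (Corollary~\ref{cor:boundeddomainLaplaceC}).
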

A similar result holds on the half space $\R^d_+$ or small deformations of the half space.
The range $\gamma\in (p-1, 2p-1)$ falls outside the classical $A_p$-setting and Theorem \ref{thm:boundeddomainLaplaceintro} is new in this range. The range $\gamma\in (-1, p-1)$ can be treated by classical methods, and it can be derived from the general $A_p$-case which will be considered in Section \ref{sec:heatApsetting}.

The boundedness of the $H^\infty$-calculus has many interesting consequences for the operator $\DD$ on $L^p(\dom,w^{\dom}_{\gamma})$.
Loosely speaking, the boundedness of the $H^\infty$-calculus can be used as a black box to ensure existence of certain singular integrals.
In particular, the boundedness of the $H^\infty$-calculus implies:
\begin{itemize}
\item Continuous and discrete square function estimates (see \cite[Theorems 10.4.4 $\&$ 10.4.23]{HNVW2}), which are closely related to the classical Littlewood--Paley inequalities.
\item Well-posedness and maximal regularity of the Laplace equation and the heat equation on $L^p(\dom,w^{\dom}_{\gamma})$ (see Corollaries \ref{cor:ellipticregnonAp}, \ref{cor:weightedmaxnonAp}, \ref{cor:max-reg_bdd_domain}).
\item Maximal regularity for the stochastic heat equation on $L^p(\dom,w^{\dom}_{\gamma})$ (see \cite[Theorem 1.1]{NVW12a}).
\end{itemize}
On bounded domains we analyse the spectrum of $\DD$ and in particular we show that the analytic semigroup generated by $\DD$ is exponentially stable. Additionally we use the functional calculus to characterize several of the fractional domain spaces.

The main difficulty in the proof of Theorem \ref{thm:boundeddomainLaplaceintro} in the non-$A_p$ setting is that standard tools from harmonic analysis are not available. For instance, the boundedness of the Hilbert transform, the boundedness of the Hardy-Littlewood maximal function operator, and the Littlewood--Paley decomposition all hold on $L^p(\R^d,w_{\gamma}^{\dom})$ if and only if $\gamma\in (-1, p-1)$ (see \cite[Chapter 9]{GrafakosM} and \cite{Rychkov01}). Here one also needs to use the fact that the $A_p$-condition holds if and only if $\gamma\in (-1, p-1)$.
As a consequence, we have to find a new approach to obtain the domain characterizations, sectoriality estimates and the boundedness of the functional calculus.

We have already mentioned that Theorem \ref{thm:boundeddomainLaplaceintro} implies maximal regularity results. As a further application we will derive a maximal regularity result for the heat equation on weighted spaces with {\em rough inhomogeneous boundary conditions}. The main reason we can allow much rougher boundary data than in previous works is that we allow $\gamma\in (p-1, 2p-1)$. Maximal regularity results can be used to study nonlinear equations in an effective way (see e.g.\ \cite{PSW18} and references therein). The result below is a special case of Theorem \ref{thm:max-reg_bdd_domain_bd-data;interval}.
In order to make the result transparent without losing the main innovative part of the result, we state the result in the special case $u_0 = 0$, $f = 0$ and $p=q$ and without weights in time.
\begin{theorem}\label{thm:inhmainintro}
Let $\dom$ be a bounded $C^{2}$-domain. Let $\lambda\geq 0$. Let $p\in (1, \infty)$ and $\gamma\in (-1, 2p-1)\setminus \{p-1,2p-3\}$ and set $\delta = 1-\frac{1+\gamma}{2p}$.
Assume
\[g \in B^{\delta}_{p,p}(\R_+;L^{p}(\partial\dom)) \cap
L^{p}(\R_+;B^{2\delta}_{p,p}(\partial\dom)),\]
with $g(0,\cdot) = 0$ in the case $\gamma\in (-1, 2p-3)$. Then there exists a unique $u \in W^{1,p}(\R_+;L^p(\dom,w_{\gamma}^\dom))\cap L^{p}(\R_+;W^{2,p}(\dom,w_{\gamma}^{\dom}))$ such that
\begin{align*}
\left\{\begin{array}{rlll}
u' + (\lambda-\Delta) u &=& 0, & \text{on $\R_+\times\dom$}, \\
\tr_{\partial\dom}u &=&g, & \text{on $\R_+\times\partial \dom$}, \\
u(0) &=& 0, & \text{on $\dom$}.
\end{array}\right.
\end{align*}
\end{theorem}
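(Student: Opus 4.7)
The plan is to reduce the problem to one with zero boundary data, where the maximal regularity from Theorem~\ref{thm:boundeddomainLaplaceintro} can be applied directly.

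First I would establish the relevant trace theorem: the trace map
\[
\tr_{\partial\dom}\colon W^{1,p}(\R_+;L^p(\dom,w_\gamma^\dom))\cap L^p(\R_+;W^{2,p}(\dom,w_\gamma^\dom)) \longrightarrow \G,
\]
with $\G := B^{\delta}_{p,p}(\R_+;L^p(\partial\dom))\cap L^p(\R_+;B^{2\delta}_{p,p}(\partial\dom))$, is bounded and admits a bounded right inverse $\mathcal{E}$. The exponent $\delta = 1-\frac{1+\gamma}{2p}$ reflects, under parabolic scaling, the spatial loss of $\frac{1+\gamma}{p}$ derivatives required to take a boundary trace in the $\gamma$-weighted $L^p$ scale. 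When $\gamma<2p-3$ one has $\delta>1/p$, so the time-trace at $0$ is defined in $\G$, and one can arrange $(\mathcal{E}g)(0,\cdot)=0$ precisely under the compatibility condition $g(0,\cdot)=0$; when $\gamma\in(2p-3,2p-1)$ no such condition is meaningful and $\mathcal{E}g$ automatically has vanishing initial trace.

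Given $v := \mathcal{E}g$, the reduction $w := u-v$ must satisfy
\[
w' + (\lambda-\Delta) w = f, \qquad \tr_{\partial\dom} w = 0, \qquad w(0) = 0,
\]
with $f := -v'+(\lambda-\Delta)v \in L^p(\R_+;L^p(\dom,w_\gamma^\dom))$. By Theorem~\ref{thm:boundeddomainLaplaceintro}, the operator $-\DD$ has a bounded $H^\infty$-calculus of angle zero on the UMD space $L^p(\dom,w_\gamma^\dom)$; by the Kalton--Weis / Weis theorem this implies maximal $L^p$-regularity, producing a unique $w\in W^{1,p}(\R_+;L^p(\dom,w_\gamma^\dom))\cap L^p(\R_+;W^{2,p}_{\Dir}(\dom,w_\gamma^\dom))$. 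Setting $u=w+v$ solves the problem, and uniqueness is inherited from uniqueness of $w$, since any two candidate solutions differ by a function with zero boundary and initial trace, belonging to the maximal regularity class for the homogeneous problem.

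The principal obstacle is the trace theorem in the non-$A_p$ range $\gamma\in(p-1,2p-1)$, where boundedness of Fourier multipliers and Littlewood--Paley decompositions on $L^p(\R^d,w_\gamma^\dom)$ fails in all directions. My approach would be to handle the half-space model first, exploiting the fact that $w_\gamma$ depends only on the normal variable and is essentially a one-dimensional power weight. A Poisson-type extension built from the semigroup $e^{-t\sqrt{\lambda-\Delta_{\textrm{tan}}}}$ applied in the tangential and time variables, composed with a normal-direction extension, yields $\mathcal{E}$; the matching trace and extension estimates reduce to one-dimensional weighted Hardy-type inequalities in the normal variable combined with vector-valued Besov characterizations in the tangential/temporal variables, where the $A_p$-property does hold. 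Once the half-space case is settled, standard localization, partition of unity, and flattening of the boundary via $C^2$-charts transfer the result to the bounded $C^2$-domain $\dom$, where perturbation terms generated by the coordinate change are absorbed using the lower-order part of the norms.
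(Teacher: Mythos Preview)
Your high-level strategy---prove a spatial trace theorem with a bounded right inverse, subtract off the extension to reduce to homogeneous boundary data, then invoke maximal $L^p$-regularity coming from the $H^\infty$-calculus of $-\DD$---is exactly the paper's route (Theorems~\ref{thm:max-reg_bdd_domain_bd-data} and~\ref{thm:max-reg_bdd_domain_bd-data;interval}, with Corollary~\ref{cor:max-reg_bdd_domain} supplying the maximal regularity).

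The substantive difference is how the trace theorem is proved in the non-$A_p$ range. You propose a Poisson-type extension built from the tangential semigroup together with one-dimensional weighted Hardy estimates in the normal variable. The paper instead uses a ``shift into $A_p$'' trick: for the boundedness of the trace it combines the Hardy embedding $W^{k,p}(\dom,w_\gamma^\dom)\hookrightarrow W^{k-1,p}(\dom,w_{\gamma-p}^\dom)$ with the dual Sobolev embedding $L^p(\dom,w_\gamma^\dom)\hookrightarrow H^{-1,p}(\dom,w_{\gamma-p}^\dom)$ (Proposition~\ref{prop:Sobolev_emb_into_neg_smoothness}), thereby moving the weight exponent down by $p$ into the $A_p$ range at the cost of one derivative, after which the already-established $A_p$-weighted trace theory applies. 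For the coretraction it exploits $F^k_{p,1}(\dom,w_\gamma^\dom)\hookrightarrow W^{k,p}(\dom,w_\gamma^\dom)$, valid for $A_\infty$-weights, and extends into the Triebel--Lizorkin scale (Lemma~\ref{lemma:thm:trace_max-reg_space;trivial_embd_F}). Your approach may well work, but the paper's embedding device is a short and robust way to sidestep the failure of Fourier-multiplier tools for $w_\gamma\notin A_p$.

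One point in your sketch is loose: the assertion that $(\mathcal{E}g)(0,\cdot)=0$ holds ``automatically'' when $\gamma\in(2p-3,2p-1)$. Even though $g$ has no well-defined time-trace at $t=0$ in that range, $\mathcal{E}g$ lies in the maximal regularity space, which always embeds into $C([0,\infty);\,\cdot\,)$, so $(\mathcal{E}g)(0)$ is defined and there is no a priori reason it vanishes. The paper handles this by extending $g$ by zero to the full line (bounded since $\delta<1/p$; Lemma~\ref{lemma:thm:max-reg_bdd_domain_bd-data;interval;ext0}), solving on $\R$, and then showing via a mollification-plus-semigroup argument that the solution actually vanishes for $t<0$, so that its restriction to $\R_+$ has zero initial trace. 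You would need some comparable mechanism.
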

Conversely, the conditions on $g$ are necessary in order for $u$ to be in the intersection space.
Note that $\delta\in (0,1)$ can be taken arbitrarily close to zero by taking $\gamma$ arbitrarily close to $2p-1$. Moreover, if $\gamma\in (2p-3, 2p-1)$ then the compatibility condition $g(0,\cdot) = 0$ also vanishes.

Theorem \ref{thm:inhmainintro} was proved in \cite{DHPinh} and \cite{Weid02} for $\gamma = 0$, and in this case the smoothness parameter equals $\delta = 1-\tfrac1{2p}$. In \cite{DHPinh} actually the general setting of higher order operators $A$ with boundary conditions of Lopatinskii-Shapiro was consider. In \cite{lindemulder2017maximal} the first author extended the latter result to the weighted situation with $\gamma\in (-1, p-1)$, in which case $\delta\in (\tfrac12, 1)$ can only be taken arbitrarily close to $\tfrac12$ by taking $\gamma$ close to $p-1$. It would be interesting to investigate if one can extend special cases of \cite{lindemulder2017maximal} to other values of $\gamma$. In ours proofs the main technical reason that we can extend the range of $\gamma$'s in the Dirichlet setting is that the heat kernel on a half space has a zero of order one at the boundary. The heat kernel in the case of Neumann boundary conditions does not have this property. Moreover, the Neumann trace operator is not well-defined for $\gamma\in (p-1, 2p-1)$. It is a natural question to ask for which kernels associated to higher order elliptic operators with different boundary conditions one has similar behavior at the boundary. In such cases one might be able to allow for rougher boundary data as well.

There exist several theories of elliptic and parabolic boundary value problems on other classes of function spaces than the $L^q(L^p)$-framework of the above. The case that $L^p$ is replaced by a weighted Besov or Triebel-Lizorkin space is considered by the first named author in \cite{LindF} in the elliptic setting and in \cite{LindFpara} in the parabolic setting. The advantage in that setting is that one can use Fourier multiplier theorems for $A_{\infty}$-weights. The results in \cite{LindF,LindFpara}  are independent from the results presented here since in the non-$A_p$ setting Triebel-Lizorkin spaces do not coincide with Sobolev spaces. Results in the framework of tent spaces have been obtained in \cite{amenta2016abstract, AxKeiMc06,AuAxMc} for elliptic equations and in \cite{auscher20162} for parabolic equations. Here in some cases the boundary data is allowed to be in $L^p$ or $L^2$.

The paper is organized as follows. In Section \ref{sec:Hardy} we present some results on traces, Hardy inequalities and interpolation inequalities which will be needed. In Section \ref{sec:heatApsetting} we consider the half space case with $A_p$-weights. In Section \ref{sec:heatnonAp} we consider the half space case for non-$A_p$-weights. We extend the results to bounded domains in Section \ref{sec:loc},  where Theorem \ref{thm:boundeddomainLaplaceintro} can be derived from Corollary \ref{cor:boundeddomainLaplaceC}.
In Section \ref{sec:inhheat} we consider the heat equation with inhomogeneous boundary conditions and, in particular, we will derive Theorem \ref{thm:inhmainintro}.
In many of our considerations we consider the vector-valued situation. This is mainly because it can be convenient to write Sobolev spaces as the intersection of several simpler vector-valued Sobolev spaces.

\subsubsection*{Acknowledgment}  The authors would like to thank Dorothee Frey and Bas Nieraeth for helpful discussions on Section \ref{subs:extra}.

\subsubsection*{Notation}

$\R^d_+ = (0,\infty)\times\R^{d-1}$ denotes the half space. We write $x = (x_1, \tilde{x})\in \R^{d}$ with $x_1\in \R$ and $\tilde{x}\in \R^{d-1}$.
The following shorthand notation will be used throughout the paper
\[w_{\gamma}(x) = |x_1|^{\gamma} \ \ \text{and} \ \ w_{\gamma}^{\dom}(x) = \dist(x, \partial\dom)^{\gamma}.\]
For two topological vector spaces $X$ and $Y$ (usually Banach spaces), $\calL(X,Y)$ denotes the space of continuous linear operators. We write $A \lesssim_p B$ whenever $A \leq C_p B$ where $C_p$ is a constant which depends on the parameter $p$. Similarly, we write $A\eqsim_p B$ if $A\lesssim_p B$ and $B\lesssim_p A$. Unless stated otherwise in the rest of the paper $X$ is assumed to be a Banach space.

\section{Preliminaries}

\subsection{Function spaces and weights}
Let $X$ be a Banach space. For an open set $\dom\subseteq \R^d$ let $\mathcal{D}(\dom;X)$ denote the space of compactly supported smooth functions from $\dom$ into $X$ equipped with its usual inductive limit topology.
Let $\Distr'(\dom;X) = \calL(\mathcal{D}(\dom), X)$ be the space of $X$-valued distributions.
Let $C^\infty_c(\overline{\dom};X)$ be the space of infinitely differentiable functions which vanish outside a compact set $K\subseteq \overline{\dom}$.
Furthermore, $\Schw(\R^d;X)$ denotes the space of Schwartz functions and $\Schw'(\R^d;X) = \calL(\Schw(\R^d), X)$ is the space of $X$-valued tempered distributions. We refer to \cite{amann2003vector,Amann09} for introductions to the theory of vector-valued distribution.

A locally integrable function $w:\dom\to (0,\infty)$ is called a weight.
A weight $w$ on $\R^d$ will be called {\em even} if $w(-x_1, \tx) = w(x_1, \tx)$ for $x_1>0$ and $\tx\in \R^{d-1}$.

Although we will be mainly interested in a special class of weights, it will be natural to formulate some of the result for the class of Muckenhoupt $A_p$-weights. For $p\in (1, \infty)$ and a weight $w:\R^d\to (0,\infty)$, we say that $w\in A_p$ if
\[[w]_{A_p} = \sup_{Q} \frac{1}{|Q|} \int_Q w(x) \ud x \cdot \Big(\frac{1}{|Q|} \int_Q w(x)^{-\frac{1}{p-1}}\ud x\Big)^{p-1}<\infty.\]
Here the supremum is taken over all cubes $Q\subseteq \R^d$ with sides parallel to the coordinate axes. For $p\in (1, \infty)$ and a weight $w:\R^d\to (0,\infty)$ one has $w\in A_p$ if and only if the Hardy--Littlewood maximal function is bounded on $L^p(\R^d,w)$.
We refer the reader to \cite[Chapter 9]{GrafakosM} for standard properties of $A_p$-weights. For a fixed $p$ and a weight $w\in A_p$, the weight $w' = w^{-1/(p-1)}\in A_{p'}$ is the $p$-dual weight.
Define $A_{\infty} =\bigcup_{p>1} A_p$. Recall that $w_{\gamma}(x) := |x_1|^{\gamma}$ is in $A_p$ if and only if $\gamma\in (-1, p-1)$.

For a weight $w:\dom\to (0,\infty)$ and $p\in [1, \infty)$, let $L^p(\dom,w;X)$ denote the Bochner space of all strongly measurable functions $f:\dom\to X$ such  that
\[\|f\|_{L^p(\dom,w;X)} =\Big(\int_\dom |f(x)|^p w(x) dx\Big)^{1/p} < \infty.\]
For a set $\Omega\subseteq \R^d$ with nonempty interior and $w:\Omega\to (0,\infty)$ let $L^1_{\rm loc}(\Omega;X)$ denote the set of all functions such that for all bounded open sets $\Omega_0$ with $\overline{\Omega_0}\subseteq \Omega$, we have $f|_{\Omega_0}\in L^1(\Omega_0,w;X)$. In this case $f$ is called {\em locally integrable} on $\Omega$.
If the $p$-dual weight $w'=w^{-1/(p-1)}$ ($w'=1$ when $p=1$) is locally integrable on $\dom$, then $L^p(\dom,w;X) \hookrightarrow \Distr'(\dom;X)$.

For $p \in (1,\infty)$, an integer $k\geq 0$ and a weight $w$ with $w'=w^{-1/(p-1)} \in L^{1}_{\rm loc}(\dom)$, let $W^{k,p}(\dom,w;X)\subseteq \Distr'(\dom;X)$ be the {\em Sobolev space} of all $f\in L^p(\dom,w;X)$ with $D^\alpha f \in L^p(\dom,w;X)$ for all $|\alpha|\leq k$ and set
\begin{align*}
\|f\|_{W^{k,p}(\dom,w;X)} &= \sum_{|\alpha|\leq k} \|D^{\alpha} f\|_{L^p(\dom,w;X)}, \\ [f]_{W^{k,p}(\dom,w;X)} &= \sum_{|\alpha| = k} \|D^{\alpha} f\|_{L^p(\dom,w;X)}.
\end{align*}
$W^{k,p}(\dom,w;X)$ is a Banach space. We refer to \cite{Kufner80, KufnerOpic84} for a detailed study of weighted Sobolev spaces. Finally, for a set $\Omega\subseteq \R^d$ with nonempty interior we let $W^{k,1}_{\rm loc}(\Omega,w;X)$ denote the space of functions such that $D^\alpha f\in L^1_{\rm loc}(\Omega,w;X)$ for all $|\alpha|\leq k$.

Let us mention that density of $C^\infty_c(\overline{\dom};X)$ in $W^{1,p}(\dom,w;X)$ is not true in general, not even for $w\in A_{\infty}$. A sufficient condition class is $w\in A_p$ (see \cite[Corollary 2.1.6]{Tur00}). Further examples and counterexamples can be found in \cite[Chapter 7 $\&$ 11]{Kufner80} and \cite{Zhikov98}.

We further would like to point out that in general $W^{k,p}(\dom,w)$ does not coincide with a Triebel-Lizorkin space $F^{k}_{p,2}(\dom,w)$ if $w\notin A_p$. Moreover, in the $X$-valued setting this is even wrong for $w = 1$ unless $X$ is isomorphic to a Hilbert space (see \cite{HanMeyer}).

\subsection{Localization and $C^k$-domains\label{subsec:locCk}}

\begin{definition}\label{DBVP:def:special_domain}
Let $\dom \subset \R^{d}$ be a domain and let $k \in \N_0\cup\{\infty\}$.
Then $\dom$ is called a \emph{special $C^{k}$-domain} when, after rotation and translation, it is of the form
\begin{equation}\label{DBVP:eq:def:special_domain;above_graph}
\dom = \{ x= (y,x') \in \R^{d} : y > h(x') \}
\end{equation}
for some $C^{k}$-function $h:\R^{d-1} \longra \R$. If $h$ can be chosen with compact support, then $\mathscr{O
}$ is called a \emph{special $C^{k}_{c}$-domain}.
\end{definition}

For later it will be convenient to define, given a special $C^{k}_{c}$-domain $\dom$ with $k\in \N_0$, the numbers
\begin{equation}\label{DBVP:eq:special_domain;above_graph;C^{k}-number}
[\dom]_{C^{k}} := \inf_{h}\norm{h}_{C^{k}_{b}(\R^{d-1})}
\end{equation}
where the infimum is taken over all $h \in C^{k}_{c}(\R^{d-1};\R)$ for which $\dom$, after rotation and translation, can be represented as \eqref{DBVP:eq:def:special_domain;above_graph}.

\begin{definition}\label{DBVP:def:sec:localization;smooth_domain}
Let $k\in \N_0\cup\{\infty\}$. A domain $\dom \subset \R^{d}$ is said to be a \emph{$C^{k}$-domain} when every boundary point $x \in \partial \dom$ admits an open neighborhood $V$ with the property that
\[
\dom \cap V = W \cap V \quad\mbox{and}\quad \partial \dom \cap V = \partial W \cap V
\]
for some special $C^{k}$-domain $W \subset \R^{d}$.
\end{definition}

Note that, in the above definition, $V$ may be replaced by any smaller open neighborhood of $x$. Hence, we may without loss of generality assume that $W$ is a $C^{k}_{c}$-domain. Moreover, if $k\in \N_0$ then for any $\epsilon > 0$ we can arrange that $[W]_{C^{k}} < \epsilon$.

If $U,V\subseteq \R^d$ are open and $\Phi:U\to V$ is a $C^1$-diffeomorphism, then we define $\Phi_*:L^1_{\rm loc}(U) \to L^1_{\rm loc}(V)$ by
\[\lb \Phi_* f,g\rb := \lb f, j_{\Phi} g\circ\Phi\rb, \ \  f\in L^1_{\rm loc}(U), g\in C_c(V),\]
where $j_{\Phi} = \det(\nabla \Phi)$ denotes the Jacobian. In this way $\Phi_{*}f = f\circ \Phi^{-1}$.

Now assume $h\in C^k_{c}(\R^{d-1})$ with $k\geq 1$ and
\begin{equation}\label{eq:Omegaspecial}
\dom = \{(x_1,\tx): \tx\in \R^{d-1}, x_1>h(\tx)\}.
\end{equation}
Define a $C^k$-diffeomorphism $\Phi:\dom\to \R^d_+$ by
\begin{align}\label{eq:diffeo_special_domain}
\Phi(x) = (x_1-h(\tx), \tx).
\end{align}
Obviously, $\det(\nabla \Phi) = 1$. For a weight $w:\R^d\to (0,\infty)$, let $w_{\Phi}:\dom\to (0,\infty)$ be defined by $w_{\Phi}(x) = w(\Phi(x))$.
In the important case that $w(x) =  |x_1|^{\gamma}$, we have
\[w_{\Phi}(x) =  |x_1-h(\tx)|^{\gamma} \eqsim \dist(x,\partial \dom)^{\gamma},  \ \ x\in \dom.\]
In this way for $k\in \N_0$, the mapping $\Phi_*$ defines a bounded isomorphism
\[\Phi_*:W^{k,p}(\dom,w_{\Phi})\to W^{k,p}(\R_+^d,w_{\gamma})\]
with inverse $(\Phi^{-1})_*$.

In the paper we will often use a localization procedure. We will usually leave out the details as they are standard. In the localization argument for the functional calculus (see Theorem \ref{thm:boundeddomainLaplaceX}) we do give the full details as a precise reference with weighted spaces seems unavailable.

Given a bounded $C^k$-domain $\dom$ with $k\geq 1$, then we can find $\eta_{0} \in C^{\infty}_{c}(\dom)$ and $\{\eta_{n}\}_{n=1}^{N} \subset C^{\infty}_{c}(\R^{d})$ such that $\supp(\eta_{n}) \subset V_{n}$ for each $n \in \{1,\ldots,N\}$ and $\sum_{n=0}^{N}\eta_{n}^{2} = 1$ (see \cite[Ch.8, Section 4]{Krylov2008_book}).
These functions can be used to decompose the space $E_k:= W^{k,p}(\dom,w^{\dom}_{\gamma};X)$  as
\[
F_k:= W^{k,p}(\R^{d};X)  \oplus \bigoplus_{n=1}^{N} W^{k,p}(\dom_{n},w^{\dom_{n}}_{\gamma};X)
\]
The mappings $\mathcal{I}: E_k \longra F_k$ and $\mathcal{P}: F_k \longra E_k$ given by
\begin{equation}\label{eq:domaindecomoperator}
\mathcal{I} f  = (\eta_{n}f)_{n=0}^{N} \qquad \text{and} \qquad \mathcal{P} (f_{n})_{n=0}^{N} = \sum_{n=0}^{N}\eta_{n}f_{n}.
\end{equation}
satisfy $\mathcal{P}\mathcal{I} = I$, thus $\mathcal{P}$ is a retraction with coretraction $\mathcal{I}$.

\subsection{Functional calculus}

Let $\Sigma_{\varphi} = \{z\in \C: |\arg(z)|<\varphi\}$.
We say that an unbounded operator $A$ on a Banach space $X$ is a {\em sectorial operator} if $A$ is injective, closed, has dense range and there exists a $\varphi\in (0,\pi)$ such that $\sigma(A)\subseteq \overline{\Sigma_{\varphi}}$ and
\begin{align*}
\sup_{\lambda\in \C\setminus \Sigma_{\varphi}} \|\lambda R(\lambda,A)\|<\infty.
\end{align*}
The infimum over all possible $\varphi$ is called the {\em angle of sectoriality} and denoted by $\omega(A)$. In this case we also say that $A$ is {\em sectorial of angle} $\omega(A)$. The condition that $A$ has dense range is automatically fulfilled if $X$ is reflexive (see \cite[Proposition 10.1.9]{HNVW2}).

Let $H^\infty(\Sigma_{\omega})$ denote the space of all bounded holomorphic functions $f:\Sigma_{\omega}\to \C$ and let $\|f\|_{H^\infty(\Sigma_{\omega})} = \sup_{z\in \Sigma_{\omega}} |f(z)|$. Let $H^\infty_0(\Sigma_{\omega})\subseteq H^\infty(\Sigma_{\omega})$  be the set of all $f$ for which there exists an $\varepsilon>0$ and $C>0$ such that $|f(z)|\leq C \frac{|z|^{\varepsilon}}{1+|z|^{2\varepsilon}}$.

If $A$ is sectorial, $\omega(A)<\nu<\omega$, and $f\in H^\infty_0(\Sigma_{\omega})$ we let
\[f(A) = \frac{1}{2\pi i} \int_{\partial \Sigma_{\nu}} f(\lambda) R(\lambda,A) \ud \lambda,\]
where $\partial \Sigma_{\nu}$ is oriented downwards. The operator $A$ is said to have a {\em bounded $H^\infty(\Sigma_{\omega})$-calculus} if there exists a constant $C$ such that for all $f\in H^\infty_0(\Sigma_{\omega})$
\[\|f(A)\|\leq C\|f\|_{H^\infty(\Sigma_{\omega})}.\]
The infimum over all possible $\omega> \omega(A)$ is called the angle of the $H^\infty$-calculus and is denoted by $\omega_{H^\infty}(A)$. In this case we also say that $A$ has a bounded $H^\infty$-calculus of angle $\omega_{H^\infty}(A)$.

For details on the $H^\infty$-functional calculus we refer the reader to \cite{Haase:2} and \cite{HNVW2}.

The following well-known result on the domains of fractional powers and complex interpolation will be used frequently. For the definitions of the powers $A^{\alpha}$ with $\alpha\in \C$ we refer to \cite[Chapter 3]{Haase:2}. For details on complex interpolation we refer to \cite{BeLo,HNVW1,Tr1}.

We say that $A$ has {\em BIP (bounded imaginary powers)} if for every $s\in \R$, $A^{is}$ extends to a bounded operator on $X$. In this case one can show that there exists $M,\sigma\geq 0$ such that (see \cite[Corollary 3.5.7]{Haase:2})
\begin{equation}\label{eq:Aisesigmaetc}
\|A^{is}\|\leq M e^{\sigma s}, \qquad s\in\R.
\end{equation}
Let $\omega_{\BIP}(A) = \inf\{\omega\in \R: \exists M>0 \ \text{such that for all} \ s\in \R \ \|A^{is}\|\leq Me^{\omega |s|}\}$.
One can easily check that $\omega_{\BIP}(A)\leq \omega_{H^\infty}(A)$.

The next result can be found in \cite[Theorem 6.6.9]{Haase:2} and \cite[Theorem 1.15.3]{Tr1}.
\begin{proposition}\label{prop:BIP}
Assume $A$ is a sectorial operator such that $A$ has BIP. Then for all $\theta\in (0,1)$ and $0\leq \alpha<\beta$ we have
\[[D(A^{\alpha}), D(A^{\beta})]_{\theta} = D(A^{(1-\theta)\alpha+\theta\beta}),\]
where the constant in the norm equivalence depends $\alpha, \beta,\theta$, the sectoriality constants and on the constant $M$ and $\sigma$ in \eqref{eq:Aisesigmaetc}.
\end{proposition}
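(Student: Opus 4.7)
The plan is to follow the classical Stein-interpolation strategy on the analytic family $\{A^{z}\}_{z\in\C}$, with the exponential growth $Me^{\sigma|s|}$ of \eqref{eq:Aisesigmaetc} killed by a Gaussian damping factor $e^{\eta(z-\theta)^{2}}$ for a fixed $\eta>\sigma$. First I would perform a sequence of reductions: replacing $A$ by $A+\eta_{0}I$ for small $\eta_{0}>0$ one may assume $0\in\rho(A)$ (BIP is preserved with controlled constants and $D((A+\eta_{0})^{\theta})=D(A^{\theta})$ with equivalent norms under BIP). Once $A$ is invertible, $A^{\alpha}$ is an isomorphism $D(A^{\alpha+s})\to D(A^{s})$ for every $s\geq 0$, and functoriality of the complex method gives an isomorphism $A^{\alpha}\colon[D(A^{\alpha}),D(A^{\beta})]_{\theta}\to[X,D(A^{\beta-\alpha})]_{\theta}$, reducing the identity to $[X,D(A^{\delta})]_{\theta}=D(A^{\theta\delta})$ with $\delta:=\beta-\alpha$. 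Since $A^{\delta}$ is sectorial, invertible and has BIP with constants $(M,\delta\sigma)$, a last change of operator reduces everything to the prototypical statement $[X,D(A)]_{\theta}=D(A^{\theta})$ for $A$ sectorial, invertible, and with BIP.

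For the inclusion $D(A^{\theta})\hookrightarrow[X,D(A)]_{\theta}$, given $x\in D(A^{\theta})$ I would set $y:=A^{\theta}x$ and
\[F(z):=e^{\eta(z-\theta)^{2}}A^{-z}y,\qquad z\in\overline{S},\ S=\{0<\Re z<1\}.\]
Since $A^{-z}\in\calL(X)$ for $\Re z\geq 0$, $F$ is holomorphic on $S$ and continuous on $\overline{S}$; it satisfies $F(\theta)=A^{-\theta}y=x$, and \eqref{eq:Aisesigmaetc} yields
\[\|F(it)\|_{X}\leq Me^{\eta(\theta^{2}-t^{2})+\sigma|t|}\|y\|,\qquad \|AF(1+it)\|_{X}\leq Me^{\eta((1-\theta)^{2}-t^{2})+\sigma|t|}\|y\|,\]
both uniformly bounded in $t\in\R$ by a constant times $\|A^{\theta}x\|$. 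Hence $F$ is admissible in the Calderón space of $[X,D(A)]_{\theta}$ and $\|x\|_{[X,D(A)]_{\theta}}\lesssim\|A^{\theta}x\|$.

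For the reverse inclusion, take $x\in[X,D(A)]_{\theta}$ together with an admissible representative $F$ with $F(\theta)=x$, and set $G(z):=e^{\eta z^{2}}A^{z}F(z)$. By density of the representatives of the form $F(z)=\sum_{j}\phi_{j}(z)x_{j}$ with $x_{j}\in\bigcap_{n}D(A^{n})$ and $\phi_{j}$ scalar-valued entire of Gaussian decay, $G$ is a well-defined $X$-valued holomorphic map on $S$, continuous on $\overline{S}$, with boundary values
\[\|G(it)\|_{X}\leq Me^{-\eta t^{2}+\sigma|t|}\|F(it)\|_{X},\qquad \|G(1+it)\|_{X}\leq Me^{\eta(1-t^{2})+\sigma|t|}\|AF(1+it)\|_{X}.\]
The three-lines theorem applied to $G$ gives $e^{\eta\theta^{2}}\|A^{\theta}x\|_{X}=\|G(\theta)\|_{X}\lesssim\max\bigl(\sup_{t}\|F(it)\|_{X},\sup_{t}\|AF(1+it)\|_{X}\bigr)$; taking the infimum over admissible $F$ yields $\|A^{\theta}x\|_{X}\lesssim\|x\|_{[X,D(A)]_{\theta}}$, and since $A^{\theta}$ is closed this forces $x\in D(A^{\theta})$ with the required norm estimate.

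The hard part is making rigorous the pointwise action $A^{z}F(z)$ in the reverse inclusion, which requires the density of the Gaussian-exponential representatives with values in the common core $\bigcap_{n}D(A^{n})$ of the extended calculus; the remaining technicalities amount to bookkeeping how the final constants depend on $(\alpha,\beta,\theta,M,\sigma)$ and on the sectoriality bounds through the successive reductions.
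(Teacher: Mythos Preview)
The paper does not give its own proof of this proposition; it simply cites \cite[Theorem~6.6.9]{Haase:2} and \cite[Theorem~1.15.3]{Tr1}. Your proposal is precisely the classical argument one finds in \cite[Theorem~1.15.3]{Tr1}: reduce to the invertible case, shift the exponents so that $\alpha=0$, rescale to $\beta=1$, and then run Stein interpolation on $z\mapsto A^{z}$ with the Gaussian correction $e^{\eta(z-\theta)^{2}}$ to tame the exponential growth from BIP. So your route and the cited proof coincide.

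One small remark on the reverse inclusion: rather than approximating by representatives with values in $\bigcap_{n}D(A^{n})$, the cleaner way (and the way both references organize it) is to work with $X_{0}\cap X_{1}=D(A)$-valued step functions or finite sums $\sum_{j}\phi_{j}(z)y_{j}$ with $y_{j}\in D(A)$, which already suffice because you only need $A^{z}F(z)$ with $\Re z\in[0,1]$, and for $y\in D(A)$ one has $A^{z}y=A^{z-1}(Ay)$ bounded on the strip. Your version with the common core also works (it is dense in $D(A)$ by sectoriality), but it is more than is needed.
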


For two closed operators $(A,D(A))$ and $(B,D(B))$ on $X$ we define $D(A + B) := D(A)\cap D(B)$ and $(A+B) u = Au + B u$. Often it is difficult to determine whether $A+B$ with the above domain is a closed operator. Sufficient conditions are given in the following theorem which will be used several times throughout this paper (see \cite{DoreVenni,PrSo}).
\begin{theorem}[Dore--Venni]\label{thm:operatorsum}
Let $X$ be a UMD space.
Assume $A$ and $B$ are sectorial operators on $X$ with commuting resolvents and assume $A$ and $B$ both have BIP with $\omega_{\BIP}(A)+\omega_{\BIP}(B)<\pi$. Then the following assertions hold:
\begin{enumerate}[(1)]
\item\label{it:sum1} $A+B$ is a closed sectorial operator with
with $\omega(A+B)\leq \max\{\omega_{\BIP}(A),\omega_{\BIP}(B)\}$
\item\label{it:sum2} There exists a constant $C\geq 0$ such that for all $x\in D(A)\cap D(B)$,
\[\|Ax\|+ \|Bx\|\leq C \|Ax+Bx\|,\]
and if $0\in \rho(A)$ or $0\in \rho(B)$, then $0\in \rho(A+B)$.
\end{enumerate}
\end{theorem}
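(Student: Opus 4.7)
The natural approach is through a Mellin-type integral representation of $(A+B)^{-1}$ assembled from the functional calculi of $A$ and $B$ independently. The starting point is the scalar identity
\[
\frac{1}{a+b} \;=\; \frac{1}{2\pi i}\int_{c-i\infty}^{c+i\infty} \frac{\pi}{\sin \pi s}\, a^{-s}b^{s-1}\, ds, \qquad a,b>0,\ 0<c<1,
\]
which is Mellin inversion applied to the beta integral $\int_0^\infty t^{s-1}/(1+t)\,dt = \pi/\sin\pi s$. Since $A$ and $B$ have commuting resolvents, their imaginary powers commute, and BIP gives $\|A^{-s}B^{s-1}\|\le C e^{(\omega_{\BIP}(A)+\omega_{\BIP}(B))|\Im s|}$ on the line $\Re s=c$. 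The scalar kernel $\pi/\sin\pi s$ decays like $e^{-\pi|\Im s|}$ on that same line, so the hypothesis $\omega_{\BIP}(A)+\omega_{\BIP}(B)<\pi$ produces an exponentially decaying integrand on a suitable dense subspace, e.g.\ $D(A)\cap D(B)\cap R(A)\cap R(B)$.

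My first step would be to define, on this dense subspace,
\[
Ty \;:=\; \frac{1}{2\pi i}\int_{c-i\infty}^{c+i\infty} \frac{\pi}{\sin \pi s}\, A^{-s}B^{s-1}y\, ds,
\]
verify via the scalar identity and the commuting calculi that $(A+B)Ty=y=T(A+B)y$, and then extend $T$ to a bounded operator on all of $X$. This identification yields closedness of $A+B$, gives the coercive estimate (\ref{it:sum2}) since $\|AT\|$ and $\|BT\|$ are controlled by analogous Mellin representations with one extra power of $A$ or $B$ absorbed into the integrand, and under the invertibility hypothesis of (\ref{it:sum2}) produces $0\in\rho(A+B)$. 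For the sectoriality statement (\ref{it:sum1}), I would repeat the construction with $\mu+A+B$ in place of $A+B$ for $\mu$ outside a sector, tracking how the BIP bounds of $\mu+A$ and $B$ scale and how the Mellin contour may be rotated. The improvement from the sum $\omega_{\BIP}(A)+\omega_{\BIP}(B)$ to the maximum $\max\{\omega_{\BIP}(A),\omega_{\BIP}(B)\}$ in the bound on $\omega(A+B)$ requires the Pr\"uss--Sohr refinement, where one rotates the two spectral contours of $A$ and $B$ independently rather than jointly.

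The genuine obstacle is extending $T$ from the dense subspace, where the Mellin integral converges absolutely, to a bounded operator on all of $X$. The singular integral $\int \frac{\pi}{\sin\pi s}A^{-s}B^{s-1}\,ds$ along the vertical line is, after pulling out a factor $B^{-1}$, morally a Hilbert transform applied to the operator-valued $C_0$-group $s\mapsto A^{is}B^{-is}$, the BIP growth being absorbed by the exponential decay of $\pi/\sin\pi s$. This is exactly where the UMD hypothesis enters: it provides the boundedness of the operator-valued Hilbert transform on $L^2(\R;X)$ (via a transference-type argument applied to this bounded group), which turns the formally singular integral into a bounded operator on $X$. In the scalar or Hilbertian setting this boundedness is free from Plancherel, but on a general Banach space UMD plus the strict angle condition $\omega_{\BIP}(A)+\omega_{\BIP}(B)<\pi$ is essentially the minimal hypothesis that makes the construction go through.
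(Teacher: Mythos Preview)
The paper does not prove this theorem; it simply cites \cite{DoreVenni,PrSo} immediately before the statement and uses the result as a black box. Your sketch is precisely the classical Dore--Venni argument from \cite{DoreVenni}, with the angle improvement in part~(1) attributed (correctly) to the Pr\"uss--Sohr refinement \cite{PrSo}, so there is nothing to compare against and your outline matches the cited sources.
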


The following can be used to obtain boundedness of the $H^\infty$-calculus for translated operators (\eqref{it:firsttransH} is straightforward and \eqref{it:secondttransH} follows from \cite[Corollary 5.5.5]{Haase:2}):
\begin{remark}\label{rem:Hinftytranslate}
Let $\sigma\in (0,\pi)$ and assume $A$ is a sectorial operator of angle $\leq \sigma$
\begin{enumerate}[$(1)$]
\item\label{it:firsttransH} If $A$ has a bounded $H^\infty$-calculus of angle $\leq \sigma$, then for all $\lambda\geq 0$, $A+\lambda$ has a bounded $H^\infty$-calculus of angle $\leq \sigma$.
\item\label{it:secondttransH} If there exists a $\wt{\lambda}>0$ such that $A+\wt{\lambda}$ has a bounded $H^\infty$-calculus of angle $\leq \sigma$, then for all $\lambda>0$, $A+\lambda$ has a bounded $H^\infty$-calculus of angle $\leq \sigma$.
\end{enumerate}
\end{remark}

\subsection{UMD spaces and Fourier multipliers}
Below the geometric condition UMD will often be needed for $X$. UMD stands for unconditional martingale differences. One can show that a Banach space $X$ is a UMD space if and only if the Hilbert transform is bounded if and only if the vector-valued analogue of the Mihlin multiplier theorem holds. For details we refer to \cite[Chapter 5]{HNVW1}. Here we recall the important examples for our considerations.
\begin{itemize}
\item Every Hilbert space is a UMD space;
\item If $X$ is a UMD space, $(S, \Sigma, \mu)$ is $\sigma$-finite and $p\in (1, \infty)$, then $L^p(S;X)$ is a UMD space.
\item UMD spaces are reflexive.
\end{itemize}

For $m\in L^\infty(\R^d)$ define
\[T_m: \Schw(\R^d;X) \to \Schw'(\R^d;X), \qquad T_m f = \mathcal \F^{-1} (m \wh f).\]
For $p\in [1,\infty)$ and $w\in A_\infty$ the Schwartz class $\Schw(\R^d;X)$ is dense in $L^p(\R^d,w;X)$ (see Lemma \ref{lem:densitynonAp}).

The following is a weighted version of Mihlin's type multiplier theorem and can be found in \cite[Proposition 3.1]{MeyVerpoint}
\begin{proposition}\label{prop:weightedmihlin}
Let $X$ be a UMD space, $p\in (1, \infty)$ and $w\in A_p$. Assume that $m\in C^{d+2}(\R^d\setminus\{0\})$ satisfies
\begin{equation}\label{Mihlin-assum}
C_m = \sup_{|\alpha| \leq d+2}\sup_{\xi \neq 0} |\xi|^{|\alpha|} |D^{\alpha} m(\xi)| <\infty.
\end{equation}
Then $T_m$ extends to a bounded operator on $L^p(\R^d,w;X)$, and its operator norm only depends on $d$, $X$, $p$, $[w]_{A_p}$ and $C_m$.
\end{proposition}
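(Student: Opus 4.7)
My approach is to realize $T_m$ as a Calder\'on--Zygmund operator with a scalar kernel acting on $X$-valued functions, and then invoke the weighted vector-valued Calder\'on--Zygmund theory. There are three ingredients: (i) an unweighted vector-valued bound to start the CZ machine, (ii) pointwise kernel estimates of CZ type, and (iii) a weighted CZ theorem that upgrades (i) to $A_p$ weights.

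First I would establish boundedness of $T_m$ on $L^{p_0}(\R^d;X)$ for some $p_0\in(1,\infty)$ with Lebesgue measure. This is precisely the UMD-valued Mihlin multiplier theorem, see \cite[Chapter 5]{HNVW1}, and it is at this point that the UMD assumption on $X$ is essential (for scalar $X$ one could quote the classical Mihlin theorem). A standard truncation of $m$ away from $0$ and $\infty$ by a smooth bump, combined with the density of the Schwartz class in $L^p(\R^d,w;X)$ for $w\in A_p$ (cf.\ Lemma \ref{lem:densitynonAp}), reduces all the manipulations below to Schwartz multipliers with kernel in $\Schw$. Next I would verify that $K:=\mathcal{F}^{-1}m\in C^1(\R^d\setminus\{0\})$ satisfies the standard pointwise CZ estimates
\[ |K(x)|\lesssim C_m\, |x|^{-d}, \qquad |\nabla K(x)|\lesssim C_m\, |x|^{-d-1}, \qquad x\neq 0. \]
This is obtained in the usual way: dyadically decompose $m=\sum_{j\in\Z}m\psi_j$ with $\psi_j$ supported in annuli $\{|\xi|\sim 2^j\}$, compute $\mathcal{F}^{-1}(m\psi_j)$ by integration by parts in the Fourier integral, and sum the resulting geometric series. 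The choice of $d+2$ derivatives in \eqref{Mihlin-assum} is generous for this step.

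With (i) and (ii) in hand I would conclude by invoking the Coifman--Fefferman weighted Calder\'on--Zygmund theorem (see \cite[Chapter 9]{GrafakosM}) in its $X$-valued incarnation: since the kernel is scalar, the classical Calder\'on--Zygmund decomposition, good-$\lambda$ inequality with the Hardy--Littlewood maximal function, and $A_p$ extrapolation go through verbatim for $X$-valued functions, using the $L^{p_0}(\R^d;X)$-bound of step (i) as the starting point. This yields boundedness of $T_m$ on $L^p(\R^d,w;X)$ with operator norm controlled by a constant depending only on $d$, $X$, $p$, $[w]_{A_p}$, and $C_m$, as claimed. The main obstacle is careful constant-tracking in the weighted CZ argument so that the final dependence on $w$ enters only through $[w]_{A_p}$; this is automatic from the explicit form of the classical proof.
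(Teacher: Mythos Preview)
The paper does not give its own proof of this proposition: it is simply quoted from \cite[Proposition~3.1]{MeyVerpoint}, so there is no in-paper argument to compare against. Your outline is a correct and standard route to the result, and in fact matches the strategy used in the cited reference: obtain the unweighted $L^{p_0}(\R^d;X)$-bound from the UMD Mihlin theorem, derive the standard Calder\'on--Zygmund size and regularity estimates for the scalar kernel $K=\mathcal{F}^{-1}m$ from the Mihlin condition (the choice $d+2$ is precisely what makes both $|K(x)|\lesssim|x|^{-d}$ and $|\nabla K(x)|\lesssim|x|^{-d-1}$ go through via dyadic decomposition and integration by parts), and then run the weighted Calder\'on--Zygmund machinery, which extends verbatim to $X$-valued functions because the kernel is scalar. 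The dependence of the constant only on $[w]_{A_p}$ comes for free from the standard good-$\lambda$/extrapolation proof.
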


\begin{proposition}\label{propddtHinfty}
Let $X$ be a UMD space. Let $q\in (1, \infty)$ and $v\in A_q(\R)$. Then the following assertions hold:
\begin{enumerate}
\item The operator $\tfrac{d}{dt}$ with $D(\tfrac{d}{dt}) = W^{1,q}(\R,v;X)$ has a bounded $H^\infty$-calculus with $\omega_{H^\infty}(\tfrac{d}{dt}) \leq \tfrac{\pi}{2}$.
\item The operator $\tfrac{d}{dt}$ with $D(\tfrac{d}{dt}) = W^{1,q}_0(\R_+,v;X)$ has a bounded $H^\infty$-calculus with $\omega_{H^\infty}(\tfrac{d}{dt}) \leq \tfrac{\pi}{2}$.
\end{enumerate}
\end{proposition}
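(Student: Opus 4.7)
The plan is to prove part (1) by a direct Fourier multiplier analysis combined with the weighted Mihlin theorem (Proposition \ref{prop:weightedmihlin}), and then to deduce part (2) by an extension-by-zero retract argument.

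For part (1), on $\Schw(\R;X)$ the operator $\tfrac{d}{dt}$ coincides with the Fourier multiplier of symbol $i\xi$, and by density of $\Schw(\R;X)$ in $L^q(\R,v;X)$ this determines a closed operator with the stated domain. Injectivity follows from the fact that a weak solution of $u'=0$ is constant, while constants fail to lie in $L^q(\R,v;X)$ because $A_q$ weights on $\R$ have infinite total mass (a consequence of reverse doubling); dense range is then automatic by reflexivity of $L^q(\R,v;X)$. For sectoriality of angle $\leq\pi/2$, fix $\omega\in(\pi/2,\pi)$ and note that for $\lambda\notin\Sigma_\omega$ one has $\text{Re}(\lambda)\leq|\lambda|\cos\omega<0$, so the symbol $m_\lambda(\xi)=\lambda/(\lambda-i\xi)$ satisfies the Mihlin condition \eqref{Mihlin-assum} uniformly in $\lambda$; Proposition \ref{prop:weightedmihlin} then gives $\|\lambda R(\lambda,\tfrac{d}{dt})\|\leq C_\omega$. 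For the $H^\infty$-calculus, fix $f\in H^\infty_0(\Sigma_\omega)$ and deform the defining contour to the imaginary axis (using the decay of $f$ at $0$ and $\infty$) to identify $f(\tfrac{d}{dt})$ with $T_{m_f}$ where $m_f(\xi)=f(i\xi)$. The bound $|m_f(\xi)|\leq\|f\|_{H^\infty(\Sigma_\omega)}$ is trivial, and Cauchy's integral formula on the disc around $i\xi$ of radius $|\xi|\sin(\omega-\tfrac{\pi}{2})$ (which lies inside $\Sigma_\omega$) gives $|\xi|^k|D^k m_f(\xi)|\leq C_{k,\omega}\|f\|_{H^\infty(\Sigma_\omega)}$, so Proposition \ref{prop:weightedmihlin} supplies the required $H^\infty$-bound. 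Letting $\omega\downarrow\pi/2$ yields $\omega_{H^\infty}(\tfrac{d}{dt})\leq\pi/2$.

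For part (2), use the zero-extension $E\colon L^q(\R_+,v;X)\to L^q(\R,v;X)$ and the restriction $R\colon L^q(\R,v;X)\to L^q(\R_+,v;X)$; both are bounded contractions with $RE=I$. The vanishing trace in $W^{1,q}_0$ guarantees that $Eu\in W^{1,q}(\R,v;X)$ with $(Eu)'=E(u')$ (a brief integration by parts against a test function verifies that no Dirac mass at $0$ appears), giving the intertwining $\tfrac{d}{dt}|_\R\circ E=E\circ\tfrac{d}{dt}|_{\R_+}$. For $\lambda\notin\Sigma_\nu$ with $\pi/2<\nu<\omega$ one has $\text{Re}(\lambda)<0$, and the function $h_\lambda:=R(\lambda,\tfrac{d}{dt}|_\R)Eg$ satisfies $h_\lambda'=\lambda h_\lambda$ on $\R_-$; the only $L^q(\R_-,v;X)$-solution is $0$, since $e^{\lambda t}$ grows as $t\to-\infty$. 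Consequently $Rh_\lambda\in W^{1,q}_0(\R_+,v;X)$ solves the half-line resolvent equation, so $R(\lambda,\tfrac{d}{dt}|_{\R_+})=R\,R(\lambda,\tfrac{d}{dt}|_\R)\,E$. Integrating this identity against $f$ along $\partial\Sigma_\nu$ yields $f(\tfrac{d}{dt}|_{\R_+})=R\,f(\tfrac{d}{dt}|_\R)\,E$, and the $H^\infty$-bound transfers verbatim without loss in the angle.

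The main obstacle is the retract identity in part (2): the intertwining of the operators is immediate, but one must exploit the asymmetric observation that for $\text{Re}(\lambda)<0$ the full-line resolvent of an extension-by-zero automatically vanishes on $\R_-$. Everything else reduces to the routine Mihlin/Cauchy bookkeeping of part (1).
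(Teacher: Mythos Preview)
Your proposal is correct and, for part~(1), takes the same route as the paper: both reduce to the weighted Mihlin theorem (Proposition~\ref{prop:weightedmihlin}), with your write-up simply unpacking what \cite[Theorem~10.2.25]{HNVW2} packages abstractly.

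For part~(2) the paper only says to repeat part of the proof of \cite[Theorem~6.8]{LMV} (stated there for power weights $v=|t|^\gamma$), whereas you give a self-contained zero-extension retract argument valid for arbitrary $v\in A_q$. This is a clean and natural approach; the key asymmetric observation you isolate --- that for $\mathrm{Re}\,\lambda<0$ the full-line resolvent of a zero-extension vanishes on $\R_-$ --- is exactly the right one. One point deserves a word more than ``since $e^{\lambda t}$ grows as $t\to-\infty$'': you must also rule out that $v$ decays fast enough on $\R_-$ to compensate. This follows from the standard $A_q$ comparison $(|I|/|J|)^q\leq [v]_{A_q}\,v(I)/v(J)$ for $I\subset J$, which gives $v([-n-1,-n])\gtrsim (n+1)^{-q}$ and hence $\int_{-\infty}^{0}e^{-q\,\mathrm{Re}(\lambda)\,t}\,v(t)\,dt=\infty$. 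The same estimate justifies your earlier claim that $A_q$ weights have infinite mass. With this made explicit, your argument for~(2) is complete and arguably more transparent than a bare citation.
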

Here $W^{1,q}_0(\R_+,v;X)$ denotes the closed subspace of $W^{1,q}(\R_+,v;X)$ of functions which are zero at $t=0$.
\begin{proof}
(1) follows from Proposition \ref{prop:weightedmihlin} and \cite[Theorem 10.2.25]{HNVW2}. (2) can be derived as a consequence by repeating part of the proof of \cite[Theorem 6.8]{LMV} where the case $v(t) = |t|^{\gamma}$ was considered.
\end{proof}

For $p\in (1, \infty)$, $w\in A_p$ and $s\in \R$, we define the {\em Bessel potential space} $H^{s,p}(\R^d,w;X)$ as the space of all $f\in \Schw'(\R^d;X)$ for which $\F^{-1} [(1+|\cdot|^2)^{s/2} \wh{f} \in L^p(\R^d,w;X)$. This is a Banach space when equipped with the norm
\[\|f\|_{H^{s,p}(\R^d,w;X)} = \|\F^{-1} [(1+|\cdot|^2)^{s/2} \wh{f} ]\|_{L^p(\R^d,w;X)}.\]
For an open subset $\dom\subseteq \R^d$ the space $H^{s,p}(\dom,w;X)$ is defined as all restriction $f|_{\dom}$ where $f\in H^{s,p}(\dom,w;X)$. This is a Banach space when equipped with the norm
\[\|f\|_{H^{s,p}(\dom,w;X)} = \inf\{\|g\|_{H^{s,p}(\R^d,w;X)}: g|_{\dom} = f \ \text{where} \ g\in H^{s,p}(\R^d,w;X)\}.\]

The next result can be found in \cite[Propositions 3.2 $\&$ 3.5]{MeyVerpoint}. The duality pairing mentioned in the statement below is the natural extension of
\[\lb f, g\rb = \int_{\R^d} \lb f(x),g(x)\rb dx, \ \ \ f\in \Schw(\R^d;X), \ g\in \Schw(\R^d;X^*).\]

\begin{proposition}\label{prop:HWduality}
Let $X$ be a UMD space, $p\in (1, \infty)$ and $w\in A_p$. Then
\[H^{m,p}(\R^d,w;X) = W^{m,p}(\R^d,w;X) \qquad  \text{ for all }\, m \in \N_0. \label{H=W}\]
Moreover, for all $s\in \R$, one has $[H^{s,p}(\R^d,w;X)]^* = H^{-s,p'}(\R^d,w';X^*)$.
\end{proposition}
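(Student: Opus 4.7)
The plan is to reduce both claims to the Bessel potential $J^s = (1-\Delta)^{s/2}$, which by definition is an isometric isomorphism $H^{s,p}(\R^d,w;X) \to L^p(\R^d,w;X)$, combined with the weighted vector-valued Mihlin multiplier theorem (Proposition \ref{prop:weightedmihlin}) and the Bochner $L^p$–$L^{p'}$ duality.

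For the equivalence $H^{m,p} = W^{m,p}$, I would first handle the inclusion $H^{m,p}\embed W^{m,p}$ by noting that for every multi-index $\alpha$ with $|\alpha|\leq m$ the symbol $m_\alpha(\xi) = (i\xi)^\alpha (1+|\xi|^2)^{-m/2}$ obeys the Mihlin condition \eqref{Mihlin-assum}; hence $D^\alpha = T_{m_\alpha}\circ J^m$ is bounded $H^{m,p}\to L^p$. For the reverse inclusion $W^{m,p}\embed H^{m,p}$, the goal is to decompose $J^m$ as a finite sum of Mihlin multipliers composed with $D^\alpha$ for $|\alpha|\leq m$. When $m=2k$ is even, $J^m = (1-\Delta)^k = \sum_{j=0}^{k}\binom{k}{j}(-\Delta)^j$ is a pure differential operator of order $2k$, so the bound is immediate. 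For odd $m=2k+1$, I factor $J^m = J\circ (1-\Delta)^k$ and use the identity
\[
(1+|\xi|^2)^{1/2} = \frac{1}{(1+|\xi|^2)^{1/2}} + \sum_{j=1}^d \frac{-i\xi_j}{(1+|\xi|^2)^{1/2}}\,(i\xi_j),
\]
whose two types of coefficients satisfy \eqref{Mihlin-assum}, reducing $J f$ to a sum of Mihlin multipliers applied to $f$ and to the $D_j f$; combining with the even case this writes $J^m f$ as a finite sum of bounded operators applied to $D^\alpha f$, $|\alpha|\leq m$. Proposition \ref{prop:weightedmihlin} delivers the needed $L^p(\R^d,w;X)$-boundedness, and the case $m=0$ is trivial.

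For the duality $[H^{s,p}(\R^d,w;X)]^* = H^{-s,p'}(\R^d,w';X^*)$, I would first record that $J^s$ is formally self-adjoint on the Schwartz pairing, so $\langle f,g\rangle := \langle J^s f, J^{-s} g\rangle_{L^p(w;X),L^{p'}(w';X^*)}$ defines a natural extension of the Schwartz pairing and is bounded by $\|f\|_{H^{s,p}(w;X)}\|g\|_{H^{-s,p'}(w';X^*)}$. Since UMD spaces are reflexive, $X^*$ also has the Radon--Nikodym property, so $[L^p(\R^d,w;X)]^* = L^{p'}(\R^d,w';X^*)$ isometrically. Given $\phi \in [H^{s,p}(\R^d,w;X)]^*$, the functional $\psi(h) := \phi(J^{-s}h)$ lies in $[L^p(\R^d,w;X)]^*$ and is represented by some $k\in L^{p'}(\R^d,w';X^*)$; setting $g := J^s k$ yields $g\in H^{-s,p'}(\R^d,w';X^*)$ with $\|g\|_{H^{-s,p'}} = \|k\|_{L^{p'}} = \|\psi\| \leq \|\phi\|$, and $\phi(f) = \psi(J^s f) = \langle J^s f,k\rangle = \langle f,g\rangle$ for every $f\in H^{s,p}$. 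This gives the isometric identification.

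The main obstacle is ensuring the harmonic-analytic tools work in the vector-valued $A_p$-weighted setting: the Mihlin multiplier step relies crucially on $X$ being UMD, and the duality step relies on the reflexivity of $X$ to obtain $L^p$–$L^{p'}$ duality for Bochner spaces via the Radon--Nikodym property of $X^*$. Both of these are consequences of the standing UMD hypothesis, so the argument goes through; the only other technical point is checking that the extended duality pairing is the correct one, which follows from the self-adjointness of $J^s$ on Schwartz functions together with the density of $\Schw(\R^d;X)$ in $L^p(\R^d,w;X)$ for $w\in A_p$ (Lemma \ref{lem:densitynonAp}).
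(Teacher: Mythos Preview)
Your argument is correct. The paper does not actually give a proof of this proposition; it simply records that the result can be found in \cite[Propositions~3.2 and~3.5]{MeyVerpoint}. What you have written is essentially the standard proof (and almost certainly the one in that reference): reduce $H^{m,p}=W^{m,p}$ to the boundedness of explicit Mihlin symbols via Proposition~\ref{prop:weightedmihlin}, and transport the duality question along the isometry $J^{s}\colon H^{s,p}(w;X)\to L^{p}(w;X)$ to the Bochner duality $[L^{p}(w;X)]^{*}=L^{p'}(w';X^{*})$, which holds because UMD spaces are reflexive (hence $X^{*}$ has the Radon--Nikod\'ym property). Your check that the transported pairing agrees with the natural extension of the Schwartz pairing, using self-adjointness of $J^{s}$ on $\Schw(\R^{d};X)$ and density (Lemma~\ref{lem:densitynonAp}), is exactly what is needed to match the duality statement as formulated in the paper.
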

The UMD condition is also necessary in the above result (see \cite[Theorem 5.6.12]{HNVW1}).

\begin{proposition}[Intersection representation]\label{prop:intersectionrepr}
Let $d,d_1, d_2,n\geq 1$ be integers such that $d_1+d_2=d$. Let $w\in A_p(\R^{d_1})$. Then
\[
W^{n,p}(\R^{d},w;X) =
W^{n,p}(\R^{d_1},w_;L^{p}(\R^{d_2};X)) \cap L^{p}(\R^{d_1},w;W^{n,p}(\R^{d_2};X)).
\]
\end{proposition}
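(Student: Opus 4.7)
The plan is to prove both inclusions separately, using throughout the Fubini identification $L^{p}(\R^{d},w;X) = L^{p}(\R^{d_{1}},w;L^{p}(\R^{d_{2}};X))$, which is valid because $w$ depends only on the $\R^{d_{1}}$-variables.

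The inclusion $\subseteq$ is immediate from the definitions. For $f \in W^{n,p}(\R^{d},w;X)$ and $\alpha_{1} \in \N_{0}^{d_{1}}$ with $|\alpha_{1}| \leq n$, the distributional derivative $D_{1}^{\alpha_{1}}f$ equals $D^{(\alpha_{1},0)}f \in L^{p}(\R^{d},w;X)$, and by Fubini this exhibits $f$ as an element of $W^{n,p}(\R^{d_{1}},w;L^{p}(\R^{d_{2}};X))$. Swapping the roles of the two factors yields membership in the other summand.

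For the reverse inclusion I would argue via weighted Fourier multipliers. Assuming the standing UMD hypothesis on $X$ (so that $L^{p}(\R^{d_{2}};X)$ is also UMD), Proposition \ref{prop:HWduality} translates membership in the intersection into
\[
J_{1}f := \F^{-1}\bigl[(1+|\xi|^{2})^{n/2}\widehat{f}\,\bigr] \in L^{p}(\R^{d},w;X), \qquad J_{2}f := \F^{-1}\bigl[(1+|\eta|^{2})^{n/2}\widehat{f}\,\bigr] \in L^{p}(\R^{d},w;X),
\]
with $(\xi,\eta)$ the Fourier dual of $(x_{1},x_{2}) \in \R^{d_{1}} \times \R^{d_{2}}$. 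Setting
\[
m(\xi,\eta) := \frac{(1+|\xi|^{2}+|\eta|^{2})^{n/2}}{(1+|\xi|^{2})^{n/2} + (1+|\eta|^{2})^{n/2}},
\]
a routine computation shows that $m \in C^{\infty}(\R^{d})$ is bounded and satisfies the Mihlin condition \eqref{Mihlin-assum}: the denominator is bounded below by $1$, and all three building blocks are smooth positive symbols of quasi-homogeneous type, so Leibniz's rule yields $|\xi|^{|\alpha|}|D^{\alpha}m(\xi,\eta)| \leq C_{\alpha}$. Since $w \in A_{p}(\R^{d_{1}}) \subset A_{p}(\R^{d})$, Proposition \ref{prop:weightedmihlin} gives boundedness of $T_{m}$ on $L^{p}(\R^{d},w;X)$. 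The elementary identity
\[
(1+|\xi|^{2}+|\eta|^{2})^{n/2}\widehat{f} = m(\xi,\eta)\bigl[\widehat{J_{1}f} + \widehat{J_{2}f}\bigr]
\]
then shows that $(1-\Delta)^{n/2}f \in L^{p}(\R^{d},w;X)$, i.e. $f \in H^{n,p}(\R^{d},w;X) = W^{n,p}(\R^{d},w;X)$, the last equality being Proposition \ref{prop:HWduality} again.

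The step I expect to be the most delicate is verifying the Mihlin-type derivative bound on $m$: one has to check $|D^{\gamma}m(\xi,\eta)| \leq C_{\gamma}|(\xi,\eta)|^{-|\gamma|}$ for multi-indices up to order $d+2$, which requires exploiting positivity and the quasi-homogeneity of numerator and denominator together (there is no trouble near the origin, since the denominator does not vanish there; for large frequencies one uses that the three quantities $(1+|\xi|^{2}+|\eta|^{2})^{n/2}$, $(1+|\xi|^{2})^{n/2}$, and $(1+|\eta|^{2})^{n/2}$ are pointwise comparable up to a constant, so $m$ and all its derivatives behave like bounded symbols of order zero). A secondary conceptual point worth flagging is that the proof as sketched relies on UMD through Proposition \ref{prop:HWduality}; for a UMD-free version one would need to argue directly on integer-order derivatives, for instance by a polarization identity producing mixed derivatives $D_{1}^{\alpha_{1}}D_{2}^{\alpha_{2}}f$ from pure directional derivatives in rotated coordinates, but this is notationally heavier and not needed for the applications in the paper.
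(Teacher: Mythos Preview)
Your argument is correct and follows essentially the same strategy as the paper: both proofs establish the nontrivial inclusion by a single application of the weighted vector-valued Mihlin theorem (Proposition~\ref{prop:weightedmihlin}), and both therefore implicitly use the UMD assumption on $X$. The difference is only in the choice of multiplier. The paper works directly at the level of integer derivatives: for each $|\alpha|=k\le n$ it bounds $D^{\alpha}f$ in terms of $f$ and the pure one-directional derivatives $D_{j}^{k}f$ via the multiplier
\[
m(\xi)=\frac{(2\pi\xi)^{\alpha}}{1+\sum_{j=1}^{d}(2\pi\rho(\xi_{j})\xi_{j})^{k}},
\]
where $\rho$ is an odd smooth cutoff ensuring positivity of the denominator for odd $k$. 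Your route instead passes through the identification $W^{n,p}=H^{n,p}$ (Proposition~\ref{prop:HWduality}) and uses the quotient of Bessel symbols. The paper's variant avoids the extra appeal to Proposition~\ref{prop:HWduality} and makes the required pure derivatives ($D_{j}^{k}f$, controlled by the intersection norm since each $j$ lies entirely in one factor) more visible; your variant packages everything into a single cleaner multiplier. Neither gains or loses generality over the other.
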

In the above we use the convention that $w$ is extended in a constant way in the remaining $d_2$ coordinates. In this way $w\in A_p(\R^d)$ as well.
\begin{proof}
$\hookrightarrow$ is obvious. To prove the converse
Let $\alpha$ be a multiindex with $k:=|\alpha|\leq n$. It suffices to prove $\|D^\alpha u\|_{L^p(w;X)}\leq C(\|u\|_{L^p(w;X)}+\sum_{j=1}^d \|D_j^k u\|_{L^p(w;X)})$. This follows by using the Fourier multiplier $m$:
\[m(\xi) = \frac{(2\pi\xi)^\alpha}{1+\sum_{j=1}^d (2\pi \rho(\xi_j)\xi_j)^k}.\]
Here $\rho\in C^\infty(\R)$ is an odd function with $\rho = 0$ on $[0,1/2]$ and $\rho = 1$ on $[1, \infty]$.
Now using Proposition \ref{prop:weightedmihlin}  one can argue in a similar way as in \cite[Theorem 5.6.11]{HNVW1}.
\end{proof}

\section{Hardy's inequality, traces, density and interpolation}\label{sec:Hardy}

In this section we will prove some elementary estimates of Hardy and Sobolev type and obtain some density and interpolation results. We will present the results in the $X$-valued setting, and later on apply this in the special case $X = L^p(\R^{d-1})$ to obtain extensions to higher dimensions in Theorem \ref{thm:domain}.

Details on traces in weighted Sobolev spaces can be found in \cite{Kimtrace08} and \cite{lindemulder2017maximal}. We will need some simple existence results in one dimension.

\subsection{Hardy's inequality and related results\label{subsec:Hardy}}

\begin{lemma}\label{lem:traceAp}
Let $p\in [1, \infty)$ and let $w$ be a weight such that $\|w^{-\frac{1}{p-1}}\|_{L^{1}(0,t)}<\infty$ for all $t\in (0,\infty)$.
Then $W^{1,p}(\R_+,w;X)\hookrightarrow C([0,\infty);X)$ and for all $u\in W^{1,p}(\R_+,w;X)$,
\[\sup_{x\in [0,t]}\|u(x)\| \leq C_{t,p,w} \|u\|_{W^{1,p}(\R_+,w;X)}, \ \ \ t\in [0,\infty)\]
Moreover, the following results hold in the special case that $w(x) = w_{\gamma}(x) = |x|^{\gamma}$:
\begin{enumerate}[$(1)$]
\item\label{it:estuniformR+} If $\gamma\in [0,p-1)$, then $u(x)\to 0$ as $x\to \infty$ and for all $u\in W^{1,p}(\R_+,w_{\gamma};X)$,
\begin{equation*}
\sup_{x\geq 0}\|u(x)\| \leq C_{p,\gamma} \|u\|_{W^{1,p}(\R_+,w_{\gamma};X)}.
\end{equation*}
\item\label{it:estuniform0} If $\gamma< -1$, then for all $u\in L^{p}(\R_+,w_{\gamma};X)\cap C([0,\infty);X)$, $u(0)=0$.
\end{enumerate}
\end{lemma}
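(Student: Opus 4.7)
The plan is to establish the three assertions in order, with the first (continuity up to the boundary together with the uniform bound on $[0,t]$) supplying the main tool, and the two special cases being refinements.

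For the main statement, I would first observe that the assumption $w^{-1/(p-1)}\in L^1(0,t)$ for every $t>0$ together with Hölder's inequality gives
\[
\int_a^b\|u'(s)\|\,ds\leq \|u'\|_{L^p(\R_+,w;X)}\Big(\int_a^b w(s)^{-1/(p-1)}ds\Big)^{1/p'}
\]
for all $0\le a<b<\infty$, so that $u'$ is in $L^1_{\rm loc}([0,\infty);X)$. Hence $u$ has a representative in $W^{1,1}_{\rm loc}([0,\infty);X)$, which is continuous, and the displayed estimate shows that this representative is uniformly continuous on $[0,t]$ and in particular has a limit at $0$. This yields the embedding $W^{1,p}(\R_+,w;X)\hookrightarrow C([0,\infty);X)$. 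To obtain the pointwise estimate, I would pick a fixed interval, say $[1,2]$. Since $w>0$ a.e., there exist $\delta>0$ and a set $A\subseteq[1,2]$ of positive measure with $w\ge \delta$ on $A$; by the mean-value theorem for integrals I can then select $s^*\in A$ with $\|u(s^*)\|^p\le \delta^{-1}|A|^{-1}\|u\|_{L^p(\R_+,w;X)}^p$. Writing $u(x)=u(s^*)+\int_{s^*}^x u'(\sigma)\,d\sigma$ for $x\in[0,t]$ (with $t\ge 2$; the case $t<2$ is then also covered) and applying the Hölder bound above gives the required estimate.

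For part \eqref{it:estuniformR+}, the key remark is that $\gamma\ge 0$ implies $w_\gamma(x)\ge 1$ for $x\ge 1$, so $u\in W^{1,p}([1,\infty);X)$ unweighted, and the classical one-dimensional Sobolev embedding gives $\|u(x)\|\to 0$ as $x\to\infty$ with the bound $\sup_{x\ge 1}\|u(x)\|\le C\|u\|_{W^{1,p}(\R_+,w_\gamma;X)}$. On $[0,1]$ I would invoke the first part of the lemma: since $\gamma<p-1$, we have $w_\gamma^{-1/(p-1)}(x)=x^{-\gamma/(p-1)}$ with exponent strictly less than $1$, so $\|w_\gamma^{-1/(p-1)}\|_{L^1(0,1)}$ is finite, and the uniform bound on $[0,1]$ follows. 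Combining both ranges completes the proof.

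For part \eqref{it:estuniform0}, I would argue by contradiction: if $u\in C([0,\infty);X)$ has $\|u(0)\|=c>0$, continuity provides $\eta>0$ with $\|u(x)\|\ge c/2$ for $x\in[0,\eta]$, whence
\[
\|u\|_{L^p(\R_+,w_\gamma;X)}^p\ge (c/2)^p\int_0^\eta x^\gamma\,dx=+\infty
\]
because $\gamma<-1$. This contradicts $u\in L^p(\R_+,w_\gamma;X)$, so $u(0)=0$.

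The only mildly delicate step is the selection of the reference point $s^*$ in the proof of the first statement: without the mean-value/positivity argument on a subset of $[1,2]$ one only recovers a bound with a constant depending on $w$ in a non-transparent way, and for the distinguished case $w=w_\gamma$ one could alternatively take $s^*=1$ directly since $w_\gamma(1)=1$, simplifying that part.
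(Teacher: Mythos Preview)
Your proposal is correct; all three parts go through. The approaches, however, differ from the paper's in instructive ways.

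For the main embedding and the bound on $[0,t]$, the paper does not select a reference point $s^*$. Instead it fixes a cutoff $\zeta\in C([0,\infty))$ with $\zeta(0)=1$, $\zeta\equiv 0$ on $[1,\infty)$, and writes
\[
u(x)=\int_0^1 \frac{d}{ds}\bigl(u(s+x)\zeta(s)\bigr)\,ds=\int_0^1 u'(s+x)\zeta(s)\,ds+\int_0^1 u(s+x)\zeta'(s)\,ds,
\]
then estimates both integrals by H\"older, yielding a constant of the explicit form $C_{w,t,p}^{p'}=\|w^{-1/(p-1)}\|_{L^1(0,t+1)}$. This is slightly more transparent than your constant, which involves the auxiliary quantities $\delta$ and $|A|$ from the positivity set; on the other hand your argument is entirely elementary and avoids any cutoff. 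For part \eqref{it:estuniformR+} the paper simply observes that $\int_0^1 w_\gamma(s+x)^{-1/(p-1)}\,ds$ is bounded uniformly in $x\ge 0$ when $\gamma\ge 0$ and tends to zero as $x\to\infty$, so the same cutoff identity immediately gives both the global bound and the decay; your splitting into $[0,1]$ and $[1,\infty)$ with the unweighted embedding on the latter is equally valid and perhaps more intuitive. For part \eqref{it:estuniform0} the paper estimates the running average $\tfrac{1}{t}\int_0^t\|u(s)\|\,ds$ via H\"older and shows it is $O(t^{-(\gamma+1)/p})\to 0$; your contradiction via $\int_0^\eta x^\gamma\,dx=\infty$ is more direct and arguably cleaner. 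One small wording point: what you call the ``mean-value theorem for integrals'' is really a Chebyshev/averaging argument (pick $s^*\in A$ with $\|u(s^*)\|^p\le |A|^{-1}\int_A\|u\|^p$); since $u$ has already been shown continuous this is unproblematic.
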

Note that the local $L^1$-condition on $w$ holds in particular for $w\in A_p$.

\begin{proof}
Let $u\in W^{1,p}(\R_+,w;X)$. By H\"older's inequality and the assumption on $w$ we have $L^p((0,t), w;X) \hookrightarrow L^1(0,t;X)$. In particular $u$ and $u'$ are locally integrable on $[0,\infty)$.
Let \[v(s) = \int_0^s u'(x) \ud x, \ \ s\in (0,t).\]
Then $v$ is continuous on $[0,t]$ and moreover $v' = u'$ on $(0,t)$ (see \cite[Lemma 2.5.8]{HNVW1}). It follows that there is a $z\in X$ such that $u= z + v$ for all $s\in (0,t)$.
In particular, $u$ has a continuous extension $\overline{u}$ to $[0,t]$ given by $\overline{u} = z+v$.

To prove the required estimates we just write $u$ instead of $\overline{u}$. Let $x\in [0,\infty)$.
Define $\zeta$ as $\zeta(x) = 1-x$ for $x\in [0,1]$ and $\zeta = 0$ on $[1, \infty)$. Then for $x\in [0,t]$, we have
\begin{align*}
u(x) &= \int_0^1 \frac{d}{ds}(u(s+x) \zeta(s)) \ud s  = \underbrace{\int_0^1 u'(s+x) \zeta(s) \ud s}_{T_1}+ \underbrace{\int_0^1 u(s+x) \zeta'(s) \ud s}_{T_2}.
\end{align*}
Then by H\"older's inequality
\begin{align*}
\|T_1\| &\leq \Big(\int_0^1 \|u'(s+x)\|^p w(s+x) \ud s\Big)^{1/p} \|s\mapsto  w(s+x)^{-1/(p-1)}\|_{L^{1}(0,1)}^{1/p'}
\\ &\leq C_{w,t,p} \|u'\|_{L^p(\R_+,w;X)},
\end{align*}
where $C_{w,t,p}^{p'} = \|w^{-1/(p-1)}\|_{L^{1}(0,t+1)}$.
Similarly, $\|T_2\| \leq C_{w,t,p} \|u\|_{L^p(\R_+,w;X)}$. Therefore, the required estimate for $\sup_{x\in [0,t]}\|u(x)\|$ follows.

The estimate in \eqref{it:estuniformR+} follows from
\[\int_0^1 w_{\gamma}(s+x)^{-1/(p-1)} \ud s \leq \int_0^1 w_{\gamma}(s)^{-1/(p-1)} \ud s=:C_{p,\gamma}.\]
Moreover,$u(x)\to 0$ as $x\to \infty$ because $\int_0^1 w_{\gamma}(s+x)^{-1/(p-1)} \ud s\to 0$ as $x\to \infty$.

To prove \eqref{it:estuniform0} note that
\[\|u(0)\| = \lim_{t\to \infty} \frac1t \int_0^t \|u(s)\| \ud s.\]
Now by H\"older's inequality we have
\[\frac1t \int_0^t \|u(s)\| \ud s\leq \frac1t \|u\|_{L^p(\R_+,w_{\gamma})} \Big(\int_0^t s^{-\gamma p'} \ud s\Big)^{1/p'} \leq C \|u\|_{L^p(\R_+,w_{\gamma})} t^{-\frac{\gamma+1}{p}}\]
and the latter tends to zero as $t\to 0$.
\end{proof}

Next we state two well-known consequences of Hardy's inequality (see \cite[Theorem 10.3.1]{Garling} and
\cite[Section~5]{Kufner80}).
\begin{lemma}\label{lem:Hardy}
Assume $p\in [1, \infty)$. Let $u\in W^{1,p}(\R_+,w_{\gamma};X)$.
Then
\[\|u\|_{L^p(\R_+,w_{\gamma-p};X)}\leq C_{p,\gamma} \|u'\|_{L^p(\R_+,w_{\gamma};X)}.\]
if ($\gamma<p-1$ and $u(0) = 0$) or $\gamma>p-1$.
\end{lemma}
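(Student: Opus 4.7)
The inequality is the classical weighted Hardy inequality, and I would prove it by reducing to scalar-valued functions, integrating by parts on a smooth core, and then passing to the limit.

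\emph{Reduction to the scalar case.} Setting $v(x) := \|u(x)\|_X$, one checks from $u\in W^{1,p}(\R_+, w_\gamma;X)$ that $v\in W^{1,p}(\R_+, w_\gamma)$ with $|v'(x)|\leq \|u'(x)\|$ a.e. (this follows by approximating $u$ by simple functions or by a difference-quotient argument, using that $\|\cdot\|_X$ is $1$-Lipschitz). Hence it suffices to prove the scalar statement, and if $u(0)=0$ then also $v(0)=0$.

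\emph{Integration by parts on the smooth core.} For $u\in C^{\infty}_{c}(\R_+)$ (compactly supported in $(0,\infty)$, so both boundary terms vanish), I use the identity $x^{\gamma-p}=\frac{d}{dx}\bigl(\frac{x^{\gamma-p+1}}{\gamma-p+1}\bigr)$, which is licit precisely because $\gamma\neq p-1$. Integration by parts gives
\begin{equation*}
\int_0^\infty |u(x)|^p x^{\gamma-p}\,dx \;=\; -\frac{p}{\gamma-p+1}\int_0^\infty |u(x)|^{p-1}\,\mathrm{sign}(u(x))\,u'(x)\,x^{\gamma-p+1}\,dx.
\end{equation*}
Splitting $x^{\gamma-p+1}=x^{(\gamma-p)/p'}\cdot x^{\gamma/p}$ and applying H\"older with exponents $p'$ and $p$ yields
\begin{equation*}
\|u\|_{L^p(\R_+,w_{\gamma-p})}^{p} \;\leq\; \frac{p}{|\gamma-p+1|}\,\|u\|_{L^p(\R_+,w_{\gamma-p})}^{p-1}\,\|u'\|_{L^p(\R_+,w_{\gamma})},
\end{equation*}
from which the desired estimate with $C_{p,\gamma}=\frac{p}{|\gamma-p+1|}$ follows by division.

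\emph{Extension by truncation.} For a general $u$ I approximate by $u_n(x):=u(x)\phi_n(x)$ with $\phi_n\in C^{\infty}_{c}((0,\infty))$ equal to $1$ on $[1/n,n]$ and supported in $[1/(2n),2n]$, arranged so that $|\phi_n'|\lesssim n$ near $0$ and $|\phi_n'|\lesssim 1/n$ near infinity. The inequality for $u_n$ follows from the smooth case (mollifying $u_n$ one more time if needed). Monotone convergence gives $\|u_n\|_{L^p(w_{\gamma-p})}\to \|u\|_{L^p(w_{\gamma-p})}$, while the contribution of $u\phi_n'$ to $\|u_n'\|_{L^p(w_\gamma)}$ has to be shown to tend to $0$. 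Near the left endpoint this uses the hypothesis: for $\gamma<p-1$ and $u(0)=0$, we bound $|u(x)|\leq \int_0^x|u'(t)|\,dt$ and compute $\int_{1/(2n)}^{1/n}|u|^p n^p x^\gamma\,dx\to 0$ via H\"older; for $\gamma>p-1$ the factor $x^\gamma$ itself suppresses the integrand at $0$ since $u$ is bounded near $0$ by Lemma \ref{lem:traceAp}. Near infinity, a parallel argument uses that $\gamma>p-1$ forces $L^p(w_\gamma)$-integrability of $u$ (so $\int_n^{2n}|u|^p n^{-p} x^\gamma\,dx\to 0$), while for $\gamma<p-1$ the vanishing $x^{\gamma-p+1}$ at infinity gives room.

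\emph{Expected main obstacle.} The algebraic core of the argument is painless; the bookkeeping in the truncation step is the only delicate part. The hardest sub-case is $\gamma>p-1$, where controlling the cut-off error $u\phi_n'$ at infinity relies on showing that $u$ decays sufficiently in an averaged sense, which is encoded in the $L^p(w_\gamma)$-membership but requires a careful H\"older estimate rather than pointwise control. The role of the hypothesis $\gamma\neq p-1$ is completely transparent: it is exactly what makes the primitive $x^{\gamma-p+1}/(\gamma-p+1)$ well-defined and drives the constant.
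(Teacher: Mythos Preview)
Your reduction to the scalar case and the integration-by-parts computation on $C^\infty_c((0,\infty))$ are fine, and the overall strategy is a legitimate direct proof (the paper itself simply writes $u(t)=\int_0^t u'$ or $u(t)=-\int_t^\infty u'$ and invokes the classical Hardy inequality from a textbook, so your route is more self-contained). However, there is a concrete error in the truncation step.

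\medskip
\textbf{The gap.} For $\gamma>p-1$ you justify control of the cut-off error $u\phi_n'$ near $0$ by asserting that ``$u$ is bounded near $0$ by Lemma~\ref{lem:traceAp}''. That lemma requires $w_\gamma^{-1/(p-1)}=x^{-\gamma/(p-1)}$ to be locally integrable on $[0,\infty)$, which fails precisely when $\gamma>p-1$. In this regime $u$ need \emph{not} be bounded near $0$: writing $u(x)=u(1)-\int_x^1 u'$ and applying H\"older only gives $|u(x)|\lesssim 1+\|u'\|_{L^p(w_\gamma)}\,x^{(p-1-\gamma)/p}$, and the exponent is negative. Plugging this into $n^p\int_{1/(2n)}^{1/n}|u|^p x^\gamma\,dx$ yields only $O(1)$, not $o(1)$. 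You also misidentify the difficulty in your closing paragraph: for $\gamma>p-1$ the delicate endpoint is $x=0$, not $x=\infty$ (at infinity the tail $\int_n^{2n}|u|^p x^\gamma\,dx\to 0$ handles things trivially for all $\gamma$).

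\medskip
\textbf{How to fix it.} Use that $\int_0^\delta|u'|^p x^\gamma\,dx$ can be made small: given $\epsilon>0$ choose $\delta$ with $\int_0^\delta|u'|^p x^\gamma\,dx<\epsilon^p$; then for $x<\delta$ one gets $|u(x)|\leq|u(\delta)|+C\epsilon\,x^{(p-1-\gamma)/p}$, and the two resulting pieces of the cut-off error are controlled by $|u(\delta)|^p\,n^{p-\gamma-1}\to 0$ (since $\gamma+1>p$) and by $C'\epsilon^p$ respectively, giving $\limsup=0$. Alternatively, and closer to the paper, avoid truncating at $0$ altogether in the case $\gamma>p-1$: reduce first to $u$ vanishing on $[n,\infty)$ (easy, as you note), represent $u(t)=-\int_t^\infty u'(s)\,ds$, and apply the dual form of Hardy's inequality directly to $\|u'\|$; no boundedness of $u$ at $0$ is ever invoked.
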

In the above result, by Lemma \ref{lem:traceAp}, $u\in C([0,\infty);X)$ if $\gamma<p-1$.
\begin{proof}
First consider $\gamma<p-1$. Writing $u(t) = \int_0^t u'(s) ds$, it follows that
\[\|u(t)\|_X \leq \int_0^t \|u'(s)\|_X ds.\]
Now the result follows from Hardy's inequality (see \cite[Theorem 10.3.1]{Garling}).
The case $\gamma>p-1$ follows similarly by writing $u(t) = \int_t^\infty u'(s) ds$. Here we use the fact that, by approximation, it suffices to consider the case where $u=0$ on $[n,\infty)$.
\end{proof}

For other exponents $\gamma$ than the ones considered in Lemma \ref{lem:traceAp} another embedding result follows. Note that this falls outside the class of $A_p$-weights.
\begin{lemma}\label{lem:lem:traceA2p}
Let $p\in [1, \infty)$ and $\gamma\in (p-1,2p-1)$.
Then $W^{2,p}(\R_+,w_{\gamma};X)\hookrightarrow C_b([0,\infty);X)$ and for all $u\in W^{2,p}(\R_+,w_{\gamma};X)$, $u(x)\to 0$ as $x\to \infty$ and
\[\sup_{x\geq 0}\|u(x)\| \leq C_{t,p,\gamma} \|u\|_{W^{2,p}(\R_+,w_{\gamma};X)}.\]
\end{lemma}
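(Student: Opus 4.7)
The plan is to reduce to a situation where Lemma~\ref{lem:traceAp} applies by using Hardy's inequality to trade a derivative for a favorable shift of the weight exponent. Since $\gamma\in(p-1,2p-1)$ lies outside the $A_p$-range, one cannot invoke Lemma~\ref{lem:traceAp} for $W^{1,p}(\R_+,w_\gamma;X)$ directly; but the condition $\gamma>p-1$ is precisely the hypothesis for the second case of Lemma~\ref{lem:Hardy}.

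Applying Lemma~\ref{lem:Hardy} to $u'\in W^{1,p}(\R_+,w_\gamma;X)$ and to $u\in W^{1,p}(\R_+,w_\gamma;X)$ gives
\[
\|u\|_{L^p(\R_+,w_{\gamma-p};X)} + \|u'\|_{L^p(\R_+,w_{\gamma-p};X)} \leq C\|u\|_{W^{2,p}(\R_+,w_\gamma;X)}.
\]
The key observation is that $\gamma-p\in(-1,p-1)$, so $w_{\gamma-p}$ is an $A_p$-weight; in particular $w_{\gamma-p}^{-1/(p-1)}$ is locally integrable on $[0,t]$ for every $t<\infty$. By Lemma~\ref{lem:traceAp}, $u$ admits a continuous representative on $[0,\infty)$ and for every $t>0$,
\[
\sup_{x\in[0,t]}\|u(x)\| \leq C_t\|u\|_{W^{1,p}(\R_+,w_{\gamma-p};X)} \leq C_t'\|u\|_{W^{2,p}(\R_+,w_\gamma;X)}.
\]

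For the global sup bound and the decay at infinity, I would treat the tail $[1,\infty)$ separately. On this interval $w_\gamma\geq 1$, hence $u\in W^{1,p}([1,\infty);X)$ with norm controlled by $\|u\|_{W^{2,p}(\R_+,w_\gamma;X)}$. Repeating verbatim the cutoff argument from the proof of Lemma~\ref{lem:traceAp} with the trivial weight $w\equiv 1$ gives
\[
\|u(x)\| \leq C\bigl(\|u\|_{L^p([x,x+1];X)} + \|u'\|_{L^p([x,x+1];X)}\bigr), \qquad x\geq 1,
\]
which is uniformly bounded by $C\|u\|_{W^{1,p}([1,\infty);X)}$ and, by absolute continuity of the integral, tends to $0$ as $x\to\infty$ since $u,u'\in L^p([1,\infty);X)$. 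Taking the maximum of the bounds on $[0,1]$ and $[1,\infty)$ yields the required supremum estimate together with the fact that $u(x)\to 0$ at infinity.

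The only subtle point is that one might be tempted to obtain the global sup bound in a single step via Lemma~\ref{lem:traceAp}(\ref{it:estuniformR+}) applied on $W^{1,p}(\R_+,w_{\gamma-p};X)$; however, that result requires $\gamma-p\in[0,p-1)$, i.e., $\gamma\geq p$, and hence misses the subrange $\gamma\in(p-1,p)$. Splitting the half-line into $[0,1]$ and $[1,\infty)$ as above circumvents this restriction and is the only nontrivial aspect of the argument.
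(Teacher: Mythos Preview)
Your proof is correct and follows essentially the same route as the paper: both apply Hardy's inequality to land in $W^{1,p}(\R_+,w_{\gamma-p};X)$ for the continuity and the estimate near $x=0$, and then treat the tail $[1,\infty)$ separately via the cutoff argument from Lemma~\ref{lem:traceAp}. The only cosmetic difference is that the paper keeps the original weight $w_\gamma$ in the tail step (bounding $\int_0^1 w_\gamma(s+x)^{-1/(p-1)}\,ds$ uniformly for $x\geq 1$), while you first pass to the unweighted space via $w_\gamma\geq 1$ on $[1,\infty)$; your remark about why Lemma~\ref{lem:traceAp}\eqref{it:estuniformR+} alone does not cover $\gamma\in(p-1,p)$ is also exactly the point.
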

\begin{proof}
By Lemma \ref{lem:Hardy} $\|u^{(k)}\|_{L^p(\R_+,w_{\gamma-p};X)}\leq C_{p,\gamma} \|u^{(k+1)}\|_{W^{2,p}(\R_+,w_{\gamma};X)}$ for $k\in \{0,1\}$.
Therefore, $u\in W^{1,p}(\R_+,w_{\gamma-p};X)$. Now the required continuity and estimate of $\|u(x)\|$ for $x\in [0,1]$ follows from Lemma \ref{lem:traceAp}. To prove the estimate for $x\in [1, \infty)$, we can repeat the argument used in Lemma \ref{lem:traceAp} \eqref{it:estuniformR+}. Indeed, for $x\geq 1$,
\[\int_0^1 w_{\gamma}(s+x)^{-1/(p-1)} \ud s \leq \int_0^1 w_{\gamma}(s+1)^{-1/(p-1)} \ud s=:C_{p,\gamma}. \qedhere\]
\end{proof}

\subsection{Traces and Sobolev embedding\label{subsec:tracesSob}}

For $u\in W^{1,1}_{\rm loc}(\overline{\R^d_+};X)$ we say that $\tr(u) = 0$ if $\tr(\varphi u) = 0$ for every $\varphi\in C^\infty$ with bounded support in $\overline{\R^d_+}$. Note that $\varphi u\in W^{1,1}([0,\infty);L^1(\R^{d-1};X))$ whenever $u\in W^{1,p}(\R^d_+,w;X)$ and $w^{-\frac{1}{p-1}}\in L^1_{\rm loc}(\overline{\R^d_+})$. Thus the existence of the trace of $\varphi u$ follows from Lemma \ref{lem:traceAp}.

For integers $k\in \N_0$, $p\in (1, \infty)$ and $w\in A_p$, we let
\begin{align}
\label{eq:Wkpdir}
W^{k,p}_{\Dir}(\R^d_+,w;X) & = \{u\in W^{k,p}(\R^d_+,w;X):\tr(u) = 0\},
\\ \notag
W^{k,p}_{0}(\R^d_+,w;X) &= \{u\in W^{k,p}(\R^d_+,w;X):\tr(D^{\alpha}u) = 0 \ \text{for all $|\alpha|<k$}\}.
\end{align}
The traces in the above formulas exists since $W^{k,p}(\R^d_+,w;X)\hookrightarrow W^{k,1}_{\rm loc}(\overline{\R_+^d};X)$.

We extend the definitions of the above spaces to the non-$A_p$-setting.
For $p\in [1, \infty)$, $\gamma\in (p-1,2p-1)$ and $k\in \N_0$ let
\begin{align*}
W^{k,p}_{\Dir}(\R^d_+,w_{\gamma};X) &= \left\{u\in W^{k,p}(\R^d_+,w_{\gamma};X): \tr(u) = 0 \ \text{if} \ k>\frac{\gamma+1}{p}\right\}.\\
W^{k,p}_{0}(\R^d_+,w_{\gamma};X) &= \left\{u\in W^{k,p}(\R^d_+,w;X):\tr(D^{\alpha}u) = 0 \ \text{if $k-|\alpha|>\frac{\gamma+1}{p}$}\right\}.
\end{align*}
Here the trace exists if $j:=k-|\alpha|>\frac{\gamma+1}{p}$ since then $j\geq 2$ and,
by Lemmas \ref{lem:traceAp} and \ref{lem:lem:traceA2p},
\[W^{j,p}(\R^d_+,w_{\gamma};X)\hookrightarrow W^{j,p}(\R_+,w_{\gamma};L^p(\R^{d-1};X))\hookrightarrow C([0,\infty);L^p(\R^{d-1};X)).\]
For $\gamma\in (-\infty, -1)$ and $k\in \N_0$ we further let
\[W^{k,p}_{\Dir}(\R^d_+,w_{\gamma};X) = W^{k,p}_{0}(\R^d_+,w_{\gamma};X) = W^{k,p}(\R^d_+,w_{\gamma};X).\]
This notation is suitable since for $k\in \N_1$, by Lemma \ref{lem:traceAp},
\begin{align*}
W^{k,p}(\R^d_+,w_{\gamma};X)& \hookrightarrow W^{k,p}(\R_+,w_{\gamma};L^p(\R^{d-1};X))
\\ & \subseteq \{u\in C([0,\infty;X);L^p(\R^{d-1})): u(0) = 0\}.
\end{align*}

Using the $C^k$-diffeomorphisms $\Phi$ of Subsection \ref{subsec:locCk} and localization one can extend the definitions of the traces and function spaces $W^{k,p}_{\Dir}(\dom,w_{\Phi};X)$ and $W^{k,p}_{0}(\dom,w_{\Phi};X)$ to special $C^k_c$-domains $\dom$ and bounded $C^k$-domains.

The following Sobolev embeddings are a direct consequence of Lemma~\ref{lem:Hardy} and a localization argument (also see \cite[Theorem~8.2 $\&$ 8.4]{Kufner80}).
\begin{corollary}\label{cor:lem:Hardy;Sobolev_emb}
Let $p\in [1, \infty)$, $k \in \N_{1}$ and $\gamma\in\R$. Let $\dom$ be a bounded $C^k$-domain or a special $C^k_c$-domain. Then
\begin{align*}
W^{k,p}_{0}(\dom,w_{\gamma}^{\dom};X) & \hookrightarrow W^{k-1,p}(\dom,w_{\gamma-p}^{\dom};X), \qquad  \text{if} \ \gamma<p-1,
\\ W^{k,p}(\dom,w_{\gamma}^{\dom};X) & \hookrightarrow W^{k-1,p}(\dom,w_{\gamma-p}^{\dom};X), \qquad \text{if} \ \gamma>p-1,
\\ W^{k,p}_0(\dom,w_{\gamma}^{\dom};X) & \hookrightarrow W^{k-1,p}_0(\dom,w_{\gamma-p}^{\dom};X), \qquad \text{if} \ \gamma\notin \{j p - 1: j\in \N_1\}.
\end{align*}
\end{corollary}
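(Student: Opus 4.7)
The plan is to reduce the general $C^{k}$-domain case to the half-space model $\R^{d}_{+}$ with weight $w_{\gamma}$ via the localization machinery of Subsection \ref{subsec:locCk}, and then to establish the half-space version by combining the intersection representation (Proposition \ref{prop:intersectionrepr}) with the one-dimensional Hardy inequality (Lemma \ref{lem:Hardy}) applied in the normal direction.

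For the half-space, let $u\in W^{k,p}(\R^{d}_{+},w_{\gamma};X)$ and fix a multi-index $\alpha$ with $|\alpha|\leq k-1$. By Proposition \ref{prop:intersectionrepr} the derivative $v:=D^{\alpha}u$ belongs to $W^{1,p}(\R_{+},w_{\gamma};L^{p}(\R^{d-1};X))$ and satisfies $\|D_{1}v\|_{L^{p}(\R^{d}_{+},w_{\gamma};X)}\leq \|u\|_{W^{k,p}(\R^{d}_{+},w_{\gamma};X)}$. Invoking Lemma \ref{lem:Hardy} in the $x_{1}$-variable yields
\[
\|D^{\alpha}u\|_{L^{p}(\R^{d}_{+},w_{\gamma-p};X)}\leq C_{p,\gamma}\|u\|_{W^{k,p}(\R^{d}_{+},w_{\gamma};X)}.
\]
When $\gamma>p-1$ no boundary condition is required; when $\gamma<p-1$ the required vanishing $\tr(D^{\alpha}u)=0$ is supplied by the definition of $W^{k,p}_{0}(\R^{d}_{+},w_{\gamma};X)$ for $\gamma>-1$, and is automatic via Lemma \ref{lem:traceAp}\eqref{it:estuniform0} for $\gamma<-1$ (where $W^{k,p}_{0}=W^{k,p}$ by convention). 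Summing over $|\alpha|\leq k-1$ gives the first two embeddings on $\R^{d}_{+}$. For the third, the estimate just obtained already places $u$ in $W^{k-1,p}(\R^{d}_{+},w_{\gamma-p};X)$; the vanishing-trace conditions defining the right-hand space match those of $W^{k,p}_{0}(\R^{d}_{+},w_{\gamma};X)$ since $(k-1)-|\alpha|>\tfrac{\gamma-p+1}{p}$ is equivalent to $k-|\alpha|>\tfrac{\gamma+1}{p}$, and the exclusion $\gamma\notin\{jp-1:j\in\N_{1}\}$ prevents the strict inequality from degenerating into equality.

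To pass to general domains, for a special $C^{k}_{c}$-domain the diffeomorphism $\Phi$ from \eqref{eq:diffeo_special_domain} yields a bounded isomorphism of $W^{k,p}$-spaces that preserves both the weight $w_{\gamma}^{\dom}$ (up to equivalent constants) and the vanishing-trace conditions, so the half-space embeddings transfer directly. For a bounded $C^{k}$-domain $\dom$ I would then apply the partition of unity $\{\eta_{n}\}_{n=0}^{N}$ together with the retraction-coretraction pair $(\mathcal{P},\mathcal{I})$ from \eqref{eq:domaindecomoperator}: the interior piece $\eta_{0}u$ lies in $W^{k,p}(\R^{d};X)$ and embeds trivially into $W^{k-1,p}(\R^{d};X)$ (the weight is bounded away from $0$ and $\infty$ on its support), each boundary piece falls under the special-domain case just treated, and the identity $\mathcal{P}\mathcal{I}=I$ produces the desired embedding on $\dom$; multiplication by the cutoffs $\eta_{n}$ respects all trace conditions in play. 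The main technical nuisance is the careful bookkeeping of the trace conventions as $\gamma$ crosses the critical values in $\{jp-1:j\in\N_{0}\}$ and verifying the compatibility of the three definitions of $W^{k,p}_{0}$ across the localization; once the trace conditions are aligned, Hardy's inequality does all of the analytic work.
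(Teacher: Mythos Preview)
Your approach is essentially the same as the paper's: the paper simply says the corollary is ``a direct consequence of Lemma~\ref{lem:Hardy} and a localization argument,'' and you have spelled this out. One small correction: Proposition~\ref{prop:intersectionrepr} is stated for $\R^{d}$ with an $A_{p}$-weight, so it does not literally apply here; however, you only need the trivial direction (if $D^{\alpha}u$ and $D_{1}D^{\alpha}u$ both lie in $L^{p}(\R^{d}_{+},w_{\gamma};X)$ then $D^{\alpha}u\in W^{1,p}(\R_{+},w_{\gamma};L^{p}(\R^{d-1};X))$), which is just Fubini and requires no hypothesis on the weight, so the argument goes through once you drop the reference.
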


\subsection{Density results\label{subsec:Density}}

\begin{lemma}\label{lem:densitynonAp}
Let $w\in A_{\infty}$ and $p\in [1, \infty)$. Let $\dom$ be an open subset of $\R^{d}$. Then $C^\infty_c(\dom)\otimes X$ is dense in $L^p(\dom,w;X)$.
\end{lemma}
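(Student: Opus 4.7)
The plan is to reduce the vector-valued density to its scalar counterpart, ``$C_c^\infty(\dom)$ is dense in $L^p(\dom,w)$,'' and establish the latter via Radon regularity of $w\,dx|_\dom$ followed by mollification. Only the local integrability of $w$ enters, which is automatic for $w\in A_\infty$.

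First I would approximate any $f\in L^p(\dom,w;X)$ by simple functions. Since $f$ is strongly measurable, the standard Bochner approximation lemma produces simple functions $f_n=\sum_j x_{n,j}\mathbf{1}_{A_{n,j}}$ with $\|f_n\|_X\le 2\|f\|_X$ pointwise and $f_n\to f$ almost everywhere; dominated convergence then gives $f_n\to f$ in $L^p(\dom,w;X)$. It therefore suffices to approximate $\mathbf{1}_A\otimes x$ for Borel $A\subset\dom$ with $w(A)<\infty$ and $x\in X$ by elements of $C_c^\infty(\dom)\otimes X$.

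Next, exhaust $\dom$ by compacts $K_k\subset\dom$ with $\bigcup_k K_k=\dom$; dominated convergence yields $\mathbf{1}_{A\cap K_k}\to\mathbf{1}_A$ in $L^p(\dom,w)$, so we may assume $A$ has compact closure in $\dom$. Since $w\in L^1_{\mathrm{loc}}(\dom)$, the measure $w\,dx|_\dom$ is Radon, hence by inner and outer regularity, for every $\varepsilon>0$ there exist a compact $C\subset A$ and an open $U$ with $A\subset U$, $\overline U\subset\dom$ compact, and $w(U\setminus C)<\varepsilon$. Urysohn's lemma then provides $\varphi\in C_c(\dom)$ with $\mathbf{1}_C\le\varphi\le\mathbf{1}_U$, so that $\|\varphi-\mathbf{1}_A\|_{L^p(\dom,w)}^p\le w(U\setminus C)<\varepsilon$. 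Finally, convolving $\varphi$ with a standard mollifier $\rho_\delta$, for $\delta$ small enough that $\supp(\rho_\delta*\varphi)$ still lies in a fixed compact subset $K'\subset\dom$, produces $\rho_\delta*\varphi\in C_c^\infty(\dom)$ with $\rho_\delta*\varphi\to\varphi$ uniformly and hence in $L^p(\dom,w)$, since $w(K')<\infty$. Tensoring with $x\in X$ finishes the approximation.

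The only mildly delicate point is to keep the mollified approximants compactly supported inside the open set $\dom$ rather than in $\R^d$; the compact exhaustion above handles this. Notably, no $A_p$-type harmonic-analytic tool such as boundedness of the Hardy--Littlewood maximal function is invoked, and in fact the same argument works for any locally integrable weight.
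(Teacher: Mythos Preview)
Your proof is correct and takes a different, more elementary route than the paper's. The paper first reduces to the scalar case and to compactly supported $f$, but then invokes the $A_\infty$ hypothesis in an essential way: it chooses $q>p$ with $w\in A_q$, uses H\"older's inequality to bound the $L^p(w)$-norm by the $L^q(w)$-norm times $w(K)^{(q-p)/q}$, and then cites a lemma asserting that standard mollification converges in $L^q(\R^d,w)$ when $w\in A_q$. You sidestep this by passing through indicator functions, using Radon regularity of $w\,dx$ to reach $C_c(\dom)$ first, and then mollifying a continuous compactly supported function, where uniform convergence plus $w(K')<\infty$ suffices. As you observe, only local integrability of $w$ is used, so your argument applies to any locally integrable weight, not just $A_\infty$-weights; the paper's approach is shorter only because it offloads the mollification step to an already-available $A_p$-weighted convergence lemma.
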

\begin{proof}
Since $L^p(\dom,w)\otimes X$ is dense in $L^p(\dom,w;X)$ it suffices to consider the scalar setting.
We claim that it furthermore suffices to approximate functions which are compactly supported in $\dom$.

To prove the claim, let $f\in L^p(\dom,w)$ and let $(K_{n})_{n \in \N}$ be an exhaustion by compact sets of $\dom$. Observe that $f\one_{K_n}\to f$ by the dominated convergence theorem. Therefore, it suffices to consider functions $f$ with compact support in $\dom$. Extending such functions $f$ by zero to $\R^{d}$, the claim follows.

Let $q\in (p, \infty)$ be such that $w\in A_q$. Then for all functions $f\in L^p(\R^d,w)$ with compact support $K\subseteq \dom$, by H\"older's inequality one has
\[\|f\|_{L^p(\R^d,w)} \leq  \|f\|_{L^q(\R^d,w)} w(K)^{\frac{q-p}{q}}.\]
Therefore, it suffices to approximate such functions $f$ in the $L^q(\R^d,w)$ norm. To do so one can
use a standard argument (see \cite[Lemma 2.2]{LMV}) by using a mollifier with compact support.
\end{proof}

\begin{lemma}\label{lem:densityCcRd}
Let $p\in (1, \infty)$, $w\in A_p$ and $k\in \N_0$. Let $\dom = \R^d$ or a bounded $C^k$-domain or a special $C^k_c$-domain with $k\in \N_0\cup\{\infty\}$.
Then $C^k_c(\overline{\dom})\otimes X$ is dense in $W^{k,p}(\dom,w;X)$.
\end{lemma}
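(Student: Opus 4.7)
The plan is to prove the density in three stages: first for $\dom = \R^d$, then for $\dom = \R^d_+$ (which covers special $C^k_c$-domains via the diffeomorphism $\Phi$ of \eqref{eq:diffeo_special_domain}), and finally for bounded $C^k$-domains by the localization from Subsection \ref{subsec:locCk}. The case $k = 0$ is immediate from Lemma \ref{lem:densitynonAp}, so assume $k \geq 1$ throughout.

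For $\dom = \R^d$: given $f \in W^{k,p}(\R^d,w;X)$, first truncate by $f_n := \phi_n f$ with $\phi_n(x) = \phi(x/n)$ and $\phi \in C^\infty_c(\R^d)$ equal to $1$ near the origin; Leibniz's formula, the bound $\|D^\beta \phi_n\|_\infty \lesssim n^{-|\beta|}$, and dominated convergence give $f_n \to f$ in $W^{k,p}(\R^d,w;X)$. Next mollify: since $D^\alpha(\rho_\e * f_n) = \rho_\e * D^\alpha f_n$ for a standard approximation of unity $\rho_\e$, it suffices to know that $\rho_\e * h \to h$ in $L^p(\R^d,w;X)$ for every $h \in L^p(\R^d,w;X)$, which is standard via the pointwise bound $\|\rho_\e * h(x)\|_X \leq C_\rho M(\|h\|_X)(x)$, the $A_p$-boundedness of the Hardy--Littlewood maximal function, and density from Lemma \ref{lem:densitynonAp}. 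This reduces matters to approximating a given $g \in C^\infty_c(\R^d;X)$ by elements of $C^\infty_c(\R^d) \otimes X$ in $W^{k,p}$-norm: for small $\e' > 0$ approximate the Bochner integral $(\rho_{\e'} * g)(x) = \int \rho_{\e'}(x-y) g(y)\ud y$ by its Riemann sums $S_h(x) = \sum_j |Q_j|\rho_{\e'}(x - y_j) g(y_j)$ over a partition $\{Q_j\}$ of a ball containing $\supp(g)$; each summand is a tensor $|Q_j|\rho_{\e'}(\cdot - y_j) \otimes g(y_j) \in C^\infty_c(\R^d) \otimes X$, and since the integrand $\rho_{\e'}(x-y) g(y)$ is jointly smooth in $x,y$ with compact support, $D^\alpha_x S_h \to D^\alpha_x(\rho_{\e'} * g)$ converges uniformly on $\R^d$ as the mesh $h \to 0$ for every $|\alpha| \leq k$. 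Local integrability of $w$ on the uniformly bounded supports converts this to $W^{k,p}$-convergence, and letting $\e' \to 0$ afterwards closes the argument.

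For $\dom = \R^d_+$ (and hence, via $\Phi$, for special $C^k_c$-domains, since $w \circ \Phi^{-1} \in A_p$ by bi-Lipschitz invariance of the $A_p$ condition), the idea is to shift away from the boundary: for $f \in W^{k,p}(\R^d_+,w;X)$ define $f_\delta(x_1,\tx) := f(x_1+\delta,\tx)$, which lives on $\{x_1 > -\delta\} \supset \overline{\R^d_+}$ and converges to $f$ in $W^{k,p}(\R^d_+,w;X)$ as $\delta \to 0^+$ by continuity of $L^p$-translations for $A_p$-weights. Cutting $f_\delta$ off by a function supported in $\{x_1 > -\delta/2\}$, extending by zero to $\R^d$, and applying the $\R^d$ case yields $C^\infty_c(\R^d)\otimes X$-approximations whose restrictions to $\overline{\R^d_+}$ lie in $C^k_c(\overline{\R^d_+})\otimes X$. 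For a bounded $C^k$-domain $\dom$ use the partition $\{\eta_n\}_{n=0}^N$ of \eqref{eq:domaindecomoperator} and write $f = \sum_n \eta_n^2 f$: the interior piece $\eta_0 f$ is handled by zero-extension and the $\R^d$ case (multiplying approximations by a further cutoff supported in $\dom$), and each boundary piece $\eta_n f$ with $n \geq 1$ is pulled back by a local chart to a function in $W^{k,p}(\R^d_+, w_n; X)$ with $w_n \in A_p$, approximated by the half-space case, and pushed forward; reassembly by multiplying with $\eta_n$ delivers the required $C^k_c(\overline{\dom}) \otimes X$-approximation.

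The main technical hurdle is the tensor-approximation step replacing $C^\infty_c(\overline{\dom};X)$ by $C^\infty_c(\overline{\dom}) \otimes X$ in the $W^{k,p}$-norm, which the Riemann sum device handles cleanly once one has reduced to mollified integrands that are jointly smooth and compactly supported, so that all $x$-derivatives commute with the finite sum and converge uniformly. The ancillary points (continuity of translations for general $A_p$-weights, preservation of $A_p$ under bi-Lipschitz change of variables, and the standard truncation and mollification estimates) are routine.
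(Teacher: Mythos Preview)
Your argument for $\dom=\R^d$ is correct and matches what the paper invokes from \cite[Lemma~3.5]{LMV}: truncate, mollify (controlled by the maximal function, which is bounded for $A_p$), and pass to tensors via Riemann sums. The localization to bounded $C^k$-domains is also standard.

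The gap is in the half-space step. Your claim ``continuity of $L^p$-translations for $A_p$-weights'' is false for general $w\in A_p$. Take $d=1$, $w(x_1)=|x_1-1|^{\gamma}$ with $\gamma\in(0,p-1)$ (an $A_p$-weight), and $f(x_1)=|x_1-1|^{-\alpha}\phi(x_1)$ with a cutoff $\phi$ near $1$ and $1/p<\alpha<(\gamma+1)/p$. Then $f\in L^p(\R_+,w)$, but the shifted function $f_\delta(x_1)=f(x_1+\delta)$ has its singularity at $1-\delta$, where the weight is $\sim\delta^{\gamma}$ and does \emph{not} compensate: $f_\delta\notin L^p(\R_+,w)$ for every $\delta>0$. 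So bare translation is not even a bounded operator on $L^p(\R_+,w)$, let alone strongly continuous. The point is that for $A_p$-weights the Hardy--Littlewood maximal operator is bounded, but individual translates are not controlled by it.

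The fix is exactly to replace bare translation by an \emph{asymmetric mollification} that shifts and averages simultaneously, so that the maximal function bound applies. Choose $\rho\in C^\infty_c(\R^d)$ supported in $\{z_1<0\}$ with $\int\rho=1$ and set $\rho_\varepsilon(z)=\varepsilon^{-d}\rho(z/\varepsilon)$. Then for $x\in\overline{\R^d_+}$ the convolution $\rho_\varepsilon*f(x)$ only samples $f$ at points with $y_1>x_1\ge 0$, so it is well-defined, smooth on a neighborhood of $\overline{\R^d_+}$, and satisfies $|\rho_\varepsilon*f|\le C\,M(\tilde f)$ with $\tilde f$ the zero extension. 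Boundedness of $M$ on $L^p(\R^d,w)$ gives uniform bounds and, together with density of $C_c(\R^d_+;X)$, yields $\rho_\varepsilon*D^\alpha f\to D^\alpha f$ in $L^p(\R^d_+,w;X)$ for each $|\alpha|\le k$. After truncation you land in $C^k_c(\overline{\R^d_+};X)$, and your Riemann-sum step passes to tensors. This is essentially the approach the paper points to via \cite[Lemma~3.5]{LMV} combined with \cite[Theorem~1.8.5]{Krylov2008_book}; see also the same device used in the proof of Lemma~\ref{lem:denseApbdr0}.
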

\begin{proof}
The case $\dom = \R^d$ follows from \cite[Lemma 3.5]{LMV}.
In all other situations, by localization, it suffices to consider $\dom = \R^d_+$. This case can be proved by combining the argument of \cite[Lemma 3.5]{LMV} with \cite[Theorem 1.8.5]{Krylov2008_book}.
\end{proof}

The density result \cite[Theorem 7.2]{Kufner80} can be extended to the vector-valued setting:
\begin{lemma}\label{lem:Kufner}
Let $p\in (1, \infty)$ and $\gamma\geq 0$. Let $\dom$ be a bounded $C^0$-domain or a special $C^0_c$-domain. Then $C^\infty_c(\overline{\dom};X)$ is dense in $W^{k,p}(\dom,w_{\gamma}^{\dom};X)$.
\end{lemma}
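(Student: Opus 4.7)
The strategy is to transfer the scalar density argument of \cite[Theorem 7.2]{Kufner80} to the Bochner setting. In the scalar case the proof combines a localization, a translation of $u$ into a neighborhood of $\overline{\dom}$, and convolution with a standard mollifier; all three operations are convolution against \emph{scalar} kernels and hence act componentwise on $X$-valued functions. Thus the only thing to verify is that the weighted $L^p$-estimates used in the scalar argument carry over to Bochner norms, which is immediate since those estimates depend only on scalar integration against the weight.

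First I would reduce the bounded $C^0$-case to the special $C^0_c$-case by the partition-of-unity construction of Subsection~\ref{subsec:locCk}, adapted to $C^0$-domains (the cutoffs $\{\eta_j\}_{j=0}^{N}$ can still be chosen $C^\infty$ since they live on $\R^d$). Cover $\overline{\dom}$ by one interior open set $V_{0}\subset\subset\dom$ together with finitely many boundary charts $V_{1},\ldots,V_{N}$ on each of which, after rotation and translation, $\dom\cap V_j=W_j\cap V_j$ for a special $C^0_c$-domain $W_j$. Writing $f=\sum_{j=0}^{N}\eta_j f$, the interior piece $\eta_0 f$ has compact support inside $\dom$ and is approximated by plain mollification on $\R^d$; each boundary piece reduces, after zero extension, to a compactly supported $u\in W^{k,p}(W_j,w_\gamma^{W_j};X)$ on a special $C^0_c$-domain.

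In the remaining case, write $\dom=\{(y,x')\in\R^d:y>h(x')\}$ with $h\in C^0_c(\R^{d-1})$ and let $u\in W^{k,p}(\dom,w_\gamma^\dom;X)$ have compact support in $\overline{\dom}$. Introduce the normal translation $T_{\epsilon}u(y,x'):=u(y+\epsilon,x')$, which is defined on the enlarged domain $\dom_{-\epsilon}:=\{y>h(x')-\epsilon\}\supset\dom$. The scalar weighted $L^p$-estimates used in \cite[Sec.~7]{Kufner80}, whose validity relies essentially on $\gamma\geq 0$ to control the weight under the inward shift, give both a uniform bound $\|T_\epsilon u\|_{W^{k,p}(\dom,w_\gamma^\dom;X)}\lesssim\|u\|_{W^{k,p}(\dom,w_\gamma^\dom;X)}$ and the convergence $T_\epsilon u\to u$ in $W^{k,p}(\dom,w_\gamma^\dom;X)$ as $\epsilon\downarrow 0$, applied to $u$ and each $D^{\alpha}u$; as these estimates involve only scalar integration against $w_\gamma^\dom$, they transfer verbatim to the $X$-valued setting. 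Having shifted $u$ inward, I would then convolve with a standard mollifier $\rho_{\delta}\in C^{\infty}_{c}(B(0,\delta))$ with $\delta<\epsilon/2$: the convolution $\rho_{\delta}\ast T_{\epsilon}u$ is smooth on $\dom$ because for $(y,x')\in\dom$ the ball $(y,x')+B(0,\delta)$ lies in $\dom_{-\epsilon}$, where $T_\epsilon u$ is defined. After multiplying by a fixed $C^{\infty}_{c}(\R^d)$-cutoff equal to one on a neighborhood of $\supp u$, the resulting function lies in $C^{\infty}_{c}(\overline{\dom};X)$, and $\rho_{\delta}\ast T_{\epsilon}u\to T_{\epsilon}u$ in $W^{k,p}(\dom,w_\gamma^\dom;X)$ as $\delta\downarrow 0$ by the standard Bochner mollifier argument on $A_\infty$-weighted spaces (cf.\ Lemma~\ref{lem:densitynonAp} and \cite[Lemma 3.5]{LMV}).

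The only substantive obstacle is the weighted boundedness and continuity of the shift $T_{\epsilon}$ on $W^{k,p}(\dom,w_\gamma^\dom;X)$, which is a purely scalar issue and is where the hypothesis $\gamma\geq 0$ is indispensable: without this sign condition the translation into the interior would fail to control the weight, and one would need Hardy-type corrections as in \cite[Sec.~8--9]{Kufner80}. Once this scalar input is in place, the extension from scalar to $X$-valued functions is entirely formal.
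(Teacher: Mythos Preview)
Your proposal is correct and follows exactly the route the paper intends: the paper gives no proof beyond the sentence ``can be extended to the vector-valued setting'', and your sketch supplies the details of Kufner's translation-plus-mollification argument together with the observation that all weighted estimates are scalar integrations and hence pass to Bochner norms unchanged.

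Two minor technical points are worth tightening. First, since $h$ is only $C^0$, the inclusion $(y,x')+B(0,\delta)\subset\dom_{-\epsilon}$ for $(y,x')\in\dom$ near $\supp u$ requires $\delta$ small relative to the modulus of continuity of $h$ on the relevant compact set, not merely $\delta<\epsilon/2$. Second, the mollifier step is not a direct consequence of Lemma~\ref{lem:densitynonAp} or \cite[Lemma~3.5]{LMV}: mollifier convergence in $L^p(w)$ is not standard for arbitrary $w\in A_\infty$. What makes it work here is that $T_\epsilon u$ only samples $u$ at vertical distance $\geq\epsilon$ from $\partial\dom$, so $T_\epsilon u$ actually lies in \emph{unweighted} $W^{k,p}$ on a neighbourhood of $\overline{\dom}$; standard mollification then converges in the unweighted norm, and since $\gamma\geq 0$ implies $w_\gamma^\dom$ is bounded on compact subsets of $\overline{\dom}$, this transfers to the weighted norm. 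Neither point is a genuine gap.
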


Next we will prove a density result for power weights of arbitrary order using functions with compact support in $\Omega$.
\begin{proposition}\label{prop:densenonAp}
Let $\gamma\in \R\setminus\{jp-1:j\in \N_1\}$. Let $\dom$ be a bounded $C^k$-domain or a special $C^k_c$-domain with $k\in \N_0\cup\{\infty\}$. Then $C^k_c(\dom;X)$ is dense in $W^{k,p}_{0}(\dom,w_{\gamma};X)$.
\end{proposition}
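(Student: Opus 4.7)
The plan is to reduce the claim to the half-space $\R^{d}_{+}$ with weight $w_{\gamma}$, cut the function off so that it has compact support strictly inside the open half-space, and then mollify. Using the $C^{k}$-partition of unity $\{\eta_{n}\}_{n=0}^{N}$ from Subsection \ref{subsec:locCk} together with the $C^{k}$-diffeomorphism $\Phi$ in \eqref{eq:diffeo_special_domain}, multiplication by each $\eta_{n}$ and pushforward by $\Phi$ both preserve the $W^{k,p}_{0}$-trace conditions as well as the class $C^{k}_{c}(\dom;X)$. After this localization step it is therefore enough to approximate an arbitrary $u \in W^{k,p}_{0}(\R^{d}_{+},w_{\gamma};X)$ by elements of $C^{\infty}_{c}(\R^{d}_{+};X) \subset C^{k}_{c}(\R^{d}_{+};X)$.

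On $\R^{d}_{+}$ I would introduce two cutoffs: a tangential one $\zeta_{R}(\tx) = \zeta(\tx/R)$, with $\zeta \in C^{\infty}_{c}(\R^{d-1})$ equal to $1$ near the origin, and a normal one $\chi_{\e}(x_{1}) = \chi(x_{1}/\e)$, with $\chi \in C^{\infty}(\R)$ vanishing on $[0,1]$ and equal to $1$ on $[2,\infty)$. The function $u_{\e,R} := \chi_{\e}\zeta_{R}u$ has compact support in the open half-space. Convergence $\zeta_{R}u \to u$ in $W^{k,p}(\R^{d}_{+},w_{\gamma};X)$ as $R \to \infty$ is routine: any tangential derivative of $\zeta_{R}$ brings a decaying factor $R^{-|\tilde{\beta}|}$, while the term where $\zeta_{R}$ is not differentiated is handled by dominated convergence.

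The main step is the normal cutoff $\chi_{\e} u \to u$ as $\e \downarrow 0$. For a multi-index $\alpha = (\alpha_{1},\tilde\alpha)$ with $|\alpha| \leq k$, Leibniz gives
\[
D^{\alpha}(\chi_{\e} u) = \sum_{j=0}^{\alpha_{1}} \binom{\alpha_{1}}{j} \e^{-j}\chi^{(j)}(x_{1}/\e)\, D_{1}^{\alpha_{1}-j}D^{\tilde\alpha} u.
\]
The $j=0$ contribution converges to $D^{\alpha}u$ in $L^{p}(w_{\gamma};X)$ by dominated convergence. Each $j \geq 1$ contribution is supported on $\{\e \leq x_{1} \leq 2\e\}$, and since $x_{1} \eqsim \e$ there, its $L^{p}(w_{\gamma};X)$-norm is controlled by
\[
C\int_{\e}^{2\e}\!\!\int_{\R^{d-1}} \|D_{1}^{\alpha_{1}-j}D^{\tilde\alpha} u(x)\|^{p}\, x_{1}^{\gamma - jp}\, d\tx\, dx_{1}.
\]
This tends to $0$ provided $D_{1}^{\alpha_{1}-j}D^{\tilde\alpha} u \in L^{p}(\R^{d}_{+},w_{\gamma-jp};X)$, which is exactly what the iterated form of Corollary \ref{cor:lem:Hardy;Sobolev_emb} delivers: since $\gamma \notin \{jp-1 : j \in \N_{1}\}$, the same is true of every $\gamma - ip$ with $i \geq 0$, so the third embedding of that corollary may be applied $j$ successive times to yield $W^{k,p}_{0}(\R^{d}_{+},w_{\gamma};X) \hookrightarrow W^{k-j,p}_{0}(\R^{d}_{+},w_{\gamma-jp};X)$. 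The derivative in question has order $|\alpha|-j \leq k-j$, so it lies in the smaller space, and absolute continuity of the Lebesgue integral yields the vanishing on the shrinking strip.

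Finally, once $u_{\e,R}$ has compact support in $\R^{d}_{+}$, the weight $w_{\gamma}$ is bounded above and below on that support, so standard mollification $\phi_{\delta} * u_{\e,R}$ by a symmetric $\phi_{\delta} \in C^{\infty}_{c}(\R^{d})$ converges to $u_{\e,R}$ in $W^{k,p}(w_{\gamma};X)$, and remains supported in $\R^{d}_{+}$ for $\delta$ small. A diagonal argument then produces the desired approximating sequence in $C^{\infty}_{c}(\R^{d}_{+};X)$. The main obstacle is the iterated Hardy step: the exclusion $\gamma \notin \{jp-1 : j \in \N_{1}\}$ is precisely what is needed so that the trace-zero property is propagated at every descent from $W^{k,p}_{0}(w_{\gamma})$ down to $L^{p}(w_{\gamma-kp})$, keeping Hardy's inequality applicable throughout.
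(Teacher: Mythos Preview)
Your approach is essentially the one in the paper: localize to $\R^d_+$, cut off near the boundary with a normal cutoff, use Leibniz and the iterated Hardy embedding of Corollary \ref{cor:lem:Hardy;Sobolev_emb} to show that the commutator terms vanish, then mollify on the compact support where the weight is harmless. One small slip: as written, $u_{\e,R}=\chi_{\e}\zeta_{R}u$ need not have \emph{compact} support in $\R^d_+$, since $\chi_{\e}\equiv 1$ on $[2\e,\infty)$ and $\zeta_{R}$ only truncates in $\tilde x$; you also need a cutoff at large $x_1$ (the paper simply first reduces to $u$ compactly supported in $\overline{\R^d_+}$), after which your mollification step goes through.
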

\begin{proof}
By a standard localization argument it suffices to consider $\dom = \R^d_+$.
Let $u\in W^{k,p}_{0}(\R^d_+,w_{\gamma};X)$.
By a simple truncation argument we may assume that $u$ is compactly supported on $\overline{\R^d_+}$.
To prove the required result we will truncate $u$ near the plane $x_1 = 0$. For this let $\phi\in C^\infty([0,\infty))$ be such that $\phi = 0$ on $[0,1/2]$ and $\phi = 1$ on $[1, \infty)$.
Let $\phi_n(x_1) = \phi(nx_1)$ and define $u_n(x) = \phi_n(x_1) u(x)$. We claim that $u_n\to u$ in $W^{k,p}(\R^d_+, w_{\gamma};X)$.
This will be proved below. Using the claim the proof can be finished as follows. It remains to show that each $u \in W^{k,p}(\R^{d}_{+},w_{\gamma};X)$ with compact support can be approximated by functions in $C^\infty_c(\R^d_+;X)$. For each $v \in W^{k,p}(\R^{d}_{+},w_{\gamma};X)$ with compact support $K$ it holds that
\begin{align}\label{eq:vcompactsuppequiv}
\|v\|_{W^{k,p}(\R^d_+,w_{\gamma};X)}\eqsim_{K,\gamma} \|v\|_{W^{k,p}(\R^d_{+};X)}.
\end{align}
Therefore, it suffices to approximate $u$ in the $W^{k,p}(\R^d_{+};X)$-norm. This can be done by extension by zero on $\overline{\R^{d}_{-}}$ followed by a standard mollifier argument (see \cite[Lemma 2.2]{LMV}).

To prove the claim for convenience we will only consider $d=1$. Since $\phi_n$ does not depend on $\tx$ the general case is similar.
Fix $m\in \{0,\ldots, k\}$. By Leibniz formula one has $(\phi_n u)^{(m)} = \sum_{i=0}^m c_{i,m} \phi_n^{(m-i)} u^{(i)}$.
By the dominated convergence theorem $\phi_n u^{(m)}\to u^{(m)}$ in $L^p(\R^d_+,w_{\gamma};X)$.
It remains to prove that $\phi_n^{(m-i)} u^{(i)}\to 0$ for $i\in \{0, \ldots, m-1\}$. By Corollary \ref{cor:lem:Hardy;Sobolev_emb}
\[u^{(i)}\in W^{m-i,p}_0(\R_+,w_{\gamma};X)\hookrightarrow L^{p}(\R_+,w_{\gamma - (m-i)p};X).\]
Now we find
\begin{align*}
\|\phi_n^{(m-i)} u^{(i)}\|_{L^p(\R_+,w_{\gamma};X)} &= \int_0^{1/n} n^{p(m-i)} |\phi^{(m-i)}(nx)|^p \|u^{(i)}(x)\|^p |x|^{\gamma} \ud x
\\ & \leq \|\phi^{(m-i)}\|_{\infty}^p \int_0^{1/n}  \|u^{(i)}(x)\|^p |x|^{\gamma-(m-i)p} \ud x.
\end{align*}
The latter tends to zero as $n\to \infty$ by the dominated convergence theorem.
\end{proof}

In the next result we prove a density result in real and complex interpolation spaces. It will be used as a technical ingredient in the proofs of Lemma \ref{lem:interp2} and Proposition \ref{prop:interp_W}.
\begin{lemma}\label{lem:CinftydenserealLW}
Let $p\in (1, \infty)$, $\gamma\in \R\setminus\{jp-1:j\in \N_0\}$, $q\in [1, \infty)$ and $k\in \N\setminus\{0\}$. Let $\dom$ be a bounded $C^k$-domain or a special $C^k_c$-domain with integer $k\geq 2$ or $k=\infty$ and let $\ell\in \{0,\ldots, k\}$. If $\theta\in (0,1)$ satisfies $k\theta<\frac{\gamma+1}{p}$ then the space $C^{k}_{c}(\dom;X)$ is dense in $(L^{p}(\dom,w_{\gamma};X),W^{\ell,p}(\dom,w_{\gamma};X))_{\theta,q}$ and $[L^{p}(\dom,w_{\gamma};X),W^{\ell,p}(\dom,w_{\gamma};X)]_{\theta}$.
\end{lemma}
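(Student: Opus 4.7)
The plan is to reduce to $\R^d_+$ and then proceed by approximation in three steps. By the standard partition-of-unity/localization argument of Subsection~\ref{subsec:locCk} (using the retraction/coretraction \eqref{eq:domaindecomoperator} applied to both endpoint spaces, which is compatible with both interpolation methods), the assertion transfers to $\dom = \R^d_+$. Abbreviate $L := L^p(\R^d_+,w_\gamma;X)$, $V := W^{\ell,p}(\R^d_+,w_\gamma;X)$, and let $Y$ denote either $(L,V)_{\theta,q}$ with $q \in [1,\infty)$ or $[L,V]_\theta$. The hypothesis $k\theta < (\gamma+1)/p$ forces $\gamma > -1$; the case $\ell = 0$ is trivial since then $V = L = Y$ and the claim is Lemma~\ref{lem:densitynonAp}, so I may assume $\ell \geq 1$.

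First I would observe that $V = L \cap V$ is dense in $Y$: this is classical for $[L,V]_\theta$ and holds for $(L,V)_{\theta,q}$ whenever $q < \infty$. Second, $C^\infty_c(\overline{\R^d_+};X)$ is dense in $V$: for $\gamma \in (-1,p-1)$ the weight is $A_p$ and this is Lemma~\ref{lem:densityCcRd}, while for $\gamma \geq 0$ it is Lemma~\ref{lem:Kufner}. Together these cases cover the full range $\gamma > -1$.

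The third and decisive step is to approximate each $u \in C^\infty_c(\overline{\R^d_+};X)$ by elements of $C^k_c(\R^d_+;X)$ in the $Y$-norm; this is where the quantitative condition $k\theta < (\gamma+1)/p$ enters. Following the truncation used at the end of Proposition~\ref{prop:densenonAp}, I would pick $\phi \in C^\infty([0,\infty))$ with $\phi \equiv 0$ on $[0,1/2]$ and $\phi \equiv 1$ on $[1,\infty)$, set $u_n(x) := \phi(nx_1)u(x) \in C^\infty_c(\R^d_+;X)$, and invoke the interpolation inequality
\[
\|u - u_n\|_Y \lesssim \|u - u_n\|_L^{1-\theta}\,\|u - u_n\|_V^\theta.
\]
A direct scaling computation (using that $u$ and its derivatives are uniformly bounded with compact support, that $\phi_n^{(i)}(x_1) = n^i \phi^{(i)}(nx_1)$, and that $\int_0^{1/n} x_1^\gamma\, dx_1 = O(n^{-(1+\gamma)})$) yields
\[
\|u - u_n\|_L = O\bigl(n^{-(1+\gamma)/p}\bigr), \qquad \|u - u_n\|_V = O\bigl(n^{\max\{0,\,\ell - (1+\gamma)/p\}}\bigr),
\]
the dominant $V$-contribution coming from the top-order Leibniz term $\phi_n^{(\ell)} u$. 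Multiplying gives $\|u - u_n\|_Y = O(n^{-(1+\gamma)/p + \theta\ell})$, which tends to $0$ precisely because $\theta\ell \leq \theta k < (\gamma+1)/p$.

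The main obstacle is this last step: in general $\|u - u_n\|_V$ does \emph{not} tend to zero and may in fact blow up in $n$, so naive convergence in $V$ is impossible. The interpolation inequality is what saves the argument by trading the fast $L$-side decay off against the slow $V$-side growth, and the threshold $k\theta < (\gamma+1)/p$ is exactly the quantitative condition needed to keep the resulting exponent of $n$ negative.
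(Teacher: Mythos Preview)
Your proof is correct and follows essentially the same approach as the paper: localize to $\R^d_+$, use density of $W^{\ell,p}$ in the interpolation space, approximate by $C^\infty_c(\overline{\R^d_+};X)$ via Lemmas~\ref{lem:densityCcRd}/\ref{lem:Kufner}, and then run the $\phi_n$-truncation combined with the multiplicative interpolation inequality to pass to $C^\infty_c(\R^d_+;X)$. The only cosmetic differences are that the paper treats the complex case by reduction to the real one through the dense embedding $(L,V)_{\theta,1}\hookrightarrow [L,V]_\theta$, whereas you handle both interpolation functors at once via the same interpolation inequality; and the paper records a separate case $\gamma<-1$, which---as you observe---is vacuous under the hypothesis $k\theta<(\gamma+1)/p$.
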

\begin{proof}
First consider the real interpolation space. In the case $\gamma<-1$ the result follows from $W^{\ell,p}(\dom,w_{\gamma};X) = W^{\ell,p}_0(\dom,w_{\gamma};X)$, Proposition \ref{prop:densenonAp} and \cite[Theorem 1.6.2]{Tr1}.

In the case $\gamma\in \R\setminus\{jp-1:j\in \N_0\}$, it suffices to consider $\dom = \R^d_+$ by a localization argument. Write $Y_j = W^{j,p}(\R^d_+,w_{\gamma};X)$ for $j\in \N_0$. Since $Y_\ell \stackrel{d}{\hookrightarrow} (Y_0,Y_\ell)_{\theta,q}$ (see \cite[Theorem 1.6.2]{Tr1}), by Lemma's \ref{lem:densityCcRd} and \ref{lem:Kufner} it suffices to consider $u\in C^\infty_c(\overline{\R^d_+};X)$ and to approximate it by functions in $C^{\infty}_{c}(\R^d_+;X)$ in the $(Y_0,Y_\ell)_{\theta,q}$-norm. Moreover, note that
\[\|v\|_{(Y_0,Y_\ell)_{\theta,q}}\leq C \|v\|_{Y_0}^{1-\theta} \|v\|^{\theta}_{Y_\ell}\]
for all $v\in Y_\ell$ (see \cite[Theorem 1.3.3]{Tr1}). Therefore, it suffices to construct $v_n\in C^\infty_c(\R^d_+;X)$ such that $\|v_n - u\|_{Y_0}^{1-\theta} \|v_n-u\|_{Y_\ell}^{\theta}\to 0$ as $n\to \infty$. As in Proposition~\ref{prop:densenonAp}, letting $u_n = \phi_n u$, it suffices to show that $\|u_n - u\|_{Y_0}^{1-\theta} \|u_n-u\|_{Y_\ell}^{\theta}\to 0$ as $n\to \infty$.
Note that, for example in the case $d=1$, for one of the terms
\[\|(\phi_n-1) u\|_{L^p(\R_+,w_{\gamma};X)} \leq \|\phi\|_{\infty} \|u\|_{\infty} \Big(\int_0^{1/n} |x|^{\gamma} \ud x\Big)^{1/p}\leq \|\phi\|_{\infty} C_{\gamma,p} n^{-\frac{\gamma+1}{p}} \]
and similarly,
\[\|\phi_n^{(\ell)} u\|_{L^p(\R^d_+,w_{\gamma};X)} \leq n^{\ell-\frac{\gamma+1}{p}} \|\phi''\|_{\infty} \|u\|_{\infty}.\]
Now we obtain that there is a constant $C$ independent of $n$ such that
\[\|(\phi_n-1) u\|_{L^p(\R_+,w_{\gamma};X)}^{1-\theta} \|\phi_n^{(\ell)} u\|_{L^p(\R^d_+,w_{\gamma};X)}^{\theta}\leq C n^{-\frac{\gamma+1}{p}+\ell\theta}.\]
The latter tends to zero by the assumptions. The other terms can be treated with similar arguments.
Finally one can approximate each $u_n$ by using \eqref{eq:vcompactsuppequiv} and the arguments given there.

The density in the complex case follows from
\[(L^{p}(\dom,w_{\gamma};X),W^{\ell,p}(\dom,w_{\gamma};X))_{\theta,1} \stackrel{d}{\hookrightarrow}[L^{p}(\dom,w_{\gamma};X),W^{\ell,p}(\dom,w_{\gamma};X)]_{\theta}\]
(see \cite[Theorems 1.9.3 (c) $\&$ 1.10.3]{Tr1}).
\end{proof}

The next standard lemma gives a sufficient condition for a function to be in $W^{1,1}_{\rm loc}(\R^d;X)$ when it consists of two $W^{1,1}_{\rm loc}$-functions which are glued together. To prove the result one can reduce to the one-dimensional setting and use the formula $u(t)  - u(0) = \int_0^t u(s) \ud s$. We leave the details to the reader.
\begin{lemma}\label{lem:u1dcontW}
Let $u\in L^1_{\rm loc}(\R^d;X)$ be such that $u_+:=u|_{\R_+^d}\in W^{1,1}_{\rm loc}(\overline{\R_+^d};X)$ and $u_- := u|_{\R_-^d}\in W^{1,1}_{\rm loc}(\overline{\R^d_-};X)$. If $\tr(u_{+}) = \tr(u_{-})$, then $u\in W^{1,1}_{\rm loc}(\R^d;X)$ and
\[D_j u = \left\{
          \begin{array}{ll}
            D_j (u_+), & \hbox{on $\R_+^d$;} \\
            D_j (u_-), & \hbox{on $\R_-^d$;} \\
          \end{array}
        \right.
\]
\end{lemma}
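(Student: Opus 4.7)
The plan is to reduce to a one-dimensional statement via Fubini, treating the tangential derivatives $D_j$ ($j \geq 2$) and the normal derivative $D_1$ separately. For the tangential derivatives the boundary $\{x_1=0\}$ is irrelevant, while the normal derivative is exactly where the matching condition $\tr(u_+) = \tr(u_-)$ enters.

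For the tangential case $j \geq 2$, I would fix a test function $\varphi \in C^\infty_c(\R^d)$ and split
\[
\int_{\R^d} u\,D_j \varphi \ud x = \int_{\R^d_+} u_+\,D_j\varphi\ud x + \int_{\R^d_-} u_-\, D_j\varphi\ud x.
\]
By Fubini, for almost every pair $(x_1,\hat x)$ of the remaining coordinates the line function $x_j\mapsto u_\pm(x_1,\hat x, x_j)$ lies in $W^{1,1}_{\rm loc}(\R;X)$ with weak derivative $D_j u_\pm$. Integration by parts in the $x_j$-variable produces no boundary terms since $\varphi$ has compact support in $\R^d$, so each half-space integral equals $-\int (D_j u_\pm)\varphi\ud x$. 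Reassembling gives $D_j u$ as the piecewise function claimed.

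For $D_1$, I would apply Fubini to reduce to lines parallel to the $x_1$-axis. For a.e.\ $\tilde x\in\R^{d-1}$, the functions $t\mapsto u_+(t,\tilde x)$ and $t\mapsto u_-(t,\tilde x)$ lie in $W^{1,1}_{\rm loc}(\overline{\R_+};X)$ and $W^{1,1}_{\rm loc}(\overline{\R_-};X)$ respectively; by the vector-valued fundamental theorem of calculus (as used in Lemma~\ref{lem:traceAp}) each has a continuous representative on its closed half-line given by the Bochner integral of the weak derivative. Moreover, for $u_+\in W^{1,1}_{\rm loc}(\overline{\R^d_+};X)$ the trace can be realised as the pointwise limit $\tr(u_+)(\tilde x)=\lim_{t\downarrow 0} u_+(t,\tilde x)$ for a.e.\ $\tilde x$ (again by Fubini and the 1D FTC). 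The hypothesis $\tr(u_+)=\tr(u_-)$ then guarantees that on a.e.\ line the two half-line representatives share the same value at $0$, so their concatenation is absolutely continuous on every bounded interval, with
\[
u(t,\tilde x) = u(0,\tilde x) + \int_0^t v_1(s,\tilde x)\ud s, \qquad t\in\R,
\]
where $v_1$ is the piecewise derivative $D_1 u_\pm$ on $\R^d_\pm$. This exhibits $u(\cdot,\tilde x)\in W^{1,1}_{\rm loc}(\R;X)$ with classical derivative $v_1(\cdot,\tilde x)$ a.e. A final Fubini-type argument — integrating $\int u\,D_1\varphi$ first in $x_1$ on a.e.\ line, applying ordinary integration by parts, and reassembling — yields $D_1 u = v_1$ in $\Distr'(\R^d;X)$.

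The one real technical obstacle is the identification of the distributional/$L^1_{\rm loc}$ trace with the pointwise a.e.\ boundary value of the continuous 1D representative; everything else is bookkeeping. This identification is standard and follows by testing $\tr(\varphi u_\pm)$ against tangential test functions and using Fubini together with the 1D FTC, so I would either quote it or dispatch it quickly at the beginning.
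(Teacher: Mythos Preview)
Your proposal is correct and follows exactly the approach the paper indicates: reduce to the one-dimensional setting and use the formula $u(t)-u(0)=\int_0^t u'(s)\,ds$ (the paper in fact leaves the details to the reader). Your write-up is more detailed than what the paper provides, but the strategy is identical.
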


Finally we will need the following simple density result in the $A_p$-case.
\begin{lemma}\label{lem:denseApbdr0}
Let $\dom$ be a bounded $C^k$-domain or a special $C^k_c$-domain with $k\in \N_0\cup\{\infty\}$. If $p\in (1, \infty)$, $w\in A_p$ and $k\in \N_0$, then
$E_0:W^{k,p}_0(\dom,w;X)\to W^{k,p}(\R^d,w;X)$ given by the extension by zero defines a bounded linear operator. Moreover, $W^{k,p}_0(\dom,w;X) = \overline{C^k_c(\dom;X)}$.
\end{lemma}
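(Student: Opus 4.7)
The plan is to prove the two assertions separately, each time reducing to the half-space case via the localization machinery of Subsection~\ref{subsec:locCk}. For the boundedness of $E_0$ on $\dom=\R^d_+$ I would argue by induction on $k$. The case $k=0$ is trivial since extension by zero is an isometry of $L^p(\R^d,w;X)$. For $k\geq 1$, given $u\in W^{k,p}_0(\R^d_+,w;X)$, apply Lemma~\ref{lem:u1dcontW} with $u_+=u$ and $u_-=0$ --- both have vanishing trace (the former by hypothesis, the latter trivially) --- to conclude that $\tilde u:=E_0u \in W^{1,1}_{\rm loc}(\R^d;X)$ with $D_j \tilde u = E_0(D_j u)$. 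Since each $D_j u$ again lies in $W^{k-1,p}_0(\R^d_+,w;X)$ (the required vanishing of $\tr(D^\beta D_j u)$ for $|\beta|<k-1$ is inherited from the zero traces of $u$), the induction hypothesis gives $E_0(D_j u)\in W^{k-1,p}(\R^d,w;X)$ and hence $\tilde u\in W^{k,p}(\R^d,w;X)$ with the corresponding norm estimate. For a general domain, the partition of unity $\{\eta_n^2\}$ together with the pullback by the diffeomorphism~\eqref{eq:diffeo_special_domain} converts the claim into finitely many assertions of this half-space type.

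For the density, the inclusion $\overline{C^k_c(\dom;X)}\subset W^{k,p}_0(\dom,w;X)$ is immediate since every $C^k_c(\dom;X)$-function has vanishing traces up to order $k-1$. For the reverse inclusion I would start from $u\in W^{k,p}_0(\dom,w;X)$, localize with $\{\eta_n^2\}$, and thus reduce to approximating a function $\tilde u$ (the extension by zero established above) which is compactly supported in $\overline{\R^d_+}$. Following the template of Proposition~\ref{prop:densenonAp}, set $\phi_n(x_1):=\phi(nx_1)$, so that $\phi_n\tilde u$ has support in $\{x_1\geq 1/(2n)\}\subset\R^d_+$. The Leibniz expansion
\[
D^\alpha(\phi_n\tilde u)-\phi_n D^\alpha \tilde u \;=\; \sum_{j=1}^{\alpha_1}\binom{\alpha_1}{j}\phi_n^{(j)}(x_1)\, D^{\alpha-je_1}\tilde u
\]
has each cross term of size $O(n^j)$ and supported in $\{1/(2n)\le x_1\le 1/n\}$. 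For $v:=D^{\alpha-je_1}\tilde u$, the vanishing traces of orders $0,\ldots,j-1$ combined with the iterated weighted Hardy inequality $\|v/x_1^j\|_{L^p(\R^d_+,w;X)}\lesssim \|D_1^j v\|_{L^p(\R^d_+,w;X)}$ (valid for $A_p$-weights) yield
\[
n^j\|\phi^{(j)}(n\,\cdot\,)\,v\,\one_{\{x_1\le 1/n\}}\|_{L^p(w)} \;\lesssim\; \|v/x_1^j\,\one_{\{x_1\le 1/n\}}\|_{L^p(w)} \;\longrightarrow\; 0
\]
by dominated convergence. Together with $(\phi_n-1)D^\alpha \tilde u\to 0$ (again by DCT), this gives $\phi_n\tilde u\to \tilde u$ in $W^{k,p}(\R^d,w;X)$. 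Finally, each $\phi_n\tilde u$ is mollified with a mollifier of radius less than $1/(4n)$; since $w\in A_p$, convolution is bounded on $L^p(w)$ via the Hardy--Littlewood maximal function and converges in norm, producing the desired $C^\infty_c(\R^d_+;X)$-approximants.

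The main obstacle is the iterated weighted Hardy inequality for general $A_p$-weights, which is not explicitly available from earlier in the paper (Lemma~\ref{lem:Hardy} only covers power weights). This is a classical but non-trivial ingredient; one can deduce it by slicing in the normal direction and invoking the one-dimensional Muckenhoupt characterization of Hardy's inequality, using the a.e.\ $A_p(\R)$-control of the sections $w(\cdot,\tilde x)$ provided by the global $A_p$-hypothesis. With that estimate in hand, the remaining steps (the inductive gluing via Lemma~\ref{lem:u1dcontW}, the Leibniz decomposition, and the final mollification) are routine.
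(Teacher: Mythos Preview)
Your argument for the boundedness of $E_0$ matches the paper's: both use Lemma~\ref{lem:u1dcontW} to obtain $D^\alpha E_0 u = E_0 D^\alpha u$ for $|\alpha|\le k$, whence the norm estimate is immediate (the paper states this directly; your induction is the same thing written out).

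For the density assertion your route is correct but considerably more involved than the paper's. The paper avoids the boundary cutoff $\phi_n$ and the weighted Hardy inequality altogether. Having already shown $E_0 u \in W^{k,p}(\R^d,w;X)$ with compact support, one simply picks a mollifier $\zeta \in C^\infty_c(\R^d)$ with $\int\zeta=1$ whose support lies in an open half-space, so that $\zeta_n * E_0 u$ is smooth, compactly supported \emph{inside} $\R^d_+$, and converges to $E_0 u$ in $W^{k,p}(\R^d,w;X)$ by the standard mollifier convergence for $A_p$-weights (\cite[Lemma~2.2]{LMV}). Thus the first half of the lemma does all the work; the density is then a one-line consequence. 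Your cutoff-plus-Hardy strategy has the merit of not relying on a global extension --- indeed it is precisely the method used in Proposition~\ref{prop:densenonAp} for the non-$A_p$ power weights, where no well-behaved zero extension to $\R^d$ exists. But in the $A_p$-setting it is overkill: the iterated weighted Hardy inequality you need, while true (via the uniform $A_p(\R)$-bound on a.e.\ one-dimensional sections of an $A_p(\R^d)$-weight, as you sketch), is a genuine external ingredient that the paper's shift-and-mollify trick sidesteps entirely.
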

\begin{proof}
By localization it suffices to consider $\dom = \R^d_+$.
If $u\in W^{k,p}_0(\R_+^d,w;X)$, then, by Lemma \ref{lem:u1dcontW}, $E_0 u\in W^{k,1}_{\rm loc}(\R^d_+;X)$ and
\[D^{\alpha} E_0 u = E_0 D^{\alpha} u, \qquad |\alpha|\leq k.\]
In particular, this shows that $E_0$ is bounded.

For the final assertion let $u\in W^{k,p}_0(\R_+^d,w)$. By a truncation we may assume $u$ has bounded support. Take $\zeta\in C^\infty_c(\R_-^d)$ such that $\int \zeta \ud x = 1$ and set $\zeta_n(x) = n^d\zeta(nx)$. Then $\zeta_n*E_0u\to E_0 u$ in $W^{k,p}(\R^d,w;X)$ (see \cite[Lemma 2.2]{LMV}). Since $\zeta_n*E_0u\in C^\infty_c(\R_+^d;X)$, the result follows.
\end{proof}

\subsection{Interpolation\label{subsec:Interp}}

We continue with two interpolation inequalities. The first one is \cite[Lemma 5.8]{LMV}.
\begin{lemma}\label{lem:interp1}
Let $p\in (1, \infty)$ and let $w\in A_p$ be even. Let $\dom = \R^d$ or $\dom = \R_+^d$. Then for every $k\in \N\setminus\{0,1\}$, $j\in\{1,\ldots, k-1\}$ and $u\in W^{k,p}(\dom,w;X)$ we have
\[[u]_{W^{j,p}(\dom,w;X)} \leq C_{p,[w]_{A_p}} \|u\|_{L^p(\dom,w;X)}^{1-\frac{j}{k}} [u]_{W^{k,p}(\dom,w;X)}^{\frac{j}{k}}.\]
\end{lemma}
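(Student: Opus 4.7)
The plan is to establish the endpoint case $k=2$, $j=1$ by a Hardy--Littlewood maximal function argument, to promote it to a log-convexity estimate for the Sobolev seminorms by applying it to derivatives, and then to obtain the full family by induction on $k$. The case $\dom=\R^d_+$ will be handled in parallel with $\dom=\R^d$ by choosing the mollifier below to have its support in the inward half-space, so that no extension of $u$ outside $\dom$ is required (alternatively, an even Stein-type reflection would work, since $w$ is even).

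For the base case I would proceed as follows. By density (Lemma~\ref{lem:densityCcRd}), take $u \in C^\infty_c(\overline{\dom};X)$. Fix a compactly supported $\Phi \in C^\infty(\R^d)$ with $\int\Phi = 1$, supported in $\{t_1 < 0\}$ when $\dom = \R^d_+$, and set $\Phi_r(x) = r^{-d}\Phi(x/r)$. Writing $g(x) := \sum_{|\alpha|=2}\|D^\alpha u(x)\|_X$, the decomposition
\[
D_l u = \bigl(D_l u - (D_l u)*\Phi_r\bigr) + u*(D_l \Phi_r),
\]
together with the Taylor identity $D_l u(x) - D_l u(x-t) = \int_0^1 \sum_m t_m\, D_m D_l u(x-st)\,ds$, bounds the first term pointwise by $C r\, M(g)(x)$, while $\|D_l \Phi_r\|_{L^1} \lesssim r^{-1}$ with compact support at scale $r$ bounds the second by $C r^{-1} M(\|u\|_X)(x)$. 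Hence
\[
\|D_l u(x)\|_X \leq C\bigl(r^{-1} M(\|u\|_X)(x) + r\, M(g)(x)\bigr).
\]
Taking the $L^p(\dom,w)$-norm, invoking the $A_p$-boundedness of $M$, and optimizing over the constant $r>0$ yields $\|D_l u\|_{L^p(\dom,w;X)} \leq C\,\|u\|_{L^p(\dom,w;X)}^{1/2}\,[u]_{W^{2,p}(\dom,w;X)}^{1/2}$; summing in $l$ gives the base case.

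Next I would apply the base case to $D^\beta u$ for each $|\beta| = j-1$ (legitimate since $j+1 \leq k$, so $D^\beta u \in W^{2,p}$), sum over $\beta$, and apply Cauchy--Schwarz in $\ell^2$ to obtain the log-convexity
\[
[u]_{W^{j,p}} \leq C\,[u]_{W^{j-1,p}}^{1/2}\,[u]_{W^{j+1,p}}^{1/2}, \qquad 1 \leq j \leq k-1.
\]
The full statement then follows by induction on $k \geq 2$. In the step from $k-1$ to $k$ I would first combine log-convexity at $j = k-1$ with the inductive hypothesis at level $k-1$ (used to rewrite $[u]_{W^{k-2,p}}$ in terms of $\|u\|$ and $[u]_{W^{k-1,p}}$) and rearrange to obtain $[u]_{W^{k-1,p}} \leq C\,\|u\|^{1/k}\,[u]_{W^{k,p}}^{(k-1)/k}$; for $1 \leq j < k-1$, the inductive hypothesis at level $k-1$ composed with this just-proved $j=k-1$ case yields the claim, with the exponents of $\|u\|$ simplifying to $1 - j/k$.

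The main obstacle is the base case: $X$ is only assumed to be a Banach space, so the Fourier-multiplier approach based on Proposition~\ref{prop:weightedmihlin} is unavailable without a UMD assumption. The pointwise maximal-function argument above circumvents this, because it treats $X$ purely through pointwise norms and invokes the weight only via the scalar $A_p$-boundedness of $M$ on $L^p(\dom,w)$.
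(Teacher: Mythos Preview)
Your argument is correct. The paper does not actually prove this lemma; it simply quotes it from \cite[Lemma 5.8]{LMV}. So there is no ``paper's proof'' to compare against beyond that citation.

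Your approach is a standard and clean one: the pointwise bound
\[
\|D_l u(x)\|_X \leq C\bigl(r^{-1} M(\|u\|_X)(x) + r\, M(g)(x)\bigr)
\]
via a one-sided mollifier (support in $\{t_1<0\}$ for the half-space) is exactly the right device to avoid both Fourier multipliers and extensions, and it uses only the scalar $A_p$-boundedness of $M$, which is why no UMD hypothesis on $X$ is needed. The log-convexity step and the induction are routine; the algebra in the inductive step (solving $[u]_{W^{k-1,p}}^{k/(2(k-1))} \leq C\|u\|^{1/(2(k-1))}[u]_{W^{k,p}}^{1/2}$ for $[u]_{W^{k-1,p}}$) is correct. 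One cosmetic point: the ``Cauchy--Schwarz in $\ell^2$'' is not really needed---after applying the base case to each $D^\beta u$ you can simply bound $\|D^\beta u\|$ and $[D^\beta u]_{W^{2,p}}$ by $[u]_{W^{j-1,p}}$ and $[u]_{W^{j+1,p}}$ respectively and sum. Also note that your constant depends on $d$ and $k$ as well as on $p$ and $[w]_{A_p}$; this is consistent with the use made of the lemma elsewhere in the paper (and the subscript $C_{p,[w]_{A_p}}$ in the statement is presumably shorthand).
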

The above result holds on smooth domains as well provided we replace the homogeneous norms $[\cdot]_{W^{k,p}}$ by $\|\cdot\|_{W^{k,p}}$.
In order to extend this interpolation inequality to a class of non-$A_p$-weights, we will use the following pointwise multiplication mappings $M$ and $M^{-1}$.

Let $M:C^\infty_c(\R^{d}_+;X)\to C^\infty_c(\R^{d}_+;X)$ be given by $M u(x) = x_{1} u(x)$. By duality we obtain a mapping $M:\Distr'(\R^{d}_+;X)\to \Distr'(\R^{d}_+;X)$ as well. Similarly, we define $M^{-1}$ on $C^\infty_c(\R^{d}_+;X)$ and $\Distr'(\R^{d}_+;X)$.

\begin{lemma}\label{lem:isomM}
Let $p\in (1, \infty)$, $\gamma\in (-1, 2p-1)$ and $k\in \{0,1,2\}$. Then
$M:W^{k,p}(\R_+^d,w_{\gamma};X)\to W^{k,p}(\R_+^d,w_{\gamma-p};X)$ is bounded. Moreover,
$M:W^{k,p}_0(\R_+^d,w_{\gamma};X)\to W^{k,p}_0(\R_+^d,w_{\gamma-p};X)$ is an isomorphism.
\end{lemma}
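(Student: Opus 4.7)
The plan is to proceed case-by-case on $k \in \{0,1,2\}$, combining the product rule with Hardy's inequality (Lemma~\ref{lem:Hardy}) applied along the $x_{1}$-direction. For $k = 0$ the identity
\[
\|Mu\|_{L^{p}(\R_{+}^{d},w_{\gamma-p};X)}^{p} = \int_{\R_{+}^{d}} \|x_{1} u(x)\|^{p} |x_{1}|^{\gamma-p} \, dx = \|u\|_{L^{p}(\R_{+}^{d}, w_{\gamma};X)}^{p}
\]
exhibits $M$ as an isometry of $L^{p}$-spaces with isometric inverse $M^{-1}$, which settles both claims in this case.

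For $k = 1$, I would apply the distributional product rule $D_{j}(Mu) = \delta_{j1}u + x_{1} D_{j} u$. The term $x_{1} D_{j} u$ has the right norm in $L^{p}(w_{\gamma-p})$ by the $k=0$ computation. For the term $u$ (present only when $j = 1$), I would invoke Hardy's inequality: after Fubini, $u$ is viewed as an $L^{p}(\R^{d-1};X)$-valued function of $x_{1}$, and Lemma~\ref{lem:Hardy} yields $\|u\|_{L^{p}(w_{\gamma-p})} \lesssim \|D_{1} u\|_{L^{p}(w_{\gamma})}$ whenever either $\gamma > p-1$ (unconditional case) or $\tr(u) = 0$ (case $\gamma < p-1$). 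Both subcases are covered by $u \in W^{1,p}_{0}(\R_{+}^{d}, w_{\gamma};X)$, since the threshold $k - |\alpha| > (\gamma+1)/p$ in the definition of $W^{k,p}_{0}$ forces $\tr(u) = 0$ exactly when $\gamma < p-1$. For the inverse direction I would compute $D_{j}(v/x_{1}) = D_{j} v / x_{1} - \delta_{j1} v / x_{1}^{2}$: the first term has $L^{p}(w_{\gamma})$-norm equal to $\|D_{j} v\|_{L^{p}(w_{\gamma-p})}$, while the second is controlled by a second Hardy application,
\[
\|v/x_{1}^{2}\|_{L^{p}(w_{\gamma})} = \|v\|_{L^{p}(w_{\gamma-2p})} \lesssim \|D_{1} v\|_{L^{p}(w_{\gamma-p})},
\]
valid since $\gamma - p < p - 1$ and the required $\tr(v) = 0$ is either automatic from Lemma~\ref{lem:traceAp}(2) when $\gamma - p < -1$, or supplied by $W^{1,p}_{0}(w_{\gamma-p})$ when $\gamma - p \in (-1, p-1)$.

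For $k = 2$ the same scheme extends: expanding $D_{i}D_{j}(Mu) = \delta_{i1} D_{j} u + \delta_{j1} D_{i} u + x_{1} D_{i} D_{j} u$ and Leibniz-expanding $D^{\alpha}(v/x_{1})$ as a finite sum of terms $D^{\beta} v / x_{1}^{m}$ with $1 \leq m \leq 3$, I would iterate Hardy up to twice, for example
\[
\|v\|_{L^{p}(w_{\gamma-3p})} \lesssim \|D_{1} v\|_{L^{p}(w_{\gamma-2p})} \lesssim \|D_{1}^{2} v\|_{L^{p}(w_{\gamma-p})},
\]
where at each step the needed trace $\tr(D_{1}^{j} v) = 0$ for $j = 0, 1$ is supplied by $v \in W^{2,p}_{0}(w_{\gamma-p})$, because the threshold $2 - j > (\gamma+1)/p - 1$ holds for $j \leq 1$ precisely when $\gamma < 2p-1$. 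That $Mu$ and $M^{-1}v$ land in the correct $W^{k,p}_{0}$-space on the target side I would verify by approximating with $C^{\infty}_{c}(\R_{+}^{d};X)$, which is dense in $W^{k,p}_{0}$ on both sides by Proposition~\ref{prop:densenonAp} (and Lemma~\ref{lem:denseApbdr0} in the $A_{p}$-range) and is preserved by both $M$ and $M^{-1}$: indeed, if $v \in C^{\infty}_{c}(\R_{+}^{d};X)$ vanishes near $\{x_{1}=0\}$, then $v/x_{1}$ is again smooth and compactly supported in $\R_{+}^{d}$.

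The main technical obstacle is the arithmetic of trace conditions across weight shifts: each Hardy iteration consumes one order of differentiability together with one trace condition, and the thresholds defining $W^{k,p}_{0}(w_{\gamma})$ and $W^{k,p}_{0}(w_{\gamma-p})$ must supply exactly the vanishing trace needed at each step. The hypothesis $\gamma \in (-1, 2p-1)$ ensures that all intermediate weight exponents $\gamma - jp$ with $1 \leq j \leq k \leq 2$ are compatible with Hardy's inequality together with the traces provided by the source and target spaces, which is what makes the isomorphism claim go through.
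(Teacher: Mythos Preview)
Your approach is essentially the same as the paper's: product rule plus iterated Hardy (Lemma~\ref{lem:Hardy}) for the estimates, and density of $C^{\infty}_{c}(\R^{d}_{+};X)$ via Proposition~\ref{prop:densenonAp}/Lemma~\ref{lem:denseApbdr0} to land in the correct $W^{k,p}_{0}$-space on the target side. The only cosmetic differences are that the paper first reduces to $d=1$ (since $M$ commutes with $\partial_{j}$ for $j\neq 1$) and handles all $k$ at once via the formula $(Mu)^{(j)}=ju^{(j-1)}+Mu^{(j)}$, and that for the inverse it reduces to $u\in C^{\infty}_{c}$ by density \emph{before} applying Hardy (so that all required traces vanish trivially), whereas you track the trace thresholds in $W^{k,p}_{0}$ explicitly.
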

\begin{proof}
Since the derivatives with respect to $x_i$ with $i\neq1$ commute with $M$, we only prove the result in the case $d=1$. Observe that $\|Mu\|_{L^{p}(\R_+,w_{\gamma-p};X)} = \|u\|_{L^{p}(\R_+^d,w_{\gamma};X)}$.
Moreover, by the product rule, we have $(Mu)^{(j)} = j u^{(j-1)} + M u^{(j)}$ for $j\in \{0, 1, 2\}$. Therefore,
\begin{align*}
\|Mu&\|_{W^{k,p}(\R_+,w_{\gamma-p};X)} = \|Mu\|_{L^{p}(\R_+,w_{\gamma-p};X)}  + \sum_{j=1}^k \|(M u)^{(j)}\|_{L^{p}(\R_+,w_{\gamma-p};X)}
\\ & \leq \|u\|_{L^{p}(\R_+,w_{\gamma};X)} + \sum_{j=1}^k \|j u^{(j-1)}\|_{L^{p}(\R_+,w_{\gamma-p};X)} + \|M u^{(j)}\|_{L^{p}(\R_+,w_{\gamma-p};X)}
\\ & \leq \|u\|_{L^{p}(\R_+,w_{\gamma};X)} + C \sum_{j=1}^k \|u^{(j)}\|_{L^{p}(\R_+,w_{\gamma};X)}
\\ & \leq (C+1) \|u\|_{W^{k,p}(\R_+,w_{\gamma};X)},
\end{align*}
where we applied Lemma \ref{lem:Hardy}. This proves the required boundedness of $M$.

By density of $C^\infty_c(\R_+;X)$ in $W^{k,p}_0(\R_+,w_{\gamma};X)$ (see Proposition \ref{prop:densenonAp} and Lemma \ref{lem:denseApbdr0}) it follows that $M:W^{k,p}_0(\R_+,w_{\gamma};X)\to W^{k,p}_0(\R_+,w_{\gamma-p};X)$ is bounded. It remains to prove boundedness of $M^{-1}:W^{k,p}_0(\R_+,w_{\gamma-p};X)\to W^{k,p}_0(\R_+,w_{\gamma};X)$. By Proposition \ref{prop:densenonAp} and Lemma \ref{lem:denseApbdr0} it suffices to prove the required estimate for $u\in C^\infty_c(\R_+;X)$. By the product rule, we have
$(M^{-1}u)^{(j)} = \sum_{i=0}^j c_{i,j} M^{-1+i-j} u^{(i)}$.
Therefore,
\begin{align*}
\|M^{-1}u\|_{W^{k,p}(\R_+,w_{\gamma};X)} & = \sum_{j=0}^k \|(M^{-1} u)^{(j)}\|_{L^{p}(\R_+,w_{\gamma};X)}
\\ & \leq C \sum_{j=0}^k \sum_{i=0}^j \|M^{-1-i} u^{(j-i)}\|_{L^{p}(\R_+,w_{\gamma};X)}
\\ & \leq C \|u\|_{W^{k,p}(\R_+,w_{\gamma-p};X)} + \sum_{j=0}^k \sum_{i=1}^j \|u^{(j-i)}\|_{L^p(\R_+,w_{\gamma-(i+1)p};X)}.
\end{align*}
Now it remains to observe that by Lemma \ref{lem:Hardy} (applied $i$ times)
\[\|u^{(j-i)}\|_{L^p(\R_+,w_{\gamma-(i+1)p};X)} \leq C\|u^{(j)}\|_{L^p(\R_+,w_{\gamma-p};X)}\leq C\|u\|_{W^{k,p}(\R_+,w_{\gamma-p};X)}.\]
\end{proof}

\begin{lemma}\label{lem:interp2}
Let $p\in (1, \infty)$ and $\gamma\in (-p-1,2p-1) \setminus \{ -1,p-1\}$.
Then for every $k\in \N\setminus\{0,1\}$, $j\in\{1,\ldots, k-1\}$ and $u\in W^{k,p}(\R^{d}_{+},w_{\gamma};X)$ we have
\[[u]_{W^{j,p}(\R_+^d,w_{\gamma};X)} \leq C_{\gamma,p,k} \|u\|_{L^p(\R_+^d,w_{\gamma};X)}^{1-\frac{j}{k}} [u]_{W^{k,p}(\R_+^d,w_{\gamma};X)}^{\frac{j}{k}}.\]
\end{lemma}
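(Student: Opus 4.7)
My plan is to split the range of $\gamma$ into three subcases and, in the two non-$A_p$ ranges, reduce to the $A_p$-weighted setting via the multiplication operator $M$ (or $M^{-1}$) of Lemma \ref{lem:isomM}. For $\gamma \in (-1, p-1)$ the even extension of $w_\gamma$ to $\R^d$ lies in $A_p(\R^d)$, so Lemma \ref{lem:interp1} applies directly on $\R^d_+$ and yields the conclusion immediately.

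For $\gamma \in (p-1, 2p-1)$ I set $v = Mu = x_1 u$. Leibniz' rule gives $\partial^\alpha v = x_1 \partial^\alpha u + \alpha_1 \partial^{\alpha - e_1} u$, and since $\gamma > p-1$ Hardy's inequality (Lemma \ref{lem:Hardy}, applied in the $x_1$-direction) controls the lower-order term by $\|\partial^{\alpha - e_1} u\|_{L^p(w_{\gamma-p})} \leq C \|\partial^\alpha u\|_{L^p(w_\gamma)}$. Hence $v \in W^{k,p}(\R^d_+, w_{\gamma-p}; X)$ with $\|v\|_{L^p(w_{\gamma-p})} = \|u\|_{L^p(w_\gamma)}$ and $[v]_{W^{k,p}(w_{\gamma-p})} \leq C[u]_{W^{k,p}(w_\gamma)}$; since $w_{\gamma-p} \in A_p$, Lemma \ref{lem:interp1} applied to $v$ yields $[v]_{W^{j,p}(w_{\gamma-p})} \leq C \|u\|^{1-j/k}_{L^p(w_\gamma)} [u]^{j/k}_{W^{k,p}(w_\gamma)}$. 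Rearranging the Leibniz identity as $x_1 \partial^\alpha u = \partial^\alpha v - \alpha_1 \partial^{\alpha - e_1} u$ then gives
\[[u]_{W^{j,p}(w_\gamma)} \leq C[v]_{W^{j,p}(w_{\gamma-p})} + C[u]_{W^{j-1,p}(w_{\gamma-p})}.\]
The lower-order cross term lies in an $A_p$-weighted Sobolev space via the embedding $W^{k,p}(w_\gamma) \hookrightarrow W^{k-1,p}(w_{\gamma-p})$ supplied by Hardy. I would estimate it by a second application of Lemma \ref{lem:interp1} in the couple $(L^p(w_{\gamma-p}), W^{k-1,p}(w_{\gamma-p}))$, followed by Hardy bounds on $\|u\|_{L^p(w_{\gamma-p})}$ and $[u]_{W^{k-1,p}(w_{\gamma-p})}$ and the induction hypothesis at $j = 1$; a direct exponent computation rearranges the product into exactly $C \|u\|^{1-j/k}_{L^p(w_\gamma)} [u]^{j/k}_{W^{k,p}(w_\gamma)}$.

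The range $\gamma \in (-p-1, -1)$ should be handled analogously with $M^{-1}$ in place of $M$: the weight $w_\gamma$ fails to be locally integrable near $x_1 = 0$, so $L^p(w_\gamma)$-functions vanish sufficiently at the boundary for $M^{-1} u = u/x_1$ to define a bounded map into $L^p(w_{\gamma+p})$, and $w_{\gamma+p} \in A_p$. The principal obstacle I expect in both non-$A_p$ ranges is closing the base case $j=1$ of the induction, where the naive Hardy bound $\|u\|_{L^p(w_{\gamma-p})} \leq C[u]_{W^{1,p}(w_\gamma)}$ is circular; I plan to resolve this by a direct integration-by-parts argument applied to $\int |\partial_1 u|^p x_1^\gamma\, dx_1$ followed by H\"older and Hardy, exploiting the freedom in weighting the H\"older pairing to avoid the circularity.
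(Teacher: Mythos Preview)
Your reduction scheme and the handling of the inductive step $j\geq 2$ via Leibniz, Hardy, and the $A_p$-case (Lemma~\ref{lem:interp1}) are correct; the exponent bookkeeping closes exactly as you say once the base case $j=1$ is in hand. The genuine gap is that base case: the integration-by-parts argument you propose cannot close. For $X$-valued functions the quantity $\|\partial_1 u(x)\|_X^p$ is in general not differentiable in a way that relates to $\partial_1^2 u$, so the manoeuvre is not even available outside the scalar setting. And even for scalar $u$, carrying out the integration by parts on $\int_0^\infty |u'|^p x_1^\gamma\,dx_1$ produces, besides the good term $(p-1)\int u|u'|^{p-2}u''x_1^\gamma$, the boundary-weight term $\gamma\int u|u'|^{p-2}u' x_1^{\gamma-1}$; any H\"older/Young splitting of the latter eventually forces a Hardy estimate $\|u\|_{L^p(w_{\gamma-p})}\leq C_H\|u'\|_{L^p(w_\gamma)}$ with sharp constant $C_H=p/(\gamma+1-p)$, and one checks that $|\gamma|C_H>1$ throughout $\gamma\in(p-1,2p-1)$, so the circular term can never be absorbed.

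The paper takes a different route that sidesteps this circularity entirely. It first reduces (by the standard iteration and a scaling $u\mapsto u(\lambda\,\cdot)$) to the single case $k=2$, $j=1$ in its inhomogeneous form, and then proves the real-interpolation embedding $(L^p(w_\gamma),W^{2,p}(w_\gamma))_{1/2,1}\hookrightarrow W^{1,p}(w_\gamma)$. The key point you are missing is that $M$ maps this interpolation space not merely into $W^{1,p}(w_{\gamma-p})$ but into $W^{1,p}_0(w_{\gamma-p})$: this is where the density Lemma~\ref{lem:CinftydenserealLW} enters, guaranteeing that $C^\infty_c(\R^d_+;X)$ is dense in the source, whence the image inherits the zero-trace condition. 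On $W^{1,p}_0$ the operator $M$ is an \emph{isomorphism} by Lemma~\ref{lem:isomM}, and composing with $M^{-1}$ gives the embedding without any absorption step. In your language: the way out of the circularity is not integration by parts but the observation that $v=Mu$ has $\tr v=0$, so one may invoke the bounded inverse $M^{-1}:W^{1,p}_0(w_{\gamma-p})\to W^{1,p}_0(w_\gamma)=W^{1,p}(w_\gamma)$ directly and then scale. The range $\gamma\in(-p-1,-1)$ is handled symmetrically with $M^{-1}$, using $W^{2,p}(w_\gamma)=W^{2,p}_0(w_\gamma)$ at the outset.
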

\begin{proof}
By an iteration argument as in \cite[Exercise 1.5.6]{Krylov2008_book}, it suffices to consider $k=2$ and $j=1$. Moreover, by a scaling involving $u(\lambda\cdot)$ it suffices to show that
\begin{align}\label{eq:lem:interp2}
\|u\|_{W^{1,p}(\R^d_+,w_{\gamma};X)} \leq C_{\gamma,p} \|u\|_{L^p(\R^d_+,w_{\gamma};X)}^{1/2} \|u\|_{W^{2,p}(\R^d_+,w_{\gamma};X)}^{1/2}.
\end{align}

The case $\gamma \in (-1,p-1)$ is contained in Lemma~\ref{lem:interp1}, where we actually do not need to proceed through \eqref{eq:lem:interp2}. So it remains to treat the case $\gamma \in (-p-1,-1) \cup (p-1,2p-1)$. By standard arguments (see e.g. \ \cite[Lemma 1.10.1]{Tr1}), it suffices to show that
\[
(L^{p}(\R^d_+,w_{\gamma};X),W^{2,p}(\R^d_+,w_{\gamma};X))_{\frac{1}{2},1} \hookrightarrow W^{1,p}(\R^d_+,w_{\gamma};X).
\]

We first assume that $\gamma \in (p-1,2p-1)$.
Using Lemma~\ref{lem:isomM} and real interpolation of operators, we see that $M$ is bounded as an operator
\[
(L^{p}(\R^d_+,w_{\gamma};X),W^{2,p}(\R^d_+,w_{\gamma};X))_{\frac{1}{2},1} \longra (L^{p}(\R^d_+,w_{\gamma-p};X),W^{2,p}(\R^d_+,w_{\gamma-p};X))_{\frac{1}{2},1}.
\]
By a combination of \cite[Lemma 1.10.1]{Tr1} and \eqref{eq:lem:interp2} for the case $\gamma \in (-1,p-1)$, the space on the right hand side is continuously embedded into $W^{1,p}(\R^{d}_{+},w_{\gamma-p};X)$. Therefore, $M$ is a bounded operator
\begin{align}\label{eq:lem:interp2;1}
M:(L^{p}(\R^d_+,w_{\gamma};X),W^{2,p}(\R^d_+,w_{\gamma};X))_{\frac{1}{2},1} \longra W^{1,p}(\R^{d}_{+},w_{\gamma-p};X).
\end{align}
From Lemma \ref{lem:CinftydenserealLW} and the fact that $MC^{\infty}_{c}(\R^{d}_{+};X) \subset C^{\infty}_{c}(\R^{d}_{+};X) \subset W^{1,p}_{0}(\R^{d}_{+},w_{\gamma-p};X)$, it follows that $M$ is a bounded operator
\[
M:(L^{p}(\R^d_+,w_{\gamma};X),W^{2,p}(\R^d_+,w_{\gamma};X))_{\frac{1}{2},1} \longra W^{1,p}_{0}(\R^{d}_{+},w_{\gamma-p};X).
\]
Combining this with Lemma~\ref{lem:isomM} we obtain \eqref{eq:lem:interp2}.

Next we assume $\gamma \in (-p-1,-1)$.
As $M$ in \eqref{eq:lem:interp2;1} in the previous case, $M^{-1}$ is a bounded operator
\[
M^{-1}:(L^{p}(\R^d_+,w_{\gamma};X),W^{2,p}_{0}(\R^d_+,w_{\gamma};X))_{\frac{1}{2},1} \longra W^{1,p}_{0}(\R^{d}_{+},w_{\gamma+p};X).
\]
Combining this with $W^{n,p}(\R^d_+,w_{\gamma};X) = W^{n,p}_{0}(\R^d_+,w_{\gamma};X)$ ($n \in \N$) and Lemma~\ref{lem:isomM} we obtain \eqref{eq:lem:interp2}.
\end{proof}

\begin{proposition}\label{prop:interp00_Ap}
Let $X$ be a UMD space, $p \in (1,\infty)$ and $\gamma \in (-1,p-1)$.
Let $\dom$ be a bounded $C^k$-domain or a special $C^k_c$-domain with $k\in \N_0\cup\{\infty\}$.
Then for every $j \in \{0,\ldots,k\}$ the following holds:
\[[L^p(\dom,w_{\gamma}^{\dom};X), W^{k,p}_0(\dom,w_{\gamma}^{\dom};X)]_{\frac{j}{k}} = W^{j,p}_0(\dom,w_{\gamma}^{\dom};X).\]
\end{proposition}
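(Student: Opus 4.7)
\emph{Plan.} The strategy is to reduce to the model case $\dom = \R^d_+$ by localization and then use a retraction-coretraction argument for complex interpolation on $\R^d$. By the localization pair $(\mathcal{I},\mathcal{P})$ from Subsection \ref{subsec:locCk} applied to the $W^{j,p}_0$-scale (multiplication by the cut-offs $\eta_n$ preserves the zero-trace conditions since by Leibniz the traces of $D^\alpha(\eta_n u)$ are linear combinations of $D^\beta \eta_n \cdot \tr(D^{\alpha-\beta} u) = 0$ for $|\alpha| < k$, $u \in W^{k,p}_0$), one reduces to the two cases $\dom = \R^d$ and $\dom = \R^d_+$. The first case is classical: Proposition \ref{prop:HWduality} identifies $W^{j,p}(\R^d,w_\gamma;X) = H^{j,p}(\R^d,w_\gamma;X)$; Proposition \ref{prop:weightedmihlin} applied to the Mihlin symbol $(1+|\xi|^2)^{is}$ gives that $1-\Delta$ has BIP on $L^p(\R^d,w_\gamma;X)$; and Proposition \ref{prop:BIP} then yields the desired interpolation identity for Bessel potential spaces.

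For $\dom = \R^d_+$, Lemma \ref{lem:denseApbdr0} shows that extension by zero $E_0$ is an isomorphism of $W^{j,p}_0(\R^d_+,w_\gamma;X)$ onto the closed subspace
\[
V_j \;:=\; \{\,u \in W^{j,p}(\R^d,w_\gamma;X) : u|_{\R^d_-} = 0\,\}, \qquad j = 0,1,\ldots,k.
\]
To construct a common retraction onto the $V_j$'s, I would use a Seeley-type reflection extension
\[
ef(x_1,\tx) \;:=\; \sum_{n=1}^{k} c_n\, f(-\alpha_n x_1,\tx)\,\phi(x_1) \text{ for } x_1 > 0, \qquad ef \;:=\; f \text{ on } \R^d_-,
\]
where $\phi \in C^\infty_c(\R)$ equals $1$ near $0$, the $\alpha_n > 0$ are distinct, and the $c_n$ are chosen so that the moment conditions $\sum_n c_n(-\alpha_n)^m = 1$ hold for $m = 0,\ldots,k-1$. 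With this choice $e$ is bounded as an operator $W^{j,p}(\R^d_-,w_\gamma;X) \to W^{j,p}(\R^d,w_\gamma;X)$ for every $j \in \{0,\ldots,k\}$ simultaneously: the matching conditions ensure that the derivatives up to order $j-1 \leq k-1$ glue continuously at $x_1 = 0$, and boundedness in the weighted Bochner setting reduces to the change of variables $x_1 \mapsto -\alpha_n x_1$, which merely rescales the weighted $L^p$-norm by the constant factor $\alpha_n^{-(1+\gamma)/p}$. Letting $r$ denote restriction to $\R^d_-$, the operator $\pi := I - e\circ r$ is then a bounded projection $W^{j,p}(\R^d,w_\gamma;X) \to V_j$ for every such $j$, since $\pi(u)|_{\R^d_-} = u|_{\R^d_-} - e(u|_{\R^d_-})|_{\R^d_-} = 0$ and $\pi u = u$ whenever $u \in V_j$.

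Finally, applying the retraction-coretraction principle for complex interpolation (see, e.g., \cite[Sec.~1.2.4]{Tr1}) to the ambient couple $(L^p(\R^d,w_\gamma;X),W^{k,p}(\R^d,w_\gamma;X))$ and the subspace couple $(V_0,V_k)$, with common retraction $\pi$ and coretractions given by the inclusions $V_j \hookrightarrow W^{j,p}$, yields
\[
[V_0,V_k]_{j/k} \;=\; \pi\bigl([L^p(\R^d,w_\gamma;X),W^{k,p}(\R^d,w_\gamma;X)]_{j/k}\bigr) \;=\; \pi\bigl(W^{j,p}(\R^d,w_\gamma;X)\bigr) \;=\; V_j,
\]
and transporting back under $E_0$ produces the claimed identity on $\R^d_+$. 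I expect the only non-trivial technical step to be the uniform bookkeeping that makes $e$ bounded on all $W^{j,p}$ with $j \leq k$ simultaneously in the weighted vector-valued setting; once that is verified, the interpolation step is essentially formal, and the reduction from a general $C^k$-domain is taken care of by the standard localization machinery of Subsection \ref{subsec:locCk}.
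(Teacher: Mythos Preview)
Your approach is correct and takes a genuinely different route from the paper's proof. The paper reduces to $\R^d_+$ as you do, but then argues via operator theory: it invokes the bounded $H^\infty$-calculus of $\partial_1$ on $L^p(\R_+,w_\gamma;L^p(\R^{d-1};X))$ with domain $W^{1,p}_0$ (a result from \cite{LMV}) and of $-\Delta_{d-1}$, deduces BIP for both, and then applies Proposition~\ref{prop:BIP} together with \cite[Lemma~9.5]{EPS03} to the pair $(1+\partial_1)^k$, $(1-\Delta_{d-1})^{k/2}$. This identifies the complex interpolation space as an intersection, which is then matched with $W^{j,p}_0(\R^d_+,w_\gamma;X)$ via an intersection representation.

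Your argument is more elementary and self-contained: the Seeley reflection produces a \emph{single} projection $\pi = I - e\circ r$ that is simultaneously bounded on every $W^{j,p}(\R^d,w_\gamma;X)$ with $j\le k$, so the retraction--coretraction principle applies directly to the full scale without any functional calculus input. The only external ingredients you need are Lemma~\ref{lem:denseApbdr0} (to identify $V_j$ with $E_0(W^{j,p}_0)$; note this uses that traces from both sides of the hyperplane agree for $A_p$-weights) and the full-space interpolation $[L^p,W^{k,p}]_{j/k}=W^{j,p}$ on $\R^d$, both of which are available. Your bookkeeping is right: the $k$ moment conditions $\sum c_n(-\alpha_n)^m=1$, $m=0,\dots,k-1$, guarantee $C^{k-1}$-matching at $x_1=0$, hence $ef\in W^{k,p}(\R^d,w_\gamma;X)$ by Lemma~\ref{lem:u1dcontW}, and the scaling of $w_\gamma$ under $x_1\mapsto -\alpha_n x_1$ is harmless. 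The trade-off is that the paper's operator-theoretic route reuses machinery ($H^\infty$-calculus of $\partial_1$) that is in any case needed elsewhere in the paper, whereas your route is the classical one and would work in settings where that calculus is not yet established.
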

\begin{proof}
By a localization argument it suffices to consider the case $\dom = \R^d_+$.
The operator $\partial_{1}$ on $L^{p}(\R^{d}_{+},w_{\gamma};X)$ with domain $D(\partial_{1}) = W^{1,p}_{0}(\R_{+},w_{\gamma};L^{p}(\R^{d-1};X))$ has a bounded $H^{\infty}$-calculus with $\omega_{H^{\infty}}(\partial_{1})=\frac{\pi}{2}$ by \cite[Theorem 6.8]{LMV}. Moreover, $D((\partial_{1})^{n}) = W^{n,p}_{0}(\R_{+},w_{\gamma};L^{p}(\R^{d-1};X))$ for all $n \in \N$.
For the operator $\Delta_{d-1}$ on $L^p(\R_+^d,w_{\gamma};L^{p}(\R^{d-1};X))$, defined by
\[
D(\Delta_{d-1}) := L^p(\R_+,w_{\gamma},W^{2,p}(\R^{d-1};X)),
\quad \Delta_{d-1}u := \sum_{k=2}^d \partial_k^2u,
\]
it holds that $-\Delta_{d-1}$ a bounded $H^\infty$-calculus with $\omega_{H^\infty}(-\Delta_{d-1}) = 0$.
Moreover, $D((-\Delta_{d-1})^{n/2}) = L^{p}(\R_{+},w_{\gamma};W^{n,p}(\R^{d-1};X))$ for all $n \in \N$.
It follows that $(1+\partial_{t})^{k}$ with $D((1+\partial_{1})^{k}) = W^{k,p}_{0}(\R_{+},w_{\gamma};X)$ is sectorial having bounded imaginary powers with angle $\leq \pi/2$ and that $(1-\Delta_{d-1})^{k/2}$ with $D((1-\Delta_{d-1})^{k/2}) = L^{p}(\R_{+},w_{\gamma};W^{k,p}(\R^{d-1};X))$ is sectorial having bounded imaginary powers with angle $0$. By a combination of Proposition~\ref{prop:BIP} and \cite[Lemma 9.5]{EPS03},
\begin{align*}
& [L^{p}(\R^{d}_{+},w_{\gamma};X),W^{k,p}_{0}(\R_{+},w_{\gamma};L^{p}(\R^{d-1};X)) \cap L^{p}(\R_{+},w_{\gamma};W^{k,p}(\R^{d-1};X))]_{\frac{j}{k}} \\
&\qquad\qquad = [L^{p}(\R^{d}_{+},w_{\gamma};X),D((1+\partial_{1})^{k})  \cap D((1-\Delta_{d-1})^{k/2})]_{\frac{j}{k}} \\
&\qquad\qquad = D((1+\partial_{1})^{j}) \cap D((1-\Delta_{d-1})^{j/2}) \\
&\qquad\qquad = W^{j,p}_{0}(\R_{+},w_{\gamma};L^{p}(\R^{d-1};X)) \cap L^{p}(\R_{+},w_{\gamma};W^{j,p}(\R^{d-1};X)).
\end{align*}
Now the result follows from the following intersection representation for $n\in \N$:
\[
W^{n,p}_{0}(\R_{+}^{d},w_{\gamma};X) =
W^{n,p}_{0}(\R_{+},w_{\gamma};L^{p}(\R^{d-1};X)) \cap L^{p}(\R_{+},w_{\gamma};W^{n,p}(\R^{d-1};X)).
\]
Here $\hookrightarrow$ is clear. To prove the converse let $u$ be in the intersection space. We first claim that $u\in W^{n,p}(\R_{+}^{d},w_{\gamma};X)$. Using a suitable extension operator it suffices to show the result with $\R_+$ and $\R^d_+$ replaced by $\R$ and $\R^d$ respectively. Now the claim follows from Proposition \ref{prop:intersectionrepr}.
To prove $u\in W^{n,p}_0(\R_{+}^{d},w_{\gamma};X)$ let $|\alpha|\leq n-1$ and write $\alpha = (\alpha_1, \tilde{\alpha})$. It remains to show $\tr (D^\alpha u) = 0$. By assumption and the claim
$D^{\alpha_1} u \in W^{1,p}_{0}(\R_{+},w_{\gamma};L^{p}(\R^{d-1};X))$ and $D^{\alpha_1} u \in W^{1,p}(\R_+,w_{\gamma};W^{n-1-\alpha_1}(\R^{d-1};X))$. It follows that $D^{\alpha_1} u \in W^{1,p}_0(\R_+,w_{\gamma};W^{n-1-\alpha_1}(\R^{d-1};X))$ and therefore, we obtain $D^{\alpha} u \in W^{1,p}_{0}(\R_{+};L^p(\R^{d-1};X))$ as required.
\end{proof}

Now we extend the last identity to the non-$A_p$ setting for $j=1$ and $k=2$.
\begin{proposition}\label{prop:interp00}
Let $X$ be a UMD space, $p \in (1,\infty)$ $\gamma\in (-p-1,2p-1)\setminus\{-1,p-1\}$. Let $\dom$ be a bounded $C^2$-domain or a special $C^2_c$-domain. Then the complex interpolation space satisfies
\[[L^p(\dom,w_{\gamma};X), W^{2,p}_0(\dom,w_{\gamma};X)]_{\frac12} = W^{1,p}_0(\dom,w_{\gamma};X).\]
\end{proposition}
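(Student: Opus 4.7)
The plan is to reduce everything to the half-space $\dom = \R^d_+$, observe that the $A_p$-range $\gamma \in (-1,p-1)$ is already treated by Proposition \ref{prop:interp00_Ap}, and transfer this case to the non-$A_p$ ranges via the pointwise multiplication isomorphism $M$ from Lemma \ref{lem:isomM}.

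For the reduction to $\R^d_+$: for a special $C^2_c$-domain $\dom$ the diffeomorphism $\Phi$ of \eqref{eq:diffeo_special_domain} intertwines the trace on $\partial\dom$ with the trace at $\{x_1=0\}$, so $\Phi_*$ gives Banach space isomorphisms between $W^{k,p}_0(\dom,w^{\dom}_\gamma;X)$ and $W^{k,p}_0(\R^d_+,w_\gamma;X)$ for each $k \in \{0,1,2\}$, and the claim transfers through complex interpolation. For a bounded $C^2$-domain I would apply the partition of unity decomposition \eqref{eq:domaindecomoperator}: multiplication by a smooth cutoff preserves the trace-zero condition, so $\mathcal{I}$ and $\mathcal{P}$ restrict to a retraction/coretraction pair between $W^{k,p}_0(\dom,w^{\dom}_\gamma;X)$ and $W^{k,p}(\R^d;X) \oplus \bigoplus_{n=1}^{N} W^{k,p}_0(\dom_n,w^{\dom_n}_\gamma;X)$ at each level $k \in \{0,1,2\}$. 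The retraction principle (see \cite[Theorem 1.2.4]{Tr1}) then identifies the interpolation space on $\dom$ with the image under $\mathcal{P}$ of the corresponding interpolation on the direct sum, reducing matters to $\R^d$ (trivial, in the weight class $w=1$) and to each special $C^2_c$-piece, hence to $\R^d_+$.

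For $\gamma \in (p-1, 2p-1)$ set $\tilde\gamma := \gamma - p \in (-1, p-1)$. By Lemma \ref{lem:isomM}, $M$ is a Banach space isomorphism $W^{k,p}_0(\R^d_+, w_\gamma;X) \to W^{k,p}_0(\R^d_+, w_{\tilde\gamma};X)$ for every $k \in \{0,1,2\}$, so functoriality of complex interpolation combined with Proposition \ref{prop:interp00_Ap} gives that
\[M:\bigl[L^p(\R^d_+, w_\gamma;X), W^{2,p}_0(\R^d_+, w_\gamma;X)\bigr]_{\frac12} \longrightarrow W^{1,p}_0(\R^d_+, w_{\tilde\gamma};X)\]
is an isomorphism of Banach spaces; applying $M^{-1}$ identifies the left-hand side with $W^{1,p}_0(\R^d_+, w_\gamma;X)$ as desired. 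For $\gamma \in (-p-1,-1)$, where $W^{k,p}_0 = W^{k,p}$ by definition, set $\tilde\gamma := \gamma + p \in (-1, p-1)$; Lemma \ref{lem:isomM} applied with parameter $\tilde\gamma$ shows that $M:W^{k,p}_0(\R^d_+, w_{\tilde\gamma};X) \to W^{k,p}_0(\R^d_+, w_\gamma;X)$ is an isomorphism for $k \in \{0,1,2\}$, and the same interpolation argument concludes.

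The substantive content is entirely contained in Lemma \ref{lem:isomM} and Proposition \ref{prop:interp00_Ap}; the only delicate point is the bookkeeping in the localization step, in particular verifying that the retraction/coretraction pair $(\mathcal{P}, \mathcal{I})$ respects the trace-zero boundary condition simultaneously at every differentiability level $k \in \{0,1,2\}$, so that the retraction principle can be applied compatibly on both ends of the interpolation couple.
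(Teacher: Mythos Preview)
Your proposal is correct and follows essentially the same route as the paper: localize to $\R^d_+$, invoke Proposition~\ref{prop:interp00_Ap} for $\gamma\in(-1,p-1)$, and use the isomorphism $M$ of Lemma~\ref{lem:isomM} (applied with source parameter in $(-1,2p-1)$, shifting by $\pm p$) together with exactness of complex interpolation to handle the two non-$A_p$ ranges. The paper compresses the localization into a single phrase and writes the $M$-transfer as a single chain $[L^p(w_\gamma),W^{2,p}_0(w_\gamma)]_{1/2}=M^{-j}[L^p(w_{\gamma-jp}),W^{2,p}_0(w_{\gamma-jp})]_{1/2}=M^{-j}W^{1,p}_0(w_{\gamma-jp})=W^{1,p}_0(w_\gamma)$ for $j\in\{-1,1\}$, but the substance is identical to what you wrote.
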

\begin{proof}
The case $\gamma\in (-1,p-1)$ is contained in Proposition~\ref{prop:interp00_Ap}.
For  the case $\gamma\in (jp-1, (j+1)p-1)$ with $j=1$ or $j=-1$ we reduce to the previous case. By a localization argument it suffices to consider $\dom = \R^d_+$.  By Lemma \ref{lem:isomM} and since the complex interpolation method is exact we deduce
\begin{align*}
[L^p(\R_+^d,w_{\gamma};X),&W^{2,p}_0(\R_+^d,w_{\gamma};X)]_{\frac12} \\ & = [M^{-j} L^p(\R_+^d,w_{\gamma-jp};X),M^{-j} W^{2,p}_0(\R_+^d,w_{\gamma-jp};X)]_{\frac12}
\\ & = M^{-j} [L^p(\R_+^d,w_{\gamma-jp};X), W^{2,p}_0(\R_+^d,w_{\gamma-jp};X)]_{\frac12}
\\ & = M^{-j} W^{1,p}_0(\R_+^d,w_{\gamma-jp};X) = W^{1,p}_0(\R_+^d,w_{\gamma};X). \qedhere
\end{align*}
\end{proof}

Next we prove a version of Proposition \ref{prop:interp00} without boundary conditions by reducing to the case with boundary conditions.
\begin{proposition}\label{prop:interp_W}
Let $X$ be a UMD space, $p \in (1,\infty)$, $\gamma\in (-p-1,2p-1)\setminus\{-1,p-1\}$. Let $\dom$ be bounded $C^2$-domain or a special $C^2_c$-domain. Then the complex interpolation space satisfies
\[[L^p(\dom,w_{\gamma};X), W^{2,p}(\dom,w_{\gamma};X)]_{\frac12} = W^{1,p}(\dom,w_{\gamma};X).\]
\end{proposition}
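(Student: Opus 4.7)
First reduce to $\dom=\R^d_+$ by the localization machinery of Subsection~\ref{subsec:locCk}, which commutes with complex interpolation through the retraction-coretraction pair in~\eqref{eq:domaindecomoperator}, and then distinguish three ranges of $\gamma$. For $\gamma\in(-p-1,-1)$, the definition in Subsection~\ref{subsec:tracesSob} gives $W^{k,p}_{0}(\R^d_+,w_\gamma;X)=W^{k,p}(\R^d_+,w_\gamma;X)$ for every integer $k$, so the statement is an immediate consequence of Proposition~\ref{prop:interp00}. For the $A_p$-range $\gamma\in(-1,p-1)$, I would use a Stein-type extension $E$ that is simultaneously bounded from $W^{k,p}(\R^d_+,w_\gamma;X)$ into $W^{k,p}(\R^d,w_\gamma;X)$ for $k=0,2$, with bounded restriction as right inverse; on $\R^d$, Proposition~\ref{prop:HWduality} identifies $W^{k,p}=H^{k,p}$, and the Mihlin theorem (Proposition~\ref{prop:weightedmihlin}) applied to the Bessel symbol $(1+|\xi|^2)^{1/2}$ yields $[H^{0,p},H^{2,p}]_{1/2}=H^{1,p}$, which transfers back to $\R^d_+$ via restriction.

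The main range is $\gamma\in(p-1,2p-1)$. Here $W^{1,p}_{0}(\R^d_+,w_\gamma;X)=W^{1,p}(\R^d_+,w_\gamma;X)$ because the trace condition is vacuous, so the easy inclusion $W^{1,p}\hookrightarrow[L^p,W^{2,p}]_{1/2}$ is immediate from Proposition~\ref{prop:interp00} combined with $W^{2,p}_{0}\hookrightarrow W^{2,p}$. For the reverse I would reduce to the $A_p$-range through the operator $M$ of Lemma~\ref{lem:isomM}. The key claim is that
\[
M:W^{k,p}(\R^d_+,w_\gamma;X)\longrightarrow W^{k,p}_{\Dir}(\R^d_+,w_{\gamma-p};X),\qquad k=0,2,
\]
is a bounded isomorphism, where $W^{k,p}_{\Dir}:=\{u\in W^{k,p}:\tr(u)=0\}$. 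Boundedness is Lemma~\ref{lem:isomM}; for the bounded inverse one verifies that $v/x_1\in W^{2,p}(\R^d_+,w_\gamma;X)$ for $v\in W^{2,p}_{\Dir}(\R^d_+,w_{\gamma-p};X)$. The estimates for $\|u\|_{L^p}$, the tangential derivatives $\|D_j u\|_{L^p}$ with $j\geq 2$, the mixed derivatives $\|D_1 D_j u\|_{L^p}$, and $\|D_1 u\|_{L^p}$ all reduce to Lemma~\ref{lem:Hardy} applied to $v$ or to $D_j v$ with $j\geq 2$ (both vanish at $x_1=0$ because $\tr v=0$).

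The only delicate bound is for $\|D_1^2 u\|_{L^p(w_\gamma;X)}$, which cannot be handled termwise in $D_1^2(v/x_1)=D_1^2v/x_1-2D_1v/x_1^2+2v/x_1^3$ since $\tr(D_1 v)$ need not vanish. Cancellation can be exploited by writing, using $v(0)=0$ and Fubini,
\[
D_1v(x_1)-v(x_1)/x_1=\frac{1}{x_1}\int_0^{x_1}r\,D_1^2v(r)\,dr,
\]
which yields
\[
D_1^2(v/x_1)=\frac{D_1^2v}{x_1}-\frac{2}{x_1^3}\int_0^{x_1}r\,D_1^2v(r)\,dr,
\]
and the second term is controlled in $L^p(\R^d_+,w_\gamma;X)$ by $\|D_1^2v\|_{L^p(\R^d_+,w_{\gamma-p};X)}$ via the classical Hardy inequality (with relevant weight exponent $\gamma-2p<p-1$).

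Interpolating the $M$-isomorphism and combining with the iso $M:W^{1,p}(\R^d_+,w_\gamma;X)\to W^{1,p}_{0}(\R^d_+,w_{\gamma-p};X)$ from Lemma~\ref{lem:isomM} reduces the reverse inclusion to the identity
\[
[L^p(\R^d_+,w_{\gamma-p};X),W^{2,p}_{\Dir}(\R^d_+,w_{\gamma-p};X)]_{1/2}=W^{1,p}_{0}(\R^d_+,w_{\gamma-p};X)
\]
in the $A_p$-range $\gamma-p\in(-1,p-1)$. The inclusion $\supseteq$ follows from Proposition~\ref{prop:interp00_Ap} together with $W^{2,p}_{0}\hookrightarrow W^{2,p}_{\Dir}$; for $\subseteq$, the $A_p$-case of the present proposition (giving $[L^p,W^{2,p}]_{1/2}=W^{1,p}$) places any element of $[L^p,W^{2,p}_{\Dir}]_{1/2}$ in $W^{1,p}$, and the density of $W^{2,p}_{\Dir}$ in this interpolation space combined with the continuity of $\tr$ on $W^{1,p}(\R^d_+,w_{\gamma-p};X)$ forces $\tr(u)=0$. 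The central technical obstacle is thus the $M$-isomorphism at the $W^{2,p}$-level, which depends on exploiting cancellation in the formula for $D_1^2(v/x_1)$ rather than a termwise estimate.
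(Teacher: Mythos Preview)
Your proof is correct, and the localization, the $A_p$-range, and the $\gamma\in(-p-1,-1)$ cases match the paper's treatment. For the main range $\gamma\in(p-1,2p-1)$ your argument diverges from the paper's in an interesting way.

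The paper handles the inclusion $[L^p,W^{2,p}]_{1/2}\hookrightarrow W^{1,p}$ by first invoking Lemma~\ref{lem:CinftydenserealLW} to reduce to $u\in C^\infty_c(\R^d_+;X)$. On such $u$ it then chains three inequalities using only the \emph{boundedness} of $M:W^{k,p}(w_\gamma)\to W^{k,p}(w_{\gamma-p})$ (the first half of Lemma~\ref{lem:isomM}) together with the isomorphism $M:W^{1,p}_0(w_\gamma)\to W^{1,p}_0(w_{\gamma-p})$:
\[
\|u\|_{W^{1,p}(w_\gamma)}\lesssim\|Mu\|_{W^{1,p}(w_{\gamma-p})}\lesssim\|Mu\|_{[L^p,W^{2,p}]_{1/2}(w_{\gamma-p})}\lesssim\|u\|_{[L^p,W^{2,p}]_{1/2}(w_\gamma)},
\]
the middle step being the $A_p$-case. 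This sidesteps entirely the question of whether $M^{-1}$ is bounded on $W^{2,p}_{\Dir}(w_{\gamma-p})$.

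Your route instead upgrades Lemma~\ref{lem:isomM} to an isomorphism $M:W^{2,p}(w_\gamma)\to W^{2,p}_{\Dir}(w_{\gamma-p})$, which forces you into the cancellation computation for $D_1^2(v/x_1)$ (correct, and the Hardy exponent check $\gamma-2p<p-1$ is fine). You then need the auxiliary identification $[L^p,W^{2,p}_{\Dir}]_{1/2}=W^{1,p}_0$ in the $A_p$-range, which you derive cleanly. The trade-off: the paper's proof is shorter and stays within the lemmas already stated, whereas yours avoids the density lemma and produces two facts of independent interest---the $M$-isomorphism at the $W^{2,p}$ level with only a single trace condition on the target, and the Dirichlet interpolation identity in the $A_p$-range (which the paper establishes later, essentially as a special case of Proposition~\ref{prop:complexinterApR+} and Theorem~\ref{thm:DD32}).
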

\begin{proof}
By a localization argument it suffices to consider $\dom = \R^d_+$.  The case $\gamma\in (-1,p-1)$ follows from \cite[Propositions 5.5 $\&$ 5.6]{LMV} and the case $\gamma \in (-p-1,-1)$ follows from Proposition~\ref{prop:interp00}.

It remains to establish the case $\gamma \in (p-1,2p-1)$. The inclusion $\hookleftarrow$ follows from Proposition \ref{prop:interp00} and $W^{1,p}_0(\R^{d}_+,w_{\gamma};X) = W^{1,p}(\R^{d}_+,w_{\gamma};X)$. To prove $\hookrightarrow$, by Lemma \ref{lem:CinftydenserealLW} it suffices to show that
\[\|u\|_{W^{1,p}(\R^{d}_+,w_{\gamma};X)}\leq C \|u\|_{[L^p(\R^{d}_+,w_{\gamma};X), W^{2,p}(\R^{d}_+,w_{\gamma};X)]_{\frac12}}, \ \ \  u\in C^\infty_c(\R^d_+;X).\]
Since $W^{1,p}_0(\R^{d}_+,w_{\gamma};X) = W^{1,p}(\R^{d}_+,w_{\gamma};X)$, using Lemma \ref{lem:isomM} twice and the result for the $A_{p}$-case already proved, we obtain
\begin{align*}
\|u\|_{W^{1,p}(\R^{d}_+,w_{\gamma};X)}\lesssim \|M u\|_{W^{1,p}(\R^{d}_+,w_{\gamma-p};X)} & \lesssim  \|Mu\|_{[L^p(\R^{d}_+,w_{\gamma-p};X), W^{2,p}(\R^{d}_+,w_{\gamma-p};X)]_{\frac12}}\\ & \lesssim \|u\|_{[L^p(\R^{d}_+,w_{\gamma};X), W^{2,p}(\R^{d}_+,w_{\gamma};X)]_{\frac12}}.
\end{align*}
\end{proof}

Next we turn to a different type of interpolation result where we interpolate all the possible parameters including the target spaces. There are many existing results in this direction (see \cite[Proposition 3.7]{MeyVerpoint} and \cite[Proposition 7.1]{PSW} and references therein). In the unweighted case it has recently also appeared in \cite[Theorem 4.5.5]{Amann19} using a different argument. Since our (independent) proof is of interest we present the details.
\begin{theorem}\label{thm:Hspwinterp}
Let $X_j$ be a UMD space, $p_j\in (1, \infty)$, $w_j\in A_{p_j}$ and $s_j\in \R$ for $j\in \{0,1\}$. Let $\theta\in (0,1)$ and set $X_{\theta} = [X_0, X_1]_{\theta}$, $\frac{1}{p} = \frac{1-\theta}{p_0} + \frac{\theta}{p_1}$, $w = w_0^{(1-\theta)p/p_0} w_1^{\theta p/p_1}$ and $s = (1-\theta)s_0 + \theta s_1$. Then
\[[H^{s_0,p_0}(\R^d,w_0;X_0), H^{s_1,p_1}(\R^d,w_1;X_1)]_{\theta} = H^{s,p}(\R^d,w;X_{\theta}).\]
\end{theorem}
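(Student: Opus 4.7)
The plan is to reduce the general case to the endpoint case $s_0 = s_1 = 0$ via an analytic family of Bessel potential operators. I would first establish the weighted vector-valued $L^p$ interpolation identity
\[[L^{p_0}(\R^d,w_0;X_0), L^{p_1}(\R^d,w_1;X_1)]_\theta = L^p(\R^d,w;X_\theta).\]
This is Calder\'on's complex interpolation theorem for $L^p$-spaces with change of measure: viewing $L^{p_j}(\R^d, w_j; X_j) = L^{p_j}(d\mu_j; X_j)$ with $d\mu_j = w_j \ud x$ and combining the scalar identification with the routine $[L^p(\mu;X_0),L^p(\mu;X_1)]_\theta=L^p(\mu;X_\theta)$ produces precisely the target density $w = w_0^{(1-\theta)p/p_0} w_1^{\theta p/p_1}$.

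Next, let $J^\sigma := \F^{-1}[(1+|\xi|^2)^{\sigma/2}\F\,\cdot\,]$ denote the Bessel potential Fourier multiplier. By definition $J^\sigma$ is an isomorphism $H^{t+\sigma,p}(\R^d,w;X) \to H^{t,p}(\R^d,w;X)$ for every $t,\sigma\in\R$, $p\in(1,\infty)$, UMD space $X$, and $w \in A_p$. Applying Proposition \ref{prop:weightedmihlin} to the symbol $(1+|\xi|^2)^{i\tau/2}$ gives the Mihlin bound $\|J^{i\tau}\|_{\calL(L^{p_j}(\R^d,w_j;X_j))}\leq C_j(1+|\tau|)^{d+2}$ for all $\tau\in\R$.

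For the forward inclusion $H^{s,p}(\R^d,w;X_\theta)\hookrightarrow[H^{s_0,p_0}(\R^d,w_0;X_0),H^{s_1,p_1}(\R^d,w_1;X_1)]_\theta$, given $f$ in the former I set $g:=J^s f\in L^p(\R^d,w;X_\theta)$, invoke the first step to pick an analytic $G\colon\overline{S}\to L^{p_0}(\R^d,w_0;X_0)+L^{p_1}(\R^d,w_1;X_1)$ in the Calder\'on class with $G(\theta)=g$ and bounded boundary norms, and then define, for sufficiently large $\delta>0$,
\[F(z):=e^{\delta(z^2-\theta^2)}J^{-[(1-z)s_0+zs_1]}G(z).\]
On the line $\{j+it:t\in\R\}$ I factor $J^{-[(1-j)s_0+js_1]-it(s_1-s_0)}=J^{-s_j}\circ J^{-it(s_1-s_0)}$: the first factor is an isomorphism $L^{p_j}(\R^d,w_j;X_j)\to H^{s_j,p_j}(\R^d,w_j;X_j)$, the second has operator norm $\lesssim(1+|t|)^{d+2}$, and the Gaussian decay $|e^{\delta(z^2-\theta^2)}|=e^{\delta(j^2-t^2-\theta^2)}$ absorbs this polynomial growth. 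Since $F(\theta)=f$, this places $f$ in the interpolation space with controlled norm. The reverse inclusion is symmetric: apply $e^{\delta(z^2-\theta^2)}J^{(1-z)s_0+zs_1}$ to any analytic extension of $f$ in the $H^{s_j,p_j}$-interpolation class to produce a valid extension of $J^s f\in L^p(\R^d,w;X_\theta)$.

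The main obstacle is verifying holomorphy of $F$ on the open strip in the Calder\'on sense. Since the symbol $(1+|\xi|^2)^{[(1-z)s_0+zs_1]/2}$ is entire in $z$ for each fixed $\xi\neq 0$, holomorphy of $z\mapsto F(z)\in L^{p_0}(\R^d,w_0;X_0)+L^{p_1}(\R^d,w_1;X_1)$ reduces, by a density argument on $\Schw(\R^d)\otimes X_\theta\subset H^{s,p}(\R^d,w;X_\theta)$ (dense by Proposition \ref{prop:HWduality} and Lemma \ref{lem:densitynonAp}), to a routine Fubini-type exchange of integration and differentiation. The Gaussian factor $e^{\delta(z^2-\theta^2)}$ is Stein's standard device for absorbing the polynomial $\|J^{i\tau}\|$-growth without altering $F(\theta)=f$.
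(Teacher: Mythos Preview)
Your approach via analytic families of Bessel potentials is a classical alternative to the paper's proof, but the two routes are genuinely different. The paper does not use the lift $J^\sigma$ at all: it characterizes $H^{s_j,p_j}(w_j;X_j)$ via the Littlewood--Paley norm $\|(\varphi_k*f)_k\|_{\Rad^{\sigma_j,p_j}(Y_j)}$ with $\sigma_j(k)=2^{s_jk}$ and $Y_j=L^{p_j}(w_j;X_j)$, builds a retraction--coretraction pair between $H^{s_j,p_j}$ and $\Rad^{\sigma_j,p_j}(Y_j)$, and reduces to interpolation of weighted Rademacher sequence spaces (Proposition~\ref{lem:weightedRad}), which is handled by the Triebel weight-transference trick. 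Your method is more elementary in that it avoids Littlewood--Paley theory and $R$-boundedness entirely; the paper's route has the structural advantage that the retraction and coretraction are \emph{fixed} bounded operators, so no analysis on the interior of the strip is needed.

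That said, there is a genuine gap in your write-up precisely at the point you flag as ``the main obstacle''. For $F$ to lie in the Calder\'on class $\F(H^{s_0,p_0}(w_0;X_0),H^{s_1,p_1}(w_1;X_1))$ you need $F(z)\in H^{s_0,p_0}(w_0;X_0)+H^{s_1,p_1}(w_1;X_1)$ for every $z$ in the open strip --- not $L^{p_0}(w_0;X_0)+L^{p_1}(w_1;X_1)$ as you wrote --- and this is not routine. Decomposing $G(z)=g_0+g_1$ with $g_j\in L^{p_j}(w_j;X_j)$, the term $J^{-[(1-z)s_0+zs_1]}g_1$ lands in $H^{(1-\sigma)s_0+\sigma s_1,\,p_1}(w_1;X_1)$ for $z=\sigma+it$, which (say for $s_0<s_1$, $\sigma<1$) is strictly larger than $H^{s_1,p_1}(w_1;X_1)$ and has no reason to embed into $H^{s_0,p_0}(w_0;X_0)$ since $p_0,w_0,X_0$ are unrelated to $p_1,w_1,X_1$. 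Your density reduction to $f\in\Schw\otimes X_\theta$ does not resolve this, because the Calder\'on extension $G$ of $J^sf$ supplied by Proposition~\ref{prop:weightedLp} need not be Schwartz-valued. The clean fix is to recast the argument as abstract Stein interpolation for the family $T_z=J^{(1-z)s_0+zs_1}$ acting on the dense subspace $\Schw(\R^d)\otimes(X_0\cap X_1)$: for such $a$ one has $T_za\in\Schw\otimes(X_0\cap X_1)\subset L^{p_0}(w_0;X_0)\cap L^{p_1}(w_1;X_1)$ for every $z$ in the strip, holomorphy is immediate, and the boundary bounds you already checked then yield that $J^s$ and $J^{-s}$ are mutually inverse bounded maps between $[H^{s_0,p_0},H^{s_1,p_1}]_\theta$ and $L^p(w;X_\theta)$.
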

Observe that $w\in A_p$ by \cite[Exercise 9.1.5]{GrafakosM}. The proof of the theorem will be given below.

As a corollary of Proposition~\ref{prop:interp_W} and Theorem~\ref{thm:Hspwinterp} we obtain (using the identification from Proposition~\ref{prop:HWduality}) the following mixed-derivative theorem:
\begin{corollary}\label{cor:mixed-derivative}
Let $X$ be a UMD space, $p \in (1,\infty)$, $\gamma\in (-p-1,2p-1)\setminus\{-1,p-1\}$ and $d\geq 2$. Then
\begin{align*}
& W^{2,p}(\R^{d-1};L^{p}(\R_{+},w_{\gamma};X)) \cap L^{p}(\R^{d-1};W^{2,p}(\R_{+},w_{\gamma};X)) \\
& \qquad\qquad\qquad\qquad \hookrightarrow W^{1,p}(\R^{d-1},W^{1,p}(\R_{+},w_{\gamma};X)).
\end{align*}
\end{corollary}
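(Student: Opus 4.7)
The plan is to combine Theorem \ref{thm:Hspwinterp} with the interpolation identity of Proposition \ref{prop:interp_W} and then apply the standard embedding of an intersection into a complex interpolation space. Concretely, I would set
\[ Y_0 := L^{p}(\R_+, w_\gamma; X), \qquad Y_1 := W^{2,p}(\R_+, w_\gamma; X),\]
and view the problem on $\R^{d-1}$ as a vector-valued one with values in $Y_0$ and $Y_1$. Before anything else I would verify that both $Y_0$ and $Y_1$ are UMD: $Y_0$ is UMD because the UMD property is preserved by $L^p$ of a $\sigma$-finite measure for $p\in(1,\infty)$, and $Y_1$ inherits UMD as a closed subspace of the finite product $Y_0^{N}$ via the isomorphism $u\mapsto (D^\alpha u)_{|\alpha|\leq 2}$.

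Next, Proposition \ref{prop:interp_W} applied on $\dom = \R_+$ (note that the hypothesis $\gamma\in (-p-1,2p-1)\setminus\{-1,p-1\}$ matches exactly the setting here, in view of Corollary \ref{cor:mixed-derivative}) yields
\[ [Y_0, Y_1]_{1/2} = W^{1,p}(\R_+, w_\gamma; X).\]
Then Theorem \ref{thm:Hspwinterp} applied to the pair $(Y_0, Y_1)$ on $\R^{d-1}$ with $s_0 = 2$, $s_1 = 0$, $p_0 = p_1 = p$, $w_0 = w_1 = 1$, and $\theta = 1/2$ gives
\[ \bigl[H^{2,p}(\R^{d-1}; Y_0),\, H^{0,p}(\R^{d-1}; Y_1)\bigr]_{1/2} = H^{1,p}\bigl(\R^{d-1}; [Y_0, Y_1]_{1/2}\bigr).\]
Now I would invoke Proposition \ref{prop:HWduality} (using UMD of $Y_0$, $Y_1$ and of $[Y_0, Y_1]_{1/2}$, the last again being a closed subspace of the UMD space $Y_0$ up to equivalent norms, or by direct verification) to identify each Bessel potential space with integer smoothness as the corresponding Sobolev space; the case $s=0$ is trivial. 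This turns the identity into
\[ \bigl[W^{2,p}(\R^{d-1}; Y_0),\, L^{p}(\R^{d-1}; Y_1)\bigr]_{1/2} = W^{1,p}\bigl(\R^{d-1}; W^{1,p}(\R_+, w_\gamma; X)\bigr).\]

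To finish, I would use the standard continuous embedding $A_0\cap A_1 \hookrightarrow [A_0, A_1]_{1/2}$, valid for every interpolation couple (a consequence of $\|u\|_{[A_0,A_1]_{1/2}}\leq \|u\|_{A_0}^{1/2}\|u\|_{A_1}^{1/2}$), applied to $A_0 = W^{2,p}(\R^{d-1}; Y_0)$ and $A_1 = L^{p}(\R^{d-1}; Y_1)$. Chaining this with the identity above produces exactly the claimed inclusion. I do not anticipate a real obstacle: the heavy lifting is packaged in Theorem \ref{thm:Hspwinterp} and Proposition \ref{prop:interp_W}, and the only point that needs a moment of care is confirming the UMD property for $Y_0$, $Y_1$ so that Theorem \ref{thm:Hspwinterp} and Proposition \ref{prop:HWduality} apply.
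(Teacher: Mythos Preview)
Your proposal is correct and follows essentially the same route as the paper's proof: identify $W^{k,p}=H^{k,p}$ via Proposition~\ref{prop:HWduality}, embed the intersection into the complex interpolation space, apply Theorem~\ref{thm:Hspwinterp} on $\R^{d-1}$, and then use Proposition~\ref{prop:interp_W} on $\R_+$ to identify $[Y_0,Y_1]_{1/2}$. The only difference is cosmetic---you make the UMD verification for $Y_0,Y_1$ explicit, whereas the paper leaves it implicit.
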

\begin{proof}
By Proposition \ref{prop:HWduality}, Theorem~\ref{thm:Hspwinterp}, and Proposition~\ref{prop:interp_W},
\begin{align*}
&L^{p}(\R^{d-1};W^{2,p}(\R_{+},w_{\gamma};X)) \cap W^{2,p}(\R^{d-1};L^{p}(\R_{+},w_{\gamma};X)) \\
&\qquad\qquad\qquad =  H^{0,p}(\R^{d-1};W^{2,p}(\R_{+},w_{\gamma};X)) \cap H^{2,p}(\R^{d-1};L^{p}(\R_{+},w_{\gamma};X))  \\
&\qquad\qquad\qquad \hookrightarrow [H^{0,p}(\R^{d-1};W^{2,p}(\R_{+},w_{\gamma};X)), H^{2,p}(\R^{d-1};L^{p}(\R_{+},w_{\gamma};X))]_{\frac{1}{2}} \\
&\qquad\qquad\qquad = H^{1,p}(\R^{d-1};[W^{2,p}(\R_{+},w_{\gamma};X),L^{p}(\R_{+},w_{\gamma};X)]_{\frac{1}{2}}) \\
&\qquad\qquad\qquad = W^{1,p}(\R^{d-1};W^{1,p}(\R_{+},w_{\gamma};X)). \qedhere
\end{align*}
\end{proof}

For the proof of Theorem~\ref{thm:Hspwinterp} we need two preliminary results. The first result follows as in \cite[Theorems 1.18.4 $\&$ 1.18.5]{Tr1}.
\begin{proposition}\label{prop:weightedLp}
Let $(A,\A,\mu)$ be a measure space. Let $X_j$ be a Banach space, $p_j\in (1, \infty)$ and $w_j:S\to (0,\infty)$ measurable for $j\in \{0,1\}$. Let $\theta\in (0,1)$ and set $X_{\theta} = [X_0, X_1]_{\theta}$, $\frac{1}{p} = \frac{1-\theta}{p_0} + \frac{\theta}{p_1}$, $w = w_0^{(1-\theta)p/p_0} w_1^{\theta p/p_1}$ and $s = (1-\theta)s_0 + \theta s_1$. Then
\[[L^{p_0}(A,w_0;X_0), L^{p_1}(A,w_1;X_1)]_{\theta} = L^{p}(\R^d,w;X_{\theta}).\]
\end{proposition}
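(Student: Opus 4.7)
The plan is to reduce to the unweighted case and transfer the identity via an analytically-varying pointwise multiplication, following the strategy of \cite[Theorems 1.18.4 $\&$ 1.18.5]{Tr1}.

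First, I would invoke the classical unweighted vector-valued Calder\'on identity
\[
[L^{p_0}(A;X_0), L^{p_1}(A;X_1)]_{\theta} = L^{p}(A;X_{\theta}),
\]
whose proof uses Calder\'on's construction of admissible holomorphic functions on the strip $S := \{z \in \C : 0 \leq \Re z \leq 1\}$ combined with pointwise arguments in $a \in A$. The bridge to the weighted case is the analytic weight factor
\[
\varphi(a,z) := \exp\!\Big(\tfrac{(1-z)p}{p_0}\log w_0(a) + \tfrac{zp}{p_1}\log w_1(a)\Big), \qquad a\in A,\ z\in \overline{S},
\]
which is well defined (as $w_j(a)>0$), holomorphic in $z$ for each fixed $a$, measurable in $a$, and whose modulus depends only on $\Re z$; one checks that $|\varphi(a,it)| = w_0(a)^{p/p_0}$, $|\varphi(a,1+it)| = w_1(a)^{p/p_1}$, and $\varphi(a,\theta) = w(a)$. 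Pointwise multiplication by $\varphi^{\pm 1/p}$ (principal branch) then implements a correspondence between admissible functions for the unweighted and for the weighted couple that is isometric on each horizontal boundary.

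Concretely, to prove $L^{p}(A,w;X_{\theta}) \embed [L^{p_0}(A,w_0;X_0), L^{p_1}(A,w_1;X_1)]_{\theta}$, given $f \in L^{p}(A,w;X_{\theta})$ I observe that $g := w^{1/p}f$ lies isometrically in $L^{p}(A;X_{\theta})$, and use the unweighted identity to produce, for each $\varepsilon > 0$, an admissible $F:\overline{S}\to L^{p_0}(A;X_0)+L^{p_1}(A;X_1)$ with $F(\theta)=g$ and boundary bounds at most $(1+\varepsilon)\|f\|_{L^p(A,w;X_\theta)}$. Then $\tilde F(z) := \varphi(z)^{-1/p}F(z)$ is admissible for the weighted couple with $\tilde F(\theta)=f$, and the boundary identities $|\varphi(a,it)|^{-p_0/p}w_0(a) = 1$ and $|\varphi(a,1+it)|^{-p_1/p}w_1(a) = 1$ deliver the inclusion with the correct norm estimate. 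The reverse inclusion is proved symmetrically by multiplying a given admissible function for the weighted couple by $\varphi^{1/p}$ and applying the unweighted identity to $w^{1/p}f$.

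The main obstacle is verifying that multiplication by $\varphi(z)^{\pm 1/p}$ preserves Calder\'on admissibility in the Bochner sense, because $|\varphi(\cdot,z)|$ need not be uniformly bounded on $A$. I would handle this by a standard truncation argument: first prove both inclusions for $f$ supported on a measurable set where $C^{-1}\leq w_0,w_1\leq C$ for some $C>0$, in which case $|\varphi|$ is bounded above and below uniformly on $\overline{S}\times A$ so that Bochner continuity, holomorphy, and boundary integrability are immediate; then pass to the limit using dominated convergence and the density of such truncated functions in $L^{p}(A,w;X_{\theta})$. Everything beyond this is routine bookkeeping with the boundary norm identities above.
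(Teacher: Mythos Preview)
Your proposal is correct and follows essentially the same route as the paper, which simply defers to \cite[Theorems 1.18.4 \& 1.18.5]{Tr1}; the holomorphic weight factor $\varphi(\cdot,z)$ you write down and the reduction to the unweighted Calder\'on identity is precisely Triebel's argument, and your truncation step to secure Bochner-admissibility is the standard way to make that transfer rigorous.
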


For the next result we need to introduce some notation. Let $(\varepsilon_k)_{k\geq 0}$ be a Rademacher sequence on a probability space $\Omega$. Let $\sigma:\N\to (0,\infty)$ be a weight function, $p\in (1, \infty)$ and let $\Rad^{\sigma,p}(X)$ denote the space of all sequences $(x_k)_{k\geq 0}$ in $X$ for which
\[\big\|(x_k)_{k\geq 0}\big\|_{\Rad^{\sigma,p}(X)}:= \sup_{n\geq 1} \Big\|\sum_{k=0}^n \varepsilon_k \sigma(k) x_k\Big\|_{L^p(\Omega;X)}<\infty.\]
The above space is $p$-independent and the norms for different values of $p$ are equivalent (see \cite[Proposition 6.3.1]{HNVW2}). If $\sigma \equiv 1$, we write $\Rad^p(X):=\Rad^{\sigma,p}(X)$. Clearly $(x_k)_{k\geq 0}\mapsto (\sigma(k) x_k)_{k\geq 0}$ defines an isometric isomorphism from $\Rad^{\sigma,p}(X)$ onto $\Rad^p(X)$. By \cite[Corollary 6.4.12]{HNVW2}, if $X$ does not contain a copy isomorphic to $c_0$ (which is the case for UMD spaces), then $(x_k)_{k\geq 0}$ in $\Rad^{\sigma,p}(X)$ implies that $\sum_{k\geq 0} \varepsilon_k \sigma(k) x_k$ converges in $L^p(\Omega;X)$ and in this case
\[\big\|(x_k)_{k\geq 0}\big\|_{\Rad^{\sigma,p}(X)} =  \Big\|\sum_{k\geq 0} \varepsilon_k \sigma(k) x_k\Big\|_{L^p(\Omega;X)}.\]
Interpolation of the unweighted spaces
\begin{equation}\label{eq:Radunweightedinter}
[\Rad^{p_0}(X_0), \Rad^{p_1}(X_1)]_{\theta} = \Rad^p(X_{\theta})
\end{equation}
holds if $X_0$ and $X_1$ are $K$-convex spaces (see \cite[Theorem 7.4.16]{HNVW2} for details). In particular, UMD spaces are $K$-convex (see \cite[Proposition 4.3.10]{HNVW1}).  We need the following weighted version of complex interpolation of $\Rad$-spaces.
\begin{proposition}\label{lem:weightedRad}
Let $X_j$ be a $K$-convex space, $\sigma_{j}:\N\to (0,\infty)$ and let $p_j\in (1, \infty)$ for $j\in \{0,1\}$. Let $\theta\in (0,1)$ and set $X_{\theta} = [X_0, X_1]_{\theta}$, $\frac1p = \frac{1-\theta}{p_0} + \frac{\theta}{p_1}$ and $\sigma = \sigma_0^{1-\theta} \sigma_1^{\theta}$.
Then
\[[\Rad^{\sigma_0,p_0}(X_0), \Rad^{\sigma_1,p_1}(X_1)]_{\theta} = \Rad^{\sigma,p}(X_\theta).\]
\end{proposition}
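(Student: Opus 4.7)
The plan is to reduce the weighted identity to the unweighted one \eqref{eq:Radunweightedinter}, using the unweighted Bochner interpolation identity from Proposition \ref{prop:weightedLp}, i.e., $[L^{p_0}(\Omega;X_0), L^{p_1}(\Omega;X_1)]_{\theta} = L^{p}(\Omega;X_{\theta})$, together with the Rademacher projection $\pi$. It suffices to work with finitely supported sequences throughout, since these are dense in each of the spaces appearing ($K$-convex spaces are reflexive and therefore do not contain $c_0$, so the defining series converge and admit truncations). The key technical device will be the analytic family of weights $\sigma_z(k) := \sigma_0(k)^{1-z}\sigma_1(k)^{z}$ on the closed strip $\overline{S}$; this satisfies $\sigma_{\theta}=\sigma$ together with $|\sigma_{it}(k)|=\sigma_0(k)$ and $|\sigma_{1+it}(k)|=\sigma_1(k)$, so the ratios $\sigma_0(k)/\sigma_{it}(k)$ and $\sigma_1(k)/\sigma_{1+it}(k)$ are unimodular.

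For the inclusion $\Rad^{\sigma,p}(X_{\theta}) \hookrightarrow [\Rad^{\sigma_0,p_0}(X_0),\Rad^{\sigma_1,p_1}(X_1)]_{\theta}$, I take a finitely supported $(x_k)\in \Rad^{\sigma,p}(X_{\theta})$ and set $y=\sum_k \varepsilon_k \sigma(k) x_k \in L^{p}(\Omega;X_{\theta})$, whose norm equals $\n(x_k)\n_{\Rad^{\sigma,p}(X_{\theta})}$. Proposition \ref{prop:weightedLp} supplies an analytic $F:\overline{S}\to L^{p_0}(\Omega;X_0)+L^{p_1}(\Omega;X_1)$ with $F(\theta)=y$ and the usual boundary bounds. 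I then apply the Rademacher projection $\pi$, which is bounded on each $L^{p_j}(\Omega;X_j)$ by $K$-convexity of $X_j$ and acts consistently across the couple (the formula $\pi f = \sum_k \varepsilon_k \E[\varepsilon_k f]$ is independent of the ambient Banach space). Hence $\pi F$ is again analytic with the same boundary estimates, and $\pi F(\theta)=y$. Writing $\pi F(z) = \sum_k \varepsilon_k z_k(z)$, I define $u_k(z) := z_k(z)/\sigma_z(k)$, so that $u_k(\theta)=x_k$. On the line $z=it$, $\sigma_0(k) u_k(it) = \lambda_k(t) z_k(it)$ with $|\lambda_k(t)|=1$, and the contraction principle for Rademacher sums with modulus-one complex multipliers bounds $\n(u_k(it))\n_{\Rad^{\sigma_0,p_0}(X_0)}$ by a constant times $\n \pi F(it) \n_{L^{p_0}(\Omega;X_0)}\lesssim \n y\n_{L^{p}(\Omega;X_{\theta})}$; the line $z=1+it$ is analogous.

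For the reverse inclusion, I take an analytic $f(z)=(f_k(z))_k$ with values in $\Rad^{\sigma_0,p_0}(X_0)+\Rad^{\sigma_1,p_1}(X_1)$ and $f(\theta)=(x_k)$, and form $g(z):=\sum_k \varepsilon_k \sigma_z(k) f_k(z)$. This $g$ is analytic into $L^{p_0}(\Omega;X_0)+L^{p_1}(\Omega;X_1)$, and at $z=it$ one has $\sigma_{it}(k) = \lambda_k(t)\sigma_0(k)$ with $|\lambda_k(t)|=1$, so the contraction principle gives $\n g(it) \n_{L^{p_0}(\Omega;X_0)}\lesssim \n f(it)\n_{\Rad^{\sigma_0,p_0}(X_0)}$, and similarly on $z=1+it$. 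Since $g(\theta)=\sum_k \varepsilon_k \sigma(k) x_k$, Proposition \ref{prop:weightedLp} then yields $\n (x_k) \n_{\Rad^{\sigma,p}(X_{\theta})} = \n g(\theta)\n_{L^{p}(\Omega;X_{\theta})} \lesssim \n(x_k)\n_{[\Rad^{\sigma_0,p_0}(X_0), \Rad^{\sigma_1,p_1}(X_1)]_{\theta}}$.

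The main obstacle is that the natural isometric embeddings $\Rad^{\sigma_j,p_j}(X_j)\hookrightarrow L^{p_j}(\Omega;X_j)$ given by $(x_k)\mapsto \sum_k\varepsilon_k \sigma_j(k)x_k$ are \emph{different} for $j=0$ and $j=1$, so one cannot merely appeal to the standard stability of complex interpolation under complemented subspaces of a common ambient space. Introducing the holomorphic weight family $\sigma_z(k)$ and absorbing the resulting modulus-one factor via the contraction principle is precisely what reconciles the two embeddings in a holomorphic way across $\overline{S}$, while the $K$-convexity of $X_0$ and $X_1$ is exactly what is needed to project onto the Rademacher subspace after interpolation.
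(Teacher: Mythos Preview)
Your argument is correct and shares the paper's key device --- the holomorphic weight family $\sigma_z(k)=\sigma_0(k)^{1-z}\sigma_1(k)^{z}$, whose boundary values differ from $\sigma_j(k)$ by unimodular factors absorbed via the contraction principle --- but the routes diverge after that. The paper proceeds in one stroke: the map $Tf(k,z)=\sigma_z(k)f(k,z)$ is an isomorphism between the Calder\'on $\F$-spaces of the weighted couple $(\Rad^{\sigma_0,p_0}(X_0),\Rad^{\sigma_1,p_1}(X_1))$ and of the unweighted couple $(\Rad^{p_0}(X_0),\Rad^{p_1}(X_1))$, so the weighted interpolation space is carried isomorphically onto the unweighted one, and then the known identity \eqref{eq:Radunweightedinter} is quoted directly. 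You instead bypass \eqref{eq:Radunweightedinter} and unpack its proof inline: you embed into $L^{p_j}(\Omega;X_j)$, invoke the Bochner interpolation identity there, and use the Rademacher projection (hence $K$-convexity explicitly) to come back to Rad-spaces. This buys you a self-contained argument that does not cite \eqref{eq:Radunweightedinter}, at the cost of more bookkeeping (analyticity of $(u_k(z))_k$ into the sum of the Rad-spaces, continuity on the boundary lines for possibly infinitely supported $(z_k(z))_k$, etc.). The paper's version is shorter because all of that is hidden inside the black-box \eqref{eq:Radunweightedinter}, and the only new ingredient is the one-line observation that $T$ preserves the $\F$-norms on each boundary line.
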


\begin{proof}
We use the same method as in \cite[1.18.5]{Tr1}. Let
\[T:\F_-(\Rad^{\sigma_0,p_0}(X_0), \Rad^{\sigma_1,p_1}(X_1),0)\to \F_-(\Rad^{p_0}(X_0), \Rad^{p_1}(X_1),0)\] be defined by
\[T f(k,z) = \sigma_0(k)^{1-z} \sigma_1(k)^{z} f(k,z).\]
Then $f\mapsto Tf(\cdot,\theta)$ is an isomorphism from
$[\Rad^{\sigma_0,p_0}(X_0), \Rad^{\sigma_1,p_1}(X_1)]_{\theta}$ onto $[\Rad^{p_0}(X_0), \Rad^{p_1}(X_1)]_{\theta} = \Rad^{p}(X_{\theta})$, where we used \eqref{eq:Radunweightedinter} in the last step.
\end{proof}

\begin{proof}[Proof of Theorem \ref{thm:Hspwinterp}]
Set $Y_j = L^{p_j}(\R^d,w_j,X_j))$ for $j\in \{0,1\}$ and let $Y_{\theta} = L^{p}(\R^d,w,X_{\theta}))$. Then by Proposition \ref{prop:weightedLp} $Y_{\theta} = [Y_0,Y_1]_{\theta}$.  Let $\sigma_j(n) = 2^{s_j n}$ and let $(\varphi_k)_{k\geq 0}$ be a smooth Littlewood-Paley sequence as in \cite[Section 2.2]{MeyVerpoint} and let $\phi_{-1} = 0$. By \cite[Proposition 3.2]{MeyVerpoint} and \cite[Theorem 6.2.4]{HNVW2} we have $f\in H^{s_j,p_j}(\R^d,w_j;X_j)$ if and only if $(\varphi_k*f)_{k\geq 0}\in \Rad^{\sigma_j,p_j}(Y_j)$ and in this case
\begin{align}\label{eq:LPineqH}
\|(\varphi_k*f)_{k\geq 0}\|_{\Rad^{\sigma_j,p_j}(Y_j)}\eqsim \|f\|_{H^{s_j,p_j}(\R^d,w_j;X_j)}
\end{align}
with implicit constants only depending on $p_j, X_j,s_j, [w_j]_{A_{p_j}}$. Now to reduce the statement to Proposition \ref{lem:weightedRad} we use a retraction-coretraction argument (see \cite[Theorem 1.2.4]{Tr1} and \cite[Lemma 5.3]{LMV}).
Let $\psi_n = \sum_{k=n-1}^{n+1}\phi_k$ for $n\geq 0$, and let $\psi_{-1} = 0$. Then $\wh{\psi}_k = 1$ on $\supp(\wh{\phi}_k)$ for all $k\geq 0$, and $\supp(\wh{\psi}_0) \subseteq \{\xi:|\xi|\leq 2\}$ and $\supp(\wh{\psi}_k) \subseteq \{\xi:2^{k-2}\leq |\xi|\leq 2^{k+1}\}$ for $k\geq 1$.
Let
$R:\Rad^{\sigma_j,p_j}(Y_j)\to H^{s_j,p_j}(\R^d,w_j;X_j)$ be defined by $R (f_\ell)_{\ell\geq 0} = \sum_{\ell\geq 0} \psi_\ell* f_\ell$ and let $S:H^{s_j,p_j}(\R^d,w_j;X_j)\to \Rad^{p_j,\sigma_j}(Y_j)$ be given by $S f = (\varphi_k*f)_{k\geq 0}$. The boundedness of $S$ follows from \eqref{eq:LPineqH}. We claim that $R$ is bounded and this will be explained below. By the special choice of $\psi_k$ we have $RS = I$. Therefore, the retraction-coretraction argument applies and the interpolation result follows.

To prove claim let $E_j = L^{p_j}(\Omega;Y_j))$. Due to \eqref{eq:LPineqH} and by density it suffices to show that, for all finitely-nonzero sequences $(f_\ell)_{\ell\geq 0}$ in $Y_j$ and all $n\geq 0$,
\begin{equation}\label{eq:bddRest2sj}
\Big\| \sum_{k=0}^n \varepsilon_k 2^{s_j k} \varphi_k*\sum_{\ell\geq 0} \psi_\ell*f_\ell\Big\|_{E_j}\leq C \Big\|\sum_{k\geq 0} \varepsilon_k 2^{s_j k} f_k\Big\|_{E_j}.
\end{equation}
Below, for convenience of notation, we view sequences on $\N$ as sequences on $\Z$ through extension by zero. Under this convention, by the Fourier support properties of $(\varphi_k)_{k}$ and the $R$-boundedness of $\{\varphi_k*:k\geq 0\}$ (see \cite[Lemma 4.1]{MeyVerpoint}) and the implied $R$-boundedness of $\{\psi_{k}* : k \geq 0\}$, we have
\begin{align*}
\Big\| \sum_{k=0}^n \varepsilon_k 2^{s_j k} \varphi_k*\sum_{\ell\geq 0} \psi_\ell*f_\ell\Big\|_{E_j}
& \leq \sum_{j=-2}^{2}  \Big\| \sum_{k=0}^n \varepsilon_k 2^{s_j k} \varphi_k*\psi_{k+j}* f_{k+j}\Big\|_{E_j}
\\ & \lesssim \sum_{j=-2}^{2}  \Big\| \sum_{k=0}^n \varepsilon_k 2^{s_j k}f_{k+j}\Big\|_{E_j}
\\ & \lesssim \Big\|\sum_{k\geq 0} \varepsilon_k 2^{s_j k} f_k\Big\|_{E_j},
\end{align*}
where in the last step we used the contraction principle (see \cite[Proposition 3.24]{HNVW1}).
\end{proof}

\section{$\DD$ on $\R^d_+$ in the $A_p$-setting\label{sec:heatApsetting}}

Let $p\in (1, \infty)$ and $w\in A_p(\R^{d})$.
We consider the Dirichlet Laplacian $\DD$ on $L^p(\R^d_+,w;X)$, defined by
\[
D(\DD) := W^{2,p}_{\Dir}(\R^d_+,w;X), \qquad \DD u := \sum_{j=1}^{d}\partial_{j}^{2}u.
\]

Let $G_z:\R^d\to \R$ denote the standard heat kernel on $\R^d$:
\[G_z(x) = \frac{1}{(4\pi z)^{d/2}}e^{-|x|^2/(4z)}, \qquad z\in C_+.\]
It is well-known that $|G_z*f|\leq {\cos^{-d/2}(\arg(z))}Mf$, where $M$ denotes the Hardy-Littlewood maximal function (see \cite[Section 8.2]{HNVW2}). Therefore, $f\mapsto G_z*f$ is bounded on $L^p(\R^d,w;X)$ for any $w\in A_p$.

Define $T(z):L^p(\R^d_+,w;X)\to L^p(\R^d_+,w;X)$ by
\begin{align}\label{eq:heatsemi}
 T(z) f(x) := H_z * f(x):=\int_{\R^d_+} H_z(x,y) f(y)\ud y = \int_{\R^d} G_z(x-y) \overline{f}(y)\ud y, \ \ z\in \C_+,
\end{align}
with $\overline{f}(y) = \sign(y_1) f(|y_1|, \tilde{y})$ and
\begin{align}\label{eq:heatsemikernel}
H_{z}(x,y) = G_z(x_1-y_1,\tilde{x}-\tilde{y}) - G_z(x_1+y_1, \tilde{x}-\tilde{y}), \ \ x,y \in \R^{d}_{+}.
\end{align}
By the properties of $G_z$, the operator $T(z)$ is bounded on $L^p(\R^d_+,w;X)$ for any $w\in A_p$ with $\|T(z)\|\leq \|M\|_{\mathcal{B}(L^p(w))} {\cos^{-d/2}(\arg(z))}$. In Theorem \ref{thm:domainAp case} we will show that $T(z)$ is an analytic $C_0$-semigroup with generator $\DD$. Moreover, in case $X$ is a UMD space we characterize $D(\DD)$ and prove that $\DD$ is a sectorial operator with a bounded $H^\infty$-calculus of angle zero.

Recall that a weight $w$ is called even if $w(-x_1, \tx) = w(x_1, \tx)$ for $x_1>0$ and $\tx\in \R^{d-1}$.

The next result is the main result of this section on the functional calculus of $-\DD$ on $L^p$-spaces with $A_p$-weights. The result on the whole of $\R^d$ is well-known to experts, but seems not to have appeared anywhere. By a standard reflection argument we deduce the result on $\R^d_+$. It can be seen as a warm-up for Theorem \ref{thm:domain} where weights outside the $A_p$-class are considered.
\begin{theorem}\label{thm:domainAp case}
Let $X$ be a UMD space. Let $p\in (1, \infty)$ and let $w\in A_p$ be even.
Then the following assertions hold:
\begin{enumerate}[$(1)$]
\item\label{it:domainAp}  $-\DD$ is a sectorial operator with $\omega(-\DD) = 0$,
$D(\DD) = W^{2,p}_{\Dir}(\R^d_+,w;X)$ with equivalent norms,
the analytic $C_0$-semigroup $(e^{z\DD})_{z\in \C_+}$ is uniformly bounded on any sector $\Sigma_{\omega}$ with $\omega\in (0,\pi/2)$ and
    \[e^{z\DD} f= T(z)f, \ \ \ z\in \C_+.\]
\item\label{it:HinftyAp} For all $\lambda\geq 0$, $\lambda-\DD$ has a bounded $H^\infty$-calculus with $\omega_{H^\infty}(\lambda-\DD) = 0$.
\end{enumerate}
Moreover, all the implicit constants only depend on $X$, $p$, $d$ and $[w]_{A_{p}}$.
\end{theorem}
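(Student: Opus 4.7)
The plan is to reduce everything to the full space via odd reflection, so the main work is on $\R^d$. \emph{Step 1: the full space case.} Define $A := -\Delta$ on $L^p(\R^d,w;X)$ with $D(A) = W^{2,p}(\R^d,w;X) = H^{2,p}(\R^d,w;X)$, where the identification comes from Proposition~\ref{prop:HWduality}. For $\lambda\geq 0$ and $\phi \in H^\infty_0(\Sigma_\omega)$ with $\omega \in (0,\pi)$ arbitrary, the operator $\phi(\lambda + A)$ is the Fourier multiplier with symbol $m(\xi) = \phi(\lambda + |\xi|^2)$. A direct computation shows that derivatives $D^\alpha m$ satisfy the Mihlin-type bound $|\xi|^{|\alpha|}|D^\alpha m(\xi)| \lesssim_{\alpha,d} \|\phi\|_{H^\infty(\Sigma_\omega)}$ uniformly in $\lambda \geq 0$, because $\lambda + |\xi|^2 \in \overline{\Sigma_\omega}$ with $|\lambda + |\xi|^2| \gtrsim |\xi|^2$. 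Proposition~\ref{prop:weightedmihlin} then yields a bounded $H^\infty(\Sigma_\omega)$-calculus on $L^p(\R^d,w;X)$ with constants depending only on $X,p,d,[w]_{A_p}$. This gives sectoriality of $A$ with $\omega(A) = \omega_{H^\infty}(A) = 0$, and the same for $\lambda + A$ for every $\lambda\geq 0$. The heat kernel representation $e^{zA}f = G_z * f$ on $\Sigma_{\pi/2}$ follows from taking $\phi(\zeta) = e^{-z\zeta}$ in $H^\infty_0$-approximations and the maximal function domination $|G_z * f| \leq \cos^{-d/2}(\arg z) \, Mf$, which yields uniform boundedness on any proper subsector.

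\emph{Step 2: odd reflection as a retraction-coretraction pair.} Define
\begin{equation*}
Eu(x) := \sign(x_1)\, u(|x_1|, \tilde x), \qquad R v := v|_{\R^d_+},
\end{equation*}
and let $L^p_{\odd}(\R^d,w;X) := E(L^p(\R^d_+,w;X))$, which is a closed subspace since $w$ is even. Then $R E = I$, and $E$ is an isometric isomorphism onto $L^p_{\odd}$. Analogously, using Lemma~\ref{lem:u1dcontW} applied in the variable $x_1$, one checks that $E$ maps $W^{2,p}_{\Dir}(\R^d_+,w;X)$ into $W^{2,p}(\R^d,w;X)$: the trace condition $\tr u = 0$ ensures the odd extension has weak first derivatives (with $\partial_1 E u = E_{\mathrm{even}}(\partial_1 u)$ and $\partial_j E u = E(\partial_j u)$ for $j\geq 2$), and the first derivatives again satisfy the matching trace condition needed for the second-order weak derivatives to exist. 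Conversely, if $v \in W^{2,p}(\R^d,w;X)$ is odd, then $\tr(v|_{\R^d_+}) = 0$. This identifies the isomorphism $E : W^{2,p}_{\Dir}(\R^d_+,w;X) \to W^{2,p}_{\odd}(\R^d,w;X) := W^{2,p}(\R^d,w;X) \cap L^p_{\odd}$.

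\emph{Step 3: transfer of the calculus.} The subspace $L^p_{\odd}$ is invariant under $\lambda + A$ and its resolvents (because the heat semigroup preserves odd functions, as $G_z$ is even in $x_1$); hence $\lambda + A$ restricts to a sectorial operator on $L^p_{\odd}$ of angle $0$ with bounded $H^\infty$-calculus of angle $0$, with constants inherited from Step~1. Via the isomorphism $E$, the operator on $L^p(\R^d_+,w;X)$ similar to $(\lambda + A)|_{L^p_{\odd}}$ has the domain $W^{2,p}_{\Dir}(\R^d_+,w;X)$ identified in Step~2 and acts as $\lambda - \DD$, because for $u \in W^{2,p}_{\Dir}$ one has $-\Delta Eu = E(-\Delta u)$ in $\Distr'(\R^d;X)$. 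This yields both \eqref{it:domainAp} and \eqref{it:HinftyAp}, with $D(\DD) = W^{2,p}_{\Dir}(\R^d_+,w;X)$ with equivalent norms. Finally, the semigroup formula $e^{z\DD}f = T(z)f$ follows because $e^{zA}(Ef) = G_z * (Ef)$, and reading off the restriction to $\R^d_+$ using that $G_z(x_1 + y_1, \tilde x - \tilde y)$ is the contribution of the reflected part gives precisely the kernel $H_z$ in \eqref{eq:heatsemikernel}.

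The main obstacle is Step~2: verifying cleanly that $E$ maps $W^{2,p}_{\Dir}(\R^d_+,w;X)$ into $W^{2,p}(\R^d,w;X)$ for a general $A_p$ weight. One iterates Lemma~\ref{lem:u1dcontW}: first apply it to $u$ itself (whose odd extension has a matching trace $0$ on both sides) to get $Eu \in W^{1,p}(\R^d,w;X)$, then apply it again to $\partial_j u$ for $j \geq 2$ (odd extension, matching trace $0$ since $\partial_j u$ still vanishes on $\{x_1 = 0\}$) and to $\partial_1 u$ (even extension, whose traces from both sides coincide automatically). The evenness of $w$ is essential throughout so that all the weighted $L^p$ norms are preserved under reflection and the $A_p$ condition on $\R^d$ transfers correctly.
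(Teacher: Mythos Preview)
Your proof is correct and follows essentially the same route as the paper's: establish the $H^\infty$-calculus for $-\Delta$ on $L^p(\R^d,w;X)$ via the weighted Mihlin theorem (Proposition~\ref{prop:weightedmihlin}), then transfer to the half space by the odd-reflection isomorphism $\Eo:W^{2,p}_{\Dir}(\R^d_+,w;X)\to W^{2,p}_{\odd}(\R^d,w;X)$. The paper packages your Step~2 as a separate lemma (Lemma~\ref{lem:tracech}, proved via \eqref{eq:Dalphaovu} from Lemmas~\ref{lem:u1dcontW} and~\ref{lem:traceAp}), but the content is the same iteration you sketch.
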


For the proof we use a simple lemma on odd extensions. For $u\in L^p(\R^d,w;X)$, the functions $\overline{u}$ and $\Eo u$ denote the {\em odd extension of $u$}:
\[ \overline{u}(-x_1, \tx) = \Eo u(-x_1, \tx)= -u(x_1, \tx)\ \text{for $x_1>0$ and $\tx\in \R^{d-1}$.}\]
For $k\in \N_0$ let $W^{k,p}_{\odd}(\R^d,w;X)$ denote the closed subspace of all odd functions in $W^{k,p}(\R^d,w;X)$.

\begin{lemma}\label{lem:tracech}
Let $p\in (1, \infty)$ and let $w\in A_p$ be even. Let $k\in \{0,1,2\}$.
Then $\Eo:W^{k,p}_{\Dir}(\R^d_+,w;X)\to W^{k,p}_{\odd}(\R^d,w;X)$ is an isomorphism and
\[\|u\|_{W^{k,p}(\R^d,w;X)}\leq \|\Eo u\|_{W^{k,p}(\R^d,w;X)}\leq 2^{1/p} \|u\|_{W^{k,p}(\R^d,w;X)}. \]
Moreover, $\{u\in C^\infty_c(\overline{\R^d_+}): u(0,\cdot)=0\}\otimes X$ is dense in $W^{k,p}_{\Dir}(\R^d_+,w;X)$.
\end{lemma}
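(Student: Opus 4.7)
The plan is to build $\Eo u$ degree by degree using Lemma \ref{lem:u1dcontW} together with the evenness of $w$. The case $k=0$ is immediate: a substitution $x_1 \mapsto -x_1$ combined with the evenness of $w$ gives $\|\Eo u\|_{L^p(\R^d,w;X)}^p = 2\|u\|_{L^p(\R^d_+,w;X)}^p$. For $k\geq 1$, the trace of $\Eo u$ computed from $\R^d_+$ equals $\tr(u) = 0$, while the trace from $\R^d_-$ equals $-\tr(u) = 0$ by oddness; Lemma \ref{lem:u1dcontW} then gives $\Eo u \in W^{1,1}_{\rm loc}(\R^d;X)$ with $\partial_1 \Eo u$ the \emph{even} extension of $\partial_1 u$ and $\partial_j \Eo u$ the odd extension of $\partial_j u$ for $j \geq 2$.

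For $k=2$ I would iterate. Applying Lemma \ref{lem:u1dcontW} to the even extension $\partial_1 \Eo u$, the one-sided traces trivially agree (both equal $\tr(\partial_1 u)$), so $\partial_1^2 \Eo u$ exists weakly as the odd extension of $\partial_1^2 u$. For a mixed derivative $\partial_1 \partial_j \Eo u$ with $j \geq 2$, tangentiality gives $\tr(\partial_j u) = \partial_j \tr(u) = 0$, so both one-sided traces of the odd extension of $\partial_j u$ vanish and weak differentiability follows from Lemma \ref{lem:u1dcontW} once more. The remaining purely tangential derivatives are odd extensions of $\partial_i \partial_j u$ with no trace hypothesis needed. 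For every multiindex $|\alpha|\leq k$ the resulting derivative of $\Eo u$ is either the even or odd extension of $D^\alpha u$, so by evenness of $w$ its $L^p(\R^d,w;X)$-norm equals $2^{1/p}\|D^\alpha u\|_{L^p(\R^d_+,w;X)}$; summing yields both bounds and shows $\Eo u \in W^{k,p}_{\odd}(\R^d,w;X)$. Surjectivity onto $W^{k,p}_{\odd}(\R^d,w;X)$ is clear: if $v$ is odd and in $W^{k,p}$ with $k\geq 1$, its trace on $\{x_1 = 0\}$ is well-defined as in Subsection \ref{subsec:tracesSob}, is odd in its trivial variable, hence zero, so $v|_{\R^d_+} \in W^{k,p}_{\Dir}$ and $\Eo(v|_{\R^d_+}) = v$.

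For the density assertion, given $u \in W^{k,p}_{\Dir}(\R^d_+,w;X)$ I would first apply Lemma \ref{lem:densityCcRd} on $\R^d$ to obtain $(\phi_n)\subset C^\infty_c(\R^d)\otimes X$ with $\phi_n \to \Eo u$ in $W^{k,p}(\R^d,w;X)$. Symmetrising via $\tilde\phi_n(x) := \tfrac{1}{2}(\phi_n(x)-\phi_n(-x_1,\tx))$ produces an odd sequence still in $C^\infty_c(\R^d)\otimes X$; the projection $f \mapsto \tfrac{1}{2}(f - f \circ \mathrm{refl})$, with $\mathrm{refl}(x_1,\tx) := (-x_1,\tx)$, has norm $1$ on $W^{k,p}(\R^d,w;X)$ by the evenness of $w$, so $\tilde\phi_n \to \Eo u$ as well. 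Restriction to $\overline{\R^d_+}$ produces elements of $\{u \in C^\infty_c(\overline{\R^d_+}) : u(0,\cdot)=0\}\otimes X$ (oddness and smoothness give vanishing at $x_1=0$), and the lower half of the two-sided bound already proved shows these approximate $u$ in $W^{k,p}(\R^d_+,w;X)$.

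The only genuinely delicate point is the iterative derivative computation at $k=2$: one must track the parity of each successive partial derivative and verify the matching-trace hypothesis of Lemma \ref{lem:u1dcontW} at each stage. The Dirichlet condition $\tr(u)=0$ is used precisely twice, once directly to make $\Eo u \in W^{1,1}_{\rm loc}$, and once via its tangential consequence $\tr(\partial_j u)=0$ for $j \geq 2$ to upgrade to $W^{2,1}_{\rm loc}$; notably, no hypothesis on $\tr(\partial_1 u)$ enters, which is consistent with working in $W^{2,p}_{\Dir}$ rather than $W^{2,p}_{0}$.
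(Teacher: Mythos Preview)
Your proof is correct and follows essentially the same approach as the paper: compute the weak derivatives of $\Eo u$ via Lemma \ref{lem:u1dcontW}, use evenness of $w$ for the norm identity, and obtain density by approximating $\Eo u$ on $\R^d$ with smooth functions and then antisymmetrising. Your version is more explicit about the parity bookkeeping (the paper compresses this into the single formula $D^{\alpha}\overline{u}(x) = (\mathrm{sign}(x_1))^{\alpha_1+1}(D^{\alpha}u)(|x_1|,\tilde x)$), and you correctly write the antisymmetrisation $\tfrac12(\phi_n - \phi_n\circ\mathrm{refl})$ where the paper's printed proof has a sign slip.
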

\begin{proof}
The case $k=0$ is easy, so let us assume $k\in \{1, 2\}$. For $u\in W^{k,p}_{\Dir}(\R^d_+,w;X)$ one has
\begin{equation}\label{eq:Dalphaovu}
 D^{\alpha} \overline{u}(x) = (\sign(x_1))^{|\alpha_1|+1} (D^{\alpha} u)(|x_1|, \tx), \ \ \ |\alpha|\leq k.
\end{equation}
Indeed, this follows from Lemmas \ref{lem:u1dcontW} and \ref{lem:traceAp}.

From \eqref{eq:Dalphaovu} we find that $\overline{u}\in W^{2,p}(\R^d,w;X)$ and that the stated estimates hold.

If $\overline{u}\in W^{k,p}(\R^d,w)$, then
by Lemma \ref{lem:densityCcRd} we can find $u_n\in C^\infty_c(\R^d)\otimes X$ such that $u_n \to \overline{u}$ in $W^{2,p}(\R^d,w;X)$. Then also $u_n(-\cdot, \cdot)\to \overline{u}$ in $W^{2,p}(\R^d,w;X)$. Now $v_n:=(u_n + u_n(-\cdot,\cdot))/2$ satisfies $v_n\in C^\infty_c(\R^d;X)$ and $v_n(0,\cdot) = 0$ and $v_n\to u$ in $W^{2,p}(\R^d_+,w;X)$. Since $\tr(v_n) = 0$ the continuity of the trace implies $\tr(u) = 0$ as well. This part of the proof also implies the desired density result.
\end{proof}

\begin{proof}[Proof of Theorem \ref{thm:domainAp case}]
Let us first consider the result on $\R^d$. Then $-\Delta$ with $D(\Delta) = W^{2,p}(\R^d,w;X)$ is a closed operator which is sectorial of angle zero (see \cite[Theorem 5.1]{GV17} and Proposition \ref{prop:weightedmihlin}).
Moreover, by Proposition \ref{prop:weightedmihlin} and \cite[Theorem 10.2.25]{HNVW2}), one has that $-\Delta$ has a bounded $H^\infty$-calculus with $\omega_{H^\infty}(-\Delta) = 0$. Moreover, by Remark \ref{rem:Hinftytranslate} the same holds for $\lambda-\Delta$.
Now the half space case follows by a well-known reflection argument, which we partly include here for completeness.

Since $\Eo (\DD f) =  (\Delta\Eo f)$, $\Eo:W^{2,p}_{\Dir}(\R^d_+,w;X)\to W^{2,p}_{\odd}(\R^d,w;X)$ is an isomorphism (see Lemma \ref{lem:tracech}), $\Delta:W^{2,p}_{\odd}(\R^d,w;X)\to L^p_{\odd}(\R^d,w;X)$, and $D(\Delta|_{L^p_{\odd}(\R^d,w;X)}) = W^{2,p}_{\odd}(\R^d,w;X)$,
one has
\[\rho(\Delta)  \subseteq \rho(\DD), \ \ R(\lambda,\DD)f =  (R(\lambda,\Delta)\Eo f)|_{\R^d_+}\ \ \text{and} \ \ D(\DD) = W^{2,p}_{\Dir}(\R^d_+,w;X).\]
All the statements now follow.
\end{proof}

\begin{corollary}[Laplace equation]\label{cor:ellipticregAp}
Let $X$ be a UMD space. Let $p\in (1, \infty)$ and let $w\in A_p$ be even.
For all $u \in W^{2,p}_{\Dir}(\R^d_+,w;X)$ there holds the estimates
\begin{align}\label{eq:cor:ellipticregAp:eq_hom_part_Sobolev}
[u]_{W^{2,p}(\R^{d}_{+},w;X)} \eqsim_{X,p,d,w} \| \Delta u \|_{L^p(\R^d_+,w;X)}.
\end{align}
Furthermore, for every $f\in L^p(\R^d_+,w;X)$ and $\lambda>0$ there exists a unique $u\in W^{2,p}_{\Dir}(\R^d_+,w;X)$ such that $\lambda u - \DD u= f$ and
\begin{align}\label{eq:aprioridAp}
\sum_{|\alpha|\leq 2} |\lambda|^{1-\frac12|\alpha|} \|D^{\alpha} u\|_{L^p(\R^d_+,w;X)} \lesssim_{X,p,d,w} \|f\|_{L^p(\R^d_+,w;X)}.
\end{align}
\end{corollary}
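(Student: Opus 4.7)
The plan is to reduce everything to Theorem~\ref{thm:domainAp case} together with the odd extension trick of Lemma~\ref{lem:tracech} and weighted Mihlin multipliers.

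I would first establish the homogeneous estimate \eqref{eq:cor:ellipticregAp:eq_hom_part_Sobolev}. The direction $\|\Delta u\|_{L^p(\R^d_+,w;X)} \lesssim [u]_{W^{2,p}(\R^d_+,w;X)}$ is immediate since $\Delta u = \sum_{j}\partial_{j}^{2}u$. For the reverse direction, given $u\in W^{2,p}_{\Dir}(\R^d_+,w;X)$, I would form the odd extension $\Eo u \in W^{2,p}(\R^d,w;X)$. By \eqref{eq:Dalphaovu} one has $\Delta (\Eo u) = \Eo(\DD u)$, and $\|\Eo v\|_{L^p(\R^d,w;X)} \eqsim \|v\|_{L^p(\R^d_+,w;X)}$ by Lemma~\ref{lem:tracech} (using that $w$ is even). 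The symbols $\xi_j\xi_k/|\xi|^2$ satisfy the Mihlin condition \eqref{Mihlin-assum}, hence Proposition~\ref{prop:weightedmihlin} yields boundedness of the second-order Riesz transforms $\partial_j\partial_k(-\Delta)^{-1}$ on $L^p(\R^d,w;X)$. Applied to $v = \Eo u$ (using density of Schwartz functions via Lemma~\ref{lem:densitynonAp}) this gives
\[
\|\partial_j\partial_k u\|_{L^p(\R^d_+,w;X)} \leq \|\partial_j\partial_k \Eo u\|_{L^p(\R^d,w;X)} \lesssim \|\Delta \Eo u\|_{L^p(\R^d,w;X)} \lesssim \|\DD u\|_{L^p(\R^d_+,w;X)}.
\]

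For the second part I would invoke Theorem~\ref{thm:domainAp case}\eqref{it:domainAp}: since $-\DD$ is sectorial of angle $0$ with domain $W^{2,p}_{\Dir}(\R^d_+,w;X)$, every $\lambda>0$ lies in $\rho(-\DD)$, so $u := R(\lambda,-\DD)f$ is the unique solution of $\lambda u - \DD u = f$ in $W^{2,p}_{\Dir}(\R^d_+,w;X)$. Sectoriality supplies the zeroth-order bound $\lambda\|u\|_{L^p(\R^d_+,w;X)} \lesssim \|f\|_{L^p(\R^d_+,w;X)}$, whence the triangle inequality $\|\DD u\|_{L^p} \leq \|f\|_{L^p} + \lambda\|u\|_{L^p} \lesssim \|f\|_{L^p}$ and \eqref{eq:cor:ellipticregAp:eq_hom_part_Sobolev} together give $\sum_{|\alpha|=2}\|D^\alpha u\|_{L^p} \lesssim \|f\|_{L^p}$.

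Finally, the first-order estimate follows by interpolation. By Lemma~\ref{lem:interp1},
\[
\|D_i u\|_{L^p(\R^d_+,w;X)} \lesssim \|u\|_{L^p(\R^d_+,w;X)}^{1/2} \, [u]_{W^{2,p}(\R^d_+,w;X)}^{1/2},
\]
and multiplying by $\lambda^{1/2}$ and using the already established bounds on the two factors yields $\lambda^{1/2}\|D_i u\|_{L^p} \lesssim (\lambda\|u\|_{L^p})^{1/2}[u]_{W^{2,p}}^{1/2} \lesssim \|f\|_{L^p}$. Collecting the three scales gives \eqref{eq:aprioridAp}.

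The only genuinely nontrivial step is the homogeneous estimate \eqref{eq:cor:ellipticregAp:eq_hom_part_Sobolev}; once the odd extension is justified (which needs $w$ to be even, hence the standing assumption on $w$), the weighted Mihlin theorem does all the work. Everything else is a formal consequence of sectoriality of angle $0$ and interpolation.
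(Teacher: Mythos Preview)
Your argument is correct. The parts concerning existence, uniqueness, the $|\alpha|=0,2$ estimates from sectoriality, and the $|\alpha|=1$ case via Lemma~\ref{lem:interp1} match the paper exactly.

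The one genuine difference is in how you obtain the homogeneous estimate \eqref{eq:cor:ellipticregAp:eq_hom_part_Sobolev}. You argue directly: odd-extend to $\R^d$ and apply the second-order Riesz transforms $\xi_j\xi_k/|\xi|^2$ via the weighted Mihlin theorem (Proposition~\ref{prop:weightedmihlin}). The paper instead runs a scaling argument: for $u_r=u(r\,\cdot\,)$ and $w_r=w(r\,\cdot\,)$ one has $[w_r]_{A_p}=[w]_{A_p}$, so the already-proved domain equivalence $\|v\|_{W^{2,p}(\R^d_+,w_r;X)}\eqsim \|v\|_{L^p}+\|\Delta v\|_{L^p}$ from Theorem~\ref{thm:domainAp case} applies with constants uniform in $r$; rescaling and sending $r\to\infty$ kills the zeroth-order term and leaves the homogeneous bound. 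Your route is more explicit and self-contained (it reopens the multiplier computation), while the paper's is slicker in that it recycles Theorem~\ref{thm:domainAp case} as a black box together with the scale invariance of the $A_p$ constant. Both yield constants depending only on $X,p,d,[w]_{A_p}$. One small correction: the density statement you need for the Riesz-transform step is Lemma~\ref{lem:densityCcRd} (density in $W^{2,p}$), not Lemma~\ref{lem:densitynonAp}.
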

\begin{proof}
We first prove \eqref{eq:cor:ellipticregAp:eq_hom_part_Sobolev}.
Let $u \in W^{2,p}_{\Dir}(\R^d_+,w;X)$. For $r > 0$ we put $u_{r}:=u(r\,\cdot\,)$ and $w_{r}:=w(r\,\cdot\,)$. Then $w_{r} \in A_{p}$ with $[w_{r}]_{A_{p}} = [w]_{A_{p}}$.
So we can apply Theorem~\ref{thm:domainAp case} with $w_{r}$ instead of $w$ to obtain
\begin{align*}
\sum_{|\alpha| \le 2}r^{|\alpha|-\frac{d}{p}}\|\partial^{\alpha}u\|_{L^p(\R^d_+,w;X)}
&=  \|u_{r}\|_{W^{2,p}(\R^{d}_{+},w_{r};X)} \\
&\eqsim_{X,p,d,w} \|u_{r}\|_{L^{p}(\R^{d}_{+},w_{r};X)} + \| \Delta u_{r} \|_{L^p(\R^d_+,w_{r};X)} \\
&=  r^{-\frac{d}{p}}\|u\|_{L^{p}(\R^{d}_{+},w;X)} + r^{2-\frac{d}{p}}\|\Delta u\|_{W^{2,p}(\R^{d}_{+},w_{r};X)}.
\end{align*}
Dividing by $r^{2-\frac{d}{p}}$ and taking the limit $r \to \infty$ gives \eqref{eq:cor:ellipticregAp:eq_hom_part_Sobolev}.

The existence and uniqueness in the second claim follow from the sectoriality in Theorem~\ref{thm:domainAp case}. Moreover, together with \eqref{eq:cor:ellipticregAp:eq_hom_part_Sobolev}, the sectoriality yields the estimates for $|\alpha|=0$ and $|\alpha|=2$ in \eqref{eq:aprioridAp}. The case $|\alpha|=1$  subsequently follows from Lemma~\ref{lem:interp1}.
\end{proof}

\begin{corollary}[Heat equation]\label{cor:weightedmaxAp}
Let $X$ be a UMD space. Let $p,q\in (1, \infty)$, $v\in A_q(\R)$, $w\in A_p(\R^d)$ and assume $w$ is even. Let $J \in \{\R,\R_+\}$. Then the following assertions hold:
For all $\lambda>0$ and $f\in L^q(J, v;L^p(\R^d_+,w;X))$ there exists a unique $u\in W^{1,q}(J,v;L^p(\R^d_+,w;X))\cap L^{q}(J,v;W^{2,p}_{\Dir}(\R^d_+,w;X))$ such that
$u' + (\lambda-\DD)u = f$, $u(0) = 0$ in case $J = \R_+$. Moreover, the following estimate holds
\begin{align*}
\|u'\|_{L^q(J,v;L^p(\R^d_+,w;X))} + \sum_{|\alpha|\leq 2}& \lambda^{1-\frac12|\alpha|} \|D^{\alpha} u\|_{L^{q}(J,v;L^p(\R^d_+,w;X))}
\\ & \lesssim_{p,q,v,w,d} \|f\|_{L^q(J, v;L^p(\R^d_+,w;X))}.
\end{align*}
\end{corollary}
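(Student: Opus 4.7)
The plan is to apply the Dore--Venni theorem (Theorem~\ref{thm:operatorsum}) on the space $L^q(J, v; Y)$ with $Y := L^p(\R^d_+, w; X)$. Observe first that $Y$ is UMD (since $X$ is UMD and $L^p(w;\cdot)$ preserves UMD), and consequently $L^q(J, v; Y)$ is UMD too, so the Dore--Venni framework applies.

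Define $A := d/dt$ on $L^q(J, v; Y)$ with domain $W^{1,q}(\R, v; Y)$ in case $J = \R$ and $W^{1,q}_0(\R_+, v; Y)$ in case $J = \R_+$. By Proposition~\ref{propddtHinfty}, $A$ has a bounded $H^\infty$-calculus with $\omega_{H^\infty}(A) \leq \pi/2$. Next, let $B$ denote the pointwise extension of $\lambda - \DD$ to $L^q(J, v; Y)$, with domain $L^q(J, v; W^{2,p}_{\Dir}(\R^d_+, w; X))$. Theorem~\ref{thm:domainAp case}\eqref{it:HinftyAp} (applied with $\lambda > 0$) combined with Remark~\ref{rem:Hinftytranslate} shows that $\lambda - \DD$ has a bounded $H^\infty$-calculus of angle zero on $Y$ and that $0 \in \rho(\lambda - \DD)$; the standard pointwise-extension argument, valid because $L^q(J, v; Y)$ is UMD, transfers this calculus to $B$ with the same angle zero, so in particular $\omega_{\BIP}(B) = 0$ and $0 \in \rho(B)$.

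Now $A$ and $B$ have commuting resolvents since they act in independent variables, and $\omega_{\BIP}(A) + \omega_{\BIP}(B) \leq \pi/2 + 0 < \pi$. Thus Theorem~\ref{thm:operatorsum} yields that $A + B$ is closed with $0 \in \rho(A+B)$. This gives, for every $f \in L^q(J, v; Y)$, the existence of a unique $u \in D(A) \cap D(B) = W^{1,q}(J, v; Y) \cap L^q(J, v; W^{2,p}_{\Dir}(\R^d_+, w; X))$ (with $u(0) = 0$ built into the domain when $J = \R_+$) solving $u' + (\lambda - \DD) u = f$, plus the estimate
\[
\|u'\|_{L^q(J, v; Y)} + \|(\lambda - \DD) u\|_{L^q(J, v; Y)} \lesssim \|f\|_{L^q(J, v; Y)}.
\]
To upgrade this to the claimed $\lambda$-weighted bound on all $D^\alpha u$, apply Corollary~\ref{cor:ellipticregAp} to $u(t)$ pointwise in $t$ (for a.e.\ $t$ one has $u(t) \in W^{2,p}_{\Dir}(\R^d_+, w; X)$ solving $\lambda u(t) - \Delta u(t) = (f - u')(t)$) and integrate in $t$:
\[
\sum_{|\alpha| \leq 2} \lambda^{1 - \frac{|\alpha|}{2}} \|D^\alpha u\|_{L^q(J, v; Y)} \lesssim \|(\lambda - \Delta) u\|_{L^q(J, v; Y)} \lesssim \|f\|_{L^q(J, v; Y)}.
\]
Combining yields the full estimate.

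The main delicate point is justifying that the pointwise extension of $\lambda - \DD$ retains its bounded $H^\infty$-calculus (and hence BIP with the same angle) on $L^q(J, v; Y)$. This is a standard fact for $L^q$-extensions of sectorial operators with $H^\infty$-calculus on UMD spaces and can be verified either directly from the resolvent integral defining $f(B)$ together with Minkowski's inequality, or by invoking the tensor-product characterization of the calculus (as in \cite[Chapter~10]{HNVW2}). Everything else is a direct assembly of the ingredients.
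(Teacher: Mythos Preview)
Your proof is correct and follows essentially the same route as the paper: set $Y=L^p(\R^d_+,w;X)$, use Proposition~\ref{propddtHinfty} for $d/dt$ and Theorem~\ref{thm:domainAp case} for $\lambda-\DD$, apply the Dore--Venni theorem (Theorem~\ref{thm:operatorsum}), and finish with Corollary~\ref{cor:ellipticregAp} pointwise in $t$. The only difference is that you spell out the pointwise-extension step for the $H^\infty$-calculus of $B$, which the paper leaves implicit.
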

\begin{proof}
Since $L^p(\R^d_+,w;X)$ is a UMD space, by Proposition \ref{propddtHinfty}, $d/dt$ has a bounded $H^\infty$-calculus on $L^q(J,v;L^p(\R^d_+,w;X))$.
Therefore, from Theorem \ref{thm:domainAp case}, Remark \ref{rem:Hinftytranslate} \eqref{it:firsttransH}, and Theorem \ref{thm:operatorsum}
\[\|u'\|_{L^q(J,v;L^p(\R^d_+,w;X))} + \|(\lambda-\Delta)u\|_{L^{q}(J,v;L^p(\R^d_+,w;X))} \lesssim_{p,q,v,w,d} \|f\|_{L^q(J, v;L^p(\R^d_+,w;X))}.\]
Now the result follows from Corollary \ref{cor:ellipticregAp} applied pointwise in $t$.
\end{proof}

\begin{remark} \
\begin{enumerate}[$(i)$]
%\item
%Instead of considering the problem on the whole real line one can also consider the initial value problem. We refer to \cite[Section 4.4]{GV17} for comments on the connection between the initial value problem and the problem on the whole real line in the weighted setting.

\item The same result as in Corollary \ref{cor:weightedmaxAp} holds for $\Delta$ on the whole of $\R^d$. For results on elliptic and parabolic equations with $A_p$-weights in space we refer to \cite{HHH}.
\item Due to Calder\'on-Zygmund extrapolation theory one can add $A_q$-weights in time after considering the unweighted case (see \cite{ChFio}).

\item It would be interesting to extend Corollary \ref{cor:weightedmaxAp} to spaces of the form $L^{p}(\R\times \R^d_+,w;X)$ where $w$ depends on time and space. For some result in this direction concerning the maximal regularity estimate we refer to \cite{DKnew}.

\item The estimates in Corollaries \ref{cor:ellipticregAp} and \ref{cor:weightedmaxAp} also hold for $\lambda=0$. However, solvability does not hold for general $f$.
\end{enumerate}
\end{remark}

\section{$\DD$ on $\R^d_+$ in the non-$A_{p}$-setting\label{sec:heatnonAp}}

In this section we will extend the results of Section \ref{sec:heatApsetting} to weighted $L^p$-spaces with $w_{\gamma}(x) = |x_1|^{\gamma}$ where $\gamma\in (p-1,2p-1)$. This case is not included in the $A_p$-weight and is therefore not accessible through classical harmonic analysis. The reflection argument cannot be applied since the weight is not locally integrable in $\R^d$.
%The method of proof will be very different.

\subsection{The heat semigroup\label{subs:heatnonAp}}

Let $T(z):L^p(\R^d_+;X)\to L^p(\R^d_+;X)$ be defined by \eqref{eq:heatsemi}. We first show that $T(z)$ is also bounded on $L^p(\R^d_+,w_{\gamma})$ with $w_{\gamma}(x) = |x_1|^{\gamma}$ for $\gamma\in (-p-1, 2p-1)$ and that this range is optimal. Note that $w_{\gamma}\in A_p(\R^d)$ if and only if $\gamma\in (-1,p-1)$.

\begin{proposition}\label{prop:nonapbdd}
Let $p\in [1, \infty)$ and $\gamma\in (-1-p,2p-1)$. For every $|\phi|<\pi/2$, $(T(z))_{z\in \Sigma_{\phi}}$ defines a bounded analytic $C_0$-semigroup on $L^p(\R^d_+,w_{\gamma};X)$.
\end{proposition}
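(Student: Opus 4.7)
My plan is to exploit the explicit form of the half-space heat kernel together with the isometric isomorphism $M:L^p(\R^d_+,w_\gamma;X)\to L^p(\R^d_+,w_{\gamma-p};X)$ (the $k=0$ case of Lemma~\ref{lem:isomM}, which is immediate from the algebraic identity $\|Mf\|_{L^p(w_{\gamma-p})}=\|f\|_{L^p(w_\gamma)}$) to reduce the two non-$A_p$ ranges of $\gamma$ to the $A_p$ range $\gamma\in(-1,p-1)$, where the result follows from Theorem~\ref{thm:domainAp case}. The key input is the identity
\begin{equation*}
H_z(x,y)=G_z(x_1-y_1,\tilde x-\tilde y)\bigl(1-e^{-x_1y_1/z}\bigr),
\end{equation*}
which makes the first-order vanishing of $H_z$ at the boundary explicit. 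Combined with $|1-e^{-w}|\leq \min(2,|w|)$ for $\Re w\geq 0$ and the standard pointwise estimate $|G_z(u)|\leq C_\phi G_{c_\phi|z|}(u)$ on $\Sigma_\phi$ (with $c_\phi=1/\cos\phi$), it gives
\begin{equation*}
|H_z(x,y)|\leq C_\phi\, G_{c_\phi|z|}(x-y)\,\min\!\bigl(1,\tfrac{x_1y_1}{|z|}\bigr),\qquad z\in\Sigma_\phi,\; x,y\in\R^d_+.
\end{equation*}

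For $\gamma\in(-1,p-1)$, $w_\gamma\in A_p(\R^d)$ and the uniform bound follows from the odd-reflection argument of Section~\ref{sec:heatApsetting} (essentially Theorem~\ref{thm:domainAp case}), with constants depending only on $\phi$ via $c_\phi$. For $\gamma\in(p-1,2p-1)$, set $\gamma':=\gamma-p\in(-1,p-1)$; then boundedness of $T(z)$ on $L^p(w_\gamma;X)$ is equivalent, by conjugation with the isometry $M$, to boundedness of $MT(z)M^{-1}$ on $L^p(w_{\gamma'};X)$, whose integral kernel is $\tilde K_z(x,y)=(x_1/y_1)H_z(x,y)$. I split the integration region into $\{y_1\geq x_1/2\}$, where $x_1/y_1\leq 2$ and $\min(1,x_1y_1/|z|)\leq 1$ give directly $|\tilde K_z(x,y)|\lesssim_\phi G_{c_\phi|z|}(x-y)$, and $\{y_1<x_1/2\}$, where $|x-y|\geq x_1/2$ and the estimate reduces to $|\tilde K_z(x,y)|\lesssim_\phi (x_1^2/|z|)G_{c_\phi|z|}(x-y)$; trading a fraction of the Gaussian exponent for the prefactor via $\sup_{t\geq 0}te^{-t/C}<\infty$ then yields $|\tilde K_z(x,y)|\lesssim_\phi G_{2c_\phi|z|}(x-y)$. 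Since $\gamma'$ is an $A_p$ exponent, Gaussian convolution is uniformly bounded on $L^p(w_{\gamma'};X)$ by the maximal-function argument, which closes this range. The range $\gamma\in(-p-1,-1)$ is treated symmetrically: conjugate by $M^{-1}$ instead so that the new kernel is $(y_1/x_1)H_z(x,y)$, use $H_z(x,y)=H_z(y,x)$, and repeat the preceding argument with $x$ and $y$ interchanged on the $A_p$ space $L^p(w_{\gamma+p};X)$.

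Once the uniform operator bound $\sup_{z\in\Sigma_\phi}\|T(z)\|_{\calL(L^p(w_\gamma;X))}<\infty$ is in hand, analyticity of $z\mapsto T(z)$ on $\Sigma_\phi$ follows from the pointwise $z$-analyticity of $H_z(x,y)$ by a standard Morera/Cauchy argument applied to $\langle T(z)f,g\rangle$. The semigroup identity $T(z_1+z_2)=T(z_1)T(z_2)$ is verified first on $C^\infty_c(\R^d_+)\otimes X$ by observing that the odd extension $\Eo$ commutes with convolution by the even kernel $G_z$, so $\Eo T(z)f=G_z*\Eo f$, and then applying $G_{z_1+z_2}=G_{z_1}*G_{z_2}$ on $\R^d$; density (e.g.\ by truncating away from the boundary and mollifying, combined with the bound above and Lemma~\ref{lem:densitynonAp} for $A_\infty$ weights) and the uniform bound extend this to all of $L^p(w_\gamma;X)$. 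Strong continuity $T(z)f\to f$ as $z\to 0$ in $\Sigma_\phi$ reduces, again by density and uniform boundedness, to the test-function case, where it follows from pointwise convergence $G_z*\Eo f\to\Eo f$ and dominated convergence.

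The main obstacle is the kernel estimate on the regime $y_1\ll x_1$: the transferred kernel $(x_1/y_1)H_z(x,y)$ inherits a $1/y_1$ singularity which is cancelled only because of the first-order boundary vanishing $1-e^{-x_1y_1/z}\lesssim x_1y_1/|z|$, and the residual factor $x_1^2/|z|$ can be absorbed into the Gaussian tail only thanks to the lower bound $|x-y|\gtrsim x_1$ that is available precisely in this regime. This is exactly the quantitative manifestation of the ``zero of order one at the boundary'' highlighted in the introduction, and it is consistent with the Neumann kernel, which lacks this zero, not admitting the analogous extended range of $\gamma$.
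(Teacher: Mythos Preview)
Your argument is correct and takes a genuinely different route from the paper. The paper reduces to $d=1$ via Minkowski's inequality and then applies Schur's test on $L^p(\R_+,\tfrac{dx}{x})$ with the conjugated kernel $y(x/y)^{(\gamma+1)/p}|H_z(x,y)|$, checking both Schur conditions by hand. Your approach instead conjugates by the isometry $M^{\pm 1}$ to land in the $A_p$-range $\gamma\mp p\in(-1,p-1)$, and then uses the pointwise domination of the transferred kernel $(x_1/y_1)^{\pm 1}H_z(x,y)$ by a pure Gaussian $G_{2c_\phi|z|}(x-y)$, so that the maximal function (or, for $p=1$, the dual estimate $G_t*w_{\gamma'}\lesssim Mw_{\gamma'}\lesssim w_{\gamma'}$ with $w_{\gamma'}\in A_1$) finishes the job. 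Your method is arguably more structural: it makes completely explicit that the admissible range $(-p-1,2p-1)$ is exactly a one-step $M^{\pm 1}$-shift of the $A_p$-range, and it works directly in dimension $d$ without the reduction to $d=1$. The paper's Schur computation, on the other hand, is self-contained and treats the closed endpoint $\gamma=p-1$ directly.

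Two small points to tidy up. First, your three open subintervals $(-p-1,-1)\cup(-1,p-1)\cup(p-1,2p-1)$ omit $\gamma\in\{-1,p-1\}$. These are easily recovered by complex interpolation of weighted $L^p$-spaces with fixed $p$ (Proposition~\ref{prop:weightedLp}), since your bounds are uniform on compact subsets of each open interval. Second, your appeal to Lemma~\ref{lem:densitynonAp} for density only covers $\gamma>-1$ (as $w_\gamma\in A_\infty$ fails for $\gamma\le -1$); for $\gamma\le -1$ you should simply use the direct truncation-plus-mollification argument you already sketch, which works for all $\gamma$.
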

\begin{proof}
First we consider $p\in (1, \infty)$.
The result for $\gamma\in (-1,p-1)$ follows from Theorem \ref{thm:domainAp case}.
In the remaining cases by duality it suffices to consider $\gamma\in [p-1,2p-1)$.

Let $|\delta|<\phi$ and write $z = t e^{i \delta}$ for $t>0$.

\medskip
{\em Step 1: Reduction to an estimate in the case $X=\C$.}
In this step we show that it is enough to prove the estimate
\begin{align}\label{eq:uniformbddT}
\||H_z|*\|f\|\|_{L^p(\R_+^d,w_{\gamma})} \lesssim_{\phi,\gamma,p} \|f\|_{L^p(\R_+^d,w_{\gamma})}
\end{align}
for all $f \in C_{c}(\R^{d}_{+})$.
Having this estimate, we get
\begin{align*}
\|T(z)f\|_{L^p(\R_+^d,w_{\gamma};X)}\leq \||H_z|*\|f\|_X\|_{L^p(\R_+^d,w_{\gamma})} \leq C_{\phi,\gamma,p} \|f\|_{L^p(\R_+^d,w_{\gamma};X)}
\end{align*}
for all $f\in C_c(\R_+^d)\otimes X$, from which the analyticity and strong continuity follow.
Indeed, note that for $g\in C_c(\R_+)\otimes X$, $z\mapsto \lb T(z) f,g\rb$ is analytic on $\Sigma_{\phi}$ and continuous on $\overline{\Sigma_{\phi}}$  by Theorem \ref{thm:domainAp case}  with $w=1$. Therefore, in case $X=\C$, the weak continuity of $T$ on $\overline{\Sigma_{\phi}}$ follows by density in the case $p\in (1, \infty)$ and by weak$^*$-sequential density of $C_c$ in $L^\infty$ in case $p=1$ (see \cite[Corollary 2.24]{Rudreal}). This in turn implies strong continuity by \cite[Theorem I.5.8]{EN}.
For general $X$, the continuity of $T(z) f$ for $f\in C_c(\R^d_+)\otimes X$ is clear from the scalar case, yielding the case of general $f \in L^{p}(\R^{d}_{+};X)$ by density.
The analyticity of $T$ on $\Sigma_{\phi}$ follows from \cite[Theorem A.7]{ABHN}.

\medskip
{\em Step 2: Reduction to the case $d=1$.}
Writing $H^1_z$ for the kernel of $T(z)$ in case $d=1$ and $G_z^{d-1}$ for the standard heat kernel in dimension $d-1$, we have
\[|H_z|*|f|(x_1, \tilde{x}) = \int_{0}^{\infty} |H_z^{1}(x_1,y_1)| \int_{\R^{d-1}}  G_z^{d-1}(\tilde{x}-\tilde{y}) |f(y_1, \tilde{y})| d\tilde{y} dy_1.\]
Taking $L^p(\R^{d-1})$-norms for fixed $x_1\in \R_+$, and using Minkowski's inequality and Young's inequality, we obtain
\begin{align*}
\|&|H_z|*|f|(x_1, \cdot)\|_{L^p(\R^{d-1})} \\& \leq \int_{0}^{\infty} |H_z^{1}(x_1,y_1)| \Big\|\tilde{x}\mapsto \int_{\R^{d-1}} G_z^{d-1}(\tilde{x}-\tilde{y}) |f(y_1, \tilde{y})| d\tilde{y}\Big\|_{L^p(\R^{d-1})} dy_1
\\ & \leq C_{\phi}\int_{0}^{\infty} |H_z^{1}(x_1,y_1)| \|f(y_1,\cdot)\|_{L^p(\R^{d-1})} dy_1,
\end{align*}
where $C_{\phi} = \sup_{z\in \Sigma_{\phi}}\|G_z^{d-1}\|_{L^1(\R^d)}<\infty$.
Therefore, it remains to prove \eqref{eq:uniformbddT} in the case $d=1$.

\medskip
{\em Step 3: The case $d=1$.} Setting $g(x) := x^{\frac{\gamma+1}{p}} |f(x)|$, $k_z(s,y) := y (x/y)^{\frac{\gamma+1}{p}} |H_z(x,y)|$ and
\[
h_{z}(x) := \int_0^\infty k_z(x,y)  g(y) \frac{dy}{y},
\]
we see that \eqref{eq:uniformbddT} holds if and only if
$\|h_{z}\|_{L^p(\R_+,\frac{dx}{x})} \lesssim_{\phi} \|g\|_{L^p(\R_+,\frac{dx}{x})}$.
To prove this, by Schur's test (see \cite[Theorem 5.9.2]{Garling}) it is enough to show
\begin{align}\label{eq:Schur1}
\sup_{x>0}\int_0^\infty k_z(x,y) \frac{dy}{y}& \leq A,
\\ \label{eq:Schur2} \sup_{y>0}\int_0^\infty k_z(x,y) \frac{dx}{x} & \leq B.
\end{align}
In order to prove these estimates, observe that with $z = t e^{i\delta}$,
\begin{equation}\label{eq:Hzpoint}
\begin{aligned}
(4\pi t)^{1/2} |H_z(x,y)| & = \big|e^{\frac{-|x-y|^2e^{-i\delta}}{4t}} - e^{\frac{-|x+y|^2e^{-i\delta}}{4t}}\big|
\\ & = e^{\frac{-|x-y|^2\cos(\delta)}{4t}} \big| 1- e^{-\frac{xy e^{-i \delta }}{t}}\big|
\\ & \leq e^{\frac{-|x-y|^2\cos(\delta)}{4t}}  \int_0^{xy/t} e^{-s \cos(\delta)} \ud s
\\ & = (4\pi t)^{1/2} \cos(\delta)^{-1} H_{t\cos(\delta)}(x,y).
\end{aligned}
\end{equation}
Therefore, by replacing $x$ and $y$ by $(4t/\cos(\delta))^{1/2} x$ and $(4t/\cos(\delta))^{1/2} y$, respectively, in \eqref{eq:Schur1} and \eqref{eq:Schur2} it suffices to consider $t=1/4$ and $\delta=0$.

From now on we write
\[k(x,y) := y (x/y)^{\frac{\gamma+1}{p}}(e^{-|x-y|^2} - e^{-|x+y|^2}) = y (x/y)^{\frac{\gamma+1}{p}} e^{|x-y|^2} \big| 1- e^{-4xy}\big|.\]
One can check that $| 1- e^{-4xy}|\leq \min\{1, 4xy\}$.
Therefore,  $k$ satisfies
\[k(x,y) \leq y (x/y)^{\frac{\gamma+1}{p}} e^{-|x-y|^2}  \min\{1, 4xy\} \]
It follows that
\begin{align*}
\int_0^\infty k(x,y) \frac{dy}{y} & \leq \int_0^\infty (x/y)^{\frac{\gamma+1}{p}} e^{-|x-y|^2} \min\{1, 4xy\} dy
\\ & \leq \int_0^{x/2} (x/y)^{\frac{\gamma+1}{p}} e^{-|x-y|^2} 4xy dy  +  \int_{x/2}^\infty (x/y)^{\frac{\gamma+1}{p}} e^{-|x-y|^2}  dy
\\ & = T_1 + T_2.
\end{align*}
The first term satisfies
\begin{align*}
 T_1 & \leq 4 \int_0^{x/2} x^{\frac{\gamma+1}{p}+1} y^{1-\frac{\gamma+1}{p}} e^{-x^2/4}  dy = C_{1} x^{3} e^{-x^2/4} \leq A_1,
\end{align*}
where we used $1-\frac{\gamma+1}{p}>-1$. Since $\gamma>-1$, the second term satisfies
\begin{align*}
T_2 \leq C_{2} \int_{-\infty}^\infty  e^{-|x-y|^2}  dy = C_{2} \int_{-\infty}^\infty  e^{-|y|^2}  dy = A_2
\end{align*}

Next we estimate the integral over the $x$-variable.
For $y\in (0,1)$, we can write
\begin{align*}
  \int_0^\infty k(x,y) \frac{dx}{x} & \leq \int_0^\infty (x/y)^{\frac{\gamma+1}{p}-1} e^{-|x-y|^2} 4xy dx
\\ & \leq 4 \int_0^\infty x^{\frac{\gamma+1}{p}} y^{2-\frac{\gamma+1}{p}}   e^{-|x-y|^2} dx
\\ & = \int_{-y}^\infty (x+y)^{\frac{\gamma+1}{p}} y^{2-\frac{\gamma+1}{p}} e^{-|x|^2} dx
\\ & = \int_{-\infty}^\infty (x+1)^{\frac{\gamma+1}{p}}  e^{-|x|^2} dx\leq B_1,
\end{align*}
where we used $2-\frac{\gamma+1}{p}\geq 0$ and $\gamma+1\geq 0$. For $y\geq 1$, since $\frac{\gamma+1}{p}\geq 1$ we have
\begin{align*}
  \int_0^\infty k(x,y) \frac{dx}{x} & \leq \int_0^\infty (x/y)^{\frac{\gamma+1}{p}-1} e^{-|x-y|^2} dx
\\ & = \int_{-y}^\infty \big(\tfrac{x}{y} + 1\big)^{\frac{\gamma+1}{p}-1} e^{-x^2} dx
\\ & \leq \int_{-\infty}^\infty \big(|x| + 1\big)^{\frac{\gamma+1}{p}-1} e^{-x^2} dx\leq B_2.
\end{align*}

{\em Step 4: The case $p=1$:} One can still reduce to the case $d=1$ by Fubini's theorem. Moreover, instead of using Schur's lemma, by Fubini's theorem it suffices to show that
\[\sup_{y>0}\int_0^\infty k(x,y) \frac{dx}{x}<\infty.\]
The case $\gamma\in [0,1)$ can be treated in the same way as in the above proof. In case $\gamma\in (-2,0)$  we argue as follows:
\begin{align*}
\int_{y/2}^\infty k(x,y) \frac{dx}{x} &\leq \int_{y/2}^\infty (x/y)^{\gamma} e^{-|x-y|^2}  dx \leq 2^\gamma \int_{-\infty}^\infty e^{-|x-y|^2}  dx =  C.
\end{align*}
On the other hand, since $\gamma+2>0$, we have
\begin{align*}
\int_0^{y/2}  k(x,y) \frac{dx}{x} & \leq \int_0^{y/2}   (x/y)^{\gamma} e^{-|x-y|^2}  4xy
\\ & \leq 4e^{-y^2/4} y^{-\gamma+1} \int_0^{y/2} x^{\gamma+1} dx = 4 ye^{-y^2/4}. \qedhere
\end{align*}
\end{proof}

In the next example we show that the range for $\gamma$ in Proposition \ref{prop:nonapbdd} is optimal.
\begin{example}
Let $p\in (1, \infty)$ and $\gamma\notin (-p-1, 2p-1)$. We give an example of a function $f\in L^p(\R^d_+,w_{\gamma})$ such that for all $t>0$, $T(t)f\notin  L^p(\R^d_+,w_{\gamma})$. Here $T(t) f$ is defined by \eqref{eq:heatsemi}. By duality we only need to consider $\gamma\geq 2p-1$. Let $\beta\in (1/p,1)$ and set $f(x) = x_1^{-2} |\log(x_1)|^{-\beta} \one_Q(x)$, where $Q = [0,1]^d$. Then, on the one hand, $f\in L^p(\R^d_+;w_{\gamma})$. On the other hand, for $x\in Q^d$,
\begin{align*}
T(t) f(x) &=c_{t,d} \int_{Q} e^{-\frac{|x-y|^2}{4t}} [1 - e^{-\frac{-x_1 y_1}{t}}] y_1^{-2} |\log(y_1)|^{-\beta} \ud y
\\ & = \tilde{c}_{t,d} \int_{0}^1 y_1^{-1} |\log(y_1)|^{-\beta} \ud y_1 = \infty;
\end{align*}
in particular, $T(t)f\notin  L^p(\R^d_+,w_{\gamma})$.
\end{example}

Let $-A$ denote the generator of the semigroup $(T(z))_{z}$ of Proposition \ref{prop:nonapbdd}.
Then by standard results of analytic semigroups we see that $A$ is sectorial with $\omega(A) = 0$.

In the case of a $X$ is a UMD space, $-A$ even has a bounded $H^{\infty}$-calculus:

\begin{proposition}\label{prop:bddHinfty}
Let $X$ be a UMD space. Let $-A$ be the generator of the heat semigroup on $L^p(\R^d_+,w_{\gamma};X)$ given in Proposition \ref{prop:nonapbdd} with $p\in (1, \infty)$ and $\gamma\in (-1-p,2p-1)$. Then $A$ has a bounded $H^\infty$-calculus with $\omega_{H^\infty}(A) = 0$.
\end{proposition}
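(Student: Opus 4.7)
My plan is to reduce the non-$A_{p}$ cases to the $A_{p}$ range $\gamma\in(-1,p-1)$, which is handled by Theorem~\ref{thm:domainAp case}, by combining a duality argument, the pointwise semigroup kernel bound from the proof of Proposition~\ref{prop:nonapbdd}, and complex interpolation for the two endpoint weights.

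First, a duality reduction. Since $[L^{p}(\R^{d}_{+},w_{\gamma};X)]^{*}=L^{p'}(\R^{d}_{+},w_{-\gamma/(p-1)};X^{*})$, the dual $X^{*}$ is UMD, and the $H^{\infty}$-calculus of a sectorial operator passes to adjoints, it suffices to observe that $\gamma\in(-1-p,-1)$ corresponds under this identification to $-\gamma/(p-1)\in(p'-1,2p'-1)$. This collapses the task to the range $\gamma\in[p-1,2p-1)$.

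For the main interior case $\gamma\in(p-1,2p-1)$, the generator $A$ acts on both $L^{p}(w_{\gamma};X)$ and $L^{p}(w_{\gamma-p};X)$ through the \emph{same} Dirichlet heat semigroup $T(z)$ of \eqref{eq:heatsemi}, so it is a consistent sectorial operator on the scale. Since $\gamma-p\in(-1,p-1)$, Theorem~\ref{thm:domainAp case} already delivers a bounded $H^{\infty}$-calculus of angle zero on $L^{p}(w_{\gamma-p};X)$. To transfer this to $L^{p}(w_{\gamma};X)$, I would revisit the kernel argument of Proposition~\ref{prop:nonapbdd}: the pointwise domination $|H_{z}(x,y)|\lesssim H_{t\cos\delta}(x,y)$ from \eqref{eq:Hzpoint} together with the Schur-type estimates worked out there yields uniform boundedness of $\{T(z):z\in\Sigma_{\phi}\}$ on $L^{p}(w_{\gamma};X)$. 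Combining this lattice-level domination with the UMD/UMD-lattice structure of $L^{p}(w_{\gamma};X)$ (inherited from $X$, independently of $A_{p}$), I would then upgrade uniform boundedness to $R$-boundedness of the sectorial semigroup, and finally invoke a Kalton--Weis-type criterion to conclude the bounded $H^{\infty}$-calculus of angle zero. The remaining endpoints $\gamma=-1$ and $\gamma=p-1$ are recovered by complex interpolation: for small $\varepsilon>0$, Proposition~\ref{prop:weightedLp} gives
\[
[L^{p}(w_{\gamma-\varepsilon};X),L^{p}(w_{\gamma+\varepsilon};X)]_{\frac{1}{2}}=L^{p}(w_{\gamma};X),
\]
and the $H^{\infty}$-bound $\|f(A)\|\lesssim\|f\|_{H^{\infty}}$ already established on both sides interpolates to the endpoint weight since $A$ is consistent across the scale.

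\textbf{Main obstacle.} The crux is the $R$-boundedness step that bridges the scalar pointwise semigroup kernel bound with an operator-theoretic property on the vector-valued weighted space. The UMD hypothesis on $X$ is essential here: it ensures that $L^{p}(\R^{d}_{+},w_{\gamma};X)$ is a UMD Banach lattice even when $w_{\gamma}\notin A_{p}$, and this is what allows a Rademacher/Khintchine-type argument to convert the pointwise lattice domination coming from \eqref{eq:Hzpoint} into genuine $R$-boundedness. Without UMD one would obtain only uniform operator-norm bounds on $\{T(z)\}$, which are insufficient to trigger Kalton--Weis; carrying out this upgrade cleanly, in a setting where the classical $A_{p}$ harmonic analysis is unavailable, is the main technical content of the proof.
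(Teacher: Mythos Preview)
Your proposal has a genuine gap at its core. You assert that the UMD hypothesis on $X$ makes $L^{p}(\R^{d}_{+},w_{\gamma};X)$ a ``UMD Banach lattice'', and then rely on a lattice-domination argument to convert the scalar pointwise bound $\|T(z)f(x)\|_{X}\le (|H_{z}|*\|f\|_{X})(x)$ into $R$-boundedness of $\{T(z):z\in\Sigma_{\phi}\}$. But UMD confers no order structure whatsoever: $L^{p}(w_{\gamma};X)$ is a Banach lattice only when $X$ itself is one (e.g.\ the Schatten class $S^{q}$ is UMD but not a lattice). The standard ``domination by a single positive operator implies $R$-boundedness'' results require the ambient space to be a Banach function space; they simply do not apply here for general UMD $X$, and Kahane--Khintchine alone does not bridge the gap. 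A secondary problem: even if you had $R$-sectoriality on $L^{p}(w_{\gamma};X)$, the Kalton--Weis interpolation theorem you have in mind yields the bounded $H^{\infty}$-calculus only on the \emph{open} interpolation scale $[L^{p}(w_{\gamma-p};X),L^{p}(w_{\gamma};X)]_{\theta}$ for $\theta\in(0,1)$, never at $\theta=1$; moving the $R$-sectorial endpoint past $\gamma$ merely relocates the same unresolved $R$-boundedness question.

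The paper's argument is entirely different and avoids $R$-bounds altogether. It is a Kr\'ee-type weighted-to-unweighted comparison: for $f\in C_{c}(\R^{d}_{+};X)$ one sets $g=w_{\gamma}^{1/p}f$ and $\psi(x,y)=(x_{1}/y_{1})^{\gamma/p}-1$, and decomposes
\[
\phi(A)f(x)=w_{\gamma}^{-1/p}(x_{1})\,\phi(A)g(x)+w_{\gamma}^{-1/p}(x_{1})\,\phi(A)\bigl(\psi(x,\cdot)g\bigr)(x).
\]
The first term is controlled by the \emph{unweighted} $H^{\infty}$-calculus of Theorem~\ref{thm:domainAp case}; this is the only place UMD is used. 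The correction term is handled by a purely scalar kernel estimate: via the Laplace representation of the resolvent and \eqref{eq:Hzpoint} one reduces to the convolution-type kernel $(|s-1|^{-1}-|s+1|^{-1})\,|s^{\gamma/p}-1|$ on $(0,\infty)$, whose relevant integral is finite precisely for $\gamma\in(-1-p,2p-1)$. No duality, no interpolation of endpoints, no lattice hypothesis, and no $R$-boundedness enter.
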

\begin{proof}
The case $\gamma\in (-1,p-1)$ follows from Theorem \ref{thm:domainAp case}. For the other values of $\gamma$ we use a classical perturbation argument (see \cite{Kree}).

\bigskip

{\em  Step 1:}
Let $0<\sigma<\omega<\pi/2$
Let $\phi\in H^\infty_0(\Sigma_{\omega})$ with $\omega\in (0,\pi/2)$ satisfy $\|\phi\|_{\infty}\leq 1$ and let $\Gamma = \partial \Sigma_{\sigma}$. By definition we have
\begin{align}\label{eq:defHinfty}
\phi(A) & = \frac{1}{2\pi i}\int_{\Gamma} \phi(\lambda) R(\lambda, A) d\lambda = \frac{1}{2\pi i}\int_{\Gamma_+\cup \Gamma_-} \phi(\lambda) R(\lambda, A) d\lambda,
\end{align}
where $\Gamma_{\pm} = \{t e^{\pm \sigma i}: t\in (0,\infty)\}$.

Fix $f\in C_c(\R^d_+;X))$ and let $g = w_{\gamma}^{\frac1p} f$ and $\psi(x,y) = \Big(\frac{w^{\frac1p}_{\gamma}(x_1)}{w^{\frac1p}_{\gamma}(y_1)} - 1\Big)$.
Then for $x\in \R^d_+$
\begin{align*}
\phi(A) f(x) & = w^{-\frac1p}_{\gamma}(x_1) \phi(A) (w^{\frac1p}_{\gamma} f)(x) + w^{-\frac1p}_{\gamma}(x_1) \phi(A) \big(w^{\frac1p}_{\gamma}(x_1) - w^{\frac1p}_{\gamma})f\big)(x)
\\ & = w^{-\frac1p}_{\gamma}(x_1) \phi(A)(g)(x) + w^{-\frac1p}_{\gamma}(x_1) \phi(A)(\psi(x,\cdot) g)(x).
\end{align*}
Therefore,
\[\|\phi(A) f\|_{L^p(\R_+,w_{\gamma};X)} \leq \|\phi(A) g\|_{L^p(\R_+;X)} + \|x\mapsto \phi(A) (\psi(x,\cdot) g)(x)\|_{L^p(\R_+;X)}.\]
The first term on the right-hand side can be estimated by the boundedness of the $H^\infty$-calculus in the unweighted case (see Theorem \ref{thm:domainAp case}):
\[\|\phi(A) g\|_{L^p(\R_+;X)}\leq C \|g\|_{L^p(\R_+;X)} = C\|f\|_{L^p(\R_+,w_{\gamma};X)}.\]
Therefore, it remains to show
\begin{equation}\label{eq:phiApsig}
 \Big\|x\mapsto \phi(A) (\psi(x,\cdot) g)(x)\Big\|_{L^p(\R_+;X)}\leq C\|g\|_{L^p(\R_+;X)} = C\|f\|_{L^p(\R_+,w_{\gamma};X)}.
\end{equation}

\bigskip

{\em  Step 2:}  To prove \eqref{eq:phiApsig} we estimate the integrals over $\Gamma_{\pm}$ in \eqref{eq:defHinfty} separately. By symmetry it suffices to consider $\Gamma_{+}$. Let $\delta = (\pi-\sigma)/2$.  For $\lambda = re^{i\sigma}$ with $r>0$ and $h \in L^{p}(\R_{+},w_{\gamma};X)$, we have the following Laplace transform representation for the resolvent (see \cite{EN}):
\begin{align*}
R(\lambda,A)h & = (\lambda-A)^{-1}h =-(re^{i(\sigma-\pi)} + A)^{-1}h
\\ & = -e^{i\delta} (re^{-i\delta} + e^{i\delta}A)^{-1}h
= e^{i\delta} \int_0^\infty e^{-t re^{-i\delta}} e^{-te^{i\delta} A}h \,dt.
\end{align*}
Observe that by \eqref{eq:Hzpoint} we can write
\begin{align*}
\Big\|&\int_{\Gamma_+} \phi(\lambda) R(\lambda, A) (\psi(x,\cdot) g)(x) d\lambda\Big\|_X \\ &  \leq \int_{0}^\infty \|R(r e^{i \sigma}, A) (\psi(x,\cdot) g)(x)\|  dr
\\ & \leq \int_{0}^\infty \int_0^\infty  \|e^{-te^{i\delta} A}  e^{- tr e^{-i \delta}} (\psi(x,\cdot) g)(x)\|_X  dt dr
\\ & \leq \frac{1}{\cos(\delta)} \int_{0}^\infty \int_0^\infty  e^{-t\cos(\delta) A} \|\psi(x,\cdot) g\|(x) e^{- t r \cos(\delta)}   dt dr
\\ & = \frac{1}{\cos^2(\delta)}\int_0^\infty  e^{-t\cos(\delta) A} \|\psi(x,\cdot) g\|(x)    \frac{dt}{t}.
\end{align*}
Below we will write $x=(x_{1},\tilde{x})$ and $y=(y_{1},\tilde{y})$.
Using the kernel representation of the semigroup we can write
\begin{align*}
\int_0^\infty  & e^{-t\cos(\delta) A} \|\psi(x,\cdot) g\|(x)    \frac{dt}{t}
\\ &= \int_0^\infty   e^{-t A} \|\psi(x,\cdot) g\|(x)    \frac{dt}{t}
\\   & = \int_{0}^\infty \int_{\R^{d}_{+}} (G_{t}(x_{1}-y_{1},\tilde{x}-\tilde{y}) - G_{t}(x_{1}+y_{1},\tilde{x}-\tilde{y}))|\psi(x,y)| \, \|g(y)\|  dy \frac{dt}{t}
\\   & = \int_{\R^{d}_{+}}\int_{0}^\infty(G_{t}(x_{1}-y_{1},\tilde{y}) - G_{t}(x_{1}+y_{1},\tilde{y})) \frac{dt}{t} |\psi(x_{1},y_{1})| \, \|g(y_{1},\tilde{x}-\tilde{y})\|  dy
\\ & = C_{1} \int_{\R^{d}_{+}} \Big(\frac{1}{|(x_{1}/y_{1}-1,\tilde{y}/y_{1})|^{d}}-\frac{1}{|(x_{1}/y_{1}+1,\tilde{y}/y_{1})|^{d}} \Big)  |(x_{1}/y_{1})^{\frac{\gamma}p} - 1\Big| \|g(y)\| \, \frac{dy}{y_{1}^{d}}
\\ & =: C_{1}\int_{\R^{d}_{+}} \ell(x_{1}/y_{1},\tilde{y}/y_{1}) \, \|g(y_{1},\tilde{x}-\tilde{y})\|  \frac{dy}{y_{1}^{d}}
\\ &= C_{1}\int_{\R^{d}_{+}} \ell(x_{1}/y_{1},\tilde{y}) \, \|g(y_{1},\tilde{x}-y_{1}\tilde{y})\|  \frac{dy}{y_{1}},
\end{align*}
where we used
\begin{align*}
\int_0^\infty  G_t(x) \, \frac{dt}{t} &= \int_0^\infty (4\pi t)^{-d/2} e^{-\frac{|x|^2}{4t}} \, \frac{dt}{t}
= \int_0^\infty (4\pi)^{-d/2} s^{d/2} e^{-\frac{s}{4}} \, \frac{ds}{s} |x|^{-d}=C_{1}|x|^{-d} .
\end{align*}
Now,
\begin{align*}
\Big\|& x\mapsto \int_{\R^{d}_{+}} \ell(x_{1}/y_{1},\tilde{y}) \, \|g(y_{1},\tilde{x}-y_{1}\tilde{y})\|  \frac{dy}{y_{1}}  \Big\|_{L^p(\R^d_+)}
\\ & \leq \Big\| x_{1}\mapsto \int_{\R^{d}_{+}} \ell(x_{1}/y_{1},\tilde{y}) \, \| g(y_{1},\,\cdot\,-y_{1}\tilde{y})\|_{L^{p}(\R^{d-1};X)}  \frac{dy}{y_{1}}  \Big\|_{L^p(\R_+)}
\\ & = \Big\| x_{1}\mapsto \int_{\R^{d-1}}\int_{0}^{\infty} \ell(x_{1}/y_{1},\tilde{y}) \, \| g(y_{1},\,\cdot\,)\|_{L^{p}(\R^{d-1};X)}  \frac{dy_{1}}{y_{1}}d\tilde{y}  \Big\|_{L^p(\R_+)}
\\ & = \Big\| x_{1}\mapsto \int_{\R^{d-1}}\int_{0}^{\infty} \ell(y_{1},\tilde{y}) \, \| g(x_{1}/y_{1},\,\cdot\,)\|_{L^{p}(\R^{d-1};X)}  \frac{dy_{1}}{y_{1}}d\tilde{y}  \Big\|_{L^p(\R_+)}
\\ & = \Big\| x_{1}\mapsto \int_{\R^{d-1}}\int_{0}^{\infty} \ell(y_{1},\tilde{y})y_{1}^{p}(x_{1}/y_{1})^{p} \, \| g(x_{1}/y_{1},\,\cdot\,)\|_{L^{p}(\R^{d-1};X)}  \frac{dy_{1}}{y_{1}}d\tilde{y}  \Big\|_{L^p(\R_+,\frac{dx_{1}}{x_{1}})}
\\ & \leq \int_{\R^{d-1}}\int_{0}^{\infty} \ell(y_{1},\tilde{y})y_{1}^{p} \, \Big\| x_{1}\mapsto (x_{1}/y_{1})^{p} \| g(x_{1}/y_{1},\,\cdot\,)\|_{L^{p}(\R^{d-1};X)}  \Big\|_{L^p(\R_+,\frac{dx_{1}}{x_{1}})} \frac{dy_{1}}{y_{1}}d\tilde{y}
\\ & = \int_{\R^{d-1}}\int_{0}^{\infty} \ell(y_{1},\tilde{y})y_{1}^{p} \, \Big\| x_{1}\mapsto x_{1}^{p} \| g(x_{1},\,\cdot\,)\|_{L^{p}(\R^{d-1};X)}  \Big\|_{L^p(\R_+,\frac{dx_{1}}{x_{1}})} \frac{dy_{1}}{y_{1}}d\tilde{y}
\\ & = C_{2}\|g\|_{L^{p}(\R^{d}_{+};X)}.
\end{align*}
Here we use $-1-p<\gamma<2p-1$ to obtain
\begin{align*}
C_{2}
&:= \int_{\R^{d-1}}\int_{0}^{\infty} \ell(y_{1},\tilde{y})y_{1}^{p} \, \frac{dy_{1}}{y_{1}}d\tilde{y} \\
&= \int_{0}^{\infty}\int_{\R^{d-1}}\Big(\frac{1}{|(y_{1}-1,\tilde{y})|^{d}}-\frac{1}{|(y_{1}+1,\tilde{y})|^{d}} \Big) \, d\tilde{y} |y_{1}^{\frac{\gamma}p} - 1|y_{1}^{p} \frac{dy_{1}}{y_{1}} \\
&= C_{3}\int_{0}^{\infty}\Big(\frac{1}{|y_{1}-1|}-\frac{1}{|y_{1}+1|} \Big)  |y_{1}^{\frac{\gamma}p} - 1|y_{1}^{p} \, \frac{dy_{1}}{y_{1}} < \infty,
\end{align*}
where $C_3 = \int_{\R^{d-1}} (1+|\tilde{y}|)^{-d} \ud \tilde{y}$ if $d\geq 2$ and $C_3 = 1$ otherwise.
Combining the above estimates we obtain the required estimate
\begin{align*}
\Big\|x\mapsto \int_{\Gamma_+} \phi(\lambda) R(\lambda, A) (\psi(x,\cdot) g)(x) d\lambda\Big\|_{L^p(\R_+;X)}
\leq \frac{C \|g\|_{L^p(\R_+;X)}}{\cos^2(\delta)}.
\end{align*}
\end{proof}

\subsection{The Dirichlet Laplacian on $\R_+$\label{subs:funcnonAp}}

\begin{proposition}\label{prop:wellposedd=1}
Let $p\in (1,\infty)$ and $\gamma\in (p-1,2p-1)$. Then $\DD$, defined as
\[
D(\Delta_{\Dir}) := W^{2,p}_{\Dir}(\R_{+},w_{\gamma};X),\quad \Delta_{\Dir}u := u'',
\]
is the generator of the heat semigroup on $L^p(\R_+,w_{\gamma};X)$ given in Proposition \ref{prop:nonapbdd}.
\end{proposition}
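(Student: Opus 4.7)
The plan is to identify $\DD$ with $-A$, where $-A$ denotes the generator of $(T(z))$ from Proposition~\ref{prop:nonapbdd}. I will establish the two inclusions $W^{2,p}_{\Dir}(\R_+,w_\gamma;X) \subseteq D(A)$ and $D(A) \subseteq W^{2,p}_{\Dir}(\R_+,w_\gamma;X)$ separately, the second being the harder one.

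For the easy inclusion I would first test on $f \in C_c^\infty(\R_+;X)$. Since such an $f$ vanishes near the boundary, its odd extension $\Eo f$ lies in $C_c^\infty(\R;X)$, so $T(t) f = (G_t * \Eo f)|_{\R_+}$, and classical smooth-convolution estimates applied to $\Eo f$ give $(T(t) f - f)/t \longra f''$ in $L^p(\R_+,w_\gamma;X)$ as $t \downarrow 0$; thus $f \in D(A)$ with $-Af = f''$. For $\gamma \in (p-1,2p-1)$ the trace definitions of Subsection~\ref{subsec:tracesSob} yield $W^{2,p}_0(\R_+,w_\gamma;X) = W^{2,p}_{\Dir}(\R_+,w_\gamma;X)$ (only the order $|\alpha|=0$ trace is required to vanish), so Proposition~\ref{prop:densenonAp} gives density of $C_c^\infty(\R_+;X)$ in $W^{2,p}_{\Dir}(\R_+,w_\gamma;X)$. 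Combined with closedness of $A$ and continuity of $\partial^2 \colon W^{2,p}_{\Dir}(\R_+,w_\gamma;X) \longra L^p(\R_+,w_\gamma;X)$, this extends $-Au = u''$ to every $u \in W^{2,p}_{\Dir}$.

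For the opposite inclusion the plan is to compute $(\lambda+A)^{-1}$ explicitly by Laplace-transforming the semigroup: for $\lambda > 0$ (automatically in the resolvent set of $-A$ by Proposition~\ref{prop:nonapbdd}), Fubini gives
\[
R_\lambda f(x) := \int_0^\infty e^{-\lambda t} T(t) f(x)\,dt = \int_0^\infty G_\lambda(x,y) f(y)\,dy,
\]
where $G_\lambda(x,y) = \tfrac{1}{2\sqrt{\lambda}}\bigl(e^{-\sqrt\lambda|x-y|} - e^{-\sqrt\lambda(x+y)}\bigr)$ is the Dirichlet Green's function on $\R_+$. For $f_n \in C_c^\infty(\R_+;X)$, differentiation under the integral shows that $u_n := R_\lambda f_n$ is smooth on $[0,\infty)$ with $u_n(0) = 0$ and $u_n'' = \lambda u_n - f_n$, and with $u_n$ and its derivatives decaying exponentially at infinity; hence $u_n \in W^{2,p}_{\Dir}(\R_+,w_\gamma;X)$. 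Given arbitrary $f \in L^p(\R_+,w_\gamma;X)$, I would use Lemma~\ref{lem:densitynonAp} to pick $f_n \in C_c^\infty(\R_+;X)$ with $f_n \longra f$ in $L^p$. Then $u_n \longra u := R_\lambda f$ in $L^p$ (boundedness of $R_\lambda$), and $u_n'' - u_m'' = \lambda(u_n - u_m) - (f_n - f_m) \longra 0$ in $L^p$. The non-$A_p$ interpolation inequality Lemma~\ref{lem:interp2} then forces $(u_n)$ to be Cauchy in $W^{2,p}(\R_+,w_\gamma;X)$; passage to the limit yields $u \in W^{2,p}$ with $u'' = \lambda u - f$, and the trace embedding Lemma~\ref{lem:lem:traceA2p} gives $u(0) = \lim_n u_n(0) = 0$. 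Consequently $R_\lambda(L^p(\R_+,w_\gamma;X)) \subseteq W^{2,p}_{\Dir}(\R_+,w_\gamma;X)$, i.e.\ $D(A) \subseteq W^{2,p}_{\Dir}$.

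The hard part is this second inclusion, since in the non-$A_p$ range none of the standard harmonic-analytic tools (Fourier multipliers, maximal function, Littlewood--Paley theory) is available to bound $R_\lambda$ directly as an operator $L^p \longra W^{2,p}$. The proposal sidesteps this by using the explicit Green's function to exhibit, for $f_n \in C_c^\infty(\R_+;X)$, smooth solutions $u_n$ lying in $W^{2,p}_{\Dir}$ by hand, and then invoking Lemma~\ref{lem:interp2} to pass from $L^p$-control of $u$ and $u''$ to $W^{2,p}$-control. Together with the easy inclusion this gives $D(A) = W^{2,p}_{\Dir}(\R_+,w_\gamma;X)$ and $A = -\DD$, proving the proposition.
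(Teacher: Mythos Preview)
Your proposal is correct and follows essentially the same route as the paper. Both arguments first establish $\DD\subseteq -A$ via density of $C_c^\infty(\R_+;X)$ in $W^{2,p}_{\Dir}(\R_+,w_\gamma;X)$ (Proposition~\ref{prop:densenonAp}) and closedness of $A$; for the reverse inclusion both produce, for $f\in C_c^\infty(\R_+;X)$, an explicit solution $u$ of $\lambda u-u''=f$ with $u(0)=0$, obtain $\|u\|_{L^p}+\|u''\|_{L^p}\lesssim\|f\|_{L^p}$ from the resolvent bound and the equation, and close in $W^{2,p}$ via the non-$A_p$ interpolation inequality Lemma~\ref{lem:interp2}. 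The only cosmetic difference is that the paper writes the solution via the Fourier transform of the odd extension (yielding $\overline{u}\in\Schw(\R;X)$) while you write it via the explicit Dirichlet Green's kernel obtained from Laplace-transforming the semigroup; both constructions give the same object and the same estimates.
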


For the case $\gamma \in (-p-1,-1)$ we refer the reader to Section~\ref{subsec:Dir_Laplace_(-p-1,-1)}.

\begin{proof}
Let $-A$ denote the generator of the heat semigroup $T$ of Proposition \ref{prop:nonapbdd}.
We first show that $\DD \subseteq -A$, that is, $W^{2,p}_{\Dir}(\R_+,w_{\gamma};X) \subseteq D(A)$ and for $u\in W^{2,p}_{\Dir}(\R_+,w_{\gamma};X)$ one has $-A u = \DD u$.
From Theorem \ref{thm:domainAp case} we see that for $u\in C^\infty_c(\R_+;X)$,
\[T(t) u - u = \int_0^t T(s) \DD u ds.\]
Therefore, $\frac1t(T(t) u - u) \to \DD u$ in $L^p(\R_+,w_{\gamma};X)$ by strong continuity of $(T(s))_{s\geq 0}$. Therefore, $u \in D(A)$ with $-A u = \DD u$. Now for $u\in W^{2,p}_{\Dir}(\R_+,w_{\gamma};X)$, using Proposition \ref{prop:densenonAp}, we can find a sequence $(u_n)_{n\geq 1}$ in $C^\infty_c(\R_+;X)$ such that $u_n\to u$ in $W^{2,p}_{\Dir}(\R_+,w_{\gamma};X)$. Then $-A u_n = \DD u_n \to \DD u$ in $L^p(\R_+,w;X)$. Therefore, the closedness of $A$ yields that $u\in D(A)$ and $-A u = \DD u$.

Next we show $-A \subseteq \DD$. Using $\DD \subseteq -A$, for this it is enough that $1+A$ is injective  and $1-\DD$ is surjective. Being the generator of a bounded analytic semigroup (see Proposition~\ref{prop:nonapbdd}), $A$ is sectorial, implying that $1+A$ is injective.
For the surjectivity of $1-\DD$ we consider the equation $u-\DD u=f$, for $f\in L^p(\R_+^d,w_{\gamma};X)$.

Let us first consider $f\in C_c^\infty(\R_+;X)$.
Let $\overline{f}$ denote the odd extension of $f$.
Clearly, $\overline{f}\in C_c^{\infty}(\R;X) \subset \mathcal{S}(\R;X)$.
So we can define $\overline{u} \in \mathcal{S}(\R;X)$ by $\overline{u} := \F^{-1}[\xi \mapsto \frac{\F\overline{f}(\xi)}{1+\xi^2}]$, yielding a solution of the equation $\overline{u}-\overline{u}''=\overline{f}$. Since $\overline{u}$ is odd, it also satisfies the Dirichlet condition $u(0)=0$.
By restriction to $\R_{+}$ we obtain a solution $u:=\overline{u}_{|\R_{+}} \in W^{2,p}_{\Dir}(\R_+,w_{\gamma};X)$ of the equation $(1-\DD)u=f$.
As $W^{2,p}_{\Dir}(\R^d,w_{\gamma};X)$ is complete and $C_c^\infty(\R_+;X)$ is dense in $L^p(\R_+^d,w_{\gamma};X)$ (see Proposition \ref{prop:densenonAp}), it suffices to prove the estimate $\|u\|_{W^{2,p}(\R^d,w_{\gamma};X)} \lesssim \|f\|_{L^p(\R_+^d,w_{\gamma};X)}$.

To finish, we prove this estimate.
As $\DD \subset A$, we have  $u \in D(A)$ with $(1-A)u=f$, so $u=R(1,A)f$. It follows that $\|u\|_{L^p(\R_+,w_{\gamma};X)} \lesssim \|f\|_{L^p(\R_+,w_{\gamma};X)}$. Since $u'' = u-f$ we find that $\|u''\|_{L^p(\R_+,w_{\gamma};X)}\lesssim \|f\|_{L^p(\R_+,w_{\gamma};X)}$.
By interpolation the same estimate holds for $u'$ (see Lemma \ref{lem:interp2}).
\end{proof}

\begin{corollary}\label{cor:prop:wellposedd=1}
Let $p\in (1,\infty)$ and $\gamma\in (p-1,2p-1)$.
For all $\lambda>0$ and $f\in L^p(\R_+;w_{\gamma};X)$ there exists a unique $u\in W^{2,p}_{\Dir}(\R_+,w_{\gamma};X)$ such that $\lambda u-u'' = f$ and
\begin{equation}\label{eq:apriorid=1}
\sum_{j=0}^2 |\lambda|^{1-\frac{j}{2}} \|D^{j} u\|_{L^p(\R_+,w_{\gamma};X)}  \lesssim_{p,\gamma} \|f\|_{L^p(\R_+;w_{\gamma};X)}.
\end{equation}
\end{corollary}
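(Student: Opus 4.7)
The plan is to extract this corollary from Proposition \ref{prop:wellposedd=1} together with standard sectoriality and interpolation arguments. Since Proposition \ref{prop:wellposedd=1} identifies $\DD$ as the generator of the bounded analytic semigroup of Proposition \ref{prop:nonapbdd}, the operator $-\DD$ is sectorial of angle $0$ on $L^p(\R_+,w_\gamma;X)$. In particular, for every $\lambda>0$ the operator $\lambda-\DD\colon W^{2,p}_{\Dir}(\R_+,w_\gamma;X)\to L^p(\R_+,w_\gamma;X)$ is a bijection, which delivers the existence and uniqueness of $u$.

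For the estimates I would proceed term by term. First, sectoriality with angle $0$ gives the resolvent bound
\[
\lambda\|u\|_{L^p(\R_+,w_\gamma;X)} = \|\lambda R(\lambda,\DD)f\|_{L^p(\R_+,w_\gamma;X)} \lesssim_{p,\gamma} \|f\|_{L^p(\R_+,w_\gamma;X)},
\]
which handles the $j=0$ term. Next, since $u\in D(\DD)$, we have $u'' = \lambda u - f$ pointwise in $L^p(\R_+,w_\gamma;X)$, so
\[
\|u''\|_{L^p(\R_+,w_\gamma;X)} \leq \lambda\|u\|_{L^p(\R_+,w_\gamma;X)} + \|f\|_{L^p(\R_+,w_\gamma;X)} \lesssim_{p,\gamma} \|f\|_{L^p(\R_+,w_\gamma;X)},
\]
which handles the $j=2$ term.

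For the intermediate term $j=1$, I would invoke the interpolation inequality of Lemma \ref{lem:interp2} (with $k=2$, $j=1$), applicable since $\gamma\in(p-1,2p-1)\subset(-p-1,2p-1)\setminus\{-1,p-1\}$:
\[
\|u'\|_{L^p(\R_+,w_\gamma;X)} \leq C_{\gamma,p}\,\|u\|_{L^p(\R_+,w_\gamma;X)}^{1/2}\,\|u''\|_{L^p(\R_+,w_\gamma;X)}^{1/2}.
\]
Inserting the two bounds already obtained gives
\[
\lambda^{1/2}\|u'\|_{L^p(\R_+,w_\gamma;X)} \leq C_{\gamma,p}\bigl(\lambda\|u\|_{L^p}\bigr)^{1/2}\bigl(\|u''\|_{L^p}\bigr)^{1/2} \lesssim_{p,\gamma}\|f\|_{L^p(\R_+,w_\gamma;X)},
\]
which produces the correct $\lambda$-weight. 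Summing the three contributions yields \eqref{eq:apriorid=1}. There is no real obstacle here beyond verifying that the interpolation constant in Lemma \ref{lem:interp2} is independent of $\lambda$; this is automatic because the geometric-mean structure of the inequality is dimensionally balanced, so the $\lambda^{1/2}$ weight distributes correctly between the two factors.
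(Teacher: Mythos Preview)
Your proof is correct and follows essentially the same route as the paper, which simply refers back to the argument for the second statement of Corollary~\ref{cor:ellipticregAp}: sectoriality (via Proposition~\ref{prop:wellposedd=1}) for existence, uniqueness, and the $j=0$ bound; the equation itself for $j=2$; and the interpolation inequality (here Lemma~\ref{lem:interp2}, which is the correct replacement for Lemma~\ref{lem:interp1} in the non-$A_p$ range) for $j=1$.
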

\begin{proof}
This can be proved in the same way as the second statement in Corollary~\ref{cor:ellipticregAp}.
\end{proof}

Combining Propositions and \ref{prop:wellposedd=1} and \ref{prop:bddHinfty}, we find the following result in the one-dimensional case:
\begin{corollary}\label{cor:prop:wellposedd=1;Hinfty}
Let $X$ be a UMD space, $p\in (1,\infty)$ and $\gamma\in (p-1,2p-1)$.
Then $-\DD$ has a bounded $H^{\infty}$-calculus on $L^p(\R_+,w_{\gamma};X)$ with $\omega_{H^{\infty}}(-\Delta_{\Dir})=0$.
\end{corollary}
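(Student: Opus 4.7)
The plan is to assemble the corollary by combining the two preceding results, which together already supply both the functional calculus bound and the identification of the domain. There is no new analysis needed, only a careful bookkeeping of which operator is which.

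First I would invoke Proposition \ref{prop:nonapbdd} with $d=1$ (noting that $\gamma\in(p-1,2p-1)\subset(-p-1,2p-1)$) to obtain the bounded analytic $C_0$-semigroup $(T(z))_{z\in\Sigma_\phi}$ on $L^p(\R_+,w_\gamma;X)$. Denote its negative generator by $A$. Since $T$ is analytic and uniformly bounded on every $\Sigma_\phi$ with $\phi<\pi/2$, standard analytic-semigroup theory (as recorded right after Proposition \ref{prop:nonapbdd}) gives that $A$ is sectorial with $\omega(A)=0$.

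Next, Proposition \ref{prop:bddHinfty} applied to this very $A$ — here the UMD assumption on $X$ is essential — yields that $A$ has a bounded $H^\infty$-calculus with $\omega_{H^\infty}(A)=0$. It remains only to identify $A$ with $-\Delta_{\Dir}$ on the domain $W^{2,p}_{\Dir}(\R_+,w_\gamma;X)$, and this is the content of Proposition \ref{prop:wellposedd=1}. Substituting this identification into the conclusion of Proposition \ref{prop:bddHinfty} gives the desired statement: $-\Delta_{\Dir}$, with domain $W^{2,p}_{\Dir}(\R_+,w_\gamma;X)$, has a bounded $H^\infty$-calculus on $L^p(\R_+,w_\gamma;X)$ of angle zero.

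No genuine obstacle arises at this step, since the hard work has already been carried out upstream: the kernel-based mapping estimate \eqref{eq:uniformbddT} via Schur's test established boundedness of the semigroup on a non-$A_p$ weighted space; the Kree-type perturbation argument in Proposition \ref{prop:bddHinfty} reduced the weighted $H^\infty$-bound to the unweighted UMD case via the multiplicative decomposition through $w_\gamma^{1/p}$; and Proposition \ref{prop:wellposedd=1} identified the generator using density of $C^\infty_c(\R_+;X)$ in $W^{2,p}_{\Dir}(\R_+,w_\gamma;X)$ together with an odd-reflection Fourier solution of $(1-\Delta)u=f$.
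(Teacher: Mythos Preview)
Your proof is correct and follows exactly the paper's approach: the corollary is obtained by combining Proposition~\ref{prop:wellposedd=1} (identifying the generator as $\DD$) with Proposition~\ref{prop:bddHinfty} (bounded $H^\infty$-calculus of angle zero for the generator). The additional commentary you provide on the upstream ingredients is accurate but unnecessary for the formal proof.
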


\subsection{The Dirichlet Laplacian on $\R^d_+$\label{subsec:dgroot}}

The main result of this section is the following theorem. Note that the case $\gamma\in (-1,p-1)$ was already considered in Theorem \ref{thm:domainAp case}. See Section~\ref{subsec:Dir_Laplace_(-p-1,-1)} for the case $\gamma \in (-p-1,-1)$.

Before we state the theorem, let us first define the Dirichlet Laplacian $\DD$ on  $L^{p}(\R^{d}_{+},w_{\gamma};X)$ with $p \in (1,\infty)$ and $\gamma \in (p-1,2p-1)$:
\[
D(\DD) := W^{2,p}_{\Dir}(\R^{d}_{+},w_{\gamma};X),\qquad \DD u:= \Delta u.
\]

\begin{theorem}\label{thm:domain}
Let $X$ be a UMD space, $p\in (1, \infty)$ and $\gamma\in (p-1,2p-1)$. Then the following assertions hold:
\begin{enumerate}
\item\label{it:generator} $\DD$
    is the generator of the heat semigroup from Proposition \ref{prop:nonapbdd}.
\item\label{it:domain} $\DD$ is a closed and densely defined linear operator on $L^p(\R^d_+,w_{\gamma};X)$ with
    \begin{align*}
    D(\DD)
    &= W^{2,p}_{\Dir}(\R^{d}_{+},w_{\gamma};X) \\
    &= W^{2,p}_{\Dir}(\R_{+},w_{\gamma};L^{p}(\R^{d-1};X)) \cap L^{p}(\R_{+},w_{\gamma};W^{2,p}(\R^{d-1};X))
    \end{align*}
    with an equivalence of norms only depending on $X,p,d,\gamma$.
\item\label{it:Hinfty} For all $\lambda\geq 0$, $\lambda-\DD$ has a bounded $H^\infty$-calculus with $\omega_{H^\infty}(-\DD) = 0$.
\end{enumerate}
\end{theorem}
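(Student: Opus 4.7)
My plan is to realize $-\Delta$ as the sum of two commuting sectorial operators with bounded $H^{\infty}$-calculus, apply Dore--Venni, and identify this sum with the generator of the heat semigroup of Proposition~\ref{prop:nonapbdd}. Using the identifications $L^{p}(\R^{d}_{+},w_{\gamma};X) = L^{p}(\R_{+},w_{\gamma};L^{p}(\R^{d-1};X)) = L^{p}(\R^{d-1};L^{p}(\R_{+},w_{\gamma};X))$, I introduce the normal part $A := -\partial_{1}^{2}$ with $D(A) := W^{2,p}_{\Dir}(\R_{+},w_{\gamma};L^{p}(\R^{d-1};X))$ and the tangential part $B := -\sum_{j=2}^{d}\partial_{j}^{2}$ with $D(B) := L^{p}(\R_{+},w_{\gamma};W^{2,p}(\R^{d-1};X))$. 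Both $L^{p}(\R^{d-1};X)$ and $L^{p}(\R_{+},w_{\gamma};X)$ are UMD, so Corollary~\ref{cor:prop:wellposedd=1;Hinfty} gives that $A$ has a bounded $H^{\infty}$-calculus of angle $0$ and Proposition~\ref{prop:weightedmihlin} does the same for $B$. Their resolvents commute since they act in disjoint groups of variables, so Theorem~\ref{thm:operatorsum} applied to $A+\lambda$ and $B$ for some $\lambda > 0$ yields that $A+B$ with $D(A+B)=D(A)\cap D(B)$ is closed sectorial of angle $0$, with $\|(A+B+\lambda)u\| \eqsim \|Au\|+\|Bu\|+\lambda\|u\|$.

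To identify $-(A+B)$ with the generator $-L$ of $(T(t))_{t\geq 0}$, I use that the analytic $C_{0}$-semigroups $U(t):=e^{-tA}$ and $V(t):=e^{-tB}$ commute, so that $U(t)V(t)$ is a bounded analytic $C_{0}$-semigroup whose generator extends $-(A+B)$ on $D(A)\cap D(B)$. The sectoriality of $A+B$ combined with the bijectivity of $\lambda + A + B$ for large $\lambda > 0$ forces this generator to coincide with $-(A+B)$ itself. Now Proposition~\ref{prop:wellposedd=1} shows that $U(t)$ is $x_{1}$-convolution against the one-dimensional Dirichlet kernel $H^{1}_{t}(x_{1},y_{1}) = G_{t}(x_{1}-y_{1})-G_{t}(x_{1}+y_{1})$, while $V(t)$ is $\tilde{x}$-convolution against the Gaussian $G^{d-1}_{t}$; the factorization $G^{d}_{t}(x)=G^{1}_{t}(x_{1})G^{d-1}_{t}(\tilde{x})$ yields
\[
U(t)V(t)u(x) = \int_{\R^{d}_{+}} H_{t}(x,y)\,u(y)\,\ud y = T(t)u(x),
\]
hence $-(A+B) = -L$. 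This proves~(1).

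For~(2), the nontrivial inclusion $D(A)\cap D(B) \subseteq W^{2,p}_{\Dir}(\R^{d}_{+},w_{\gamma};X)$ reduces to showing that $\partial_{1}\partial_{j}u \in L^{p}(\R^{d}_{+},w_{\gamma};X)$ for $j\geq 2$ and $u \in D(A)\cap D(B)$; this is exactly Corollary~\ref{cor:mixed-derivative}, and the Dirichlet trace condition is inherited from $D(A)$. Part~(3) is then Proposition~\ref{prop:bddHinfty} together with Remark~\ref{rem:Hinftytranslate} for the translate $\lambda-\DD$ with $\lambda \geq 0$. The main technical obstacle is the identification of the generator of $U(t)V(t)$ with $-(A+B)$: although the kernel factorization itself is clean, one must avoid appealing to density of $C^{\infty}_{c}(\R^{d}_{+};X)$ in $D(A)\cap D(B)$ under the graph norm, which is not automatic in this non-$A_{p}$ regime; the Dore--Venni sectoriality is precisely what enables the identification via a resolvent/bijectivity comparison instead.
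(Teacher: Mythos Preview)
Your proof is correct and follows essentially the same route as the paper: split $-\Delta$ into the normal operator $-\partial_1^2$ (with Dirichlet boundary condition, handled via Corollary~\ref{cor:prop:wellposedd=1;Hinfty}) and the tangential Laplacian $-\Delta_{d-1}$, apply the Dore--Venni theorem to obtain closedness and sectoriality of the sum on $D(A)\cap D(B)$, identify the product semigroup $e^{-tA}e^{-tB}$ with the heat semigroup $T(t)$ via the factorization $H_t(x,y)=H^1_t(x_1,y_1)G^{d-1}_t(\tilde x-\tilde y)$, and use Corollary~\ref{cor:mixed-derivative} for the intersection representation of $W^{2,p}_{\Dir}$. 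The paper cites \cite[Theorem 10.2.25]{HNVW2} rather than Proposition~\ref{prop:weightedmihlin} for the $H^\infty$-calculus of $-\Delta_{d-1}$ (the Mihlin estimate alone gives boundedness of individual multipliers, not the full calculus), and it states the semigroup-product identity $e^{t\DD^\Sigma}=e^{t\Delta_{1,\Dir}}e^{t\Delta_{d-1}}$ directly rather than arguing via bijectivity as you do, but these are cosmetic differences.
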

\begin{proof}
Note that \eqref{it:Hinfty} follows from \eqref{it:generator} by Proposition~\ref{prop:bddHinfty} and Remark \ref{rem:Hinftytranslate}.
So we only need to prove \eqref{it:generator} and \eqref{it:domain}.

Below we will frequently use  and Fubini's theorem in the form of the identification
\[
L^{p}(\R^{d}_{+},w_{\gamma};X) = L^{p}(\R_{+},w_{\gamma};L^{p}(\R^{d-1};X)) = L^{p}(\R^{d-1};L^{p}(\R_{+},w_{\gamma};X)),
\]
and that UMD-valued $L^p$-spaces have UMD again.
By Corollary~\ref{cor:prop:wellposedd=1;Hinfty}, for the operator $\Delta_{1,\Dir}$ on $L^p(\R_+^d,w_{\gamma};X)$, defined by
\[
D(\Delta_{1,\Dir}) := W^{2,p}_{\Dir}(\R_+,w_{\gamma};L^{p}(\R^{d-1};X)), \quad \Delta_{1,\Dir}u :=\partial_1^2u,
\]
it holds that $-\Delta_{1,\Dir}$ a bounded $H^\infty$-calculus with $\omega_{H^\infty}(-\Delta_{1,\Dir}) = 0$.
By \cite[Theorem 10.2.25]{HNVW2}, for the operator $\Delta_{d-1}$ on $L^p(\R_+^d,w_{\gamma};X)$, defined by
\[
D(\Delta_{d-1}) := L^p(\R_+,w_{\gamma},W^{2,p}(\R^{d-1};X)),
\quad \Delta_{d-1}u := \sum_{k=2}^d \partial_k^2u,
\]
it holds that $-\Delta_{d-1}$ a bounded $H^\infty$-calculus  with $\omega_{H^\infty}(-\Delta_{d-1}) = 0$.
The operators $\Delta_{1,\Dir}$ and $D(\Delta_{d-1})$ are clearly resolvent commuting.
Therefore, by Theorem \ref{thm:operatorsum} for the operator sum
$\DD^{\Sigma} := \Delta_{1,\Dir} +\Delta_{d-1}$ with $D(\DD^{\Sigma}) = D(\Delta_{1,\Dir}) \cap D(\Delta_{d-1})$ it holds that $-\DD^{\Sigma}$ is a sectorial operator with angle $\omega(-\DD^{\Sigma}) = 0$.
Moreover,
\begin{equation}\label{eq:thm:domain;semigroup}
e^{t\DD^{\Sigma}} = e^{t\Delta_{1,\Dir}}e^{t\Delta_{d-1}}, \quad t \geq 0.
\end{equation}

Writing $H^{1}_{t}$ for the kernel in \eqref{eq:heatsemikernel} in dimension $1$ and $G^{d-1}_{t}$ for the standard heat kernel in dimension $d-1$, \eqref{eq:thm:domain;semigroup} and Proposition~\ref{prop:wellposedd=1} give
\begin{align*}
[e^{t\DD^{\Sigma}}f](x)
&= \int_{0}^{\infty}H^{1}_{t}(x_{1},y_{1})\int_{\R^{d-1}}
G^{d-1}_{t}(\tilde{x}-\tilde{y})f(y_{1},\tilde{y})d\tilde{y} dy_{1} \\
& = \int_{\R^{d}_{+}}H_{t}(x,y)f(y)dy
\end{align*}
for all $f \in L^p(\R_+^d,w_{\gamma};X)$.
Therefore, $\DD^{\Sigma}$ is the generator of the heat semigroup from Proposition \ref{prop:nonapbdd}.

We now show that
\begin{align*}
D(\DD^{\Sigma}) &= W^{2,p}_{\Dir}(\R_{+},w_{\gamma};L^{p}(\R^{d-1};X)) \cap L^{p}(\R_{+},w_{\gamma};W^{2,p}(\R^{d-1};X)) \\
&= W^{2,p}_{\Dir}(\R^{d}_{+},w_{\gamma};X)
\end{align*}
with an equivalence of norms.
Note that then $\DD = \DD^{\Sigma}$ and the assertions
\eqref{it:generator}, \eqref{it:domain}
follow.
Since $\DD^{\Sigma} = \Delta_{1,\Dir} +\Delta_{d-1}$ with $D(\DD^{\Sigma}) = D(\Delta_{1,\Dir}) \cap D(\Delta_{d-1})$, the first identity follows from the domain descriptions of $\Delta_{1,\Dir}$ and $\Delta_{d-1}$.
The second identity follows from Corollary~\ref{cor:mixed-derivative}.
\end{proof}

\begin{corollary}\label{cor:ellipticregnonAp}
Let $X$ be a UMD space, $p\in (1, \infty)$ and $\gamma\in (p-1,2p-1)$.
For all $u \in W^{2,p}_{\Dir}(\R^d_+,w_{\gamma};X)$ there hold the estimates
\begin{align}\label{eq:cor:ellipticregnonAp:eq_hom_part_Sobolev}
[u]_{W^{2,p}(\R^{d}_{+},w_{\gamma};X)} \eqsim_{X,p,d,\gamma} \| \Delta u \|_{L^p(\R^d_+,w_{\gamma};X)}.
\end{align}
Furthermore, for every $f\in L^p(\R^d_+,w_{\gamma};X)$ and $\lambda>0$ there exists a unique $u\in W^{2,p}_{\Dir}(\R^d_+,w_{\gamma};X)$ such that $\lambda u - \DD u= f$ and
\begin{align}\label{eq:aprioridnoAp}
\sum_{|\alpha|\leq 2} |\lambda|^{1-\frac12|\alpha|} \|D^{\alpha} u\|_{L^p(\R^d_+,w_{\gamma};X)} \lesssim_{X,p,d,\gamma} \|f\|_{L^p(\R^d_+,w_{\gamma};X)}.
\end{align}
\end{corollary}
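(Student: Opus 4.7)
The proof will follow in direct parallel with that of Corollary~\ref{cor:ellipticregAp}, with Theorem~\ref{thm:domainAp case} replaced by Theorem~\ref{thm:domain} and Lemma~\ref{lem:interp1} replaced by Lemma~\ref{lem:interp2}. The substantive analytic work has already been absorbed into Theorem~\ref{thm:domain}; what remains are soft scaling and interpolation arguments.

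For the homogeneous estimate~\eqref{eq:cor:ellipticregnonAp:eq_hom_part_Sobolev}, the plan is a dilation argument. Given $u\in W^{2,p}_{\Dir}(\R^d_+,w_\gamma;X)$ and $r>0$, I set $u_r(x):=u(rx)$. Since $w_\gamma(rx)=r^\gamma w_\gamma(x)$, a change of variables gives
\[
\|\partial^\alpha u_r\|_{L^p(\R^d_+,w_\gamma;X)}=r^{|\alpha|-(d+\gamma)/p}\|\partial^\alpha u\|_{L^p(\R^d_+,w_\gamma;X)},\qquad|\alpha|\le 2.
\]
Applying the norm equivalence $\|u_r\|_{W^{2,p}(\R^d_+,w_\gamma;X)}\eqsim \|u_r\|_{L^p(\R^d_+,w_\gamma;X)}+\|\Delta u_r\|_{L^p(\R^d_+,w_\gamma;X)}$ coming from Theorem~\ref{thm:domain} (whose constants depend only on $X,p,d,\gamma$ and are thus independent of $r$), substituting the scaling identities, dividing through by $r^{2-(d+\gamma)/p}$, and letting $r\to\infty$ yields $[u]_{W^{2,p}(\R^d_+,w_\gamma;X)}\lesssim \|\Delta u\|_{L^p(\R^d_+,w_\gamma;X)}$. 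The reverse inequality is immediate from the triangle inequality applied to $\Delta u=\sum_j\partial_j^2 u$.

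For the resolvent estimate~\eqref{eq:aprioridnoAp}, existence and uniqueness of $u$ for each $\lambda>0$, $f$ follow immediately from the sectoriality of $-\DD$ in Theorem~\ref{thm:domain}, which simultaneously supplies $\lambda\|u\|_{L^p(\R^d_+,w_\gamma;X)}\lesssim\|f\|_{L^p(\R^d_+,w_\gamma;X)}$. Using $\Delta u=\lambda u-f$ this gives $\|\Delta u\|_{L^p(\R^d_+,w_\gamma;X)}\lesssim\|f\|_{L^p(\R^d_+,w_\gamma;X)}$, and then \eqref{eq:cor:ellipticregnonAp:eq_hom_part_Sobolev} disposes of the $|\alpha|=2$ contribution. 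For the $|\alpha|=1$ terms, I will invoke Lemma~\ref{lem:interp2} with $k=2$, $j=1$ to obtain
\[
\lambda^{1/2}\,[u]_{W^{1,p}(\R^d_+,w_\gamma;X)}\lesssim\bigl(\lambda\|u\|_{L^p(\R^d_+,w_\gamma;X)}\bigr)^{1/2}\,[u]_{W^{2,p}(\R^d_+,w_\gamma;X)}^{1/2}\lesssim\|f\|_{L^p(\R^d_+,w_\gamma;X)}.
\]

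There is no serious obstacle remaining; the only point worth monitoring is that the constants in Theorem~\ref{thm:domain} are genuinely independent of the dilation parameter $r$, which is guaranteed by the homogeneity of $w_\gamma$ together with the fact that $\gamma\in(p-1,2p-1)$ is preserved under dilation, and that Lemma~\ref{lem:interp2} is applicable since the range $(p-1,2p-1)$ is contained in the admissible range $(-p-1,2p-1)\setminus\{-1,p-1\}$ of that lemma.
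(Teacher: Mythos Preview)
Your proposal is correct and follows essentially the same approach as the paper: the paper's proof simply says to proceed as in Corollary~\ref{cor:ellipticregAp}, using the explicit scaling relation $w_{\gamma}(r\,\cdot\,)=r^{\gamma}w_{\gamma}$, and you have spelled out precisely that argument. The replacement of Theorem~\ref{thm:domainAp case} by Theorem~\ref{thm:domain} and of Lemma~\ref{lem:interp1} by Lemma~\ref{lem:interp2} is exactly what the paper intends.
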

\begin{proof}
This can be done in the same way as Corollary~\ref{cor:ellipticregAp}, now using the explicit formula $w_{\gamma}(r\,\cdot\,)= r^{\gamma}w_{\gamma}$ in the scaling argument.
\end{proof}

\begin{remark}
The second statement in Corollary~\ref{cor:ellipticregnonAp} also follows from \cite[Theorem 4.1 $\&$ Remark 4.2]{Krylovheat}. In our setting it follows from operator sum methods involving bounded imaginary powers (obtained through the $H^\infty$-calculus).
\end{remark}

Now using Theorem \ref{thm:domain}, as in Corollary \ref{cor:weightedmaxAp} we obtain the following maximal regularity result for the weights $w_{\gamma}$ with $\gamma\in (p-1, 2p-1)$. The case $\gamma\in (-1, p-1)$ was already considered in Corollary \ref{cor:weightedmaxAp}.
\begin{corollary}[Heat equation]\label{cor:weightedmaxnonAp}
Let $X$ be a UMD space. Let $p,q\in (1, \infty)$, $v\in A_q(\R)$, $\gamma\in (p-1, 2p-1)$. Let $J\in \{\R_+, \R\}$. Then the following assertions hold:
\begin{enumerate}[$(1)$]
\item $\frac{d}{dt}-\DD$ is a closed sectorial operator on $L^q(J,v;L^p(\R^d_+,w_{\gamma};X))$ which has a bounded $H^\infty$-calculus with $\omega_{H^\infty}(\frac{d}{dt}-\DD) \leq  \frac{\pi}{2}$.
\item\label{it:weightedmaxnonAp2}
For all $\lambda>0$ and $f\in L^q(J, v;L^p(\R^d_+,w_{\gamma};X))$ there exists a unique $u\in W^{1,q}(J,v;L^p(\R^d_+,w_{\gamma};X))\cap L^{q}(J,v;W^{2,p}_{\Dir}(\R^d_+,w_{\gamma};X))$ such that
$u' + (\lambda-\DD)u = f$, $u(0) = 0$ in case $J=\R_+$. Moreover, the following estimate holds
\begin{align*}
\|u'\|_{L^q(J,v;L^p(\R^d_+,w_{\gamma};X))} + \sum_{|\alpha|\leq 2}\lambda^{1-\frac12|\alpha|} &\|D^{\alpha} u\|_{L^{q}(J,v;L^p(\R^d_+,w_{\gamma};X))} \\ & \lesssim_{p,q,v,\gamma,d} \|f\|_{L^q(J, v;L^p(\R^d_+,w_{\gamma};X))}.
\end{align*}
\end{enumerate}
\end{corollary}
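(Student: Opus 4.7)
The plan is to obtain the result by combining the bounded $H^\infty$-calculus of the spatial operator $-\Delta_{\Dir}$ (from Theorem~\ref{thm:domain}) with the bounded $H^\infty$-calculus of the time derivative (from Proposition~\ref{propddtHinfty}), and then invoking an operator sum theorem on the space $Y := L^q(J,v;L^p(\R^d_+,w_\gamma;X))$. Since $X$ is UMD, so is $L^p(\R^d_+,w_\gamma;X)$, and therefore so is $Y$; this is exactly the geometric input one needs for the relevant operator sum theorems.

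First I would set up the two operators on $Y$. The realization of $\frac{d}{dt}$ with domain $W^{1,q}(\R,v;L^p(\R^d_+,w_\gamma;X))$ (resp.\ $W^{1,q}_0(\R_+,v;L^p(\R^d_+,w_\gamma;X))$) has bounded $H^\infty$-calculus with angle $\leq \pi/2$ by Proposition~\ref{propddtHinfty}. The pointwise lift of $-\Delta_{\Dir}$ to $Y$ with domain $L^q(J,v;W^{2,p}_{\Dir}(\R^d_+,w_\gamma;X))$ inherits the bounded $H^\infty$-calculus of angle zero from Theorem~\ref{thm:domain} (see e.g.\ \cite[Theorem 10.2.25]{HNVW2}), and after translation by $\lambda\geq 0$ still has bounded $H^\infty$-calculus of angle zero by Remark~\ref{rem:Hinftytranslate}. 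The two operators clearly have commuting resolvents since they act in different variables. The sum of $H^\infty$-calculus angles is then strictly less than $\pi$, and the Kalton--Weis operator sum theorem (or, for the weaker statement of closed sectoriality, the Dore--Venni Theorem~\ref{thm:operatorsum}) yields that $\frac{d}{dt} + (\lambda - \Delta_{\Dir})$ is closed and sectorial on $Y$; in the UMD setting of commuting sectorial operators with bounded $H^\infty$-calculus, the sum itself has bounded $H^\infty$-calculus with angle at most $\max$ of the two angles, proving assertion~(1).

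For assertion~(2), existence, uniqueness and the estimate $\|u'\|_Y + \|(\lambda-\Delta_{\Dir})u\|_Y \lesssim \|f\|_Y$ follow from part \eqref{it:sum2} of Theorem~\ref{thm:operatorsum}, using that $0 \in \rho(\frac{d}{dt})$ on $W^{1,q}_0(\R_+,v;\cdot)$ in the case $J=\R_+$ and that $\lambda-\Delta_{\Dir}$ has $0\in\rho$ for $\lambda>0$. To split $\|(\lambda-\Delta_{\Dir})u\|_Y$ into the individual derivative terms with the correct $\lambda$-scaling, I would apply the elliptic estimate \eqref{eq:aprioridnoAp} of Corollary~\ref{cor:ellipticregnonAp} for a.e.\ fixed $t\in J$ and integrate in time against $v$; this gives the bounds on $\|D^\alpha u\|_Y$ for $|\alpha|\in\{0,2\}$ with the $\lambda^{1-|\alpha|/2}$ factors, while the $|\alpha|=1$ bound follows by the interpolation inequality of Lemma~\ref{lem:interp2} applied pointwise in $t$. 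Uniqueness follows from injectivity of $\lambda-\Delta_{\Dir}$ combined with $0\in\rho(\frac{d}{dt})$ on the half line (or from injectivity of $A+B$ in the Dore--Venni conclusion).

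The only delicate point is justifying the bounded $H^\infty$-calculus of the sum rather than mere closed sectoriality; in a UMD space this is standard provided one appeals to the Kalton--Weis joint functional calculus theorem (the operators have commuting bounded $H^\infty$-calculi with $\omega_{H^\infty}(\frac{d}{dt}) + \omega_{H^\infty}(\lambda-\Delta_{\Dir}) \leq \pi/2 < \pi$), so no genuinely new difficulty arises beyond what is already encoded in Theorem~\ref{thm:domain}.
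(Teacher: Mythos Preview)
Your proposal is correct and matches the paper's approach essentially line by line: the paper simply says the result follows ``as in Corollary~\ref{cor:weightedmaxAp}'' from Theorem~\ref{thm:domain}, Proposition~\ref{propddtHinfty}, Remark~\ref{rem:Hinftytranslate} and Theorem~\ref{thm:operatorsum}, with Corollary~\ref{cor:ellipticregnonAp} applied pointwise in $t$ for the $\lambda$-dependent estimates. You are in fact slightly more careful than the paper in one respect: the Dore--Venni Theorem~\ref{thm:operatorsum} only yields closed sectoriality of the sum, so for the bounded $H^\infty$-calculus claim in part~(1) one indeed needs the Kalton--Weis sum theorem \cite{KWcalc} (or \cite[Theorem~16.3]{KuWe}), which you correctly invoke; the paper leaves this implicit. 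Your separate appeal to Lemma~\ref{lem:interp2} for $|\alpha|=1$ is harmless but redundant, since Corollary~\ref{cor:ellipticregnonAp} already contains that case.
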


\begin{remark}\label{rem:Krylovpq}
In the case $v=1$, Corollary \ref{cor:weightedmaxnonAp} \eqref{it:weightedmaxnonAp2} reduces to \cite[Theorem 0.1]{Krylovheatpq}, where it was deduced using completely different methods. Let us mention here that in \cite[Theorem 0.1]{Krylovheatpq}, \cite[Theorem 4.4]{KoNa} \cite[Theorem 2.1]{DongKimweighted15} more general elliptic operators with time and space-dependent coefficients have been considered.
\end{remark}

\begin{problem} Let $p\in (1, \infty)$.
\begin{enumerate}
\item Characterize those weights $w$ for which $e^{t\DD}$ extends to a bounded analytic semigroup on $L^p(\R^d_+,w)$.
\item Characterize those weights $w$ for which $\DD$ has a bounded $H^\infty$-calculus on $L^p(\R^d_+,w)$.
\item Characterize those weights $w$ for which $\DD$ on $L^p(\R^d_+,w)$ is a closed operator with $D(\DD) = W^{2,p}_{\Dir}(\R^d_+,w)$.
\end{enumerate}
\end{problem}
Given the results of Sections \ref{sec:heatApsetting} and \ref{sec:heatnonAp} it would be natural to conjecture that all weights of the form $w(x) = v_0(x) + x_1 v_1(x)$ with $v_0,v_1\in A_p$ are included.

\subsection{Extrapolation of functional calculus\label{subs:extra}}
As soon as one knows the boundedness of the functional calculus of a generator on a space $L^2(\R^d_+,d\mu)$ for some doubling measure $\mu$, then, if the heat kernel satisfies Gaussian estimates with respect to $\mu$, one can extrapolate the boundedness of the functional calculus to $L^p(\R^d_+,wd\mu)$ for $p\in (1, \infty)$ and $w\in A_p(\mu)$. Here $A_p(\mu)$ is the weight class associated to the measure $\mu$ on $\R^d_+$. The above is presented in the setting of homogeneous spaces in \cite{DuMc99} in the unweighted setting and in \cite[Theorem 7.3]{Martell04} in the weighted setting. Extension to the setting without kernel bounds can be found in \cite{AuMarIII, BluKun}.

In order to apply \cite[Theorem 7.3]{Martell04} to our setting, we set $d\mu(x) = x_1 \ud x$. The reason to take this measure is that the kernel $H_z(x,y)$ as defined in \eqref{eq:heatsemi} has a zero of order one at $x_1=0$. Then $\mu$ is doubling and one can check that $w_{\alpha}(x):=x_1^\alpha$ is in $A_p(\mu)$ if and only if $\alpha\in (-2, 2p-2)$. From Theorem \ref{thm:domain} we know that on $L^2(\R^d_+,\mu)$, $-\Delta_{\Dir}$ has a bounded $H^\infty$-calculus with $\omega_{H^\infty}(-\Delta_{\Dir}) = 0$. So in order to extrapolate the latter to $L^p(\R^d_+,wd\mu)$ for $p\in (1, \infty)$ and $w\in A_p(\mu)$ it suffices to check the kernel condition of \cite[Theorem 7.3]{Martell04}. For this (due to \eqref{eq:Hzpoint}) it suffices to show that there exist constant $C,c>0$ such that
\begin{equation}\label{eq:conditionkernel}
\frac{H_t(x,y)}{y_1} \leq \frac{C e^{-c|x-y|^2/t}}{\mu(B(x,t^{1/2}))}, \ \ \ x,y\in \R^d_+, t>0.
\end{equation}
Here the nominator $y_1$ is to correct for the choice of the measure $\mu$. After renormalization for the condition \eqref{eq:conditionkernel} it suffices to consider $t=1/4$.
In the case that $x_1<1$, \eqref{eq:conditionkernel} is equivalent to
\[\frac{e^{-|x_1-y_1|^2} - e^{-|x_1+y_1|^2}}{y_1} e^{-|\tx - \tilde{y}|^2} \leq \frac{C' e^{-c'|x-y|^2}}{\max\{x_1, 1\}}.\]
Since $\frac{e^{-|x_1-y_1|^2} - e^{-|x_1+y_1|^2}}{y_1} e^{-|\tx - \tilde{y}|^2} = e^{-|x-y|^2} \frac{1-e^{-4x_1y_1}}{y_1}$, it suffices to check that for some $\theta\in [0,1)$ and $C_{\theta}\geq 0$,
\[\frac{1-e^{-4x_1y_1}}{y_1} \leq \frac{C_{\theta}e^{\theta|x_1-y_1|^2}}{\max\{x_1, 1\}}.\]
If $x_1\leq 1$, then this holds with $\theta=0$ and $C_{\theta}=4$ since $\frac{1-e^{-4x_1y_1}}{y_1}\leq 4x_1\leq 4$.

If $x_1>1$, then we split into the cases (i) $y_1\leq \frac{1}{4x_1}$ and (ii) $y_1> \frac{1}{4x_1}$. In case (i) we can write
\[x_1 \frac{1-e^{-4x_1y_1}}{y_1}\leq 4x_1^2\leq 8 e^{\frac14} e^{\frac12(x_1-\frac{1}{4x_1})^2}\leq  8 e^{\frac14} e^{\frac12(x_1-y_1)^2}.\]
In case (ii), we can write
\[x_1 \frac{1-e^{-4x_1y_1}}{y_1}\leq \frac{x_1}{y_1}.\]
Therefore, it remains to check that $A:=\sup \frac{x_1}{y_1} e^{-\frac12(x_1-y_1)^2}<\infty$, where the supremum is taken over all $x_1>1$ and $y_1> \frac{1}{4x_1}$. Since for $x_1\to 0$ and $x_1\to \infty$ this expression tends to zero, optimizing first over $x_1$, yields that $x_1 = \phi(y_1) = \frac{1}{2} y_1 + \sqrt{\frac{1}{4} y^2_1+1}$. Since $x_1> \frac{1}{4y_1}$ and $x_1<\sqrt{y^2_1+4}$, we find $\frac{1}{16y_1^2}<y^2_1+4$. The latter holds if and only if $y_1^2>\sqrt{\frac{65}{16}}-2$. This implies $y_1>1/10$. Note that
\[B_1:=\sup_{y>1/10} \frac{\phi(y_1)}{y_1} e^{-\frac12(\phi(y_1)-y_1)^2}\leq \sup_{y_1>1/10}\frac{\phi(y_1)}{y_1} <\infty.\]
In the case $y_1\leq 1/10$, the optimal solution $x_1 = \phi(y_1)$ is not feasible and the maximum is attained at $x_1 = \frac{1}{4y_1}$. In this case we obtain
\[B_2 := \sup_{0<y_1\leq 1/10} \frac{1}{4y_1^2} e^{-\frac12(\frac1{4y_1}-y_1)^2} \leq \sup_{t>0} \frac{t}{4} e^{-\frac{t}{32}} e^{\frac14}<\infty.\]
It follows that $A\leq \max\{B_1,B_2\}<\infty$.

%The latter estimate holds since
%\[\frac{e^{-|x_1-y_1|^2} - e^{-|x_1+y_1|^2}}{y_1} e^{-|\tx - \tilde{y}|^2} = e^{-|x-y|^2} \frac{1-e^{-4x_1y_1}}{y_1} \leq e^{-|x-y|^2},\]
%where we used $1-e^{-4x_1y_1}\leq 4x_1y_1\leq 4 y_1$.
%In the case that $x_1>1$, \eqref{eq:conditionkernel} is equivalent to
%\[\frac{e^{-|x_1-y_1|^2} - e^{-|x_1+y_1|^2}}{y_1} e^{-|\tx - \tilde{y}|^2} \leq \frac{C e^{-c|x-y|^2}}{x_1}.\]
%
%
%using $1-e^{-4x_1y_1}\leq 1$.
%
%
%Since $\frac{1-e^{-4x_1y_1}}{x_1}\leq \min\{1/x_1,4y_1\}$, we find
%\[\frac{e^{-|x_1-y_1|^2} - e^{-|x_1+y_1|^2}}{x_1} e^{-|\tx - \tilde{y}|^2} = e^{-|x-y|^2} \frac{1-e^{-4x_1y_1}}{x_1} \leq e^{-|x-y|^2}\]
%as required. The case $x_1\in (0,t^{1/2})$ can be proved by a similar argument.

As a consequence we obtain the following result.
\begin{theorem}\label{thm:anderdemethode}
Let $d\mu = x_1 \ud x$, $p\in (1, \infty)$ and $w\in A_p(\mu)$. Then
the heat semigroup given by \eqref{eq:heatsemi} extends to an analytic semigroup on $L^p(\R^d_+,w)$ and its generator $-A$ has the property that $A$ has a bounded $H^\infty$-calculus with $\omega_{H^\infty}(A) = 0$.
\end{theorem}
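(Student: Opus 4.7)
The plan is to deduce Theorem \ref{thm:anderdemethode} directly from the extrapolation result \cite[Theorem 7.3]{Martell04} applied to the space of homogeneous type $(\R^d_+, d\mu)$. That theorem upgrades a bounded $H^\infty$-calculus of angle zero on $L^2(\R^d_+, d\mu)$ to one on $L^p(\R^d_+, w\,d\mu)$ for every $p \in (1,\infty)$ and $w \in A_p(\mu)$, provided the semigroup kernel satisfies a Gaussian-type pointwise bound relative to $\mu$. Since $d\mu = x_1\, dx$, the $L^p$-spaces obtained from the extrapolation are exactly those appearing in the statement.

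For the $L^2(\mu)$-base case, I would invoke Theorem \ref{thm:domain} with the weight $w_1(x) = x_1$, using the identification $L^2(\R^d_+, d\mu) = L^2(\R^d_+, w_1)$; this yields a bounded $H^\infty$-calculus of angle zero for $-\Delta_{\Dir}$ on $L^2(\R^d_+, d\mu)$. The kernel bound to be verified is
\[
\frac{H_t(x,y)}{y_1} \leq \frac{C e^{-c|x-y|^2/t}}{\mu(B(x, t^{1/2}))},
\]
the factor $y_1$ in the denominator on the left matching the order-one vanishing of the Dirichlet heat kernel at the boundary, and the denominator on the right behaving like $t^{d/2}\max\{x_1, t^{1/2}\}$ due to the weight in $\mu$.

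The verification of this bound, sketched in the paragraph preceding the theorem, proceeds as follows. Using \eqref{eq:Hzpoint} and parabolic scaling one reduces to $t = 1/4$, where $\mu(B(x, 1/2)) \eqsim \max\{x_1, 1\}$. Factoring out the tangential Gaussian $e^{-|\tilde x - \tilde y|^2}$ and using
\[
e^{-|x_1-y_1|^2} - e^{-|x_1+y_1|^2} = e^{-|x_1-y_1|^2}(1 - e^{-4 x_1 y_1}),
\]
the remaining estimate becomes
\[
\frac{1-e^{-4 x_1 y_1}}{y_1} \leq \frac{C_\theta e^{\theta |x_1 - y_1|^2}}{\max\{x_1, 1\}}
\]
for some $\theta \in [0,1)$. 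The elementary bound $(1-e^{-4 x_1 y_1})/y_1 \leq \min\{4x_1, 1/y_1\}$ together with a short case analysis (namely $x_1 \leq 1$ trivially, and for $x_1 > 1$ one splits further into $y_1 \leq 1/(4x_1)$ and $y_1 > 1/(4x_1)$, in the latter range extremizing $x_1 \mapsto (x_1/y_1) e^{-\tfrac12(x_1-y_1)^2}$) settles the inequality.

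With the $L^2(\mu)$-calculus and the kernel bound both in hand, \cite[Theorem 7.3]{Martell04} delivers in one stroke the analyticity of the semigroup and the bounded $H^\infty$-calculus of angle zero on every $L^p(\R^d_+, w)$ with $p \in (1,\infty)$ and $w \in A_p(\mu)$. The main technical hurdle is entirely the kernel estimate: all the interplay between the Dirichlet boundary condition (reflected in the numerator $y_1$) and the non-translation-invariant geometry of $\mu$ is concentrated there, and once this is handled the extrapolation theorem does the rest as a black box.
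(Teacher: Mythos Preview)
Your proposal is correct and follows essentially the same approach as the paper: the paper's proof of Theorem~\ref{thm:anderdemethode} is precisely the discussion in Subsection~\ref{subs:extra} preceding the statement, and it proceeds exactly as you outline---invoking Theorem~\ref{thm:domain} for the $L^2(\mu)$ base case, verifying the Gaussian kernel bound \eqref{eq:conditionkernel} via the same reduction to $t=1/4$ and the same case split on $x_1$ and $y_1$, and then applying \cite[Theorem~7.3]{Martell04} as a black box. Your sketch of the kernel-bound verification matches the paper's argument step for step.
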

Note that this does not directly imply the same for $-\DD$ because it is unclear whether $A = -\DD$ in the above setting, because we do not know whether the domains coincide. Note that the approach presented in Theorem \ref{thm:domain} also works for weights of the form $w(x):=x_1^{\gamma} v(\tx)$ with $v\in A_p$.

Instead of applying Theorem \ref{thm:domain} in the above situation one could also apply the simpler Theorem \ref{thm:domainAp case} with $d\mu(x) = x_1^{\beta} \ud x$ with $\beta\in (0,1)$. Indeed, then $w_{\alpha}\in A_p(\mu)$ if and only if $-1<\alpha+\beta<\beta p  +p-1$. Again one can check condition \eqref{eq:conditionkernel} with left-hand side $\frac{1}{x_1^{\beta}}|H_z(x,y)|$ and for the new measure $\mu$. Therefore, choosing $\beta$ arbitrary close to $1$, we obtain $-\Delta_{\Dir}$ has a bounded $H^\infty$-calculus on $L^p(\R^d_+,w_{\gamma})$ for $\gamma\in (-1,2p-1)$. Finally, let us remark that some work needs to be done in order to obtain Theorem \ref{thm:anderdemethode} in the vector-valued setting using the above approach.

\subsection{Some comments on the case $\gamma \in (-p-1,-1)$}\label{subsec:Dir_Laplace_(-p-1,-1)}

In Theorem~\ref{thm:domainAp case}, Proposition~\ref{prop:wellposedd=1} and Theorem~\ref{thm:domain} we have characterized the generator of the heat semigroup from Proposition~\ref{prop:nonapbdd} for the case $\gamma \in (-1,p-1) \cup (p-1,2p-1)$ as the Dirichlet Laplacian $\Delta_{\Dir}$ with domain $D(\Delta_{\Dir})=W^{2,p}_{\Dir}(\R^{d}_{+},w_{\gamma};X)$.
In this subsection we will discuss the failure of this domain description for the case $\gamma \in (-p-1,-1)$.

Let us start with the one-dimensional case.
The point where the proof of Proposition~\ref{prop:wellposedd=1} does not work for the case $\gamma \in (-p-1,-1)$ is the fact that $\mathcal{S}_{\odd}(\R_{+};X) \nsubseteq W^{2,p}(\R_{+},w_{\gamma};X)$ in that case, which is illustrated by the following example.

\begin{example}\label{ex:subsec:Dir_Laplace_(-p-1,-1)}
Let $p \in [1,\infty)$ and $\gamma \in (-p-1,-1)$. Suppose $u \in \mathcal{S}(\R_{+};X)$ satisfies  $u(0) = u''(0) = 0$. Then $u, u'' \in L^{p}(\R_{+},w_{\gamma};X)$, but
\[
u \in W^{2,p}(\R_{+},w_{\gamma};X) \:\Longlra\: u'(0) = 0.
\]
\end{example}
\begin{proof}
Note that $u, u'' \in W^{1,p}_{0}(\R_{+},w_{\gamma+p};X)$. So $u, u'' \in L^{p}(\R_{+},w_{\gamma};X)$ by Lemma~\ref{lem:Hardy} (or Corollary~\ref{cor:lem:Hardy;Sobolev_emb}). In the same way, $u' \in L^{p}(\R_{+},w_{\gamma};X)$ if $u'(0)=0$. On the other hand, $u' \in L^{p}(\R_{+},w_{\gamma};X)$ only if $u'(0)=0$ by (the proof of) Lemma~\ref{lem:traceAp} \eqref{it:estuniform0}.
\end{proof}

As a consequence of the above example,
\begin{equation}\label{eq:subsec:Dir_Laplace_(-p-1,-1);subsetneq_d=1}
W^{2,p}(\R_{+},w_{\gamma};X) \subsetneqq \left\{u \in L^{p}(\R_{+},w_{\gamma};X): u'' \in L^{p}(\R_{+},w_{\gamma};X) \right\}
\end{equation}
for $p \in [1,\infty)$ and $\gamma \in (-p-1,-1)$, despite of the interpolation inequality from Lemma~\ref{lem:interp2}. Note that here $W^{2,p}(\R_{+},w_{\gamma};X) = W^{2,p}_{\Dir}(\R_{+},w_{\gamma};X)$.

A duality argument yields that the right-hand side space in \eqref{eq:subsec:Dir_Laplace_(-p-1,-1);subsetneq_d=1} actually is the "correct" the domain for the Dirichlet Laplacian $\DD$ on $L^{p}(\R_{+},w_{\gamma};X)$ when $\gamma \in (-p-1,-1)$:
\begin{proposition}\label{prop:subsec:Dir_Laplace_(-p-1,-1);domain_d=1}
Let $p \in (1,\infty)$ and $\gamma \in (-p-1,-1)$.
Then $\DD$, defined as
\[
D(\DD) := \{ u \in L^{p}(\R_{+},w_{\gamma};X) : u'' \in L^{p}(\R_{+},w_{\gamma};X) \},\quad \DD u := u'',
\]
is the generator of the heat semigroup on $L^p(\R_+,w_{\gamma};X)$ given in Proposition \ref{prop:nonapbdd}.
\end{proposition}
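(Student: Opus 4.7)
Let $A$ denote the generator of the heat semigroup $(T(t))_{t\geq 0}$ on $L^{p}(\R_{+},w_{\gamma};X)$ furnished by Proposition~\ref{prop:nonapbdd}, and let $B$ be the operator introduced in the proposition. The plan is to establish the inclusion $A\subseteq B$ via a duality argument, and then to deduce $B\subseteq A$ from the sectoriality of $A$ together with the injectivity of $1-B$ on $D(B)$.

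For $A\subseteq B$ set $p':=p/(p-1)$ and $\gamma':=-\gamma/(p-1)$. Since $\gamma\in(-p-1,-1)$ one checks directly that $\gamma'\in(p'-1,2p'-1)$, so Proposition~\ref{prop:wellposedd=1}, applied with $X^{*}$ and the exponents $p',\gamma'$, identifies the generator of the heat semigroup on $L^{p'}(\R_{+},w_{\gamma'};X^{*})$ as $u\mapsto u''$ with domain $W^{2,p'}_{\Dir}(\R_{+},w_{\gamma'};X^{*})$. The kernel $H_{t}$ from \eqref{eq:heatsemikernel} is symmetric in its two spatial variables, so Fubini gives
\[
\langle T(t)u,\phi\rangle=\langle u,T(t)\phi\rangle,\qquad u\in L^{p}(\R_{+},w_{\gamma};X),\ \phi\in L^{p'}(\R_{+},w_{\gamma'};X^{*}).
\]
Any $\phi\in C^{\infty}_{c}(\R_{+};X^{*})$ has compact support in the open half-line, hence lies in $W^{2,p'}_{\Dir}(\R_{+},w_{\gamma'};X^{*})$, i.e., in the domain of the dual generator. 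For $u\in D(A)$, dividing the displayed identity by $t$ and passing to the limit $t\downarrow 0$ in both factors yields $\langle Au,\phi\rangle=\langle u,\phi''\rangle$ for every such $\phi$. Since $L^{p}(\R_{+},w_{\gamma};X)\hookrightarrow L^{1}_{\mathrm{loc}}(\R_{+};X)\hookrightarrow\Distr'(\R_{+};X)$, this identifies the distributional second derivative $u''$ with $Au\in L^{p}(\R_{+},w_{\gamma};X)$, so $u\in D(B)$ with $Bu=Au$.

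For $B\subseteq A$, it suffices to show that $1-B$ is injective on $D(B)$. Granted this, the standard algebraic argument closes the proof: given $u\in D(B)$, sectoriality of $A$ supplies $v:=(1-A)^{-1}(1-B)u\in D(A)\subseteq D(B)$ with $(1-B)v=(1-A)v=(1-B)u$, forcing $u=v\in D(A)$. To verify injectivity, suppose $w\in D(B)$ satisfies $w''=w$ in $\Distr'(\R_{+};X)$. A bootstrap argument (the right-hand side $w$ is in $L^{p}_{\mathrm{loc}}(\R_{+};X)$, so $w\in W^{2,p}_{\mathrm{loc}}$, hence in $C^{1}_{\mathrm{loc}}$ by the one-dimensional vector-valued Sobolev embedding, and then $w\in C^{\infty}$) combined with classical ODE theory gives $w(x)=ae^{x}+be^{-x}$ for some $a,b\in X$. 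Integrability of $\|w(\cdot)\|_{X}^{p}w_{\gamma}$ near $\infty$ forces $a=0$, since $\int_{1}^{\infty}e^{px}x^{\gamma}\,dx=\infty$; the residual $be^{-x}$ is bounded near the origin, so integrability near $0$ together with $\gamma<-1$ forces $b=0$. Hence $w=0$.

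The genuinely new difficulty, compared with the regimes treated in Proposition~\ref{prop:wellposedd=1} and Theorem~\ref{thm:domain}, is precisely the strict containment $W^{2,p}_{\Dir}(\R_{+},w_{\gamma};X)\subsetneqq D(B)$ recorded in \eqref{eq:subsec:Dir_Laplace_(-p-1,-1);subsetneq_d=1}: one cannot identify $D(A)$ by Sobolev approximation inside $W^{2,p}_{\Dir}$, because elements of $D(B)$ generally fail to satisfy the integrability of $u'$ against $w_{\gamma}$. The duality route in the second paragraph sidesteps this obstruction entirely, since all Sobolev regularity and boundary vanishing are moved onto the test function $\phi\in C^{\infty}_{c}(\R_{+};X^{*})$, where both conditions are tautological, and the generator in the dual weight regime is exactly the Dirichlet Laplacian treated by Proposition~\ref{prop:wellposedd=1}.
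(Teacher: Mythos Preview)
Your proof is correct, and it shares with the paper's argument the central idea of dualizing to the exponent $\gamma'=-\gamma/(p-1)\in(p'-1,2p'-1)$, where Proposition~\ref{prop:wellposedd=1} is available. The execution, however, differs. The paper identifies $\DD$ with the part of $[\DD']^{*}$ in $L^{p}(\R_{+},w_{\gamma};X)$, where $\DD'$ is the Dirichlet Laplacian on $L^{p'}(\R_{+},w_{\gamma'};X^{*})$, and then invokes the abstract theory of adjoint semigroups (van Neerven) to conclude that this operator generates the heat semigroup. Your route avoids the adjoint semigroup machinery entirely: for $A\subseteq B$ you use the kernel symmetry $H_{t}(x,y)=H_{t}(y,x)$ directly to pair with test functions, and for $B\subseteq A$ you replace the abstract identification by an elementary ODE argument showing that $1-B$ is injective (the only solutions of $w''=w$ in $L^{p}(\R_{+},w_{\gamma};X)$ are zero because $e^{x}$ fails integrability at infinity and $e^{-x}$ fails it at the origin when $\gamma<-1$). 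Your argument is more self-contained in dimension one; the paper's formulation, on the other hand, transfers verbatim to $\R^{d}_{+}$ (Proposition~\ref{prop:subsec:Dir_Laplace_(-p-1,-1);domain_d}), where no explicit ODE characterization of the kernel of $1-B$ is available. One small wording point: what you actually use is that $1-A$ is invertible, which follows because $-A$ (not $A$) is sectorial of angle zero by Proposition~\ref{prop:nonapbdd}; the conclusion is of course unaffected.
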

\begin{proof}
Let $\gamma'=\frac{-\gamma}{p-1} \in (p'-1,2p'-1)$ be the $p$-dual exponent of $\gamma$ and let $\DD'$ be the Dirichlet Laplacian on $L^{p'}(\R_{+},w_{\gamma'};X^{*})$:
\[
D(\DD') := W^{2,p'}_{\Dir}(\R_{+},w_{\gamma'};X^{*}),\quad \DD'u := u''.
\]
Then, viewing $L^{p}(\R_{+},w_{\gamma};X)$ as closed subspace of $[L^{p'}(\R_{+},w_{\gamma'};X^{*})]^*$, we have that $\DD$ coincides with the realization of $[\DD']^{*}$ in $L^{p}(\R_{+},w_{\gamma};X)$.
To see this, denote the latter operator by $A$.
Given $v \in D(\DD)$, we have, for all $u$ in the dense subspace $C^{\infty}_{c}(\R_{+}) \otimes X^{*}$ of $D(\DD') = W^{2,p'}_{\Dir}(\R_{+},w_{\gamma'};X^{*}) = W^{2,p'}_{0}(\R_{+},w_{\gamma'};X^{*})$ (see Proposition~\ref{prop:densenonAp}),
\begin{align*}
\ip{\DD'u}{v}_{\ip{L^{p'}(\R_{+},w_{\gamma'};X^{*})}{L^{p}(\R_{+},w_{\gamma};X)}}
&= \ip{u''}{v}_{\ip{L^{p'}(\R_{+},w_{\gamma'};X^*)}{L^{p}(\R_{+},w_{\gamma};X)}} \\
&= \ip{u''}{v}_{\ip{\Distr(\R_{+};X^{*})}{\Distr'(\R_{+};X)}} \\
&= \ip{u}{v''}_{\ip{\Distr(\R_{+};X^{*})}{\Distr'(\R_{+};X)}} \\
&= \ip{u}{v''}_{\ip{L^{p'}(\R_{+},w_{\gamma'};X^{*})}{L^{p}(\R_{+},w_{\gamma};X)}},
\end{align*}
showing that $\DD \subset [\DD']^{*}$, and hence $\DD \subset A$.
Given $v \in D(A)$, we have, for all $u \in C^{\infty}_{c}(\R_{+}) \otimes X^{*} \subset D(\DD')$,
\begin{align*}
\ip{u}{Av}_{\ip{\Distr(\R_{+};X^{*})}{\Distr'(\R_{+};X)}}
&= \ip{u}{Av}_{\ip{L^{p'}(\R_{+},w_{\gamma'};X^{*})}{L^{p}(\R_{+},w_{\gamma};X)}} \\
&= \ip{\DD'u}{v}_{\ip{L^{p'}(\R_{+},w_{\gamma'};X^{*})}{L^{p}(\R_{+},w_{\gamma};X)}} \\
&= \ip{u''}{v}_{\ip{L^{p'}(\R_{+},w_{\gamma'};X^{*})}{L^{p}(\R_{+},w_{\gamma};X)}} \\
&= \ip{u''}{v}_{\ip{\Distr(\R_{+};X^{*})}{\Distr'(\R_{+};X)}} \\
&= \ip{u}{v''}_{\ip{\Distr(\R_{+};X^{*})}{\Distr'(\R_{+};X)}},
\end{align*}
and thus $Av=v''$, showing that $A \subset \DD$. Since the heat semigroup on $L^{p}(\R_{+},w_{\gamma};X)$ from Proposition \ref{prop:nonapbdd} is the restriction to $L^{p}(\R_{+},w_{\gamma};X)$ of the strongly continuous adjoint (in the sense of \cite[page~6]{Neerven92_adjoint}) of the heat semigroup on $L^{p'}(\R_{+},w_{\gamma'};X^{*})$ from Proposition~\ref{prop:nonapbdd}, the required result follows Proposition~\ref{prop:wellposedd=1} and \cite[Theorem~1.3.3]{Neerven92_adjoint}.
\end{proof}

Let us next turn to the $d$-dimensional case.
\begin{proposition}\label{prop:subsec:Dir_Laplace_(-p-1,-1);domain_d}
Let $X$ be a UMD space, $p \in (1,\infty)$ and $\gamma \in (-p-1,-1)$.
Then $\DD$, defined as
\[
D(\DD) := \{ u \in L^{p}(\R^{d}_{+},w_{\gamma};X) : \Delta u \in L^{p}(\R^{d}_{+},w_{\gamma};X) \},\quad \DD u := \Delta u,
\]
is the generator of the heat semigroup on $L^p(\R^{d}_+,w_{\gamma};X)$ given in Proposition \ref{prop:nonapbdd}. Moreover,
\[
D(\DD) = \left\{ u \in L^{p}(\R_{+},w_{\gamma};W^{2,p}(\R^{d-1};X)) : \partial_{1}^{2}u \in  L^p(\R^{d}_+,w_{\gamma};X) \right\}.
\]
with an equivalence of norms only depending on $X$, $p$, $d$, $\gamma$.
\end{proposition}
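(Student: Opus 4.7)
The plan is to mirror the strategy of the one-dimensional Proposition~\ref{prop:subsec:Dir_Laplace_(-p-1,-1);domain_d=1}: first identify the semigroup generator with the distributional Laplacian via a duality argument, and then obtain the intersection-space representation via a Dore--Venni operator sum in the tangential and normal directions.

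First I would set $\gamma' = -\gamma/(p-1) \in (p'-1,2p'-1)$ and let $\Delta_{\Dir}'$ be the Dirichlet Laplacian on $L^{p'}(\R^d_+, w_{\gamma'}; X^*)$ with domain $W^{2,p'}_{\Dir}(\R^d_+, w_{\gamma'}; X^*)$. By Theorem~\ref{thm:domain} this is the generator of the heat semigroup on the dual space, and the heat semigroup from Proposition~\ref{prop:nonapbdd} on $L^p(\R^d_+, w_\gamma; X)$ is the restriction of the strongly continuous adjoint semigroup. Consequently its generator $-A$ is the $L^p$-realization of $(\Delta_{\Dir}')^*$ by \cite[Theorem 1.3.3]{Neerven92_adjoint}. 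Crucially, for $\gamma' \in (p'-1,2p'-1)$ one has $W^{2,p'}_{\Dir} = W^{2,p'}_{0}$ (the order-one trace condition is automatically absent since $2-1 < (\gamma'+1)/p'$), so Proposition~\ref{prop:densenonAp} provides density of $C^\infty_c(\R^d_+) \otimes X^*$ in $D(\Delta_{\Dir}')$. The exact duality computation from the one-dimensional proof then applies verbatim with $\Delta$ replacing $u''$: distributional integration by parts gives $\langle \Delta v, u\rangle = \langle v, \Delta u\rangle$ for test functions $v$, so $Au = \Delta u$ distributionally, and $D(A) = \{u \in L^p(\R^d_+, w_\gamma; X) : \Delta u \in L^p(\R^d_+, w_\gamma; X)\}$. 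This gives the first characterization of $D(\Delta_{\Dir})$.

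For the intersection representation I would invoke the operator sum method. Define, on $L^p(\R^d_+, w_\gamma; X)$, the normal-direction operator $B_1 := -\partial_1^2$ with $D(B_1) = \{u \in L^p(\R_+, w_\gamma; L^p(\R^{d-1}; X)) : \partial_1^2 u \in L^p\}$ via Proposition~\ref{prop:subsec:Dir_Laplace_(-p-1,-1);domain_d=1} applied to the UMD space $L^p(\R^{d-1}; X)$, and the tangential operator $B_2 := -\Delta_{d-1}$ with $D(B_2) = L^p(\R_+, w_\gamma; W^{2,p}(\R^{d-1}; X))$. The Mihlin multiplier theorem (Proposition~\ref{prop:weightedmihlin}) applied only in $\tilde x$, combined with \cite[Theorem 10.2.25]{HNVW2}, shows that $B_2$ has a bounded $H^\infty$-calculus of angle $0$, since $w_\gamma$ is constant in $\tilde x$. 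For $B_1$, the previous paragraph (applied to the UMD space $L^p(\R^{d-1};X)$) identifies $(B_1)^*$ on the dual space with the one-dimensional Dirichlet Laplacian on $W^{2,p'}_{\Dir}(\R_+, w_{\gamma'}; L^{p'}(\R^{d-1}; X^*))$; Corollary~\ref{cor:prop:wellposedd=1;Hinfty} gives $(B_1)^*$ a bounded $H^\infty$-calculus of angle $0$, and since the underlying space is reflexive, $B_1 = (B_1)^{**}$ inherits a bounded $H^\infty$-calculus of the same angle.

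The resolvents of $B_1$ and $B_2$ commute because they act in disjoint variables. Theorem~\ref{thm:operatorsum} then ensures $B_1 + B_2$ is closed and sectorial on $D(B_1) \cap D(B_2)$ with $(B_1+B_2)u = -\partial_1^2 u - \Delta_{d-1} u = -\Delta u$. Using the factorization $H_t(x,y) = H^1_t(x_1,y_1)\,G^{d-1}_t(\tilde x - \tilde y)$ of the kernel from \eqref{eq:heatsemikernel}, the semigroup $e^{-tB_1}e^{-tB_2}$ equals the heat semigroup $T(t)$ of Proposition~\ref{prop:nonapbdd}; hence $-(B_1+B_2)$ is also its generator, which by the previous paragraph equals $\Delta_{\Dir}$. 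Therefore
\[
D(\Delta_{\Dir}) = D(B_1) \cap D(B_2) = \left\{u \in L^p(\R_+, w_\gamma; W^{2,p}(\R^{d-1}; X)) : \partial_1^2 u \in L^p(\R^d_+, w_\gamma; X)\right\},
\]
with equivalent graph norms by the closed graph theorem (or directly from Theorem~\ref{thm:operatorsum}\eqref{it:sum2}). The step that I expect to require the most care is verifying the bounded $H^\infty$-calculus of $B_1$: one cannot invoke Theorem~\ref{thm:domain} for $B_1$ directly since $\gamma \in (-p-1,-1)$ lies outside the range handled there, so the $H^\infty$-boundedness must be routed through the non-canonical duality identification $(B_1)^* = \Delta_{\Dir}'$ and the reflexivity of the ambient $L^p$-space.
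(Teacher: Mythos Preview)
Your proposal follows the same two-step strategy as the paper: duality for the generator identification (mirroring Proposition~\ref{prop:subsec:Dir_Laplace_(-p-1,-1);domain_d=1}, now invoking Theorem~\ref{thm:domain} for the dual exponent) and then the Dore--Venni operator sum for the intersection representation. The one place where you take a detour is the $H^\infty$-calculus of $B_1$: you route it through $(B_1)^{**}$ and Corollary~\ref{cor:prop:wellposedd=1;Hinfty} on the dual side, but this is unnecessary. Proposition~\ref{prop:bddHinfty} already establishes a bounded $H^\infty$-calculus of angle $0$ for the generator of the heat semigroup on $L^p(\R_+,w_\gamma;Y)$ for the full range $\gamma\in(-1-p,2p-1)$, and Proposition~\ref{prop:subsec:Dir_Laplace_(-p-1,-1);domain_d=1} (applied with $Y=L^p(\R^{d-1};X)$) identifies that generator with your $-B_1$; combining these gives the calculus for $B_1$ directly, without passing to the adjoint. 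With that simplification your argument is exactly the paper's.
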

\begin{proof}
The first statement can be proved in the same way as Proposition~\ref{prop:subsec:Dir_Laplace_(-p-1,-1);domain_d=1}, using Theorem~\ref{thm:domain} \eqref{it:generator} instead of Proposition~\ref{prop:wellposedd=1}. The second statement can be proved using the operator sum method as in  Theorem~\ref{thm:domain}, using Proposition~\ref{prop:subsec:Dir_Laplace_(-p-1,-1);domain_d=1} instead of Proposition~\ref{prop:wellposedd=1}.
\end{proof}

\section{$\DD$ on bounded domains}\label{sec:loc}

In this section we will use standard localization arguments to obtain versions of Theorems \ref{thm:domainAp case} and \ref{thm:domain} for bounded $C^2$-domains $\dom\subseteq \R^d$.
In particular it will be shown that the Dirichlet Laplacian $\DD$ on $L^{p}(\dom,w_{\gamma}^{\dom})$ with domain $W^{2,p}_{\Dir}(\dom,w_{\gamma}^{\dom})$ is a closed and densely defined linear operator for which $-\DD$ has a bounded $H^\infty$-calculus of angle zero. Moreover, $(e^{z\DD})_{z\in \C_+}$ is an exponentially stable analytic $C_0$-semigroup.

\subsection{Main results\label{subsec:Mainresbdddom}}

Let the Dirichlet Laplacian $\DD$ on $L^{p}(\dom,w^{\dom}_{\gamma};X)$ be defined by
\[
D(\DD):= W^{2,p}_{\Dir}(\dom,w^{\dom}_{\gamma};X), \qquad \DD u := \Delta u.
\]
Here, $w^{\dom}_{\gamma}(x) = \dist(x,\partial \dom)^{\gamma}$.

The main result of this section is the following version of Theorems \ref{thm:domainAp case} and \ref{thm:domain} for bounded $C^2$-domains.
\begin{theorem}\label{thm:boundeddomainLaplaceX}
Let $\dom$ be a bounded $C^{2}$-domain, $X$ a UMD space, $p\in (1, \infty)$ and $\gamma\in (-1,2p-1)\setminus\{p-1\}$.
Then
\begin{enumerate}[$(1)$]
\item\label{it:generatordomain} $\DD$ is the generator of an analytic $C_0$-semigroup on $L^p(\dom,w_{\gamma}^{\dom};X)$.
\item\label{it:domaindomain} $\DD$ is a closed and densely defined linear operator on $L^p(\dom,w_{\gamma}^{\dom};X)$ with
    \begin{align*}
    D(\DD) = W^{2,p}_{\Dir}(\dom,w_{\gamma}^{\dom};X)
    \end{align*}
    with an equivalence of norms only depending on $X,p,d,\gamma$ and $\dom$.
\item\label{it:Hinftydomain} For every $\varphi>0$ there exists a $\wt{\lambda} \in \R$ such that for all $\lambda\geq \wt{\lambda}$ the operator $\lambda-\DD$ has a bounded $H^\infty$-calculus with $\omega_{H^\infty}(\lambda-\DD) \leq \varphi$.
\end{enumerate}
\end{theorem}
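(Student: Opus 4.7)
The strategy is a localization argument that reduces Theorem \ref{thm:boundeddomainLaplaceX} to the half-space results of Sections \ref{sec:heatApsetting} and \ref{sec:heatnonAp}. Fix a partition of unity $\{\eta_n\}_{n=0}^N$ as in Subsection \ref{subsec:locCk}, with $\supp(\eta_0)\subset\dom$ and, for $n\geq 1$, $\supp(\eta_n)\subset V_n$ where $\dom\cap V_n=\dom_n\cap V_n$ for some special $C^2_c$-domain $\dom_n$, and where by shrinking the neighborhoods and re-centering we may arrange $[\dom_n]_{C^2}$ to be as small as we wish. Each associated diffeomorphism $\Phi_n:\dom_n\to\R^d_+$ from \eqref{eq:diffeo_special_domain} induces, for $k\in\{0,1,2\}$, an isomorphism
\[
(\Phi_n)_*:W^{k,p}(\dom_n,w^{\dom_n}_{\gamma};X)\longra W^{k,p}(\R^d_+,w_{\gamma};X)
\]
preserving the Dirichlet boundary condition. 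Via the retraction-coretraction $(\mathcal{P},\mathcal{I})$ from \eqref{eq:domaindecomoperator}, the space $L^p(\dom,w^{\dom}_{\gamma};X)$ (resp.\ $W^{2,p}_{\Dir}(\dom,w^{\dom}_{\gamma};X)$) is identified with a complemented subspace of the product $F_0$ (resp.\ $F_2$).

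\textbf{Model operator.} On $F_0$ define the block-diagonal operator $\mathbb{A}$ whose $n=0$ block is $-\Delta$ on $W^{2,p}(\R^d;X)$ and whose $n\geq 1$ blocks are all equal to $-\DD$ on $W^{2,p}_{\Dir}(\R^d_+,w_{\gamma};X)$. By Theorem \ref{thm:domainAp case} (giving the full-space block, and the boundary blocks when $\gamma\in(-1,p-1)$) and Theorem \ref{thm:domain} (for $\gamma\in(p-1,2p-1)$), together with Remark \ref{rem:Hinftytranslate}, each block has a bounded $H^\infty$-calculus of angle zero on its factor; since the blocks act on independent coordinates of $F_0$, $\mathbb{A}$ inherits a bounded $H^\infty$-calculus with $\omega_{H^\infty}(\mathbb{A})=0$. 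A direct computation gives
\[
\mathcal{I}(-\Delta_{\Dir})\mathcal{P}=\mathbb{A}+\mathbb{B}_2+\mathbb{B}_1+\mathbb{B}_0,
\]
where $\mathbb{B}_2$ is the second-order remainder from the variable-coefficient operator $(\Phi_n)_*\Delta(\Phi_n^{-1})_*-\Delta$, the term $\mathbb{B}_1$ is a first-order operator composed of lower-order coefficients of the pulled-back Laplacians and the commutators $[\Delta,\eta_n]$, and $\mathbb{B}_0$ is a bounded multiplication operator collecting $\Delta\eta_n$-type contributions.

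\textbf{Perturbation and conclusion.} The key point is that $\mathbb{B}_2$ has small relative bound with respect to $\mathbb{A}$: on the $n$-th block the coefficients of $\mathbb{B}_2$ are controlled by $\|\nabla h_n\|_{\infty}$ on $\supp(\eta_n)$, and this quantity can be driven below any prescribed $\varepsilon>0$ by further shrinking $V_n$, using only the $C^2$-regularity of $\partial\dom$. The first-order term $\mathbb{B}_1$ is controlled by Lemma \ref{lem:interp2} together with Lemma \ref{lem:interp1} in the form $\|\mathbb{B}_1 u\|\leq \varepsilon\|\mathbb{A} u\|+C_\varepsilon\|u\|$, while $\mathbb{B}_0$ is bounded. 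A standard perturbation theorem for the $H^\infty$-calculus (as in the $R$-boundedness framework of \cite{KuWe}) then provides, for each $\varphi>0$, a $\wt{\lambda}$ such that $\lambda+\mathbb{A}+\mathbb{B}_2+\mathbb{B}_1+\mathbb{B}_0$ has a bounded $H^\infty$-calculus of angle $\leq\varphi$ on $F_0$ for all $\lambda\geq\wt{\lambda}$. Transferring back through $(\mathcal{P},\mathcal{I})$ proves \eqref{it:Hinftydomain}. From the sectoriality and the a priori estimates applied blockwise (Corollaries \ref{cor:ellipticregAp} and \ref{cor:ellipticregnonAp}) together with the density results of Subsection \ref{subsec:Density}, one obtains the domain identification \eqref{it:domaindomain}; assertion \eqref{it:generatordomain} is then immediate.

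\textbf{Main obstacle.} The delicate step is quantitative control of $\mathbb{B}_2$: its relative bound with respect to $\mathbb{A}$ must be made strictly smaller than the threshold appearing in the $H^\infty$-perturbation theorem. It is precisely here that $\partial\dom\in C^2$ is used, since it permits $\|\nabla h_n\|_{\infty}$ to be shrunk by localizing; with $C^1$-regularity alone this balancing would fail. The lower-order remainders, by contrast, are absorbed for free by taking $\lambda$ large, which is the reason the conclusion is phrased as a bound on $\omega_{H^\infty}(\lambda-\DD)$ for sufficiently large $\lambda$ rather than for $\lambda=0$.
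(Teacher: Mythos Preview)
Your overall plan---localize, compare to a block-diagonal half-space model, perturb---matches the paper's, but two essential mechanisms are missing.

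First, the identity $\mathcal{I}(-\DD)\mathcal{P}=\mathbb{A}+\mathbb{B}_2+\mathbb{B}_1+\mathbb{B}_0$ cannot hold as stated: the left side annihilates $\ker\mathcal{P}$ (a large complemented subspace of $F_0$) and is therefore not sectorial, whereas you want the right side to be. What one actually has is an intertwining relation $\mathcal{I}(-\DD)=(\mathbb{A}+\tilde B)\mathcal{I}$ on $D(\DD)$, with $\tilde B$ of the form $(\mathcal{I}(-\DD)-\mathbb{A}\mathcal{I})\mathcal{P}$. Getting from bounded $H^\infty$-calculus for $\mathbb{A}+\tilde B$ on $F_0$ back to $-\DD$ on $E_0$ is not a formal transfer: one has to prove $R(\lambda,-\DD)=\mathcal{P}R(\lambda,\mathbb{A}+\tilde B)\mathcal{I}$, and the injectivity step there needs a \emph{second} commutator $\tilde C=\mathcal{I}((-\DD)\mathcal{P}-\mathcal{P}\mathbb{A})$ and a separate absorption argument for large $\lambda$. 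The paper isolates this as an abstract lemma (Lemma~\ref{DBVP:lemma:H_infty-calculus_top_embd_pert}); without it, ``transferring back through $(\mathcal{P},\mathcal{I})$'' is a gap.

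Second, for the same-order perturbation $\mathbb{B}_2$, small relative bound alone does not preserve the $H^\infty$-calculus; the relevant perturbation theorem (\cite[Theorem~3.2]{DDHPV}, which is what the paper uses) additionally requires $\mathbb{B}_2\,D((1-\DD)^{1+\alpha})\subseteq D((1-\DD)^{\alpha})$ with the corresponding estimate for some $\alpha\in(0,1)$. Verifying this for $\alpha=\tfrac12$ forces you to identify $D((-\DD)^{1/2})$ and $D((-\DD)^{3/2})$ on $L^p(\R^d_+,w_\gamma;X)$ in the non-$A_p$ range $\gamma\in(p-1,2p-1)$, which is nontrivial and is exactly the content of Theorem~\ref{thm:DD32} (and Proposition~\ref{prop:complexinterApR+} for the $A_p$ range). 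The paper in fact splits your single step into two: it first handles the diffeomorphism perturbation on a special $C^2_c$-domain with small $[\dom]_{C^1}$ via this machinery (Lemma~\ref{lem:smallperturb}), so that in the localization step only the \emph{first-order} commutators $[\Delta,\eta_n]$ remain, and those are absorbed by the abstract lemma using $[Y,D(\tilde A)]_{1/2}=W^{1,p}_{\Dir}$ (Lemma~\ref{lem:int_special_domain}). Your ``main obstacle'' paragraph correctly locates where smallness is needed, but the argument as written does not supply the extra fractional-domain input that makes the $H^\infty$-perturbation go through.
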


In the scalar case Theorem \ref{thm:boundeddomainLaplaceX} implies the following result where we obtain additional information on the value of $\wt{\lambda}$.
\begin{corollary}\label{cor:boundeddomainLaplaceC}
Let $\dom$ be a bounded $C^{2}$-domain, $p\in (1, \infty)$ and $\gamma\in (-1,2p-1)\setminus\{p-1\}$.
Then the following assertions hold:
\begin{enumerate}[$(1)$]
\item\label{it:spectrumR} $\sigma(-\DD) = \{\lambda_i: i\in \N_0\}\subseteq (0,\infty)$, is not dependent on $p\in (1, \infty)$ and $\gamma\in (-1,2p-1) \setminus \{p-1\}$.
\item\label{it:HinftydomR} For all $\lambda>-\min\{\lambda_i: i\in \N_0\}$, $\lambda-\DD$ has a bounded $H^\infty$-calculus of angle zero.
\item\label{it:domaindomR} $\DD$ is a closed and densely defined operator on $L^p(\R^d_+,w^{\dom}_{\gamma})$ for which there is an equivalence of norms in $D(\DD) = W^{2,p}_{\Dir}(\dom,w_{\gamma}^{\dom})$ and $\DD$ generates an exponentially stable analytic $C_0$-semigroup on $L^p(\dom,w_{\gamma}^{\dom})$.
\item\label{it:soleqdomR} For every $\lambda\geq 0$ and $f\in L^p(\dom,w^{\dom}_{\gamma})$ there exists a unique $u\in W^{2,p}_{\Dir}(\dom,w^{\dom}_{\gamma})$ such that $\lambda u - \DD u= f$, and there exists a constant $C_{p,\gamma,\dom}$ such that
\begin{align*}
\sum_{|\alpha|\leq 2} (\lambda+1)^{1-\frac12|\alpha|} \|D^{\alpha} u\|_{L^p(\dom,w^{\dom}_{\gamma})}  \leq C_{p,\gamma,\dom} \|f\|_{L^p(\dom,w^{\dom}_{\gamma})}.
\end{align*}
\end{enumerate}
\end{corollary}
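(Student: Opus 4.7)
The plan is to feed Theorem~\ref{thm:boundeddomainLaplaceX} into three additional ingredients: compactness of the resolvent of $-\DD$, comparison of its spectrum with that of the classical $L^{2}$-Dirichlet Laplacian, and the translation statements of Remark~\ref{rem:Hinftytranslate}. First I would show that the embedding $W^{2,p}_{\Dir}(\dom,w^{\dom}_{\gamma})\hookrightarrow L^{p}(\dom,w^{\dom}_{\gamma})$ is compact. Via the partition-of-unity decomposition from Subsection~\ref{subsec:locCk}, this reduces to the classical Rellich--Kondrachov on interior patches and to a local weighted Rellich on straightened half-space patches, which itself follows by combining the Hardy-type embeddings of Corollary~\ref{cor:lem:Hardy;Sobolev_emb} with interior Rellich on $\{\dist(\cdot,\partial\dom)\geq\delta\}$ and a boundary-strip tail estimate. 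By Theorem~\ref{thm:boundeddomainLaplaceX}\eqref{it:domaindomain} the operator $-\DD$ then has compact resolvent, so $\sigma(-\DD)$ is a discrete sequence of eigenvalues of finite multiplicity. To prove $(p,\gamma)$-independence and positivity I would use resolvent consistency: for $f\in C^{\infty}_{c}(\dom)$ and $\lambda>0$, the classical smooth solution $u\in C^{\infty}(\overline{\dom})\cap H^{1}_{0}(\dom)$ of $(\lambda-\Delta)u=f$ lies in each $W^{2,p}_{\Dir}(\dom,w^{\dom}_{\gamma})$, so the resolvents of $-\DD$ for different $(p,\gamma)$ and of the classical self-adjoint $L^{2}$-Dirichlet Laplacian all agree on the common dense set $C^{\infty}_{c}(\dom)$ (dense by Lemma~\ref{lem:densitynonAp}); since all resolvents are compact, the spectra coincide, giving \eqref{it:spectrumR}.

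\smallskip

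Set $\lambda_{0}:=\min\sigma(-\DD)>0$ and, for $\varphi>0$ and $\mu_{0}\in(0,\lambda_{0})$, put $B:=-\DD-\lambda_{0}+\mu_{0}$; then $\sigma(B)\subseteq[\mu_{0},\infty)$, $B$ is sectorial, and $0\in\rho(B)$. Picking $\tilde\lambda$ as in Theorem~\ref{thm:boundeddomainLaplaceX}\eqref{it:Hinftydomain} and writing $\tilde\lambda-\DD=B+(\tilde\lambda+\lambda_{0}-\mu_{0})$ with strictly positive scalar, Remark~\ref{rem:Hinftytranslate}\eqref{it:secondttransH} transfers the bounded $H^{\infty}$-calculus of angle $\leq\varphi$ from $\tilde\lambda-\DD$ to $B$. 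For any $\lambda>-\lambda_{0}$, choosing $\mu_{0}\in(0,\min(\lambda_{0},\lambda+\lambda_{0}))$ and writing $\lambda-\DD=B+(\lambda+\lambda_{0}-\mu_{0})$ with non-negative scalar, Remark~\ref{rem:Hinftytranslate}\eqref{it:firsttransH} promotes the $H^{\infty}$-calculus to $\lambda-\DD$; letting $\varphi\downarrow 0$ yields \eqref{it:HinftydomR}. Part \eqref{it:domaindomR} then follows: the domain description and equivalence of norms are in Theorem~\ref{thm:boundeddomainLaplaceX}\eqref{it:domaindomain}, and the identity $e^{t\DD}=e^{-(\lambda_{0}-\mu_{0})t}e^{-tB}$ with $e^{-tB}$ uniformly bounded for $t\geq 0$ delivers exponential stability.

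\smallskip

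For \eqref{it:soleqdomR} with $\lambda\geq 0$, existence and uniqueness of $u\in D(\DD)$ solving $(\lambda-\DD)u=f$ come from the previous paragraph. Writing $\lambda-\DD=B+\nu$ with $\nu:=\lambda+\lambda_{0}-\mu_{0}\eqsim\lambda+1$ uniformly in $\lambda\geq 0$, sectoriality of $B$ yields $\|\nu u\|_{L^{p}}+\|Bu\|_{L^{p}}\lesssim\|f\|_{L^{p}}$; combining $\|\DD u\|_{L^{p}}\leq\|Bu\|_{L^{p}}+(\lambda_{0}-\mu_{0})\|u\|_{L^{p}}$ with the norm equivalence on $D(\DD)=W^{2,p}_{\Dir}(\dom,w^{\dom}_{\gamma})$ from Theorem~\ref{thm:boundeddomainLaplaceX}\eqref{it:domaindomain} gives the $|\alpha|\in\{0,2\}$ contributions to the target inequality. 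The $|\alpha|=1$ bound follows by localization together with the interpolation inequality of Lemma~\ref{lem:interp2}, with the correct power of $\lambda+1$ obtained via a scaling argument analogous to the proof of Corollary~\ref{cor:ellipticregnonAp}.

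\smallskip

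The principal technical obstacle is the compactness of the weighted Sobolev embedding in the first step when $\gamma\in(p-1,2p-1)$: there $w^{\dom}_{\gamma}\notin A_{p}$, so the classical weighted Rellich--Kondrachov machinery is not available directly, and I expect to handle this via the Hardy-type inequalities of Section~\ref{sec:Hardy} combined with an explicit tail estimate on boundary strips. A secondary, more routine point is the bookkeeping of $\mu_{0}$ in the translation arguments so that the final estimate is uniform for $\lambda\geq 0$ with the sharp power $\lambda+1$.
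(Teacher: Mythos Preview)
Your proposal is correct and follows the same overall strategy as the paper: compact resolvent $\Rightarrow$ discrete spectrum $\Rightarrow$ comparison with the self-adjoint $L^{2}$-case $\Rightarrow$ Remark~\ref{rem:Hinftytranslate} to propagate the $H^\infty$-calculus, then sectoriality plus interpolation for \eqref{it:soleqdomR}. Two places where the paper is more direct than your sketch: (i) the weighted compact embedding you flag as the principal obstacle is not proved by hand but simply cited from Opic--Gurka, in the form $W^{1,p}(\dom,w^{\dom}_{\gamma})\hookrightarrow L^{p}(\dom,w^{\dom}_{\gamma})$ compactly, which already covers the full range $\gamma\in(-1,2p-1)$ and removes the need for any boundary-strip tail argument; (ii) for the $|\alpha|=1$ term in \eqref{it:soleqdomR} the paper invokes the complex interpolation identity $[L^{p}(\dom,w^{\dom}_{\gamma}),W^{2,p}_{\Dir}(\dom,w^{\dom}_{\gamma})]_{1/2}=W^{1,p}_{\Dir}(\dom,w^{\dom}_{\gamma})$ of Lemma~\ref{lem:int_special_domain} directly on the bounded domain, which immediately yields $(\lambda+1)^{1/2}\|u\|_{W^{1,p}}\lesssim\big((\lambda+1)\|u\|_{L^{p}}\big)^{1/2}\|u\|_{W^{2,p}}^{1/2}$ without localization or scaling.
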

\begin{proof}
\eqref{it:domaindomR}: All assertions follow from Theorem \ref{thm:boundeddomainLaplaceX} except the exponential stability. The latter will follow from \eqref{it:HinftydomR}.

\eqref{it:HinftydomR}: Fix $\phi>0$. Then, by Theorem \ref{thm:boundeddomainLaplaceX}, for $\lambda>0$ large enough, $\lambda-\Delta$ has a bounded $H^\infty$-calculus with $\omega_{H^\infty}(\lambda-\Delta)\leq \phi$. Next we will show that this holds for small values of $\lambda$ as well.
For this we first prove \eqref{it:spectrumR}.
Note that
\[D(\DD) = W^{2,p}_{\Dir}(\dom,w^{\dom}_{\gamma})\hookrightarrow W^{1,p}(\dom,w^{\dom}_{\gamma})\stackrel{\text{compact}}{\hookrightarrow} L^p(\dom,w^{\dom}_{\gamma}),\]
where the compactness follows from \cite[Theorem 8.8]{OpicGurka89II}.
We obtain that $(\lambda-\DD)^{-1}$ is compact for $\lambda\in \rho(\DD)$. By Riesz' theory of compact operators (see \cite[Chapter 4]{Ru91}), we obtain that $(\lambda-\DD)^{-1}$ has a discrete countable spectrum $\{\mu_i:i\geq 0\}$ and for every $\mu_i\neq 0$, $\mu_i$ is an eigenvalue of $(\lambda-\DD)^{-1}$. Moreover, $0$ is in the spectrum of $(\lambda-\DD)^{-1}$ and is the only accumulation point of the spectrum. We find that $\sigma(-\DD) = \{\mu_i^{-1} - \lambda: i\geq 0 \ \text{with} \ \mu_i\neq 0\}$.
In the case $p=2$ and $\gamma=0$, it is standard that the spectrum has the required form as stated in \eqref{it:spectrumR} (see e.g.\ \cite[Theorem 6.5.1]{Evans10}). Now arguing as in \cite[Corollary 1.6.2]{Davies89} one sees that the spectrum is independent of $\gamma\in (-1,2p-1)\setminus\{p-1\}$ and $p\in (1, \infty)$.

Set $\delta_{\dom}: = \min\{\lambda_i: i\in \N_0\}$. By the analyticity of $z\mapsto (z-\Delta)^{-1}$ for $\C\setminus(-\infty, -\delta_{\dom}]$ and the sectoriality of $\mu-\Delta$ with angle $\leq \phi$, it follows that for any $\lambda>-\delta_{\dom}$ and any $\phi'>\phi$, the operator $\lambda-\Delta$ is sectorial of angle $\leq \phi'$. Therefore, Remark \ref{rem:Hinftytranslate} implies that for any $\lambda>-\delta_{\dom}$, $\lambda-\Delta$ has a bounded $H^\infty$-calculus with $\omega_{H^\infty}(\lambda-\Delta)\leq \phi'$. Finally, since $\phi$ is arbitrary \eqref{it:HinftydomR} follows.

\eqref{it:soleqdomR}: By the sectoriality of $-\tfrac12\delta_{\dom}-\DD$, we have
\[(\lambda+\tfrac12\delta_{\dom}) \|u\|_{L^p(\dom,w_{\gamma}^{\dom})} \leq C\|f\|_{L^p(\dom,w_{\gamma}^{\dom})}\]
for all $\lambda\geq 0$. On the other hand,
\[\|\Delta_{\Dir} u\|_{L^p(\dom,w_{\gamma}^{\dom})}\leq (C+1)\|f\|_{L^p(\dom,w_{\gamma}^{\dom})}.\]
Therefore, since $D(\DD) = W^{2,p}_{\mathrm{Dir}}(\dom,w^{\dom}_{\gamma})$ and $\DD$ is invertible we can deduce
\[\|u\|_{W^{2,p}_{\mathrm{Dir}}(\dom,w^{\dom}_{\gamma})}\lesssim \|f\|_{L^p(\dom,w_{\gamma}^{\dom})}.\]
Finally, the estimates for the first order terms follow from Lemma \ref{lem:int_special_domain} below.
\end{proof}

Corollary \ref{cor:boundeddomainLaplaceC} has the following consequences similar to  Corollaries \ref{cor:weightedmaxAp} and \ref{cor:weightedmaxnonAp}. This time we can allow $\lambda=0$ since the semigroup is exponentially stable. A similar maximal regularity consequence can be deduced from Theorem \ref{thm:boundeddomainLaplaceX} in the $X$-valued case, but this time with additional conditions on $\lambda$.
\begin{corollary}[Heat equation]\label{cor:max-reg_bdd_domain}
Let $p,q\in (1, \infty)$, $v\in A_q(\R)$ and let $\gamma\in (-1, 2p-1)\setminus\{p-1\}$. Let $J\in \{\R_+, \R\}$. Then for all $\lambda\geq 0$ and $f\in L^q(J, v;L^p(\dom,w_{\gamma}^{\dom}))$ there exists a unique $u\in W^{1,q}(J,v;L^p(\dom,w_{\gamma}^\dom))\cap L^{q}(J,v;W^{2,p}_{\Dir}(\dom,w_{\gamma}^{\dom}))$ such that
$u' + (\lambda-\DD) u = f$, $u(0) = 0$ in the case $J = \R_+$. Moreover, the following estimates hold
\[\|u\|_{W^{1,q}(J,v;L^p(\dom,w_{\gamma}^\dom))} + \|u\|_{L^{q}(J,v;W^{2,p}_{\Dir}(\dom,w_{\gamma}^\dom))} \lesssim_{p,q,v,\gamma,d,X} \|f\|_{L^q(J, v;L^p(\dom,w_{\gamma}^\dom))},\]
and
\[\sum_{|\alpha|\leq 1}(\lambda+1)^{1-\frac12|\alpha|} \|D^{\alpha} u\|_{L^{q}(J,v;L^p(\dom,w_{\gamma}^\dom))}\lesssim_{p,q,v,\gamma,d,X} \|f\|_{L^q(J, v;L^p(\dom,w_{\gamma}^\dom))}. \]
\end{corollary}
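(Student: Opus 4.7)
The plan is to obtain maximal regularity by writing $u' + (\lambda - \DD)u = f$ as an operator-sum equation on $E := L^q(J, v; Y)$ with $Y := L^p(\dom, w_\gamma^\dom)$, and then invoking the Dore--Venni theorem (Theorem \ref{thm:operatorsum}). Since $Y$ is UMD as a scalar weighted $L^p$-space with $p \in (1,\infty)$, $E$ is UMD as well. First I would verify the hypotheses: by Proposition \ref{propddtHinfty}, the time derivative $\partial_t$ on $E$ --- with domain $W^{1,q}(\R, v; Y)$ when $J = \R$ and $W^{1,q}_0(\R_+, v; Y)$ when $J = \R_+$ --- has a bounded $H^\infty$-calculus of angle $\leq \pi/2$. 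By Corollary \ref{cor:boundeddomainLaplaceC}\eqref{it:HinftydomR}, for every $\lambda \geq 0$ the operator $\lambda - \DD$ on $Y$ has a bounded $H^\infty$-calculus of angle $0$; and since $\sigma(-\DD) \subseteq [\delta_\dom,\infty)$ with $\delta_\dom > 0$ by Corollary \ref{cor:boundeddomainLaplaceC}\eqref{it:spectrumR}, a contour-shift argument in the Dunford integral shows that this calculus bound is uniform in $\lambda \geq 0$. The pointwise (tensor) extension of $\lambda - \DD$ to $E$, with domain $L^q(J, v; W^{2,p}_{\Dir}(\dom, w_\gamma^\dom))$, inherits both the bounded $H^\infty$-calculus of angle $0$ and the bounded invertibility from the scalar case.

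The resolvents of $\partial_t$ and the extended $\lambda - \DD$ commute, as they act in disjoint variables, and $\omega_{\BIP}(\partial_t) + \omega_{\BIP}(\lambda - \DD) \leq \pi/2 + 0 < \pi$. Theorem \ref{thm:operatorsum} then applies and yields that $\partial_t + (\lambda - \DD)$ is closed on $D(\partial_t) \cap L^q(J, v; W^{2,p}_{\Dir}(\dom, w_\gamma^\dom))$; moreover, since $0 \in \rho(\lambda - \DD)$ for every $\lambda \geq 0$, it is boundedly invertible on $E$. This produces existence and uniqueness of $u$ in the claimed intersection space together with
\begin{equation}\label{eq:planDV}
\|u'\|_{L^q(J,v;Y)} + \|(\lambda - \DD) u\|_{L^q(J,v;Y)} \leq C \|f\|_{L^q(J,v;Y)},
\end{equation}
with $C$ independent of $\lambda \geq 0$.

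To extract the two stated estimates I would argue pointwise in $t$. Rewriting the equation as $(\lambda - \DD) u(t) = f(t) - u'(t)$ and applying Corollary \ref{cor:boundeddomainLaplaceC}\eqref{it:soleqdomR} for a.e.\ $t \in J$ gives
\[
\sum_{|\alpha| \leq 2} (\lambda + 1)^{1 - |\alpha|/2} \|D^\alpha u(t)\|_Y \leq C_0 \bigl(\|f(t)\|_Y + \|u'(t)\|_Y\bigr).
\]
Taking $L^q(J,v)$-norms of both sides and combining with \eqref{eq:planDV} yields
\[
\sum_{|\alpha| \leq 2} (\lambda + 1)^{1 - |\alpha|/2} \|D^\alpha u\|_{L^q(J,v;Y)} \leq C_1 \|f\|_{L^q(J,v;Y)}.
\]
The second claimed estimate is the restriction of this bound to $|\alpha| \leq 1$. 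The first claimed estimate follows by keeping only the $|\alpha|\in\{0,1,2\}$ terms (with their $\lambda$-factors discarded) and adding the bound on $\|u'\|_{L^q(J,v;Y)}$ from \eqref{eq:planDV}.

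The main obstacle is ensuring that all estimates are uniform in $\lambda \geq 0$, including the borderline case $\lambda = 0$. This is precisely where the exponential stability of the semigroup, equivalently the strict positivity of $\sigma(-\DD)$ established in Corollary \ref{cor:boundeddomainLaplaceC}, is indispensable: it guarantees uniform sectoriality and uniform invertibility of $\lambda - \DD$ for all $\lambda \geq 0$, which then propagates through Dore--Venni to give constants independent of $\lambda$.
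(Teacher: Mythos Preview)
Your proposal is correct and follows essentially the same approach as the paper. The paper does not spell out a proof for this corollary but indicates it is obtained ``similar to Corollaries \ref{cor:weightedmaxAp} and \ref{cor:weightedmaxnonAp}'', whose proofs proceed exactly as you do: combine the $H^\infty$-calculus of $\tfrac{d}{dt}$ from Proposition \ref{propddtHinfty} with that of $\lambda-\DD$ via the Dore--Venni Theorem \ref{thm:operatorsum}, and then apply the elliptic estimate pointwise in $t$; the paper also explicitly notes that the case $\lambda=0$ is admissible here because of the exponential stability established in Corollary \ref{cor:boundeddomainLaplaceC}, which is precisely the point you highlight.
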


\begin{remark}
Maximal regularity results have been obtained in \cite[Theorem 2.10]{KK04}, \cite{KimK-H08} and \cite[Theorem 3.13]{KIK14} for the case $\gamma\in (p-1,2p-1)$, for very general elliptic operators $A$ with time-dependent coefficient on bounded $C^1$-domains. The range $\gamma\in (-1, 2p-1)$ has been considered in \cite[Theorem 5.1]{KoNa} under different conditions on the domain. The boundedness of the $H^\infty$-calculus in the weighted case seems to be new for all $\gamma\in (-1,2p-1)$.
\end{remark}

\subsection{The adjoint operator $[\Delta_{\Dir}]^{*}$}

Recall that every UMD space is reflexive. Let $X$ be a reflexive Banach space, $p \in (1,\infty)$ and $\gamma \in \R$. Then $L^{p}(\dom,w_{\gamma}^{\dom};X)$ is a reflexive Banach space with $[L^{p}(\dom,w_{\gamma}^{\dom};X)]^{*} = L^{p'}(\dom,w_{\gamma'}^{\dom};X^{*})$ (see \cite[Corollary 1.3.22]{HNVW1}). Here $\gamma'=\frac{-\gamma}{p-1}$ and we use the unweighted pairing
\[\lb f,g\rb = \int_{\dom}\lb f(x), g(x)\rb \ud x.\]

\begin{proposition}
Let $X$ be a UMD space, $p \in (1,\infty)$ and $\gamma \in (-1,p-1)$. Let $\DD$ be the Dirichlet Laplacian on $L^{p}(\dom,w_{\gamma}^{\dom};X)$ and let $\DD'$ be the Dirichlet Laplacian on $L^{p'}(\dom,w_{\gamma'}^{\dom};X^{*})$. Then $[\DD]^{*} = \DD'$.
\end{proposition}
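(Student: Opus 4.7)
The plan is to first establish the inclusion $\DD' \subseteq [\DD]^{*}$ via Green's identity, and then upgrade this to an equality using a resolvent argument built on the sectoriality from Theorem~\ref{thm:boundeddomainLaplaceX}.

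For the inclusion, fix $u \in D(\DD) = W^{2,p}_{\Dir}(\dom, w_{\gamma}^{\dom};X)$ and $v \in D(\DD') = W^{2,p'}_{\Dir}(\dom, w_{\gamma'}^{\dom};X^{*})$. The pointwise identity $w_{\gamma}^{1/p}\, w_{\gamma'}^{1/p'} \equiv 1$ combined with H\"older's inequality ensures that both $\lb \Delta u, v\rb$ and $\lb u, \Delta v\rb$ are well-defined and jointly continuous in the respective product norms. If $u, v \in C^{\infty}(\overline{\dom};X)$ vanish on $\partial\dom$, classical Green's identity gives $\lb \Delta u, v\rb = \lb u, \Delta v\rb$, since both boundary contributions $\int_{\partial\dom}\partial_{\nu}u \cdot v\, d\sigma$ and $\int_{\partial\dom}u \cdot \partial_{\nu}v\, d\sigma$ vanish under the Dirichlet condition. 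I would extend this identity to arbitrary $u, v$ in the respective Dirichlet Sobolev spaces by density: using the partition of unity and diffeomorphisms from Subsection~\ref{subsec:locCk} together with Lemma~\ref{lem:tracech} in the half-space, functions in $C^{\infty}(\overline{\dom};X)$ vanishing on $\partial\dom$ are dense in $W^{2,p}_{\Dir}(\dom, w_{\gamma}^{\dom};X)$, and similarly on the dual side. Passing to the limit in Green's identity then yields $\DD' \subseteq [\DD]^{*}$.

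For the reverse inclusion, I would apply Theorem~\ref{thm:boundeddomainLaplaceX} to both $\DD$ on $L^{p}(\dom, w_{\gamma}^{\dom};X)$ and $\DD'$ on $L^{p'}(\dom, w_{\gamma'}^{\dom};X^{*})$; the latter is admissible since $X^{*}$ is again UMD and $\gamma' = -\gamma/(p-1) \in (-1, p'-1)$. Choose $\lambda > 0$ large enough that $\lambda \in \rho(\DD) \cap \rho(\DD')$. The standard duality identity for resolvents of closed densely defined operators gives $\lambda \in \rho([\DD]^{*})$ together with $R(\lambda, [\DD]^{*}) = R(\lambda, \DD)^{*}$. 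Now, given any $v \in D([\DD]^{*})$, set $f := (\lambda - [\DD]^{*})v$ and let $v' := R(\lambda, \DD')f \in D(\DD')$. By the already-established inclusion $\DD' \subseteq [\DD]^{*}$, we have $(\lambda - [\DD]^{*})v' = (\lambda - \DD')v' = f$, so that $v - v' \in \ker(\lambda - [\DD]^{*}) = \{0\}$. Therefore $v = v' \in D(\DD')$, and the two operators coincide.

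The main technical point is the density claim invoked in step one. The subtlety is that $W^{2,p}_{\Dir}$ imposes only a vanishing trace condition on the function itself and not on its first derivatives, so the density of $C^{\infty}_{c}(\dom;X)$ in $W^{2,p}_{0}$ (Lemma~\ref{lem:denseApbdr0}) is too restrictive here; one genuinely needs approximating sequences in $C^{\infty}(\overline{\dom};X)$ with vanishing boundary trace only. Lemma~\ref{lem:tracech} provides precisely such sequences in the flat half-space, and localization transfers the statement to the bounded $C^{2}$-domain $\dom$ because the diffeomorphisms of Subsection~\ref{subsec:locCk} are of class $C^{2}$ and preserve both $W^{2,p}$-regularity and the flat Dirichlet trace.
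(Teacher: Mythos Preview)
Your argument is correct and follows the paper's route for the inclusion $\DD' \subseteq [\DD]^{*}$: both proofs obtain it from Green's identity (integration by parts) on a dense class of smooth functions vanishing on $\partial\dom$, and your justification of that density via Lemma~\ref{lem:tracech} and localization is appropriate.

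Where you differ is in closing the reverse inclusion. The paper's text simply passes from the two inclusions $\DD \subset [\DD']^{*}$ and $\DD' \subset [\DD]^{*}$ to the conclusion $[\DD]^{*}=\DD'$ via the line ``$\DD' = [\DD']^{**} \subset [\DD]^{*}$'', which as written only reproduces the inclusion already obtained and does not by itself force equality. Your resolvent argument---applying Theorem~\ref{thm:boundeddomainLaplaceX} to both $\DD$ and $\DD'$ (legitimate since $X^{*}$ is UMD and $\gamma' \in (-1,p'-1)$), picking $\lambda$ in $\rho(\DD)\cap\rho(\DD')\cap\rho([\DD]^{*})$, and using that an inclusion between two closed operators sharing a resolvent point is an equality---is precisely the standard way to complete this step, and makes the proof self-contained.
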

\begin{proof}
Integration by parts yields that
\[
\ip{\DD'u}{v}_{\ip{L^{p'}(\mathcal{O},w_{\gamma'}^{\dom};X^{*})}{L^{p}(\mathcal{O},w_{\gamma}^{\dom};X)}}
= \ip{u}{\Delta v}_{\ip{L^{p'}(\mathcal{O},w_{\gamma'}^{\dom};X^{*})}{L^{p}(\mathcal{O},w_{\gamma}^{\dom};X)}}
\]
for all $u \in D(\DD')$ and $v \in D(\DD)$, showing that $\DD \subset [\DD']^{*}$ and $\DD' \subset [\DD]^{*}$. The first inclusion gives $\DD' = [\DD']^{**} \subset [\DD]^{*}$. Hence, $[\DD]^{*} = \DD'$.
\end{proof}

\begin{proposition}
Let $X$ be a UMD space, $p \in (1,\infty)$ and $\gamma \in (p-1,2p-1)$. Let $\DD$ be the Dirichlet Laplacian on $L^{p}(\dom,w_{\gamma}^{\dom};X)$. Then
\[
D([\DD]^{*}) = \left\{ u \in L^{p'}(\dom,w_{\gamma'}^{\dom};X^{*}) : \Delta u \in L^{p'}(\dom,w_{\gamma'}^{\dom};X^{*})  \right\}, \qquad [\DD]^{*}u = \Delta u.
\]
\end{proposition}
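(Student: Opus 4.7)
The strategy is a standard duality/approximation argument. The key preliminary observation is that $C^{\infty}_{c}(\dom;X)$ is dense in $D(\Delta_{\Dir}) = W^{2,p}_{\Dir}(\dom,w_{\gamma}^{\dom};X)$ with respect to the $W^{2,p}$-norm (which by Theorem~\ref{thm:boundeddomainLaplaceX} is equivalent to the graph norm). Indeed, since $\gamma \in (p-1,2p-1)$ gives $(\gamma+1)/p \in (1,2)$, the only trace condition appearing in the definition of $W^{2,p}_{0}$ in this range is $\tr u=0$, so that $W^{2,p}_{0}(\dom,w_{\gamma}^{\dom};X) = W^{2,p}_{\Dir}(\dom,w_{\gamma}^{\dom};X)$. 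The density then follows from Proposition~\ref{prop:densenonAp} together with the localization of Section~\ref{subsec:locCk}. Note also that, since $\gamma/p + \gamma'/p' = 0$, H\"older's inequality makes the unweighted pairing $\ip{u}{v} = \int_{\dom}\lb u(x),v(x)\rb\ud x$ continuous on $L^{p'}(\dom,w_{\gamma'}^{\dom};X^{*}) \times L^{p}(\dom,w_{\gamma}^{\dom};X)$.

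For the inclusion $\supseteq$, let $u \in L^{p'}(\dom,w_{\gamma'}^{\dom};X^{*})$ with $\Delta u \in L^{p'}(\dom,w_{\gamma'}^{\dom};X^{*})$ (the Laplacian taken distributionally), and let $v \in D(\Delta_{\Dir})$. Choose $v_{n} \in C^{\infty}_{c}(\dom;X)$ converging to $v$ in $W^{2,p}(\dom,w_{\gamma}^{\dom};X)$, so that in particular $v_{n} \to v$ and $\Delta v_{n} \to \Delta_{\Dir}v$ in $L^{p}(\dom,w_{\gamma}^{\dom};X)$. For each $n$ the identity $\ip{u}{\Delta v_{n}} = \ip{\Delta u}{v_{n}}$ is exactly the definition of the distributional Laplacian acting on the test function $v_{n}$. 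Passing to the limit using continuity of the unweighted pairing yields $\ip{u}{\Delta_{\Dir} v} = \ip{\Delta u}{v}$ for every $v \in D(\Delta_{\Dir})$, so $u \in D([\Delta_{\Dir}]^{*})$ with $[\Delta_{\Dir}]^{*}u = \Delta u$.

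For the inclusion $\subseteq$, let $u \in D([\Delta_{\Dir}]^{*})$ and set $f := [\Delta_{\Dir}]^{*}u \in L^{p'}(\dom,w_{\gamma'}^{\dom};X^{*})$. Testing the defining relation against arbitrary $\varphi \in C^{\infty}_{c}(\dom;X) \subset D(\Delta_{\Dir})$ gives $\ip{u}{\Delta\varphi}_{\mathcal{D}',\mathcal{D}} = \ip{f}{\varphi}_{\mathcal{D}',\mathcal{D}}$, i.e.\ $\Delta u = f$ in $\mathcal{D}'(\dom;X^{*})$. Thus $\Delta u = f \in L^{p'}(\dom,w_{\gamma'}^{\dom};X^{*})$, placing $u$ in the right-hand side and identifying $[\Delta_{\Dir}]^{*}u = \Delta u$. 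The entire argument is essentially formal once the density of $C^{\infty}_{c}(\dom;X)$ in $D(\Delta_{\Dir})$ is established; this density is the only genuine input and is precisely where one uses that $\gamma \in (p-1,2p-1)$ collapses the trace conditions of $W^{2,p}_{0}$ to the single Dirichlet condition.
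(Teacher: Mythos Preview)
Your argument is correct and is essentially the one the paper has in mind: the paper simply refers back to the proof of Proposition~\ref{prop:subsec:Dir_Laplace_(-p-1,-1);domain_d=1}, which is precisely the density-plus-distributional-pairing computation you wrote out. The only substantive input is the density of $C^{\infty}_{c}(\dom;X)$ in $W^{2,p}_{\Dir}(\dom,w_{\gamma}^{\dom};X)$, and your justification via $W^{2,p}_{\Dir}=W^{2,p}_{0}$ for $\gamma\in(p-1,2p-1)$ together with Proposition~\ref{prop:densenonAp} is exactly right.
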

\begin{proof}
This can be shown in the same way as in the proof of Proposition~\ref{prop:subsec:Dir_Laplace_(-p-1,-1);domain_d=1}.
\end{proof}
Note that in general the above domain is larger than $W^{2,p}(\R_{+},w_{\gamma};X^*)$ (see
\eqref{eq:subsec:Dir_Laplace_(-p-1,-1);subsetneq_d=1}).

\subsection{Intermezzo: identification of $D((-\DD)^{\frac{k}{2}})$\label{sec:compl}}

In order to transfer the results of the previous sections to smooth domains (and in particular to prove Theorem \ref{thm:boundeddomainLaplaceX}) we will use standard arguments. However, in order to use perturbation arguments we need to identify several fractional domain spaces and interpolation spaces. In principle this topic is covered by the literature as well. However, the weighted setting is not available for the class of weights we consider here and requires additional arguments.

We start with a simple interpolation result for general $A_p$-weights.
In the next result we extend the definition of \eqref{eq:Wkpdir} to all $k\in \N_0$ in the following way
\[W^{k,p}_{(\Delta,\Dir)}(\R^d_+,w;X) = \{u\in W^{k,p}(\R^d_+,w;X):\tr(\Delta^j u) = 0 \ \forall j< k/2\}.\]

\begin{proposition}\label{prop:complexinterApR+}
Let $X$ be a UMD space. Let $p\in (1, \infty)$ and let $w\in A_p$ be even. Then for any $k\in \N_1$ and $j\in \{0,\ldots,k\}$ the following holds:
\begin{align*}
[L^p(\R_+^d,w;X), W^{k,p}_{(\Delta,\Dir)}(\R^d_+,w;X)]_{\frac{j}{k}} = W^{j,p}_{(\Delta,\Dir)}(\R^d_+,w;X).
\end{align*}
In particular, for any $k\in \N_0$, $D((-\Delta_{\Dir})^{k/2}) = W^{k,p}_{(\Delta,\Dir)}(\R^d_+,w;X)$.
\end{proposition}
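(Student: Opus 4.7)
Theorem~\ref{thm:domainAp case} gives that $-\DD$ has a bounded $H^{\infty}$-calculus of angle zero on $L^{p}(\R^d_+,w;X)$, so in particular it has BIP with $\omega_{\BIP}(-\DD)=0$. Proposition~\ref{prop:BIP} applied with $\alpha=0$, $\beta=k/2$, $\theta=j/k$ then yields
\[
\bigl[L^{p}(\R^d_+,w;X),\, D((-\DD)^{k/2})\bigr]_{j/k} = D((-\DD)^{j/2}),
\]
so both the complex interpolation identity and the ``in particular'' statement reduce (with equivalent norms) to the fractional domain identification
\begin{equation}\label{eq:plan:dom}
D((-\DD)^{m/2}) \;=\; W^{m,p}_{(\Delta,\Dir)}(\R^d_+,w;X) \qquad (m \in \N_{0}).
\end{equation}

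Step~1: I would extend Lemma~\ref{lem:tracech} to arbitrary $k \in \N_{0}$. Since the boundary condition $\tr(\DD^{i}u)=0$ is equivalent, after using tangential differentiation of the already vanishing lower-order traces $\tr(\DD^{j}u)=0$ ($j<i$), to the condition $\tr(D_{1}^{2i} u)=0$, one may iterate Lemma~\ref{lem:u1dcontW} on the normal derivatives $D_{1}^{j}(\Eo u)$ ($0 \le j \le k-1$) to conclude that
\[
\Eo : W^{k,p}_{(\Delta,\Dir)}(\R^d_+,w;X) \longra W^{k,p}_{\odd}(\R^d,w;X)
\]
is an isomorphism of Banach spaces. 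Because $w$ is even, the reflection $Ru(x_{1},\tilde{x}) := u(-x_{1},\tilde{x})$ is an isometry of $L^{p}(\R^d,w;X)$ and of $W^{k,p}(\R^d,w;X)$, so the antisymmetrization $P = \tfrac{1}{2}(I-R)$ is a bounded projection onto the odd subspace; in particular $L^{p}_{\odd}(\R^d,w;X)$ and $W^{k,p}_{\odd}(\R^d,w;X)$ are complemented in $L^{p}(\R^d,w;X)$ and $W^{k,p}(\R^d,w;X)$, respectively.

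Step~2: Since $-\Delta$ commutes with $R$, it restricts to a sectorial operator on $L^{p}_{\odd}(\R^d,w;X)$, and via $\Eo$ this restriction is similar to $-\DD$ on $L^{p}(\R^d_+,w;X)$. On $L^{p}(\R^d,w;X)$, $-\Delta$ has a bounded $H^{\infty}$-calculus of angle zero (Theorem~\ref{thm:domainAp case}), and hence BIP; together with Proposition~\ref{prop:HWduality} (giving $H^{2m,p}=W^{2m,p}$) and the Bessel-potential characterization, a simple induction yields $D((-\Delta)^{m}) = W^{2m,p}(\R^d,w;X)$ for every integer $m$, and for odd $2\ell+1$ Proposition~\ref{prop:BIP} combined with Theorem~\ref{thm:Hspwinterp} gives
\[
D((-\Delta)^{(2\ell+1)/2}) = [W^{2\ell,p}(\R^d,w;X), W^{2\ell+2,p}(\R^d,w;X)]_{1/2} = W^{2\ell+1,p}(\R^d,w;X).
\]
Intersecting with the complemented subspace $L^{p}_{\odd}(\R^d,w;X)$ and pulling back through $\Eo$ then yields $D((-\DD)^{m/2}) = W^{m,p}_{(\Delta,\Dir)}(\R^d_+,w;X)$, establishing \eqref{eq:plan:dom}.

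\textbf{Main obstacle.} The key non-routine step is the extension of Lemma~\ref{lem:tracech} to all $k$: one must carefully translate the ``Laplacian'' boundary conditions $\tr(\DD^{i} u)=0$ into the ``normal-derivative'' conditions $\tr(D_{1}^{2i} u)=0$ needed for odd reflection, and verify via Lemma~\ref{lem:u1dcontW} that these are exactly the conditions guaranteeing $\Eo u \in W^{k,p}(\R^d,w;X)$. Once this reflection isomorphism is in place, the rest of the argument is a routine transfer between the full-space and half-space functional calculus through the complemented odd subspace.
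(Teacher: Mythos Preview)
Your proposal is correct and follows essentially the same strategy as the paper's proof: both arguments rest on (i) extending the odd-extension isomorphism $\Eo:W^{k,p}_{(\Delta,\Dir)}(\R^d_+,w;X)\to W^{k,p}_{\odd}(\R^d,w;X)$ to all $k$, (ii) using the antisymmetrization projection $P=\tfrac12(I-R)$ to pass between odd subspaces and the full space, (iii) invoking the full-space identifications $[L^p,W^{k,p}]_{j/k}=W^{j,p}$ (Proposition~\ref{prop:HWduality} and Theorem~\ref{thm:Hspwinterp}), and (iv) Proposition~\ref{prop:BIP} via the $H^\infty$-calculus of Theorem~\ref{thm:domainAp case}. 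The only difference is the order: the paper first proves the interpolation identity on the half space (via two retraction--coretraction arguments reducing to the full-space interpolation) and then deduces $D((-\DD)^{k/2})=W^{k,p}_{(\Delta,\Dir)}$ from BIP, whereas you first identify the fractional domains on the full space, restrict to the odd subspace, pull back through $\Eo$, and then deduce the interpolation identity from BIP. For the extension of $\Eo$ to higher $k$, the paper uses the observation that $\DD$ commutes with $\Eo$ together with full-space elliptic regularity ($W^{2,p}\cap\{\Delta u\in W^{k-2,p}\}=W^{k,p}$), which is slightly slicker than your direct verification that $\tr(\DD^{i}u)=0$ for $i<k/2$ is equivalent to $\tr(D_1^{2i}u)=0$; but your argument is equally valid and perhaps more transparent.
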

\begin{proof}
To identify the complex interpolation spaces recall from  Lemma \ref{lem:tracech} that $\Eo:W^{k,p}_{(\Delta,\Dir)}(\R^d_+,w;X)\to W^{k,p}_{\odd}(\R^d,w;X)$ is an isomorphism for $k\in \{0, 1, 2\}$. Moreover, from \eqref{eq:Dalphaovu} we see that $\DD$ commutes with $\Eo$. Therefore, the above isomorphism extends to all $k\in \N_0$.

Therefore, by a standard retraction-coretraction argument (see \cite[Theorem 1.2.4]{Tr1} and see \cite[Lemma 5.3]{LMV} for explicit estimates), it is sufficient to prove
\begin{align*}
[L^p_{\odd}(\R^d,w;X), W^{k,p}_{\odd}(\R^d,w;X)]_{\frac{j}{k}} = W^{j,p}_{\odd}(\R^d,w;X).
\end{align*}
Define $R:W^{m,p}(\R^d,w;X)\to W^{m,p}_{\odd}(\R^d,w;X)$ by $R f(x) = (f(x_1,\tx)-f(-x_1,\tx))/2$ and let $S:W^{m,p}_{\odd}(\R^d,w;X)\to W^{m,p}(\R^d,w;X)$ denote the injection. By the symmetry of $w$, $R$ is bounded. Moreover, $RS$ equals the identity operator, and since by Proposition \ref{prop:HWduality} and Theorem \ref{thm:Hspwinterp} we have
$[L^p(\R^d,w;X), W^{k,p}(\R^d,w;X)]_{\frac{j}{k}} = W^{j,p}(\R^d,w;X)$,  the required identity follows from the retraction-coretraction argument again.

The final assertion is clear for even $k$. For odd $k = 2\ell+1$ with $\ell\in \N_0$ by Proposition \ref{prop:BIP}, Theorem \ref{thm:domainAp case} and the result in the even case we can write
\begin{align*}
D((-\Delta_{\Dir})^{k/2}) &=[L^p(\R^d_+,w;X),D((-\Delta_{\Dir})^{\ell})]_{\frac{k}{2\ell}}
\\ & = [L^p(\R^d_+,w;X), W^{2\ell,p}_{(\Delta,\Dir)}(\R^d_+,w;X)]_{\frac{k}{2\ell}}= W^{k,p}_{(\Delta,\Dir)}(\R^d_+,w;X).
\end{align*}
\end{proof}

We can now prove the two main results of this section.
\begin{theorem}\label{thm:DD32}
Let $X$ be a UMD space. Let $p\in (1, \infty)$ and
$\gamma\in (p-1, 2p-1)$. Then
\begin{align*}
D((-\DD)^{1/2})&=[L^p(\R_+^d,w_{\gamma};X), D(\DD)]_{\frac12} = W^{1,p}(\R_+^d,w_{\gamma};X).
\\ D((-\DD)^{3/2}) &= [L^p(\R_+^d,w_{\gamma};X), D(\DD^2)]_{\frac34} = \{u\in W^{3,p}(\R^d_+,w_{\gamma};X): \tr(u) = 0\}.
\end{align*}
\end{theorem}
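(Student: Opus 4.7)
The plan is to deduce both identities from the bounded $H^\infty$-calculus of $-\DD$ (Theorem \ref{thm:domain}) together with Proposition \ref{prop:BIP}: since $-\DD$ has BIP, the latter proposition gives
\[
D((-\DD)^{1/2}) = [L^p(\R^d_+,w_\gamma;X),D(\DD)]_{1/2}, \qquad D((-\DD)^{3/2}) = [L^p(\R^d_+,w_\gamma;X),D(\DD^2)]_{3/4}
\]
directly. The hypothesis $\gamma \in (p-1,2p-1)$ forces $\tfrac{\gamma+1}{p} \in (1,2)$, so by the trace definitions of Subsection \ref{subsec:tracesSob} one has $W^{2,p}_{\Dir} = W^{2,p}_0$ and $W^{1,p}_0 = W^{1,p}$. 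Hence the first line follows immediately from Proposition \ref{prop:interp00}:
\[
[L^p,W^{2,p}_{\Dir}]_{1/2} = [L^p,W^{2,p}_0]_{1/2} = W^{1,p}_0 = W^{1,p}.
\]

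For the second line, BIP also yields the equivalent characterisation $D((-\DD)^{3/2}) = \{u \in W^{2,p}_{\Dir} : \Delta u \in W^{1,p}\}$, from which the inclusion $\{u \in W^{3,p} : \tr u = 0\} \subset D((-\DD)^{3/2})$ is immediate. For the reverse inclusion I would first dispose of the one-dimensional case: for any UMD space $Y$, $u \in W^{2,p}_{\Dir}(\R_+,w_\gamma;Y)$ with $u'' \in W^{1,p}(\R_+,w_\gamma;Y)$ literally means $u,u',u'',u''' \in L^p(\R_+,w_\gamma;Y)$ with $u(0)=0$, yielding $D((-\Delta_{\Dir})^{3/2}) = \{u \in W^{3,p}(\R_+,w_\gamma;Y) : \tr u = 0\}$ in 1D. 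For $d \ge 2$ I would then invoke the operator-sum decomposition from the proof of Theorem \ref{thm:domain}: on $L^p(\R^d_+,w_\gamma;X) \cong L^p(\R_+,w_\gamma;L^p(\R^{d-1};X))$, one has $-\DD = A_1 + A_{d-1}$ with $A_1 = -\Delta_{1,\Dir}$ and $A_{d-1} = -\Delta_{d-1}$, both with commuting resolvents and bounded $H^\infty$-calculus of angle zero. By the joint bivariate calculus of two commuting sectorial operators, the symbol $z_1^\alpha z_2^\beta(z_1+z_2)^{-\alpha-\beta}$ is bounded on $\Sigma_\omega \times \Sigma_\omega$ for any $\alpha,\beta \ge 0$ and small $\omega>0$, yielding the mixed-derivative bound
\[
\|A_1^\alpha A_{d-1}^\beta u\|_{L^p} \lesssim \|(-\DD)^{3/2} u\|_{L^p}, \qquad \alpha+\beta=\tfrac{3}{2},\ \alpha,\beta \ge 0,
\]
for every $u \in D((-\DD)^{3/2})$.

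Taking $(\alpha,\beta) \in \{(\tfrac32,0),(1,\tfrac12),(\tfrac12,1),(0,\tfrac32)\}$ and using the 1D identification of $D(A_1^{3/2})$ with $Y = L^p(\R^{d-1};X)$ together with $D(A_{d-1}^{k/2}) = L^p(\R_+,w_\gamma;W^{k,p}(\R^{d-1};X))$ for $k=1,2,3$ (Proposition \ref{prop:HWduality} plus the bounded $H^\infty$-calculus of $-\Delta$ on $\R^{d-1}$), I would read off $\partial_1^3 u$, $\partial_1^2 (-\Delta_{d-1})^{1/2} u$, $\partial_1 \Delta_{d-1} u$ and $(-\Delta_{d-1})^{3/2} u \in L^p$. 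The tangential Riesz transforms $\partial_j(-\Delta_{d-1})^{-1/2}$ are bounded on $L^p(\R^{d-1};Z)$ for any UMD space $Z$ (Mihlin, Proposition \ref{prop:weightedmihlin}), and together with standard $L^p$-elliptic regularity of $-\Delta_{d-1}$ they extract all remaining third-order tangential mixed partials $\partial_1^2\partial_j u$, $\partial_1\partial_j\partial_k u$ and $\partial_j\partial_k\partial_\ell u \in L^p$. Combined with $\tr u = 0$ inherited from $u \in W^{2,p}_{\Dir}$, this places $u$ in $\{v \in W^{3,p} : \tr v = 0\}$. I expect the main obstacle to be the clean application of the joint $H^\infty$-calculus, but since both $A_1$ and $A_{d-1}$ have angle zero the symbol is uniformly bounded on $\Sigma_\omega \times \Sigma_\omega$ for arbitrarily small $\omega$, so this step should reduce to a standard Kalton--Weis / operator-sum argument.
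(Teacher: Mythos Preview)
Your proposal is correct. For the first identity you are in fact slightly more direct than the paper: Proposition~\ref{prop:interp00} already applies to $\R^d_+$ for every $d$, so the paper's split into $d=1$ (via Proposition~\ref{prop:interp00}) and $d\ge 2$ (via the operator-sum lemma \cite[Lemma~9.5]{EPS03}) is not needed at this stage.

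For the second identity the two proofs start identically---both reduce to showing $\{u\in W^{2,p}_{\Dir}:\Delta u\in W^{1,p}\}\subset\{u\in W^{3,p}:\tr u=0\}$ and handle $d=1$ by hand---but diverge for $d\ge 2$. You invoke the fractional mixed-derivative estimate $\|A_1^\alpha A_{d-1}^\beta u\|\lesssim\|(-\DD)^{3/2}u\|$ coming from the joint $H^\infty$-calculus (Kalton--Weis / Pr\"uss--Simonett), and then recover the individual third-order partials via tangential Riesz transforms. The paper avoids the bivariate calculus entirely: it observes that $(1-\Delta_{1,\Dir})(1-\DD)^{-1}$ and $(1-\Delta_{d-1})(1-\DD)^{-1}$, already known to be bounded on $L^p$ by Dore--Venni (Theorem~\ref{thm:operatorsum}), commute with $(-\DD)^{1/2}$ and hence are bounded on $D((-\DD)^{1/2})=W^{1,p}$ as well. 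Applying them to $(1-\DD)u\in W^{1,p}$ yields $(1-\Delta_{1,\Dir})u\in W^{1,p}$ and $(1-\Delta_{d-1})u\in W^{1,p}$, which by the $d=1$ case and tangential elliptic regularity give the four intersection pieces
\[
W^{3,p}(\R_+;L^p)\cap W^{2,p}(\R_+;W^{1,p})\cap W^{1,p}(\R_+;W^{2,p})\cap L^p(\R_+;W^{3,p})=W^{3,p}(\R^d_+,w_\gamma;X).
\]
Your route is more systematic and immediately generalises to higher fractional powers, at the cost of importing the joint calculus; the paper's route is more self-contained, relying only on integer-power Dore--Venni estimates and commutativity already established in the text. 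One small point to tighten in your write-up: since $-\DD$ is not invertible, the mixed-derivative theorem should be applied to a shifted sum such as $(\tfrac12+A_1)+(\tfrac12+A_{d-1})=1-\DD$, after which $D((-\DD)^{3/2})=D((1-\DD)^{3/2})$ gives the conclusion.
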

\begin{proof}
By Theorem \ref{thm:domainAp case} $-\DD$ has bounded imaginary powers. Therefore, by Proposition \ref{prop:BIP} $D((-\DD)^{j/k})=[L^p(\R_+,w_{\gamma};X), D(\DD^k)]_{\frac{j}{k}}$ for all integers $0\leq j\leq k$. It remains to identify the complex interpolation spaces. For $d=1$ we can use Proposition \ref{prop:interp00} and the fact that $W^{2,p}_0(\R_+^d,w_{\gamma};X) = D(\DD)$, and $W^{1,p}_0(\R_+^d,w_{\gamma};X) = W^{1,p}(\R_+^d,w_{\gamma};X)$ for $\gamma>p-1$. For $d\geq 2$ we can use the $d=1$ case and standard results about $\Delta_{d-1}$ combined with \cite[Lemma 9.5]{EPS03} to obtain
\begin{align*}
D((-\DD)^{1/2}) &= D((2-\DD)^{1/2})  = D((1-\Delta_{\Dir,1})^{1/2})\cap D((1-\Delta_{d-1})^{1/2})
\\ & = W^{1,p}(\R_+,w_{\gamma};L^p(\R^{d-1};X))\cap L^p(\R_+,w_{\gamma};W^{1,p}(\R^{d-1};X))
\\ & = W^{1,p}(\R_+^d,w_{\gamma};X).
\end{align*}

To identify $D((-\DD)^{3/2})$ in the case $\gamma>p-1$ we first consider $d=1$. By Theorem \ref{thm:domain} and the previous case one has
\begin{align*}
D((-\DD)^{3/2}) & = \{u\in D(\DD): \DD u\in D((-\DD)^{1/2})\}
\\ & = \{u\in W^{2,p}_{\Dir}(\R_+,w_{\gamma};X): \tr u =0, u''\in W^{1,p}(\R_+,w_{\gamma};X)\}
\\ & = \{u\in W^{3,p}(\R^d_+,w_{\gamma};X): \tr(u) = 0\}.
\end{align*}
If $d\geq 2$, then
\begin{align*}
D((-\DD)^{3/2}) &= D((1-\DD)^{3/2})
\\ & = \{u\in L^p(\R^d_+,w_{\gamma}): (1-\DD)u\in D((2-\DD)^{1/2}), \tr(u) = 0\}
\\ & = \{u\in W^{2,p}_{\Dir}(\R^d_+,w_{\gamma};X): (1-\DD)u\in W^{1,p}(\R_+^d,\gamma;X)\}.
\end{align*}
Observe that
\[W^{1,p}(\R_+^d,w_{\gamma};X) = W^{1,p}(\R_+,w_{\gamma}; L^p(\R^{d-1};X))\cap L^p(\R_+,w_{\gamma}; W^{1,p}(\R^{d-1};X)).\] Thus by the $d=1$ case, the boundedness of $\Delta_{\Dir,1} (1-\DD)^{-1}$ and $\Delta_{d-1} (1-\DD)^{-1}$ (see Corollary \ref{cor:ellipticregnonAp}), we obtain that for $u\in W^{2,p}_{\Dir}(\R^d_+,w_{\gamma};X)$, we have
$(1-\DD)u\in W^{1,p}(\R_+^d,w_\gamma;X)$ if and only if
\begin{align*}
u&\in W^{3,p}(\R_+,w_{\gamma}; L^p(\R^{d-1};X))\cap W^{2,p}(\R_+,w_{\gamma}; W^{1,p}(\R^{d-1};X))\cap \\ & \qquad \cap L^p(\R_+,w_{\gamma}; W^{3,p}(\R^{d-1};X)) \cap W^{1,p}(\R_+,w_{\gamma}; W^{2^p}(\R^{d-1};X)) \\ & = W^{3,p}(\R^d_+,w_{\gamma};X),
\end{align*}
with the required norm estimate. Therefore, the required identity for $D((-\DD)^{3/2})$ follows.
\end{proof}

\subsection{Localization: the proof of Theorem~\ref{thm:boundeddomainLaplaceX}\label{subsec:proofbounddomain}}

As a first step in the localization we prove the following result for $\Delta_{\Dir}$ on small deformations of half-spaces.
\begin{lemma}\label{lem:smallperturb}
Let $X$ be a UMD space.
Let $p \in (1,\infty)$ and $\gamma\in (-1, 2p-1)\setminus\{p-1\}$.
For all $\varphi>0$ there exists
an $\varepsilon>0$ and $\lambda > 0$ such that if $\dom$ is a special $C^{2}_{c}$-domain with $[\dom]_{C^{1}} < \varepsilon$ (see \eqref{DBVP:def:special_domain} and \eqref{DBVP:eq:special_domain;above_graph;C^{k}-number}), then the following assertions hold for $\DD$ on $L^{p}(\dom,w^{\dom}_{\gamma};X)$:
\begin{enumerate}[$(1)$]
\item\label{it:Hinftydom:special} $\lambda-\DD$ has a bounded $H^\infty$-calculus with $\omega_{H^\infty}(\lambda-\Delta_{\Dir}) \leq \varphi$.
\item\label{it:domaindom} $\DD$ is a closed and densely defined operator on $L^p(\R^d_+,w^{\dom}_{\gamma};X)$ for which there is an equivalence of norms in $D(\DD) = W^{2,p}_{\Dir}(\dom,w^{\dom}_{\gamma};X)$.
\end{enumerate}
\end{lemma}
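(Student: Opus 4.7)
The plan is to reduce the problem on $\dom$ to a perturbation problem on $\R^d_+$ via the flattening diffeomorphism from Subsection~\ref{subsec:locCk}, and then to invoke a perturbation theorem for the $H^\infty$-calculus of $\lambda - \Delta_{\Dir}$ on the half space. Choose $h \in C^2_c(\R^{d-1})$ representing $\dom$ as in \eqref{eq:Omegaspecial} with $\|h\|_{C^1_b} < \varepsilon$, and let $\Phi(x) = (x_1 - h(\tx), \tx)$ be the $C^2$-diffeomorphism from \eqref{eq:diffeo_special_domain}. Because $\dist(x, \partial\dom) \eqsim |x_1 - h(\tx)|$ (with constants controlled when $\|\nabla h\|_\infty$ is small), the pullback $\Phi_*$ yields, for $k \in \{0,1,2\}$, a topological isomorphism $W^{k,p}_{\Dir}(\dom, w^{\dom}_\gamma; X) \cong W^{k,p}_{\Dir}(\R^d_+, w_\gamma; X)$. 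It therefore suffices to analyse the conjugate operator $L := \Phi_* \Delta_{\Dir} \Phi_*^{-1}$ on $L^p(\R^d_+, w_\gamma; X)$ with domain $W^{2,p}_{\Dir}(\R^d_+, w_\gamma; X)$.

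A direct chain-rule computation gives $L = \Delta + P$ with
\[Pu \;=\; |\nabla h|^2\, \partial_1^2 u \;-\; 2\sum_{j=2}^{d}(\partial_j h)\, \partial_1\partial_j u \;-\; (\Delta h)\, \partial_1 u.\]
The second-order coefficients of $P$ are uniformly bounded by $C\varepsilon$, while the first-order coefficient $\Delta h$ is only bounded by $C_h := \|h\|_{C^2_b}$. Combining this with the elliptic a priori estimates \eqref{eq:aprioridAp} and \eqref{eq:aprioridnoAp}, one obtains for all $u \in W^{2,p}_{\Dir}(\R^d_+, w_\gamma; X)$ and all $\lambda \geq 1$,
\[\|P u\|_{L^p(\R^d_+, w_\gamma; X)} \;\leq\; C\bigl(\varepsilon + C_h\, \lambda^{-1/2}\bigr) \, \|(\lambda - \Delta_{\Dir})u\|_{L^p(\R^d_+, w_\gamma; X)},\]
the factor $\lambda^{-1/2}$ on the first-order term being supplied by the $\lambda^{1-|\alpha|/2}$ scaling in the a priori estimate (equivalently, by the interpolation inequality in Lemma~\ref{lem:interp2}, respectively Lemma~\ref{lem:interp1}). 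Hence, for any prescribed $\eta > 0$, choosing $\varepsilon$ small and $\lambda$ large makes $P$ relatively $(\lambda - \Delta_{\Dir})$-bounded with relative bound $<\eta$.

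By Theorems~\ref{thm:domainAp case} and \ref{thm:domain}, together with Remark~\ref{rem:Hinftytranslate}, the operator $\lambda - \Delta_{\Dir}$ on $L^p(\R^d_+, w_\gamma; X)$ has a bounded $H^\infty$-calculus of angle zero and $0 \in \rho(\lambda - \Delta_{\Dir})$ for every $\lambda > 0$. A perturbation theorem for the $H^\infty$-calculus under small relatively bounded perturbations (see \cite{KaWe}) then yields, once $\varepsilon$ is small enough and $\lambda$ large enough depending only on $\varphi$, that $\lambda - L = (\lambda - \Delta_{\Dir}) - P$ is closed on $W^{2,p}_{\Dir}(\R^d_+, w_\gamma; X)$ and has a bounded $H^\infty$-calculus of angle $\leq \varphi$. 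Transferring both conclusions back via the similarity $\Phi_*$ delivers statements~\eqref{it:Hinftydom:special} and \eqref{it:domaindom}.

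The main obstacle is that the first-order perturbation $-(\Delta h)\,\partial_1$ has coefficient of size $\|h\|_{C^2_b}$, which is controlled but not small. This forces the introduction of the shift $\lambda$: only after subtracting a sufficiently large spectral parameter can one trade a factor $\lambda^{-1/2}$ from the elliptic estimate for the $W^{1,p}$-norm of $u$ and absorb the first-order piece into the unperturbed calculus. It is precisely this asymmetry between the small second-order coefficients and the merely bounded first-order coefficient that explains why the lemma only supplies $H^\infty$-calculus for $\lambda - \Delta_{\Dir}$ with $\lambda \geq \wt{\lambda}$, and why the sharper spectral information on bounded domains will have to be extracted in a separate step (cf.\ the proof of Corollary~\ref{cor:boundeddomainLaplaceC}).
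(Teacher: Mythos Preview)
Your reduction via $\Phi_*$ and your computation of the perturbation $P$ are correct and match the paper exactly. The gap is in the final step: there is no perturbation theorem asserting that a bounded $H^\infty$-calculus is inherited under a perturbation that is merely relatively bounded with small bound. Small relative boundedness preserves sectoriality (hence analytic generation and the domain identification), but \emph{not} the $H^\infty$-calculus in general; this is well known and is precisely why \cite{DDHPV} was written. The result you cite from \cite{KuWe} for lower-order perturbations (their Proposition~13.1) requires the perturbation to be bounded from $D(A^\beta)$ to $X$ for some $\beta<1$; your second-order piece $|\nabla h|^2\partial_1^2 - 2(\nabla h\cdot\nabla_{d-1})\partial_1$ does not satisfy this, and your $\lambda$-shift only produces a small \emph{relative} bound, not a lower-order bound.

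The paper therefore splits $P=A+B$ and treats the two pieces by different perturbation theorems. For the second-order part $A$ it invokes \cite[Theorem~3.2]{DDHPV}, which in addition to the small relative bound requires
\[
A\,D\bigl((1-\DD)^{1+\alpha}\bigr)\subseteq D\bigl((1-\DD)^{\alpha}\bigr)
\quad\text{with}\quad
\|(1-\DD)^{\alpha}Au\|\lesssim\|(1-\DD)^{1+\alpha}u\|
\]
for some $\alpha\in(0,1)$. Verifying this for $\alpha=\tfrac12$ is the real work of the argument: one needs to identify $D((-\DD)^{1/2})=W^{1,p}_{\Dir}(\R^d_+,w_\gamma;X)$ and $D((-\DD)^{3/2})=\{u\in W^{3,p}(\R^d_+,w_\gamma;X):\tr u=0\}$ in the non-$A_p$ range $\gamma\in(p-1,2p-1)$, which is done in Theorem~\ref{thm:DD32} (and Proposition~\ref{prop:complexinterApR+} for the $A_p$ range). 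Only after $1-\DD-A$ is shown to have a bounded $H^\infty$-calculus does the paper add the first-order piece $B$ via the lower-order perturbation result \cite[Proposition~13.1]{KuWe}, using again the identification of $D((1-\DD-A)^{1/2})$ with $W^{1,p}_{\Dir}$. Your $\lambda$-shift trick is a legitimate alternative organisation for the first-order term, but it does not remove the need for the fractional-domain condition on the second-order part.
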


\begin{proof}
Let $\dom$ be a special $C^{2}_{c}$-domain with $[\dom]_{C^{1}} < \varepsilon$. Then we can choose $h\in C^2_{c}(\R^{d-1})$ as in \eqref{eq:Omegaspecial} with $\|h\|_{C_b^1(\R^{d-1})}\leq \varepsilon$.

Let $\Phi$ be as in \eqref{eq:diffeo_special_domain}.
Let $\Delta^{\Phi}:W^{2,1}_{\rm loc}(\R^d_+;X)\to L^2_{\rm loc}(\R^d_+;X)$ be defined by
\[\Delta^{\Phi}  = \Phi_* \Delta (\Phi^{-1})_*,\]
where $\Phi$ is as below \eqref{eq:Omegaspecial}. Let $\Delta^{\Phi}_{\Dir}$ denote the restriction of $\Delta^{\Phi}$ to $D(\Delta^{\Phi}_{\Dir}) = W^{2,p}_{\Dir}(\R^d_+,w^{\dom}_{\gamma};X)$.
By the above transformations, it suffices to prove the result for $\Delta^{\Phi}$ on $L^p(\R^d_+,w^{\dom}_{\gamma};X)$. For this we use the perturbation theorem \cite[Theorem 3.2]{DDHPV}.

Without loss of generality we can take $\varepsilon\in (0,1)$. A simple calculation shows that
\begin{align}\label{eq:formDeltaPhi}
\Delta^{\Phi} = \Delta  + \underbrace{|\nabla h|^2 \partial_1^2 - 2\partial_1 (\nabla h \cdot \nabla_{d-1})}_{=:A} \underbrace{- (\Delta h) \partial_1}_{=:B}
\end{align}
We first apply perturbation theory to obtain a bounded $H^\infty$-calculus for $\Delta_{\Dir} + A$.
By the assumption we have
\[\|A u\|_{L^p(\R^d_+,w^{\dom}_{\gamma};X)} \leq C \varepsilon \|u\|_{W^{2,p}(\R^d_+,w^{\dom}_{\gamma};X)} \leq C'\varepsilon \|(1-\Delta)u \|_{L^p(\R^d_+,w^{\dom}_{\gamma};X)},\]
where in the last step we used Corollary \ref{cor:ellipticregnonAp}. This proves one of the required conditions for the perturbation theorem. In particular, this part is enough to obtain that for any $\varphi>0$ and for $\varepsilon$ small enough $D(\Delta_{\Dir}+A) = D(\Delta_{\Dir})$ and $1-\Delta_{\Dir}-A$ is sectorial of angle $\leq \varphi$ (see \cite[Proposition 2.4.2]{Lun}).

In order to apply \cite[Theorem 3.2]{DDHPV} it remains to show $AD((1-\Delta_{\Dir})^{1+\alpha})\subseteq D((1-\Delta_{\Dir})^{\alpha})$ and
\begin{equation}\label{eq:toprovepertualpha}
\|(1-\Delta_{\Dir})^{\alpha} A u\|\leq C\|(1-\Delta_{\Dir})^{1+\alpha} u\|, \ \ u\in D((1-\Delta_{\Dir})^{1+\alpha}).
\end{equation}
for some $\alpha\in (0,1)$. We will check this for $\alpha = 1/2$. For any $u\in W^{3,p}_{\Dir}(\R^d_+,w^{\dom}_{\gamma};X)$  we have
\[\|Au\|_{W^{1,p}(\R^d_+,w^{\dom}_{\gamma};X)}\leq C \|h\|_{C^2_b}^2 \|u\|_{W^{3,p}(\R^d_+,w^{\dom}_{\gamma};X)}.\]
Therefore, by Proposition \ref{prop:complexinterApR+} and Theorem \ref{thm:DD32}, condition \eqref{eq:toprovepertualpha} follows. Here we used the standard fact $D((-\DD)^{\alpha}) = D((1-\DD)^{\alpha})$, which is true for any sectorial operator and $\alpha>0$. We can conclude that for $\varepsilon\in (0,1)$ small enough, $1-\Delta_{\Dir} -A$ has a bounded $H^\infty$-calculus of angle $\leq \varphi$.

To obtain the same result for $\lambda-\Delta^{\Phi}$ for $\lambda>0$ large enough it remains to apply a lower order perturbation result (see \cite[Proposition 13.1]{KuWe}). For this observe
\[\|Bu\|_{L^p(\R^d_+,w^{\dom}_{\gamma};X)} \leq \|h\|_{C^2_b} \|u\|_{W^{1,p}(\R^d_+,w^{\dom}_{\gamma};X)}, \  \ u\in W^{1,p}_{\Dir}(\R^d_+,w^{\dom}_{\gamma};X).\]
The required estimate follows since by Proposition \ref{prop:complexinterApR+} and Theorem \ref{thm:DD32},
\begin{align*}
W^{1,p}_{\Dir}(\R^d_+,w^{\dom}_{\gamma};X) & = [L^p(\R^d_+,w^{\dom}_{\gamma};X), W^{2,p}_{\Dir}(\R^d_+,w^{\dom}_{\gamma};X)]_{\frac12}
\\ & = [L^p(\R^d_+,w^{\dom}_{\gamma};X), D(1-\Delta_{\Dir} -A)]_{\frac12} = D((1-\Delta_{\Dir} -A)^{1/2}),
\end{align*}
where in the last step we applied Proposition \ref{prop:BIP}.

The two perturbation arguments give $\lambda >0$ such that $\lambda-\DD^{\Phi}$ has a bounded $H^{\infty}$-calculus with $\omega_{H^{\infty}}(\lambda-\DD^{\Phi}) \leq \phi$. Moreover, there is an equivalence of norms in $D(\DD^{\Phi}) = D(\DD) = W^{2,p}(\R^{d}_{+},w_{\gamma};X)$. The desired results follow.
\end{proof}

The following lemma follows from Proposition~\ref{prop:complexinterApR+} and Theorem~\ref{thm:DD32} under a change of coordinates according to the $C^{2}$-diffeomorphism $\Phi$ from \eqref{eq:diffeo_special_domain} and a standard retration-coretraction argument using \eqref{eq:domaindecomoperator}.
\begin{lemma}\label{lem:int_special_domain}
Let $X$ be a UMD space. Let $\dom$ be a bounded $C^2$-domain or a special $C^2_c$-domain, $p \in (1,\infty)$ and $\gamma \in (-1,2p-1)\setminus\{p-1\}$. Then
\[
[L^{p}(\dom,w^{\dom}_{\gamma};X),W^{2,p}_{\Dir}(\dom,w^{\dom}_{\gamma};X)]_{\frac{1}{2}} = W^{1,p}_{\Dir}(\dom,w^{\dom}_{\gamma};X).
\]
\end{lemma}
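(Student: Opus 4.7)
My plan is to first reduce to the half-space case by flattening the boundary via the diffeomorphism of Subsection~\ref{subsec:locCk}, then combine the interpolation identities already proved on $\R^{d}_{+}$, and finally glue together the local pieces for a bounded $C^{2}$-domain through the retraction/coretraction \eqref{eq:domaindecomoperator}.

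For a special $C^{2}_{c}$-domain $\dom$, let $\Phi:\dom\to\R^{d}_{+}$ be the $C^{2}$-diffeomorphism from \eqref{eq:diffeo_special_domain}. Since $w_{\Phi}\eqsim w_{\gamma}^{\dom}$ and $\Phi$ identifies $\partial\dom$ with $\partial\R^{d}_{+}$, the pushforward $\Phi_{*}$ is a bounded isomorphism
\[
\Phi_{*}: W^{k,p}_{\Dir}(\dom,w_{\gamma}^{\dom};X)\longra W^{k,p}_{\Dir}(\R^{d}_{+},w_{\gamma};X),\qquad k\in\{0,1,2\},
\]
indeed, the Dirichlet spaces on $\dom$ were defined by transport along $\Phi$ in Subsection~\ref{subsec:tracesSob}. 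By exactness of complex interpolation, it therefore suffices to establish
\begin{equation}\label{eq:plan:halfspace}
[L^{p}(\R^{d}_{+},w_{\gamma};X), W^{2,p}_{\Dir}(\R^{d}_{+},w_{\gamma};X)]_{\frac{1}{2}} = W^{1,p}_{\Dir}(\R^{d}_{+},w_{\gamma};X).
\end{equation}

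I would settle \eqref{eq:plan:halfspace} by splitting into two cases. For $\gamma\in(-1,p-1)$, the weight $w_{\gamma}$ is an even $A_{p}$-weight and, for $k\in\{1,2\}$, the definitions of $W^{k,p}_{\Dir}$ and $W^{k,p}_{(\Delta,\Dir)}$ coincide (only $\tr(u)=0$ is imposed). Applying Proposition~\ref{prop:complexinterApR+} with $k=2,\,j=1$ then yields \eqref{eq:plan:halfspace} directly. For $\gamma\in(p-1,2p-1)$, Theorem~\ref{thm:DD32} gives
\[
[L^{p}(\R^{d}_{+},w_{\gamma};X),D(\DD)]_{\frac{1}{2}} = D((-\DD)^{\frac{1}{2}}) = W^{1,p}(\R^{d}_{+},w_{\gamma};X),
\]
while Theorem~\ref{thm:domain}\,\eqref{it:domain} identifies $D(\DD)$ with $W^{2,p}_{\Dir}(\R^{d}_{+},w_{\gamma};X)$. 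Since $1\leq\frac{\gamma+1}{p}$, no trace condition is imposed at $k=1$, so $W^{1,p}_{\Dir}(\R^{d}_{+},w_{\gamma};X)=W^{1,p}(\R^{d}_{+},w_{\gamma};X)$, giving \eqref{eq:plan:halfspace} again.

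Finally, for a bounded $C^{2}$-domain $\dom$ I would localize via the partition $(\eta_{n})_{n=0}^{N}$, neighborhoods $V_{n}$, and special $C^{2}_{c}$-domains $\dom_{n}$ ($n\geq1$) from Subsection~\ref{subsec:locCk}. Setting
\[
E_{k}:= W^{k,p}_{\Dir}(\dom,w_{\gamma}^{\dom};X),\qquad F_{k}:= W^{k,p}(\R^{d};X)\oplus\bigoplus_{n=1}^{N}W^{k,p}_{\Dir}(\dom_{n},w_{\gamma}^{\dom_{n}};X),
\]
the operators $\mathcal{I},\mathcal{P}$ from \eqref{eq:domaindecomoperator} form a retraction/coretraction pair for $k\in\{0,1,2\}$: on the interior piece $w_{\gamma}^{\dom}$ is bounded above and below on $\supp(\eta_{0})$, which is compactly contained in $\dom$; while for $n\geq1$ the identity $\dom\cap V_{n}=\dom_{n}\cap V_{n}$ together with $\supp(\eta_{n})\subset V_{n}$ forces $\eta_{n}u$ to inherit the Dirichlet condition on $\partial\dom_{n}$. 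The retraction/coretraction principle (\cite[Theorem~1.2.4]{Tr1}) then identifies $[E_{0},E_{2}]_{\frac{1}{2}}$ with $\mathcal{I}^{-1}[F_{0},F_{2}]_{\frac{1}{2}}$; combining the already-proved special $C^{2}_{c}$-domain case with $[L^{p}(\R^{d};X),W^{2,p}(\R^{d};X)]_{\frac{1}{2}}=W^{1,p}(\R^{d};X)$ (from Proposition~\ref{prop:HWduality} and Theorem~\ref{thm:Hspwinterp}) yields $[F_{0},F_{2}]_{\frac{1}{2}}=F_{1}$, and the conclusion follows. The only delicate point will be the bookkeeping of the Dirichlet trace under the partition of unity, but this is essentially forced by the geometric compatibility $\dom\cap V_{n}=\dom_{n}\cap V_{n}$, so no substantial difficulty is expected.
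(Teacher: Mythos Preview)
Your proposal is correct and follows essentially the same route as the paper: reduce the special $C^{2}_{c}$-domain case to the half-space via the diffeomorphism $\Phi$, invoke Proposition~\ref{prop:complexinterApR+} for $\gamma\in(-1,p-1)$ and Theorem~\ref{thm:DD32} for $\gamma\in(p-1,2p-1)$, and then pass to bounded $C^{2}$-domains by the retraction--coretraction pair \eqref{eq:domaindecomoperator}. The only cosmetic point is that in the final step one should write $[E_{0},E_{2}]_{1/2}=\mathcal{P}\,[F_{0},F_{2}]_{1/2}$ (or argue via norm equivalence through $\mathcal{I}$) rather than $\mathcal{I}^{-1}$, since $\mathcal{I}$ is not surjective.
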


The next step in the proof of the above theorem is a localization argument. This localization argument is a modification of the one in \cite[Section 8]{DDHPV} combined with the one in \cite[Ch. 8, Sections 4 $\&$ 5]{Krylov2008_book} and results in the next lemma.
On an abstract level the localization argument takes the following form.

\begin{lemma}\label{DBVP:lemma:H_infty-calculus_top_embd_pert}
Let $A$ be a linear operator on a Banach space $X$, $\tilde{A}$ a densely defined closed linear operator on a Banach space $Y$ such that $\tilde{A}$ has a bounded $H^\infty$-calculus. Assume there exists bounded linear mapping $\mathcal{P}:Y\to X$ and $\mathcal{I}:X\to Y$ such that the following conditions hold:
\begin{enumerate}[$(1)$]
\item $\mathcal{P}\mathcal{I} = I$.
\item $\mathcal{I}D(A)\subseteq D(\tilde{A})$ and $\mathcal{P}D(\tilde{A})\subseteq D(A)$.
\item $\tilde{B}:= (\mathcal{I}A - \tilde{A}\mathcal{I})\mathcal{P}:D(\tilde{A}) \longra Y$ and $\tilde{C}:= \mathcal{I}(A\mathcal{P} - \mathcal{P}\tilde{A}):D(\tilde{A}) \longra Y$ both extend to bounded linear operators $[Y,D(\tilde{A})]_{\theta} \longra Y$ for some $\theta \in (0,1)$.
\end{enumerate}
Then $A$ is a closed and densely defined operator and for every $\phi > \omega_{H^{\infty}}(\tilde{A})$ there exists $\mu > 0$ such that $A+\mu$ has a bounded $H^\infty$-calculus with $\omega_{H^{\infty}}(A+\mu) \leq \phi$.
\end{lemma}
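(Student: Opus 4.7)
The plan is to view $A$ as an approximate retract of $\tilde{A}+\tilde{B}$ and recover its functional calculus by a Neumann-series perturbation argument. The starting point is that the identity $\mathcal{PI}=I$ combined with the definitions of $\tilde{B}$ and $\tilde{C}$ in hypothesis $(3)$ yields two intertwining relations
\[
\mathcal{I}A=(\tilde{A}+\tilde{B})\mathcal{I}\quad\text{on }D(A),\qquad A\mathcal{P}=\mathcal{P}(\tilde{A}+\tilde{C})\quad\text{on }D(\tilde{A}),
\]
which together drive the whole argument. For density of $D(A)$ I would use that $\mathcal{P}D(\tilde{A})\subseteq D(A)$ is dense in $X$, since $D(\tilde{A})$ is dense in $Y$ and $\mathcal{P}$ is a bounded surjection. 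For closedness, if $x_{n}\to x$ and $Ax_{n}\to y$ in $X$, then $\mathcal{I}x_{n}\to\mathcal{I}x$ and $(\tilde{A}+\tilde{B})\mathcal{I}x_{n}=\mathcal{I}Ax_{n}\to\mathcal{I}y$, so closedness of $\tilde{A}+\tilde{B}$ gives $\mathcal{I}x\in D(\tilde{A})$; hypothesis $(2)$ then forces $x=\mathcal{PI}x\in D(A)$, and applying $\mathcal{P}$ gives $Ax=y$.

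Next I would apply a standard lower-order perturbation theorem for the bounded $H^{\infty}$-calculus (for instance, \cite[Proposition 13.1]{KuWe}) to $\tilde{A}$ with the perturbation $\tilde{B}:[Y,D(\tilde{A})]_{\theta}\to Y$. This provides, for every $\phi>\omega_{H^\infty}(\tilde{A})$, a threshold $\mu_{0}>0$ such that $\tilde{A}+\tilde{B}+\mu$ has a bounded $H^{\infty}(\Sigma_{\phi})$-calculus for all $\mu\geq\mu_{0}$, with unchanged domain $D(\tilde{A})$. Combining the standard sectorial estimate $\|R(\lambda,\tilde{A}+\tilde{B}+\mu)\|_{Y\to Y}\lesssim(|\lambda|+\mu)^{-1}$ with the obvious graph-norm bound $\|R(\lambda,\tilde{A}+\tilde{B}+\mu)\|_{Y\to D(\tilde{A})}\lesssim 1$, interpolation gives
\[
\|R(\lambda,\tilde{A}+\tilde{B}+\mu)\|_{Y\to[Y,D(\tilde{A})]_{\theta}}\lesssim(|\lambda|+\mu)^{\theta-1},\qquad \lambda\notin\Sigma_{\phi}.
\]

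With these estimates in hand I would introduce the candidate lifted resolvent $R_{\lambda}:=\mathcal{P}R(\lambda,\tilde{A}+\tilde{B}+\mu)\mathcal{I}$ and the error $S_{\lambda}:=\mathcal{P}(\tilde{B}-\tilde{C})R(\lambda,\tilde{A}+\tilde{B}+\mu)\mathcal{I}$. Using the two intertwiners, a direct computation shows $(\lambda-A-\mu)R_{\lambda}=I+S_{\lambda}$ on $X$ and $R_{\lambda}(\lambda-A-\mu)=I$ on $D(A)$. Since $\tilde{B}-\tilde{C}$ is bounded from $[Y,D(\tilde{A})]_{\theta}$ to $Y$, the previous interpolation estimate yields $\|S_{\lambda}\|\lesssim(|\lambda|+\mu)^{\theta-1}$, so enlarging $\mu$ if necessary we ensure $\|S_{\lambda}\|\leq 1/2$ uniformly for $\lambda\notin\Sigma_{\phi}$. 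A Neumann series then gives $R(\lambda,A+\mu)=R_{\lambda}(I+S_{\lambda})^{-1}$ with $\|R(\lambda,A+\mu)\|\lesssim|\lambda|^{-1}$, proving sectoriality of $A+\mu$ of angle $\leq\phi$.

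To upgrade this to the bounded $H^{\infty}$-calculus I would insert the Neumann expansion of $(I+S_{\lambda})^{-1}$ into the Dunford integral along $\partial\Sigma_{\nu}$ with $\phi<\nu<\omega$:
\[
f(A+\mu)=\sum_{k\geq 0}(-1)^{k}\,\frac{1}{2\pi i}\int_{\partial\Sigma_{\nu}}f(\lambda)R_{\lambda}S_{\lambda}^{k}\,d\lambda,\qquad f\in H^{\infty}_{0}(\Sigma_{\omega}).
\]
The $k=0$ summand equals $\mathcal{P}f(\tilde{A}+\tilde{B}+\mu)\mathcal{I}$, hence is bounded by $C\|f\|_{\infty}$ via the calculus of $\tilde{A}+\tilde{B}+\mu$. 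For $k\geq 1$ the integrand is $\lesssim\|f\|_{\infty}(|\lambda|+\mu)^{-1+k(\theta-1)}$, and the radial integral produces a factor $\mu^{k(\theta-1)}/(k(1-\theta))$, so the tail is a convergent geometric series in $C\mu^{\theta-1}$ once $\mu$ is large. Letting $\omega\downarrow\phi$ yields $\omega_{H^{\infty}}(A+\mu)\leq\phi$. The main obstacle, and the step that forces the size of $\mu$, is the uniform interpolation bound for the resolvent of $\tilde{A}+\tilde{B}+\mu$ with the correct joint dependence on $|\lambda|$ and $\mu$; once that is secured, the rest is bookkeeping.
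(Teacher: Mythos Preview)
Your argument is correct and shares its overall skeleton with the paper's proof: both apply the lower-order perturbation result \cite[Proposition~13.1]{KuWe} to obtain the $H^\infty$-calculus for $\tilde{A}+\tilde{B}+\mu$, both use the intertwining $\mathcal{I}A=(\tilde{A}+\tilde{B})\mathcal{I}$ to get closedness and the identity $R_\lambda(\lambda-A-\mu)=I$ on $D(A)$, and both use $\mathcal{P}D(\tilde{A})\subseteq D(A)$ for density.

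The one genuine difference is how the resolvent of $A+\mu$ is recovered. You compute the second identity $(\lambda-A-\mu)R_\lambda=I+S_\lambda$ with $S_\lambda=\mathcal{P}(\tilde{B}-\tilde{C})R(\lambda,\tilde{A}+\tilde{B}+\mu)\mathcal{I}$, make $\|S_\lambda\|$ small, and feed the Neumann expansion of $(I+S_\lambda)^{-1}$ into the Dunford integral term by term. The paper instead proves that $R_\lambda$ is \emph{injective} for $\mu$ large: if $\mathcal{P}\tilde u=0$ then $\tilde{B}\tilde u=\tilde{B}\mathcal{I}\mathcal{P}\tilde u=0$, so $\tilde u=R(\lambda,\tilde{A}+\mu)\mathcal{I}f$, and a $|\mu|^{\theta-1}$-estimate on $\tilde{C}R(\lambda,\tilde{A}+\mu)\mathcal{I}$ forces $f=0$. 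Injectivity together with $R_\lambda(\lambda-A-\mu)=I$ then gives the \emph{exact} formula $R(\lambda,A+\mu)=R_\lambda$, so $f(A+\mu)=\mathcal{P}f(\tilde{A}+\tilde{B}+\mu)\mathcal{I}$ and the calculus transfers in one line. In fact your two identities already force this: once $I+S_\lambda$ is invertible, $R_\lambda$ is a left inverse and $R_\lambda(I+S_\lambda)^{-1}$ a right inverse of $\lambda-A-\mu$, hence they coincide, giving $R_\lambda S_\lambda=0$ and then $S_\lambda=0$. So your Neumann tail is identically zero and the whole argument collapses to its $k=0$ term; the paper's route just reaches this exact formula more directly.
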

\begin{proof}
Let $\phi > \omega_{H^{\infty}}(\tilde{A})$. By a lower order perturbation result (see \cite[Proposition 13.1]{KuWe}), there exist $\tilde{\mu} > 0$ such that $\tilde{A}+\tilde{B}+\tilde{\mu}$ has a bounded $H^\infty$-calculus with $\omega_{H^{\infty}}(\tilde{A}+\tilde{B}+\tilde{\mu}) \leq \phi$. From the definition of $B$ one sees \[\mathcal{I} A = (\tilde{A}+\tilde{B}) \mathcal{I} \ \ \  \text{on $D(A)$}.\]
Since $\tilde{A}+\tilde{B}$ is closed, the injectivity of $\mathcal{I}$ implies that $A$ is closed. Since $\mathcal{P}$ is surjective, we have
\[X=\mathcal{P} Y = \mathcal{P} \overline{D(\tilde{A})}\subseteq \overline{\mathcal{P}D(\tilde{A})}\subseteq \overline{D(A)}\]
Therefore, $A$ is densely defined. Now we will transfer the functional calculus properties of $\tilde{A}+\tilde{B}$ to $A$. For this we claim that for $\mu$ large enough
and $\lambda\in \C \setminus \Sigma_{\phi}$ we have $\lambda\in \rho(A+\mu)$ and
\[R(\lambda,A+\mu) = \mathcal{P}R(\lambda,\tilde{A}+\tilde{B}+\mu)\mathcal{I}.\]
This clearly yields that $A+\mu$ has a bounded $H^\infty$-calculus of angle $\leq \phi$.

In order to prove the claim we first show that given $\lambda \in \rho(\tilde{A}+\tilde{B})$, for $u \in D(A)$ and $f \in X$ it holds that
\begin{align}
(\lambda-A)u = f \ \ \ \Longra \ \ \ u = \mathcal{P}R(\lambda,\tilde{A}+\tilde{B})\mathcal{I}f. \label{DBVP:eq:lemma:H_infty-calculus_top_embd_pert;sol_formula}
\end{align}
Indeed, if $(\lambda-A)u = f$, then since $\mathcal{I}(\lambda-A) = (\lambda-\tilde{A}-\tilde{B})\mathcal{I}$ on $D(A)$, we obtain $(\lambda-\tilde{A}-\tilde{B})\mathcal{I}u = \mathcal{I}f$ and hence the required identity for $u$ follows.
We next prove that if $\mathcal{P}R(\lambda,\tilde{A}+\tilde{B})\mathcal{I}:X \longra D(A)$ is injective, then \eqref{DBVP:eq:lemma:H_infty-calculus_top_embd_pert;sol_formula} becomes an equivalence.
and in this case $\lambda \in \rho(A)$ and
\begin{equation}\label{eq:RlambdaARtilde}
R(\lambda,A) = \mathcal{P}R(\lambda,\tilde{A}+\tilde{B})\mathcal{I}
\end{equation}
To prove the implication $\Longleftarrow$, define $u = \mathcal{P}R(\lambda,\tilde{A}+\tilde{B})\mathcal{I}f$ and $g=(\lambda-A)u$. Then by the implication $\Longrightarrow$ we find $u = \mathcal{P}R(\lambda,\tilde{A}+\tilde{B})\mathcal{I}g$ and thus by injectivity $f=g$ as required and additionally \eqref{eq:RlambdaARtilde} holds.

Next we prove that there exists $\mu \geq \tilde{\mu}>0$ with the property that for all $\lambda \in \C \setminus \Sigma_{\phi}$, $\mathcal{P}R(\lambda,\tilde{A}+\tilde{B}+\mu)\mathcal{I}$ is injective.
Let $f \in X$ be such that $\mathcal{P}\tilde{u} :=\mathcal{P}R(\lambda,\tilde{A}+\tilde{B}+\mu)\mathcal{I}f=0$. Observing that $\tilde{B}=\tilde{B}\mathcal{I}\mathcal{P}$, we get $\tilde{B}\tilde{u}=0$.
So $(\tilde{A}+\mu-\lambda)\tilde{u} = \mathcal{I}f - \tilde{B}\tilde{u} = \mathcal{I}f$, or equivalently, $\tilde{u} = R(\lambda,\tilde{A}+\mu)\mathcal{I}f$.
It follows that
\begin{align*}
0 &= \mathcal{I}(A+\mu-\lambda)\mathcal{P}\tilde{u} = (\mathcal{I}\mathcal{P}(\tilde{A}+\mu-\lambda)+\tilde{C})R(\lambda,\tilde{A}+\mu)\mathcal{I}f \\
 &= \mathcal{I}f + \tilde{C}R(\lambda,\tilde{A}+\mu)\mathcal{I}f.
\end{align*}
Estimating
\begin{align*}
\norm{\tilde{C}R(\lambda,\tilde{A}+\mu)\mathcal{I}f}_{Y}
&\lesssim
\norm{R(\lambda,\tilde{A}+\mu)\mathcal{I}f}_{[Y,D(\tilde{A})]_{\theta}}
\\ & \leq \norm{R(\lambda,\tilde{A}+\mu)\mathcal{I}f}_{Y}^{1-\theta}\norm{R(\lambda,\tilde{A}+\mu)\mathcal{I}f}_{D(\tilde{A})}^{\theta}
\\ &\lesssim |\lambda-\mu|^{\theta-1}\norm{\mathcal{I}f}_{Y} \lesssim_{\phi} |\mu|^{\theta-1}\norm{\mathcal{I}f}_{Y},
\end{align*}
we see that $\tilde{C}R(\lambda,\tilde{A}+\mu)\mathcal{I}$ is a contraction from $X$ to $Y$ when $\mu$ is sufficiently large, in which case $\mathcal{I}f = -\tilde{C}R(\lambda,\tilde{A}+\mu)\mathcal{I}f$ implies that $\mathcal{I}f=0$ and hence $f = 0$. This yields the required injectivity.
\end{proof}

\begin{proof}[Proof of Theorem \ref{thm:boundeddomainLaplaceX}]
In this proof we let $A=\DD$ on $\dom$.
Let $\epsilon > 0$ be as in Lemma \ref{lem:smallperturb}.
Choose a finite open cover $\{V_{n}\}_{n=1}^{N}$ of $\partial \dom$ together with special $C^{2}_{c}$-domains $\{\dom_{n}\}_{n=1}^{N}$ such that
\[
\dom \cap V_{n} = \dom_{n} \cap V_{n} \quad\mbox{and}\quad \partial \dom \cap V_{n} = \partial \dom_{n} \cap V_{n}, \quad\quad n=1,\ldots,N,
\]
and $[\dom_{n}]_{C^{2}} \leq \epsilon$ for $n=1,\ldots,N$.
Let $\{\eta_{n}\}_{n=1}^{N} \subset C^{\infty}_{c}(\R^{d})$, $Y$, $\mathcal{P}$ and  $\mathcal{I}$ be the objects associated to the above sets as in Subsection \ref{subsec:locCk}.
Define the linear operator $\tilde{A}:D(\tilde{A}) \subset Y \longra Y$ as the direct sum $\tilde{A} := \bigoplus_{n=0}^{N}\tilde{A}_{n}$, where
$\tilde{A}_{0}:D(\tilde{A}_{0}) \subset L^{p}(\R^{d};X) \longra L^{p}(\R^{d};X)$ is defined by
\[
D(\tilde{A}_{0}): = W^{2,p}(\R^{d}) \quad\mbox{and}\quad \tilde{A}_{0}u :=\Delta u
\]
and where, for each $n \in \{1,\ldots,N\}$, $\tilde{A}_{n}:D(\tilde{A}_{n}) \subset L^{p}(\dom_{n},w^{\dom_{n}}_{\gamma};X) \longra L^{p}(\dom_{n},w^{\dom_{n}}_{\gamma};X)$
is defined by
\[
D(\tilde{A}_{n}) := W^{2,p}_{\mathrm{Dir}}(\dom_{n},w^{\dom_{n}}_{\gamma};X) \quad\mbox{and}\quad \tilde{A}_{n}u:=\Delta u.
\]
Furthermore, we define $B:D(A) \longra Y$ by $Bu:=([\Delta,\eta_{n}]u)_{n=0}^{N}$ and $C:D(\tilde{A}) \longra X$ by $C\tilde{u} := \sum_{n=0}^{N}[\Delta,\eta_{n}] \tilde{u}$.

By Lemma \ref{lem:smallperturb}, there exists $\mu>0$ such that $\mu - \tilde{A}_{n}$  has a bounded $H^\infty$-calculus with $\omega_{H^{\infty}}(\mu-\tilde{A}_{n}) \leq \phi$ for $n=1,\ldots,N$. Since $-A_{0}$ has a bounded $H^\infty$-calculus with $\omega_{H^{\infty}}(-\tilde{A}_{0}) = 0$, it follows that $\tilde{A}-\mu$ has a bounded $H^\infty$-calculus with $\omega_{H^{\infty}}(\tilde{A}-\mu) \leq \phi$ (see \cite[Example~10.2]{KuWe}).
Moreover, by a combination of Lemmas \ref{lem:smallperturb} and \ref{lem:int_special_domain},
$[L^{p}(\dom_{n},w^{\dom_{n}}_{\gamma};X),D(\tilde{A}_{n})]_{\frac{1}{2}} = W^{1,p}_{\Dir}(\dom_{n},w^{\dom_{n}}_{\gamma};X)$.
Since $[L^{p}(\R^{d;X}),D(\tilde{A}_{0})]_{\frac{1}{2}} = W^{1,p}(\R^{d};X)$ by \cite[Theorems 5.6.9 and 5.6.11]{HNVW1},
it follows that
\begin{align}\label{eq:complex_int_direct_sum}
[Y,D(\tilde{A})]_{\frac{1}{2}}\nonumber
&= [L^{p}(\R^{d};X),D(\tilde{A}_{0})]_{\frac{1}{2}} \oplus \bigoplus_{n=1}^{N} [L^{p}(\dom_{n},w^{\dom_{n}}_{\gamma};X),D(\tilde{A}_{n})]_{\frac{1}{2}} \\
&= W^{1,p}(\R^{d};X)  \oplus \bigoplus_{n=1}^{N} W^{1,p}_{\Dir}(\dom_{n},w^{\dom_{n}}_{\gamma};X).
\end{align}
Note that $\mathcal{I}$ maps $D(A)$ into $D(\tilde{A})$ and that $\mathcal{I}Au=\tilde{A}\mathcal{I}u+Bu$ for every $u \in D(A)$.
Also note that $\mathcal{P}$ maps $D(\tilde{A})$ to $D(A)$ and that $A\mathcal{P}\tilde{u} = \mathcal{P}\tilde{A}\tilde{u} + C\tilde{u}$ for every $\tilde{u} \in D(\tilde{A})$.
 Since each commutator $[\Delta,\eta_{n}]$ is a first order partial differential operator with $C^{\infty}_{c}$-coefficients, it follows that $\mathcal{I}A-\tilde{A}\mathcal{I}$ extends to a bounded linear operator from $W^{1,p}_{\Dir}(\dom,w^{\dom}_{\gamma};X)$ to $Y$.
Since $\mathcal{P}$ is a bounded linear operator from $[Y,D(\tilde{A})]_{\frac{1}{2}}$ to $W^{1,p}_{\Dir}(\dom,w^{\dom}_{\gamma};X)$ in view of \eqref{eq:complex_int_direct_sum}, it follows that $(\mathcal{I}A-\tilde{A}\mathcal{I})\mathcal{P}$ extends to a bounded linear operator from $[Y,D(\tilde{A})]_{\frac{1}{2}}$ to $Y$.
Similarly we see that $\mathcal{I}(A\mathcal{P} - \mathcal{P}\tilde{A})$ extends to a bounded linear operator from $[Y,D(\tilde{A})]_{\frac{1}{2}}$ to $Y$.
An application of Lemma~\ref{DBVP:lemma:H_infty-calculus_top_embd_pert} finishes the proof.
\end{proof}

\section{The heat equation with inhomogeneous boundary conditions\label{sec:inhheat}}

In this section we will consider the heat equation on a smooth domain $\dom\subseteq \R^d$ with inhomogeneous boundary conditions of Dirichlet type. In particular, Theorem \ref{thm:inhmainintro} is a special case of Theorem \ref{thm:max-reg_bdd_domain_bd-data;interval} below. The main novelty is that we consider weights of the form $w_{\gamma}^{\dom}(x) = \dist(x,\partial \dom)^{\gamma}$ with $\gamma\in (p-1, 2p-1)$, which allows us to treat the heat equation with very rough boundary data.

\subsection{Identification of the spatial trace space\label{subsec:spacetrace}}

We begin with an extension of a trace result from \cite{lindemulder2017maximal} to the range $\gamma\in (p-1, 2p-1)$.
\begin{theorem}[Spatial trace space]\label{thm:trace_max-reg_space}
Let $X$ be a UMD space, $k,\ell \in \N\setminus\{0\}$, $p,q \in (1,\infty)$, $v \in A_{q}(\R)$ and $\gamma \in (-1,kp-1)\setminus [\N p-1]$. Let $\dom$ be either $\R^{d}_{+}$ or a bounded $C^{k}$-domain. Then $\tr_{\dom}$ is a retraction from
\[
W^{\ell,q}(\R,v;L^{p}(\dom,w_{\gamma}^{\dom};X)) \cap L^{q}(\R,v;W^{k,p}(\dom,w_{\gamma}^{\dom};X))
\]
to
\[
F^{\ell-\frac{\ell}{k}\frac{1+\gamma}{p}}_{p,q}(\R,v;L^{p}(\partial\dom;X)) \cap
L^{q}(\R,v;B^{k-\frac{1+\gamma}{p}}_{p,p}(\partial\dom;X)).
\]
\end{theorem}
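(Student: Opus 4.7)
The plan is to first localize via the standard $C^k$-diffeomorphisms of Subsection \ref{subsec:locCk} to reduce to the half-space case $\dom = \R^d_+$, so that it suffices to identify traces at $\{x_1 = 0\} \simeq \R^{d-1}$ for functions in $W^{\ell,q}(\R, v; L^p(\R^d_+, w_\gamma; X)) \cap L^q(\R, v; W^{k,p}(\R^d_+, w_\gamma; X))$. Under the diffeomorphism the weight $w_\gamma^\dom$ becomes equivalent to $|x_1|^\gamma$ and the Besov/Triebel--Lizorkin spaces on $\partial\dom$ correspond by a retraction--coretraction argument to the standard ones on $\R^{d-1}$. For $\gamma \in (-1, p-1)$ the weight lies in $A_p$ and the result is already contained in \cite{lindemulder2017maximal}, so the new case is $\gamma \in (p-1, kp-1) \setminus [\N p - 1]$, where standard harmonic analysis tools fail.

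For the boundedness of $\tr_\dom$ in the non-$A_p$ range I would argue as follows. Using Proposition \ref{prop:intersectionrepr} and Lemma \ref{lem:traceAp}/\ref{lem:lem:traceA2p} applied in the $x_1$-direction (noting that the hypothesis $\gamma < kp-1$ together with $\gamma > p-1$ guarantees $k \geq 2$, so the trace in $x_1$ exists), one extracts the trace as a bounded map into $L^q_tL^p_{\tx}$. The spatial Besov component $B^{k-(1+\gamma)/p}_{p,p}(\R^{d-1})$ is obtained from the standard trace-via-real-interpolation identification $(L^p(w_\gamma), W^{k,p}(w_\gamma))_{1 - (1+\gamma)/(kp), p}$, which survives outside the $A_p$ range by the interpolation results of Subsection \ref{subsec:Interp} (in particular Proposition \ref{prop:interp_W}). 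The Triebel--Lizorkin component in time is the classical mixed-derivative trace identity in the time-vs-normal-direction plane, reduced from the non-$A_p$ to the $A_p$ setting via the multiplication isomorphism $M$ of Lemma \ref{lem:isomM} applied to $u - \mathcal{E}g$ once a candidate extension $\mathcal{E}g$ has been subtracted.

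For the retraction, construct an anisotropic Poisson-type extension adapted to the scaling $(\ell, k)$ between time and space:
\[
\mathcal{E}g(x_1, \tx, t) = \bigl[e^{-x_1 L^{1/k}} g\bigr](\tx, t),
\]
where $L$ is a sectorial operator on $L^p(\R, v; L^p(\R^{d-1}; X))$ roughly of the form $1 - \Delta_{\R^{d-1}} + \partial_t^{2\ell/k}$ (with $\partial_t$ understood through its bounded $H^\infty$-calculus from Proposition \ref{propddtHinfty}). Fractional powers and semigroup estimates for $L$ give the correct mapping properties, and Proposition \ref{prop:BIP} combined with Corollary \ref{cor:mixed-derivative} identifies the resulting intersection space. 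For $\gamma \in (-1,p-1)$ this extension is standard (as in \cite{MeyVerpoint,lindemulder2017maximal}); for $\gamma \in (p-1, 2p-1)$ one either verifies the mapping properties directly using Hardy-type inequalities on the semigroup kernel (exploiting its zero of order one at the boundary, in the spirit of Proposition \ref{prop:nonapbdd}), or, after checking that $\mathcal{E}g$ lies in the intersection space, reduces the analysis of $u - \mathcal{E}g$ via Lemma \ref{lem:isomM} to a problem with weight $w_{\gamma - p}$, which is now in the $A_p$ class.

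The main obstacle will be the boundedness of $\mathcal{E}$ into $L^q(\R, v; W^{k,p}(\R^d_+, w_\gamma; X))$ when $\gamma \in (p-1, 2p-1)$: the required Fourier multiplier/Littlewood--Paley machinery (Proposition \ref{prop:weightedmihlin}) is not directly available, so the fractional powers of $L$ and the action of $e^{-x_1 L^{1/k}}$ on Besov/Triebel--Lizorkin data must be estimated either through direct kernel/Schur-test arguments as in Subsection \ref{subs:heatnonAp} (Proposition \ref{prop:nonapbdd}) or through the $M^{\pm 1}$-reduction. Handling the time direction simultaneously — so as to land in the Triebel--Lizorkin scale rather than merely a Besov scale — is the most delicate step, and will rely on the mixed-derivative theorem of Corollary \ref{cor:mixed-derivative} together with the complex interpolation identity of Theorem \ref{thm:Hspwinterp} to interchange time and spatial smoothness consistently.
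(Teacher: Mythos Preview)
Your localization and the $A_p$-case reduction are fine, but the strategy for $\gamma\in(p-1,kp-1)\setminus[\N p-1]$ differs substantially from the paper's and has real gaps.

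\textbf{What the paper does.} Write $\gamma\in(mp-1,(m+1)p-1)$ with $m\in\{1,\ldots,k-1\}$. The paper never works directly in the non-$A_p$ weighted intersection. Instead it establishes the Sobolev embedding
\[
L^{p}(\R^{d}_{+},w_{\gamma};X)\hookrightarrow H^{-m,p}(\R^{d}_{+},w_{\gamma-mp};X)
\]
(Proposition~\ref{prop:Sobolev_emb_into_neg_smoothness}, obtained by dualizing the Hardy embedding of Corollary~\ref{cor:lem:Hardy;Sobolev_emb}), pairs it with $W^{k,p}(w_\gamma)\hookrightarrow W^{k-m,p}(w_{\gamma-mp})$, and uses complex interpolation (Theorem~\ref{thm:Hspwinterp}) to embed
\[
\M\hookrightarrow H^{\ell(1-\frac{m}{k}),q}(\R,v;L^{p}(\R^{d}_{+},w_{\gamma-mp};X))\cap L^{q}(\R,v;W^{k-m,p}(\R^{d}_{+},w_{\gamma-mp};X)).
\]
Since $\gamma-mp\in(-1,p-1)$ this is an $A_p$-weighted intersection, and the trace with the correct $F\cap L^q(B)$ target follows from \cite{lindemulder2017maximal}. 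For the coretraction the paper does not build a Poisson extension at all: the coretraction from \cite{lindemulder2017maximal} already maps $\B$ into the anisotropic Triebel--Lizorkin space $F^{1,(\frac{1}{k},\frac{1}{\ell})}_{(p,q),1,(d,1)}(\R^{d}_{+}\times\R,(w_{\gamma},v);X)$ for $A_\infty$-weights, and Lemma~\ref{lemma:thm:trace_max-reg_space;trivial_embd_F} supplies the embedding of this space into $\M$ (valid for all $\gamma>-1$ by a direct argument that does not need $A_p$).

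\textbf{Gaps in your approach.} First, the multiplication isomorphism $M$ of Lemma~\ref{lem:isomM} is only proved for $k\in\{0,1,2\}$ and $\gamma\in(-1,2p-1)$; it does not cover $\gamma\in(mp-1,(m+1)p-1)$ with $m\ge 2$, so your reduction mechanism is missing for general $k$. Second, your argument for the Triebel--Lizorkin component is circular: you propose to subtract $\mathcal{E}g$ and then apply $M$, but $\mathcal{E}$ is precisely the coretraction you are still trying to construct. Third, the operator $L\sim 1-\Delta_{\R^{d-1}}+\partial_t^{2\ell/k}$ is problematic: $\partial_t$ has angle $\pi/2$, so $\partial_t^{2\ell/k}$ need not be sectorial, and the semigroup $e^{-x_1 L^{1/k}}$ is not well defined in the stated form. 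Finally, Proposition~\ref{prop:interp_W} concerns complex interpolation for $k=2$ only; it does not identify the real interpolation spaces you invoke for the Besov component, and in fact those spaces are \emph{not} Besov spaces when $\gamma>p-1$ (see Remark~\ref{rmk:lemma:incl_Besov_real_interp_Sobolev}). The paper's route via the weight shift $w_\gamma\to w_{\gamma-mp}$ sidesteps all of these issues.
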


In order to prove this we need a preliminary result.
On the compact $C^{k}$-boundary $\partial\dom$, we define the Besov spaces $B^{s}_{p,q}(\partial\dom;X)$, $p \in (1,\infty)$, $q \in [1,\infty]$ $s \in (0,k) \setminus \N$, by real interpolation:
\[
B^{s}_{p,q}(\partial\dom;X) := (W^{n,p}(\partial\dom;X),W^{n+1,p}(\partial\dom;X))_{\theta,q}
\]
for $s=\theta+n$ with $\theta \in (0,1)$ and $n \in \{0,1,\ldots,k-1\}$.

In the proof of this theorem we use weighted mixed-norm anisotropic Triebel-Lizorkin spaces as considered in \cite[Section~2.4]{lindemulder2017maximal} (see \cite{Lindemulder_master-thesis} for more details); for definitions and notations we simply refer the reader to these references.

As in the standard isotropic case (see \cite{LindF}), we have:
\begin{lemma}\label{lemma:thm:trace_max-reg_space;trivial_embd_F}
Let $\ell,k \in \N \setminus \{0\}$, $p,q \in (1,\infty)$, $v \in A_{q}(\R)$ and $\gamma \in (-1,\infty)$. Then
\begin{align*}
& F^{1,(\frac{1}{k},\frac{1}{\ell})}_{(p,q),1,(d,1)}(\R^{d}_{+} \times \R,(w_{\gamma},v);X)  \\
& \qquad \hookrightarrow W^{l,q}(\R,v;L^{p}(\R^{d}_{+},w_{\gamma};X)) \cap  L^{q}(\R,v;W^{k,p}(\R^{d}_{+},w_{\gamma};X)).
\end{align*}
\end{lemma}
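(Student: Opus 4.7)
The plan is to reduce the embedding to an anisotropic Bernstein/Fourier multiplier estimate applied piecewise to a Littlewood--Paley (LP) decomposition, exploiting the fact that the microscopic Triebel--Lizorkin parameter equals $1$. First I would fix a resolution of unity $(\phi_n)_{n\ge 0}$ adapted to the anisotropy $(1/k,1/\ell)$, so that $\wh{\phi}_n$ is supported in the anisotropic annulus $\{(\xi,\tau): |\xi|^k + |\tau|^\ell \eqsim 2^n\}$ (with $\wh{\phi}_0$ supported near the origin). By the very definition of the anisotropic weighted mixed-norm Triebel--Lizorkin space from \cite[Section~2.4]{lindemulder2017maximal},
\[
\Bigl\|\sum_{n\ge 0} 2^{n}\, |\phi_n * f|\Bigr\|_{L^{p}(\R^d_+,w_\gamma;\, L^{q}(\R,v;X))} \lesssim \|f\|_{F^{1,(\frac{1}{k},\frac{1}{\ell})}_{(p,q),1,(d,1)}(\R^{d}_{+} \times \R,(w_\gamma,v);X)}.
\]

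Next, I would prove the two one-sided embeddings separately. For $|\alpha|\le k$, write $D_x^{\alpha}(\phi_n * f) = (D_x^\alpha \phi_n) * f$. By anisotropic scaling, $D_x^\alpha \phi_n$ has frequency support in the same anisotropic annulus, and differentiation in $x$ of order $|\alpha|\le k$ costs a factor $\lesssim 2^{n|\alpha|/k}\le 2^n$. Using the Peetre-maximal-function characterization of the anisotropic TL spaces established in \cite{lindemulder2017maximal, Lindemulder_master-thesis} (which replaces the standard Hardy--Littlewood maximal function and is valid for the full range $\gamma>-1$ by virtue of the explicit kernel decay of $\phi_n$), one obtains $|(D_x^\alpha \phi_n)*f(x,t)|\lesssim 2^{n|\alpha|/k}\,(\phi_n * f)^{*}(x,t)$ with $\|(\phi_n * f)^{*}\|\lesssim \|\phi_n * f\|$ in the mixed-norm weighted space. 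Summing in $n$ and using the triangle inequality (enabled by the microscopic parameter $r=1$),
\[
\|D_x^\alpha f\|_{L^{q}(\R,v;\,L^{p}(\R^d_+,w_\gamma;X))} \le \sum_{n\ge 0}\|D_x^\alpha(\phi_n*f)\|_{L^{q}(v;L^{p}(w_\gamma;X))} \lesssim \|f\|_{F}.
\]
The analogous estimate for $\partial_t^{j}f$ with $j\le\ell$ follows from exactly the same argument, now differentiating in the time variable with cost $\lesssim 2^{nj/\ell}\le 2^n$. Taking the supremum/intersection over $|\alpha|\le k$ and $j\le \ell$ yields both embeddings into the right-hand side intersection.

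The main obstacle is that for $\gamma \ge p-1$ the weight $w_\gamma$ is not in $A_p$, so the classical Fourier multiplier and Hardy--Littlewood maximal function arguments used in the unweighted or $A_p$ case are unavailable. This is exactly the technical issue addressed in the construction of these anisotropic weighted Triebel--Lizorkin spaces on the half space in \cite{lindemulder2017maximal, Lindemulder_master-thesis}, where a Peetre-type maximal-function equivalence with control in $L^{(p,q)}(w_\gamma,v)$ is proved for the whole range $\gamma>-1$ directly from pointwise kernel estimates on the LP pieces. Once that equivalence is invoked as a black box, the embedding reduces to a routine anisotropic Bernstein inequality and summation over the LP decomposition as sketched above.
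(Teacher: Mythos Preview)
There is a genuine gap in your displayed chain. Pulling the sum outside the mixed Lebesgue norm produces $\sum_n 2^n\|\phi_n*f\|_{L^{(p,q)}}$, which is the \emph{Besov} quantity $B^{1}_{(p,q),1}$, not the Triebel--Lizorkin norm; the triangle inequality goes the wrong way and $\sum_n 2^n\|\phi_n*f\|_{L}\lesssim \big\|\sum_n 2^n|\phi_n*f|\big\|_{L}$ is false in general. The role of $r=1$ is that the $\ell^1$-sum sits \emph{inside} the $L$-norm, so you must keep it there: bound $|D_x^\alpha f|\le \sum_n|D_x^\alpha(\phi_n*f)|\lesssim \sum_n 2^n(\phi_n*f)^*$ pointwise and only then invoke the Peetre maximal equivalence for the full sequence. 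As written the chain does not close. (Also, the mixed norm in your first display has the $p$- and $q$-integrations in the wrong order.)

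More importantly, you misidentify the obstacle. The paper explicitly notes that one cannot argue on $\R^d\times\R$ and then restrict, because for $\gamma\ge p-1$ one has $L^p(\R^d,w_\gamma;X)\not\hookrightarrow L^1_{\rm loc}(\R^d;X)$, hence $L^p(\R^d,w_\gamma;X)\not\hookrightarrow\mathcal{D}'(\R^d;X)$; the target Sobolev space is therefore not a distribution space on $\R^d$ and naive restriction fails. Your Peetre argument (and the references you invoke) live on the full space, where the $F$-spaces are defined via Littlewood--Paley theory, yet you never explain how to pass to $\R^d_+$. The paper's route is different and circumvents this: it first proves the elementary pointwise estimate $\|f\|_{L^{(p,q)}}\lesssim \|f\|_{F^{0}_{(p,q),1}}$ for $f\in\mathcal{S}(\R^d\times\R;X)$, then uses density of $\mathcal{S}$ in $F^0$ together with $L^q(\R,v;L^p(\R^d_+,w_\gamma;X))\hookrightarrow\mathcal{D}'(\R^d_+\times\R;X)$ to extend the restriction operator to a bounded map $F^0(\R^d\times\R)\to L^{(p,q)}(\R^d_+\times\R)$, and finally lifts to smoothness $1$ via the standard fact that differentiation maps $F^1\to F^0$. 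No Hardy--Littlewood or Peetre maximal machinery is needed here; the $A_p$ issue you flag is not the bottleneck for this lemma.
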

\begin{proof}
We cannot reduce to the $\R^d$-case directly since $L^p(\R^d,w_{\gamma};X)\not\hookrightarrow L^1_{\rm loc}(\R^d;X)$ for $\gamma \geq p-1$ and therefore cannot be seen as a subspace of the distributions on $\R^d$. However, we can proceed as follows. An easy direct argument (see \cite[Remark~3.13]{MeyVersharp} or \cite[Proposition~5.2.31]{Lindemulder_master-thesis}) shows that
\[
\|f\|_{L^{q}(\R,v;L^{p}(\R^{d},w_{\gamma};X))} \lesssim \|f\|_{F^{0,(\frac{1}{k},\frac{1}{\ell})}_{(p,q),1,(d,1)}(\R^{d} \times \R,(w_{\gamma},v);X)}
\]
for all $f \in \mathcal{S}(\R^{d} \times \R;X)$.
Using
\[
L^{q}(\R,v;L^{p}(\R^{d}_{+},w_{\gamma};X)) \hookrightarrow \Distr'(\R^{d}_{+} \times \R;X)
\]
and using density of $\mathcal{S}(\R^{d} \times \R;X)$ in $F^{0,(\frac{1}{k},\frac{1}{\ell})}_{(p,q),1,(d,1)}(\R^{d} \times \R,(w_{\gamma},v);X)$, we find that the restriction operator
\[
R:\mathcal{D}'(\R^{d} \times \R;X) \to \mathcal{D}'(\R^{d}_{+} \times \R;X),\, f \mapsto f_{|\R^{d}_{+} \times \R},
\]
restricts to a bounded linear operator
\[
R: F^{0,(\frac{1}{k},\frac{1}{\ell})}_{(p,q),1,(d,1)}(\R^{d} \times \R,(w_{\gamma},v);X) \longra L^{q}(\R,v;L^{p}(\R^{d}_{+},w_{\gamma};X)).
\]
By \cite[Section~2.4]{lindemulder2017maximal} (see \cite[Proposition 5.2.29]{Lindemulder_master-thesis}), this implies that $R$ is also bounded as an operator
\begin{align*}\label{eq:lemma:thm:trace_max-reg_space;trivial_embd_F}
& R:F^{1,(\frac{1}{k},\frac{1}{\ell})}_{(p,q),1,(d,1)}(\R^{d} \times \R
 ,(w_{\gamma},v);X) \\
& \qquad \longra W^{l,q}(\R,v;L^{p}(\R^{d}_{+},w_{\gamma};X)) \cap  L^{q}(\R,v;W^{k,p}(\R^{d}_{+},w_{\gamma};X)).
\end{align*}
The desired inclusion now follows.
\end{proof}

With a similar argument as in the above proof one can show the following embedding for an arbitrary open set $\dom\subseteq \R^d$:
\begin{equation}\label{eq:TrLWemb}
B^{k}_{p,1}(\dom,w_{\gamma}^{\dom};X) \hookrightarrow F^{k}_{p,1}(\dom,w_{\gamma}^{\dom};X) \hookrightarrow W^{k,p}(\dom,w_{\gamma}^{\dom};X),
\end{equation}
where $k\in \N_0$, $\gamma>-1$ and $p\in [1, \infty)$.

In the proof of Theorem~\ref{thm:trace_max-reg_space} we will furthermore use the following Sobolev embedding, which is a partial extension of Corollary~\ref{cor:lem:Hardy;Sobolev_emb} to the case $k=0$, obtained by dualizing Corollary~\ref{cor:lem:Hardy;Sobolev_emb}.

\begin{proposition}\label{prop:Sobolev_emb_into_neg_smoothness}
Let $X$ be a UMD space, $p \in (1,\infty)$, $m \in \N$ and $\gamma \in (mp-1,(m+1)p-1)$. Let $\dom$ be a bounded $C^m$-domain or a special $C^m_c$-domain. Then
\[
L^{p}(\dom,w_{\gamma}^{\dom};X) \hookrightarrow H^{-m,p}(\dom,w_{\gamma-mp}^{\dom};X).
\]
\end{proposition}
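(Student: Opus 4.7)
The plan is to obtain the embedding by duality against the iterated Hardy embeddings of Corollary~\ref{cor:lem:Hardy;Sobolev_emb}. Set $\sigma := \gamma - mp$ and $\sigma' := (mp-\gamma)/(p-1)$, so that $\sigma = -\sigma'(p-1)$ and $\sigma' - mp' = -\gamma/(p-1)$. The assumption $\gamma \in (mp-1,(m+1)p-1)$ translates into $\sigma \in (-1,p-1)$ and $\sigma' \in (-1,p'-1)$, hence $w_{\sigma}^{\dom} \in A_{p}(\R^{d})$ and $w_{\sigma'}^{\dom} \in A_{p'}(\R^{d})$ (on a bounded $C^{m}$-domain, or a special $C^{m}_{c}$-domain after a routine localization, $\dist(\cdot,\partial\dom)^{\alpha}$ is an $A_{r}$-weight on $\R^{d}$ for $\alpha \in (-1,r-1)$).

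Given $f \in L^{p}(\dom,w_{\gamma}^{\dom};X)$, I consider the linear functional $L_{f}(\phi) := \int_{\dom}\langle f(x),\phi(x)\rangle\,dx$ for $\phi \in \mathcal{D}(\dom;X^{*})$. H\"older's inequality, together with the identity $(w_{\gamma}^{\dom})^{-1/(p-1)} = w_{\sigma'-mp'}^{\dom}$, gives
\[
|L_{f}(\phi)| \leq \|f\|_{L^{p}(\dom,w_{\gamma}^{\dom};X)}\,\|\phi\|_{L^{p'}(\dom,w_{\sigma'-mp'}^{\dom};X^{*})}.
\]
To control the right-hand side I iterate the third embedding in Corollary~\ref{cor:lem:Hardy;Sobolev_emb} with exponent $p'$, starting from $W^{m,p'}_{0}(\dom,w_{\sigma'}^{\dom};X^{*})$:
\[
W^{m,p'}_{0}(\dom,w_{\sigma'}^{\dom};X^{*}) \hookrightarrow W^{m-1,p'}_{0}(\dom,w_{\sigma'-p'}^{\dom};X^{*}) \hookrightarrow \cdots \hookrightarrow L^{p'}(\dom,w_{\sigma'-mp'}^{\dom};X^{*}).
\]
At step $i \in \{0,\ldots,m-1\}$ the weight exponent is $\sigma' - ip'$; since $\sigma' < p'-1$ one has $\sigma' - ip' < p'-1$, while for $j \geq 2$ one has $jp'-1 \geq 2p'-1 > \sigma' \geq \sigma'-ip'$, so $\sigma' - ip' \notin \{jp'-1 : j \in \N_{1}\}$, and the chain is admissible. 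Since $\phi \in \mathcal{D}(\dom;X^{*})$ lies in the starting space by compact support, combining the two displays yields
\[
|L_{f}(\phi)| \lesssim \|f\|_{L^{p}(\dom,w_{\gamma}^{\dom};X)}\,\|\phi\|_{W^{m,p'}_{0}(\dom,w_{\sigma'}^{\dom};X^{*})}.
\]

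To identify this with an element of $H^{-m,p}(\dom,w_{\sigma}^{\dom};X)$, I use Lemma~\ref{lem:denseApbdr0} to bound the $W^{m,p'}_{0}$-norm above by the $W^{m,p'}(\R^{d},w_{\sigma'}^{\dom};X^{*})$-norm of the zero extension, which equals $H^{m,p'}(\R^{d},w_{\sigma'}^{\dom};X^{*})$ by Proposition~\ref{prop:HWduality}. By Hahn--Banach, $L_{f}$ extends to a bounded linear functional on $H^{m,p'}(\R^{d},w_{\sigma'}^{\dom};X^{*})$; using reflexivity of the UMD space $X$ and the computation $(w_{\sigma'}^{\dom})^{-1/(p'-1)} = w_{\sigma}^{\dom}$, Proposition~\ref{prop:HWduality} identifies this dual with $H^{-m,p}(\R^{d},w_{\sigma}^{\dom};X)$. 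Restricting to $\dom$ yields an element of $H^{-m,p}(\dom,w_{\sigma}^{\dom};X)$ that agrees with $f$ as a distribution on $\dom$, with norm controlled by $\|f\|_{L^{p}(\dom,w_{\gamma}^{\dom};X)}$, proving the embedding.

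The main technical point to verify is the exceptional-exponent condition along the chain of Hardy embeddings; this is precisely where the endpoint restriction $\gamma \in (mp-1,(m+1)p-1)$ enters decisively, through the placement of $\sigma'$ strictly inside $(-1,p'-1)$ and thus safely away from the forbidden values $jp'-1$.
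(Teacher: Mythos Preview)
Your proof is correct and follows essentially the same approach as the paper: both dualize the iterated Hardy embedding $W^{m,p'}_{0}(\dom,w_{\sigma'}^{\dom};X^{*}) \hookrightarrow L^{p'}(\dom,w_{\sigma'-mp'}^{\dom};X^{*})$ from Corollary~\ref{cor:lem:Hardy;Sobolev_emb}. The only cosmetic difference is that the paper packages the identification $[W^{m,p'}_{0}(\dom,w_{\sigma'}^{\dom};X^{*})]^{*} = H^{-m,p}(\dom,w_{\sigma}^{\dom};X)$ into Lemma~\ref{lem:distrembdualwithoutDir}, whereas you reproduce that identification inline via zero extension, Hahn--Banach, and restriction.
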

To prove this embedding we need a simple lemma.
\begin{lemma}\label{lem:distrembdualwithoutDir}
Let $X$ be a UMD space, $p \in (1,\infty)$, $m \in \N$ and let $w\in A_p$. Let $\dom\subseteq \R^d$ be a bounded $C^m$-domain or a special $C^m_c$-domain. Then $H^{-m,p}(\dom,w;X)$ is reflexive and
\begin{align}\label{eq:dense1HD}
\mathcal{D}(\dom;X) &\stackrel{d}{\hookrightarrow} H^{-m,p}(\dom,w;X) \stackrel{d}{\hookrightarrow} \mathcal{D}'(\dom;X),
\end{align}
Under the natural pairing, we have
\begin{align}\label{eq:dense2HD}
\mathcal{D}(\dom;X^{*}) \stackrel{d}{\hookrightarrow} [H^{-m,p}(\dom,w;X)]^{*} \stackrel{d}{\hookrightarrow} \mathcal{D}'(\dom;X^{*}),
\end{align}
\begin{align}\label{eq:Hdequality}
[H^{-1,p}(\dom,w;X)]^{*} = W^{m,p'}_{0}(\dom,w';X^{*}).
\end{align}
\end{lemma}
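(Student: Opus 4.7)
The plan is to realize $H^{-m,p}(\dom,w;X)$ as the quotient $H^{-m,p}(\R^d,w;X)/I$, where
$I := \{u \in H^{-m,p}(\R^d,w;X) : u|_\dom = 0 \text{ in } \mathcal{D}'(\dom;X)\}$ is a closed subspace (as the intersection of the kernels of the continuous functionals $u \mapsto u(\psi)$, $\psi \in \mathcal{D}(\dom)$), and then to leverage Proposition~\ref{prop:HWduality} (giving $[H^{-m,p}(\R^d,w;X)]^* = W^{m,p'}(\R^d,w';X^*)$) together with Lemma~\ref{lem:denseApbdr0} to read off the four assertions. Reflexivity of $H^{-m,p}(\R^d,w;X)$ is immediate from the duality, and it descends to the quotient.

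For the density and embedding \eqref{eq:dense1HD}, note $\mathcal{D}(\dom;X) \subseteq L^p(\dom,w;X) \hookrightarrow H^{-m,p}(\dom,w;X)$. For density, Lemma~\ref{lem:densitynonAp} gives $\mathcal{D}(\R^d;X)$ dense in $L^p(\R^d,w;X)$ and hence in $H^{-m,p}(\R^d,w;X)$. A smooth boundary cut-off splits $\phi \in \mathcal{D}(\R^d;X)$ as $\phi = \phi_\dom^\e + \phi_\mathrm{ext}^\e + r_\e$ with $\phi_\dom^\e \in \mathcal{D}(\dom;X)$, $\phi_\mathrm{ext}^\e \in \mathcal{D}(\R^d \setminus \overline{\dom};X) \subseteq I$, and $\|r_\e\|_{L^p(\R^d,w;X)} \to 0$; in the quotient, $\phi$ is approximated by $\phi_\dom^\e$. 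The inclusion $H^{-m,p}(\dom,w;X) \hookrightarrow \mathcal{D}'(\dom;X)$ is the factorization of the distributional restriction through $I$, with the density of its image inherited from that of $\mathcal{D}(\dom;X)$.

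The main step is the duality \eqref{eq:Hdequality} (where the subscript $-1$ is a typographical slip for $-m$). Abstract quotient-subspace duality identifies $[H^{-m,p}(\dom,w;X)]^*$ with the annihilator $I^\perp \subseteq W^{m,p'}(\R^d,w';X^*)$, so it remains to show $I^\perp$ coincides with the image of $W^{m,p'}_0(\dom,w';X^*)$ under extension-by-zero. The inclusion $W^{m,p'}_0 \subseteq I^\perp$ follows from Lemma~\ref{lem:denseApbdr0}: for $\phi \in C^\infty_c(\dom;X^*)$ and $u \in I$, the Bessel-potential pairing agrees with the distributional one, which is zero because $u|_\dom = 0$ and $\supp \phi \subseteq \dom$; continuity then extends this to all of $W^{m,p'}_0$. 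The reverse inclusion $I^\perp \subseteq W^{m,p'}_0$ is a Hahn--Banach / bipolar argument: since $W^{m,p'}_0$ is norm-closed (hence weakly closed), it suffices to check $(W^{m,p'}_0)^\perp \subseteq I$, and this holds because annihilating $C^\infty_c(\dom;X^*) \subseteq W^{m,p'}_0$ forces $u|_\dom = 0$ as a distribution. The density chain \eqref{eq:dense2HD} is then immediate: Lemma~\ref{lem:denseApbdr0} gives $\mathcal{D}(\dom;X^*)$ dense in $W^{m,p'}_0(\dom,w';X^*)$, and the latter embeds densely in $\mathcal{D}'(\dom;X^*)$ through $L^{p'}(\dom,w';X^*)$.

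The only delicate point is verifying that the Bessel-potential pairing on $\R^d$ agrees with the distributional pairing on test functions supported inside $\dom$, which is what makes the bipolar step work cleanly; once this is in place the remaining claims are bookkeeping around Proposition~\ref{prop:HWduality} and Lemma~\ref{lem:denseApbdr0}.
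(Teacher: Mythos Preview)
Your argument is correct and relies on the same two ingredients as the paper, namely Proposition~\ref{prop:HWduality} (the full-space duality $[H^{-m,p}(\R^d,w;X)]^* = W^{m,p'}(\R^d,w';X^*)$) and Lemma~\ref{lem:denseApbdr0} (density of $C^m_c(\dom;X^*)$ in $W^{m,p'}_0(\dom,w';X^*)$ and boundedness of extension by zero). The organization, however, differs.

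For the main identity \eqref{eq:Hdequality}, the paper does not pass through the abstract quotient--annihilator correspondence. Instead it argues directly that, for $f \in \mathcal{D}(\dom;X^*)$, the two norms $\|f\|_{[H^{-m,p}(\dom,w;X)]^*}$ and $\|f\|_{W^{m,p'}(\R^d,w';X^*)}$ are equivalent: one inequality comes from the pairing estimate in Proposition~\ref{prop:HWduality} after taking the infimum over extensions of $g$, and the converse comes from choosing a near-optimal $g \in H^{-m,p}(\R^d,w;X)$ and observing that, since $f$ is supported in $\dom$, only $g|_\dom$ matters. Density from Lemma~\ref{lem:denseApbdr0} then upgrades this to the full identification. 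Your route via $[E/I]^* \cong I^\perp$ and a bipolar step is a clean abstract repackaging of the same content; the ``delicate point'' you flag about consistency of the Bessel-potential and distributional pairings is exactly what makes both directions go through in either presentation.

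Two minor comments. First, your density argument for \eqref{eq:dense1HD} via boundary cut-offs on $\R^d$ is more elaborate than necessary: the paper simply invokes Lemma~\ref{lem:densitynonAp} on $\dom$ itself, giving $\mathcal{D}(\dom;X)$ dense in $L^p(\dom,w;X)$, together with the obvious dense inclusion $L^p(\dom,w;X) \hookrightarrow H^{-m,p}(\dom,w;X)$. Second, the paper establishes \eqref{eq:dense2HD} \emph{before} \eqref{eq:Hdequality}, by dualizing \eqref{eq:dense1HD} and using reflexivity of $H^{-m,p}(\dom,w;X)$; you derive it afterwards as a consequence of \eqref{eq:Hdequality}. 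Both orderings are valid.
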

\begin{proof}
The reflexivity of $H^{-m,p}(\dom,w;X)$ follows from Proposition \ref{prop:HWduality}. The continuity of the inclusions in \eqref{eq:dense1HD} are obvious. The density in the first embedding of \eqref{eq:dense1HD} holds because of
Lemma~\ref{lem:densitynonAp} and $L^{p}(\dom,w;X )
\stackrel{d}{\hookrightarrow} H^{-m,p}(\dom,w;X)$.
The density of the second embedding in \eqref{eq:dense1HD} follows from the density of
$\mathcal{D}(\dom;X)$ in $\mathcal{D}'(\dom;X)$.
The dense embeddings \eqref{eq:dense2HD} follow from \eqref{eq:dense1HD}, $\Distr(\dom;X)^* = \Distr'(\dom;X^*)$ and $\Distr'(\dom;X)^* = \Distr(\dom;X^*)$ and the reflexivity of $H^{-m,p}(\dom,w;X)$. To prove \eqref{eq:Hdequality}, by density (see  Lemma \ref{lem:denseApbdr0}) it suffices to prove
\begin{equation}\label{eq:normequivHWdual}
\|f\|_{[H^{-m,p}(\dom,w;X)]^{*}} \eqsim \|f\|_{W^{m,p'}(\R^{d},w';X^{*})}, \ \ \ f\in \mathcal{D}(\dom;X^*).
\end{equation}
Let $f\in \mathcal{D}(\dom;X^*)$. Then, by Proposition \ref{prop:HWduality}, for all $g\in \mathcal{D}(\dom;X)$,
\[|\lb f, g\rb|\leq \|f\|_{W^{m,p'}(\R^{d},w';X^{*})}\|g\|_{H^{-m,p}(\R^{d},w;X)}.\]
Taking the infimum over all such $g$ and using \eqref{eq:dense1HD}, the estimate $\lesssim$ in \eqref{eq:normequivHWdual} follows. For the converse we use Proposition \ref{prop:HWduality} to obtain
\begin{align*}
\|f\|_{W^{m,p'}(\R^{d},w';X^{*})} \eqsim \|f\|_{H^{m,p'}(\R^{d},w';X^{*})}
= \|f\|_{[H^{-m,p}(\R^{d},w;X)]^{*}}.
\end{align*}
For an appropriate $g\in H^{-m,p}(\R^{d},w;X)$ of norm $\leq 1$ we obtain
\begin{align*}
\|f\|_{W^{m,p'}(\R^{d},w';X^{*})} & \lesssim |\lb f,g\rb| = |\lb f,g_{|\dom}\rb|
\leq \|f\|_{[H^{-m,p}(\dom,w;X)]^{*}},
\end{align*}
where we used $\|g_{|\R_+}\|_{H^{-1,p}(\dom,w;X)}\leq 1$.
\end{proof}

\begin{proof}[Proof of Proposition \ref{prop:Sobolev_emb_into_neg_smoothness}]
Let us first note that $X$ is reflexive as a UMD space.
Put $\gamma':= -\frac{\gamma}{p-1} \in (-mp'-1,(1-m)p'-1)$. Then $[w_{\gamma}^{\dom}]'=w_{\gamma'}^{\dom}$ and $[w_{\gamma-mp}^{\dom}]'=w_{\gamma'+mp'}^{\dom}$, the $p$-duals weights of $w_{\gamma}^{\dom}$ and $w_{\gamma-mp}^{\dom}$, respectively.
Note that $\gamma-mp \in (-1,p-1)$ and $\gamma'+mp' \in (-1,p'-1)$, so $w_{\gamma-mp}^{\dom} \in A_{p}$ and $w_{\gamma'+mp'}^{\dom} \in A_{p'}$.
By Corollary~\ref{cor:lem:Hardy;Sobolev_emb} and Lemma~\ref{lem:densitynonAp},
\[
W^{m,p'}_{0}(\dom,w_{\gamma'+mp'}^{\dom};X^{*}) \stackrel{d}{\hookrightarrow} L^{p'}(\dom,w_{\gamma'}^{\dom};X^{*}).
\]
Therefore, Proposition \ref{prop:HWduality} and Lemma \ref{lem:distrembdualwithoutDir} give
that
\begin{align*}
L^{p}(\dom,w_{\gamma}^{\dom};X) &= [L^{p'}(\dom,w_{\gamma'}^{\dom};X^{*})]^{*}
\hookrightarrow [W^{m,p'}_{0}(\dom,w_{\gamma'+mp'}^{\dom};X^{*})]^*
\\ & = [H^{-1,p}(\dom,w_{\gamma-mp}^{\dom};X)]^{**}
 =H^{-m,p}(\dom,w_{\gamma-mp}^{\dom};X),
\end{align*}
where we again used reflexivity of $X$.
\end{proof}

\begin{proof}[Proof of Theorem~\ref{thm:trace_max-reg_space}.]
By a standard localization argument it suffices to consider the case $\dom = \R^{d}_{+}$. The case $\gamma\in (-1,p-1)$ is already considered in \cite[Theorem~2.1 $\&$ Corollary~4.9]{lindemulder2017maximal} (also see \cite[Theorem~4.4]{lindemulder2017maximal}), so from now on we will assume $\gamma\in (mp-1,(m+1)p-1)$ with $m \in \{1,\ldots,k-1\}$ (which implies that $k \geq 2$).

Let us write
\[
\M := W^{\ell,q}(\R,v;L^{p}(\R^{d}_{+},w_{\gamma};X)) \cap L^{q}(\R,v;W^{k,p}(\R^{d}_{+},w_{\gamma};X))
\]
and
\[
\B := F^{\ell-\frac{\ell}{k}\frac{1+\gamma}{p}}_{p,q}(\R,v;L^{p}(\R^{d-1};X)) \cap
L^{q}(\R,v;B^{k-\frac{1+\gamma}{p}}_{p,p}(\R^{d-1};X)).
\]

By Theorem~\ref{thm:Hspwinterp}, Proposition~\ref{prop:Sobolev_emb_into_neg_smoothness}, Corollary~\ref{cor:lem:Hardy;Sobolev_emb} and \cite[Propositions 5.5 $\&$ 5.6]{LMV},
\begin{align*}
\M &\hookrightarrow H^{\ell(1-\frac{m}{k}),q}(\R;v;[L^{p}(\R^{d}_{+},w_{\gamma};X),W^{k,p}(\R^{d}_{+},w_{\gamma};X)]_{\frac{m}{k}}) \\
&\hookrightarrow H^{\ell(1-\frac{m}{k}),q}(\R;v;
[H^{-m,p}(\R^{d}_{+},w_{\gamma-mp};X),W^{k-m,p}(\R^{d}_{+},w_{\gamma-mp};X)]_{\frac{m}{k}}) \\
&= H^{\ell(1-\frac{m}{k}),q}(\R;v;L^{p}(\R^{d}_{+},w_{\gamma-mp};X)).
\end{align*}
Therefore, once applying Corollary~\ref{cor:lem:Hardy;Sobolev_emb},
\begin{align}\label{eq:thm:trace_max-reg_space}
\M \hookrightarrow H^{\ell(1-\frac{m}{k}),q}(\R;v;L^{p}(\R^{d}_{+},w_{\gamma-p};X)) \cap L^{q}(\R,v;W^{k-m,p}(\R^{d}_{+},w_{\gamma-mp};X)),
\end{align}
which reduces the problem to the $A_p$-weighted setting. By \cite[Theorem~2.1 $\&$ Corollary~4.9]{lindemulder2017maximal} (also see \cite[Theorem~4.4]{lindemulder2017maximal}), $\tr_{\partial\R^{d}_{+}}$ is bounded from the last space to
\[
F^{\ell(1-\frac{m}{k})-\frac{\ell(1-\frac{m}{k})}{k-m}\frac{1+\gamma-mp}{p}}_{p,q}(\R,v;L^{p}(\R^{d-1};X)) \cap
L^{q}(\R,v;B^{k-m-\frac{1+\gamma-mp}{p}}_{p,p}(\R^{d-1};X)) = \B.
\]

Finally, that there is a coretraction $\ext_{\partial\R^{d}_{+}}$ corresponding to $\tr_{\partial\R^{d}_{+}}$ simply follows from a combination of
\cite[Theorems~2.1 $\&$ 4.6 $\&$ Remark~4.7]{lindemulder2017maximal} (also see \cite[Example 5.5]{lindemulder2019intersection}) and Lemma~\ref{lemma:thm:trace_max-reg_space;trivial_embd_F}.
\end{proof}

\subsection{Identification of the temporal trace space\label{subsec:spacetrace}}

For $p\in (1, \infty)$, $q \in [1,\infty]$, $\gamma\in (-1, 2p-1)$ and $s \in (0,2)$ we use the following notation:
\begin{align}\label{eq:real_interp_Sobolev_def}
W^{s}_{p,q}(\dom,w_{\gamma}^{\dom};X) := (L^{p}(\dom,w_{\gamma}^{\dom};X),W^{2,p}(\dom,w_{\gamma}^{\dom};X))_{\frac{s}{2},q}.
\end{align}
In the case $\gamma\in (-1, p-1)$ (with general $A_p$-weight) these spaces can be identified with Besov spaces (see \cite[Proposition 6.1]{MeyVersharp}). In the case $\gamma\in (p-1, 2p-1)$ we only have embedding results (see Lemma \ref{lemma:incl_Besov_real_interp_Sobolev} below).

In the next result we identity the temporal trace space.
\begin{theorem}[Temporal trace space]\label{thm:temporal_trace_max-reg}
Let $\dom$ be either $\R^{d}_{+}$ or a bounded $C^{2}$-domain and let $J$ be either $\R$ or $(0,T)$ with $T \in (0,\infty]$. Let $X$ be a UMD space, $p,q \in (1,\infty)$, $\mu \in (-1,q-1)$ and $\gamma \in (-1,2p-1)\setminus\{p-1\}$.
If $1-\frac{1+\mu}{q} \neq \frac{1}{2}\frac{1+\gamma}{p}$, then the temporal trace operator $\tr_{t=0}:u \mapsto u(0)$ is a retraction
\begin{align}\label{eq:thm:temporal_trace_max-reg}
W^{1,q}(J,v_{\mu};L^{p}(\dom,w_{\gamma}^{\dom};X)) \cap L^{q}(J,v_{\mu};W^{2,p}(\dom,w_{\gamma}^{\dom};X)) \longra
W^{2(1-\frac{1+\mu}{q})}_{p,q}(\dom,w_{\gamma}^{\dom};X).
\end{align}
\end{theorem}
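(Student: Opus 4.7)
The core idea is to reduce the theorem to the abstract trace characterization of real interpolation spaces. By the partition of unity and the diffeomorphisms from Subsection~\ref{subsec:locCk}, I would first localize the problem to the model case $\dom = \R^d_+$ with weight $w_\gamma(x) = |x_1|^\gamma$; the intersection space on the left, the target space $W^{2(1-(1+\mu)/q)}_{p,q}$ on the right, and the trace $u \mapsto u(0)$ all behave compatibly under multiplication by smooth cutoffs and under the coordinate transformations $\Phi_*$, so localization is a standard bookkeeping step.

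Once reduced to the half-space, I would invoke the Grisvard-type characterization of real interpolation via Sobolev traces in its weighted form: for any densely embedded Banach couple $Y_1 \stackrel{d}{\hookrightarrow} Y_0$, $q \in (1,\infty)$ and $\mu \in (-1, q-1)$, the evaluation $u \mapsto u(0)$ is a retraction
\[
W^{1,q}(\R_+, v_\mu; Y_0) \cap L^q(\R_+, v_\mu; Y_1) \longrightarrow (Y_0, Y_1)_{1-(1+\mu)/q,\, q}.
\]
This is a purely abstract fact about the $K$-method; in the unweighted case it is classical, and the weighted extension with $v_\mu = t^\mu$ follows by the change of variables $s = t^{(1+\mu)/q}$. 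Applying it with $Y_0 = L^p(\R^d_+, w_\gamma; X)$ and $Y_1 = W^{2,p}(\R^d_+, w_\gamma; X)$ -- density following from Lemma~\ref{lem:densitynonAp} -- yields the trace as a retraction onto $(L^p, W^{2,p})_{\theta, q}$ with $\theta := 1-(1+\mu)/q$, which is exactly $W^{2\theta}_{p,q}$ by the definition~\eqref{eq:real_interp_Sobolev_def}. The case $J = (0,T)$ would then be reduced to $\R_+$ via the standard cutoff-and-restriction procedure.

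The technical hypothesis $\theta \neq (1+\gamma)/(2p)$ excludes the critical level at which the spatial trace on $\partial\R^d_+$ is borderline-defined in $W^{2\theta}_{p,q}$. In the subcritical case $2\theta < (1+\gamma)/p$ no spatial trace is seen at the interpolation level, so the identification $(L^p, W^{2,p}_{\Dir})_{\theta, q} = (L^p, W^{2,p})_{\theta, q}$ holds and the Dirichlet heat semigroup $e^{t\Delta_{\Dir}}$ from Theorem~\ref{thm:domain} directly furnishes a concrete coretraction. In the supercritical case $2\theta > (1+\gamma)/p$, elements of $W^{2\theta}_{p,q}$ generically carry a non-trivial boundary trace, and I would construct the coretraction by splitting $u_0 \in W^{2\theta}_{p,q}$ into a zero-trace part (extended by $e^{t\Delta_{\Dir}}$) and a boundary-lift part (extended via a Poisson-type formula derived from Theorem~\ref{thm:trace_max-reg_space}).

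The main obstacle I anticipate is the supercritical case: while the abstract interpolation characterization guarantees the existence of \emph{some} extension, verifying that the concrete semigroup-plus-boundary-lift construction is compatible with the temporal weight $v_\mu$ and actually lands in $W^{1,q}(J,v_\mu;L^p) \cap L^q(J,v_\mu;W^{2,p})$ requires checking that the Poisson-type lift has matching regularity in both time and space, and that the splitting is performed in a bounded linear fashion.
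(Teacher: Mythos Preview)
Your overall architecture---localize to $\R^d_+$, trace method for boundedness, Dirichlet semigroup in the subcritical case, splitting in the supercritical case---matches the paper's. The gap is that the abstract Grisvard-type statement you invoke is stronger than what is actually available: for a general Banach couple $Y_1 \stackrel{d}{\hookrightarrow} Y_0$, the classical trace method (as in \cite{Luninterp,Tr1}) yields $\tr_{t=0}$ only as a \emph{quotient map} onto $(Y_0,Y_1)_{\theta,q}$, not as a retraction. A bounded linear section typically comes from a sectorial operator on $Y_0$ with domain $Y_1$, and $W^{2,p}(\R^d_+,w_\gamma;X)$ is not such a domain when $\gamma\in(p-1,2p-1)$: no boundary condition is imposed, yet $w_\gamma\notin A_p$ rules out reducing to $\R^d$. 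The paper explicitly flags this as the nontrivial part, so the theorem is not finished after your second paragraph.

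For the supercritical case you locate the right obstacle, but the fix is not a Poisson-type formula from Theorem~\ref{thm:trace_max-reg_space}. That theorem lifts boundary data $g\in\B$ to the time-space intersection $\M$, whereas here you must lift an initial datum $u_0\in W^{2\theta}_{p,q}(\R^d_+,w_\gamma;X)$ in \emph{time}; the spatial boundary trace $\tr_{\partial\R^d_+}u_0$ alone does not determine $u_0$, and a naive constant-in-time extension of it does not land in $\B$. The paper's device is Lemma~\ref{lem:real_interp_Sob_traces}: the spatial trace on $W^{2\theta}_{p,q}$ is itself a retraction, so $\pi=E\circ\tr_{\partial\R^d_+}$ is a bounded projection, and on $\ker(I-\pi)$ the $W^{2\theta}_{p,q}$-norm is \emph{equivalent} to the Besov $B^{2\theta}_{p,q}$-norm (see~\eqref{eq:lem:real_interp_Sob_traces;kernel_proj_equiv_norm}). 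Since $w_\gamma\in A_\infty$, the realization of $1-\Delta$ in $B^0_{p,1}(\R^d,w_\gamma)$ is sectorial by \cite{LindF}, so \cite{MeyVerTr}/\cite{PrSi} supply a temporal coretraction on the Besov scale; extension--restriction and the norm equivalence transfer it to $\ker(I-\pi)$. The Dirichlet piece uses Proposition~\ref{prop:real_interp_Sob_Dir} to identify $(L^p,W^{2,p}_{\Dir})_{\theta,q}$ with $W^{2\theta}_{p,q,\Dir}$; this identification is not immediate for $\gamma>p-1$ and is obtained through the multiplication isomorphism $M$ of Lemma~\ref{lem:isomM}.
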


It follows from the trace method (see \cite[Section~1.2]{Luninterp} or \cite[Section~1.8]{Tr1}) that $\tr_{t=0}$ is a quotient mapping \eqref{eq:thm:temporal_trace_max-reg}.
The nontrivial fact in the above theorem is to show that it is a retraction. In order to show this we want to apply \cite[Theorem~1.1]{MeyVerTr}/\cite[Theorem~3.4.8]{PrSi}. However, these results can only be applied directly in the special case that the boundary condition vanishes in the real interpolation space. In the case $\gamma\in (-1, p-1)$ this difficulty does not arise because by using a suitable extension operator one can reduce to the case $\dom = \R^d$. To cover the remaining cases we have found a workaround which requires some preparations. The first result is the characterization of the spatial trace of the spaces defined in \eqref{eq:real_interp_Sobolev_def}. The result will be proved further below.

\begin{proposition}\label{prop:real_interp_Sob_traces}
Let $\dom$ be either $\R^{d}_{+}$ or a bounded $C^{2}$-domain.
Let $X$ be a UMD space, $p\in (1, \infty)$, $q \in [1,\infty)$, $\gamma\in (p-1, 2p-1)$ and $s \in (0,2)$. If $s > \frac{1+\gamma}{p}$, then $\tr_{\partial\dom}$ extends to a retraction
\[
W^{s}_{p,q}(\dom,w_{\gamma}^{\dom};X) \longra B^{s-\frac{1+\gamma}{p}}_{p,q}(\partial\dom).
\]
\end{proposition}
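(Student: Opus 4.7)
By the standard localization procedure (a finite $C^{2}$-partition of unity subordinate to a cover of $\partial\dom$, combined with the flattening diffeomorphisms $\Phi$ from \eqref{eq:diffeo_special_domain}), I would first reduce the statement to the model case $\dom = \R^{d}_{+}$, so that $\partial\dom = \R^{d-1}$. Writing $Y := L^{p}(\R^{d-1};X)$, which is a UMD space, the proof splits into the two standard tasks: (i) construction of a bounded coretraction $\ext: B^{s-(1+\gamma)/p}_{p,q}(\R^{d-1};X) \to W^{s}_{p,q}(\R^{d}_{+},w_{\gamma};X)$; and (ii) boundedness of $\tr_{x_{1}=0}$ into the claimed Besov space.

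For (i) the natural candidate is the Poisson-type extension
\[
\ext g(x_{1},\tilde{x}) \,:=\, \chi(x_{1})\bigl[e^{-x_{1}(1-\Delta_{d-1})^{1/2}}g\bigr](\tilde{x}),
\]
with a smooth cutoff $\chi \in C^{\infty}_{c}([0,\infty))$ satisfying $\chi(0)=1$. The operator $A := (1-\Delta_{d-1})^{1/2}$ has a bounded $H^{\infty}$-calculus of angle zero on $Y$, so $e^{-x_{1}A}$ is a bounded analytic semigroup with explicit kernel bounds. The coretraction identity $\tr\circ\ext = \mathrm{id}$ is immediate from $\chi(0)=1$. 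To verify the mapping property into $W^{s}_{p,q}(\R^{d}_{+},w_{\gamma};X) = (L^{p}(\R^{d}_{+},w_{\gamma};X), W^{2,p}(\R^{d}_{+},w_{\gamma};X))_{s/2,q}$, I would use the trace method characterization of real interpolation combined with the intersection representation of $W^{2,p}(\R^{d}_{+},w_{\gamma};X)$ in normal and tangential directions (as in the proof of Theorem \ref{thm:domain}, via Corollary \ref{cor:mixed-derivative}).

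For (ii), I would apply the trace method characterization of $W^{s}_{p,q}(\R^{d}_{+},w_{\gamma};X)$ to realize each $u$ in this space as $u = v(0)$ for a $Y$-valued function $v$ of $x_{1}$ with quantified regularity in the couple $(L^{p}(\R_{+},w_{\gamma};Y), W^{2,p}(\R_{+},w_{\gamma};Y))$. Since $\gamma \in (p-1,2p-1)$, Lemma \ref{lem:lem:traceA2p} guarantees that $\tr: W^{2,p}(\R_{+},w_{\gamma};Z) \to Z$ is bounded for \emph{any} Banach space $Z$, and this provides the normal-direction trace estimate. To extract the tangential Besov regularity of $\tr_{x_{1}=0}u$, I would combine this one-dimensional estimate with the standard unweighted real-interpolation identity $(L^{p}(\R^{d-1};X), W^{2,p}(\R^{d-1};X))_{\theta,q} = B^{2\theta}_{p,q}(\R^{d-1};X)$ applied in the tangential variable.

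The principal technical obstacle is the $L^{p}(w_{\gamma})$-analysis underlying (i) in the non-$A_{p}$ range $\gamma \in (p-1,2p-1)$: the vector-valued Mihlin multiplier theorem (Proposition \ref{prop:weightedmihlin}) and the Littlewood--Paley decomposition are unavailable. I would circumvent this in one of two ways: either estimate $\chi(x_{1})e^{-x_{1}A}g$ directly from the explicit kernel of the Poisson-type semigroup together with one-dimensional weighted Hardy inequalities in the spirit of Section \ref{subsec:Hardy}; or decompose $u = \ext g + v$ where $v := u - \ext(\tr u)$ has vanishing trace, and transfer $v$ to the $A_{p}$-regime $\gamma-p \in (-1,p-1)$ via the isomorphism $M$ from Lemma \ref{lem:isomM}, thereby reducing the remaining part of the trace statement to the $A_{p}$-trace theorem already proved in \cite{lindemulder2017maximal}.
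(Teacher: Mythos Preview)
Your approach is genuinely different from the paper's, and part (ii) has a real gap.

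The paper's argument (carried out in Lemma~\ref{lem:real_interp_Sob_traces}) is a clean sandwich: it establishes
\[
B^{s}_{p,q}(\R^{d}_{+},w_{\gamma};X)\; \stackrel{d}{\hookrightarrow}\; W^{s}_{p,q}(\R^{d}_{+},w_{\gamma};X)\; \hookrightarrow\; B^{s-1}_{p,q}(\R^{d}_{+},w_{\gamma-p};X),
\]
the left inclusion being Lemma~\ref{lemma:incl_Besov_real_interp_Sobolev} (via $F^{k}_{p,1}\hookrightarrow W^{k,p}$, interpolated) and the right one Lemma~\ref{lemma:real_interp_Sobolev_incl_Bessel-pot} (via Corollary~\ref{cor:lem:Hardy;Sobolev_emb} and Proposition~\ref{prop:Sobolev_emb_into_neg_smoothness}, interpolated). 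Both outer Besov spaces carry established trace theory: the first because the Besov/Triebel--Lizorkin scale is available under the mere $A_{\infty}$-assumption on $w_{\gamma}$, the second because $\gamma-p\in(-1,p-1)$ lies in the $A_{p}$-range. The two trace targets coincide since $s-\tfrac{1+\gamma}{p}=(s-1)-\tfrac{1+(\gamma-p)}{p}$. The coretraction is simply the Besov coretraction into $B^{s}_{p,q}(\R^{d}_{+},w_{\gamma};X)$ followed by the left inclusion. This bypasses all non-$A_{p}$ difficulties at once.

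Your plan (i), the Poisson-type extension, could in principle be pushed through, but would require substantially more work than the paper's one-line use of the $A_{\infty}$-Besov coretraction. The real problem is (ii): what you invoke as a ``trace method characterization of $W^{s}_{p,q}(\R^{d}_{+},w_{\gamma};X)$'' does not exist in the form you need. This space is a real interpolation space of two \emph{full} spaces on $\R^{d}_{+}$, not a trace space in the $x_{1}$-variable, so there is no direct mechanism to read off $u(0,\cdot)\in B^{s-(1+\gamma)/p}_{p,q}(\R^{d-1};X)$ from membership. Lemma~\ref{lem:lem:traceA2p} only delivers $u(0,\cdot)\in L^{p}(\R^{d-1};X)$, and your proposed step ``combine with the tangential real-interpolation identity'' is where the argument breaks: real interpolation does not split across an intersection such as $W^{2,p}(\R_{+},w_{\gamma};Y)\cap L^{p}(\R_{+},w_{\gamma};W^{2,p}(\R^{d-1};X))$ in a way that would yield the required Besov target. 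Your closing suggestion --- decompose $u=\ext(\tr u)+v$ with $v$ trace-free and then shift $v$ to the $A_{p}$-range via $M$ --- is close in spirit to the paper's right-hand embedding, but as stated it is circular: forming $\ext(\tr u)$ already presupposes the trace bound you are trying to prove.
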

In the setting of the above proposition we define, for $s \in (0,2) \setminus \{\frac{1+\gamma}{p}\}$,
\[
W^{s}_{p,q,\Dir}(\dom,w_{\gamma}^{\dom};X) :=
\left\{\begin{array}{ll}
W^{s}_{p,q}(\dom,w_{\gamma}^{\dom};X), & s < \frac{1+\gamma}{p},\\
\{ u \in W^{s}_{p,q}(\dom,w_{\gamma}^{\dom};X) : \tr_{\partial\dom}u=0 \}, & s> \frac{1+\gamma}{p}.
\end{array}\right.
\]
For these spaces we have the following result which will be proved below as well.
\begin{proposition}\label{prop:real_interp_Sob_Dir}
Let $\dom$ be either $\R^{d}_{+}$ or a bounded $C^{2}$-domain.
Let $X$ be a UMD space, $p\in (1, \infty)$, $q \in [1,\infty)$, $\gamma\in (p-1, 2p-1)$ and $s \in (0,2) \setminus \{\frac{1+\gamma}{p}\}$. Then
\[
W^{s}_{p,q,\Dir}(\dom,w_{\gamma}^{\dom};X) = (L^{p}(\dom,w_{\gamma}^{\dom};X),W^{2,p}_{\Dir}(\dom,w_{\gamma}^{\dom};X))_{\frac{s}{2},q}.
\]
\end{proposition}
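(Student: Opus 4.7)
The plan is to reduce to $\dom = \R^{d}_{+}$ using the localization procedure from Subsection~\ref{subsec:locCk} and then treat the two ranges of $s$ separately. Write $A_{0} := L^{p}(\R^{d}_{+},w_{\gamma};X)$, $A_{1} := W^{2,p}(\R^{d}_{+},w_{\gamma};X)$, $B_{1} := W^{2,p}_{\Dir}(\R^{d}_{+},w_{\gamma};X)$; the inclusion $(A_{0},B_{1})_{s/2,q} \hookrightarrow (A_{0},A_{1})_{s/2,q} = W^{s}_{p,q}$ is automatic from $B_{1} \subseteq A_{1}$, so the task is to identify its image with $W^{s}_{p,q,\Dir}$.

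For $s < (1+\gamma)/p$ there is no boundary trace at the interpolation level, so I would prove $(A_{0},B_{1})_{s/2,q} = (A_{0},A_{1})_{s/2,q}$ by producing a common dense subspace and comparing $K$-functionals on it. Lemma~\ref{lem:CinftydenserealLW} gives density of $C^{\infty}_{c}(\R^{d}_{+};X)$ in $(A_{0},A_{1})_{s/2,q}$, while Proposition~\ref{prop:densenonAp} combined with the standard density $B_{1} \stackrel{d}{\hookrightarrow} (A_{0},B_{1})_{s/2,q}$ (valid for $q<\infty$) gives density in $(A_{0},B_{1})_{s/2,q}$. Given $u \in C^{\infty}_{c}(\R^{d}_{+};X)$ and a near-optimal decomposition $u = u_{0}+u_{1}$ in $(A_{0},A_{1})$, I would multiply $u_{1}$ by a smooth cutoff $\phi_{\tau}$ supported in $\{x_{1}>\tau/2\}$ and equal to $1$ on $\{x_{1}>\tau\}$ so that $\phi_{\tau}u_{1}\in B_{1}$, and absorb $(1-\phi_{\tau})u_{1}$ into the $A_{0}$-part; calibrating $\tau$ against $t$ and estimating the cutoff errors via $\int_{0}^{\tau}r^{\gamma-jp}\,dr$ exactly as in the proof of Lemma~\ref{lem:CinftydenserealLW} should yield $K(t,u;A_{0},B_{1}) \lesssim K(t,u;A_{0},A_{1})$.

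For $s > (1+\gamma)/p$, the inclusion $(A_{0},B_{1})_{s/2,q} \subseteq W^{s}_{p,q,\Dir}$ follows from density of $B_{1}$ in the interpolation space together with continuity of $\tr_{\partial\dom}$ on $W^{s}_{p,q}$ (Proposition~\ref{prop:real_interp_Sob_traces}). For the reverse direction, given $u \in W^{s}_{p,q,\Dir}$, my plan is to modify any $K$-decomposition $u = u_{0}+u_{1}$ in $(A_{0},A_{1})$ by subtracting from $u_{1}$ a scale-dependent extension $w_{t}$ of $\tr u_{1}$, setting $v_{1} := u_{1}-w_{t} \in B_{1}$ and $v_{0} := u_{0} + w_{t}$. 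The extension $w_{t}$ would be constructed by composing a fixed trace-coretraction from $B^{2-(1+\gamma)/p}_{p,p}(\R^{d-1};X)$ into $A_{1}$ with multiplication by a boundary-layer cutoff of thickness $t^{1/2}$; this should give $\|w_{t}\|_{A_{1}} \lesssim \|u_{1}\|_{A_{1}}$ together with the compatibility bound $\|w_{t}\|_{A_{0}} \lesssim t\,\|u_{1}\|_{A_{1}}$, so that $K(t,u;A_{0},B_{1}) \leq \|v_{0}\|_{A_{0}} + t\|v_{1}\|_{A_{1}} \lesssim K(t,u;A_{0},A_{1})$.

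The hardest step will be the construction and analysis of the scale-dependent extension $w_{t}$ in the second range: since $\gamma \in (p-1,2p-1)$ lies outside the $A_{p}$-class, I cannot simply invoke boundedness of standard harmonic-analytic extension operators on $L^{p}(w_{\gamma})$. The crucial compatibility bound $\|w_{t}\|_{A_{0}} \lesssim t\,\|u_{1}\|_{A_{1}}$ must instead be extracted from the weighted Hardy-type inequalities of Section~\ref{sec:Hardy}, which quantify precisely the gain in $L^{p}(w_{\gamma})$-integrability near the boundary afforded by $W^{2,p}(w_{\gamma})$-regularity of the extension.
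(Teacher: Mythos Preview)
Your approach differs from the paper's, which reduces to the $A_p$-setting via the pointwise-multiplication isomorphism $M:W^{k,p}_0(\R^d_+,w_\gamma;X)\to W^{k,p}_0(\R^d_+,w_{\gamma-p};X)$ of Lemma~\ref{lem:isomM}. Concretely, Lemma~\ref{lemma:real_interp_Sobolev_Dir} shows that $(A_0,B_1)_{s/2,q}$ equals the closure ${_0}W^s_{p,q}$ of zero-trace smooth functions and that $M$ carries this isomorphically onto $B^s_{p,q,0}(\R^d_+,w_{\gamma-p};X)$, where the corresponding interpolation identity is available since $w_{\gamma-p}\in A_p$ (Lemma~\ref{lem:compl_real_interp00_Ap}). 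The identification ${_0}W^s_{p,q}=W^s_{p,q,\Dir}$ then follows from Lemmas~\ref{lem:CinftydenserealLW} and~\ref{lem:real_interp_Sob_traces}.

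Your $K$-functional argument has a quantitative gap that affects both ranges. For $s<\frac{1+\gamma}{p}$, the worst term in $\|\phi_\tau u_1\|_{A_1}$ is $\|\phi_\tau'' u_1\|_{L^p(w_\gamma)}$; Hardy's inequality (valid for $\gamma>p-1$) absorbs \emph{one} factor of $\tau^{-1}$ via the weight shift $w_\gamma\to w_{\gamma-p}$, giving $\lesssim \tau^{-1}\|u_1\|_{A_1}$, but a second shift to $w_{\gamma-2p}$ would require $\tr u_1=0$, which is unavailable for a generic $K$-decomposition. With the optimal calibration $\tau=t^{1/2}$ one obtains only $K(t,u;A_0,B_1)\lesssim \|u_0\|_{A_0}+t^{1/2}\|u_1\|_{A_1}$, hence control by $\|u\|_{(A_0,A_1)_{s,q}}$ rather than $\|u\|_{(A_0,A_1)_{s/2,q}}$; restricting to $u\in C^\infty_c(\R^d_+;X)$ merely introduces a constant depending on $\dist(\supp u,\partial\R^d_+)$, blocking the density extension. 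The same ``one Hardy step, not two'' obstruction hits your scale-dependent extension for $s>\frac{1+\gamma}{p}$: with a boundary-layer cutoff of width $t^{1/2}$, second derivatives of the cutoff contribute $t^{-1/2}$ to $\|w_t\|_{A_1}$, while $\|w_t\|_{A_0}$ is only $\lesssim t^{1/2}\|u_1\|_{A_1}$, not $t\|u_1\|_{A_1}$ as you assert. The $M$-operator sidesteps this precisely by converting the entire scale to $w_{\gamma-p}$ at once rather than fighting the boundary layer derivative by derivative.
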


From Proposition~\ref{prop:interp_W} and reiteration (see \cite[Theorem~1.10.2]{Tr1}) we immediately obtain the following.
\begin{lemma}\label{lem:interpWnWnplus1}
Let $\dom$ be either $\R^{d}_{+}$ or a bounded $C^{2}$-domain.
Let $X$ be a UMD space, $p\in (1, \infty)$, $q \in [1,\infty]$, $\gamma\in (p-1, 2p-1)$ and $s \in (0,2) \setminus \{1\}$.
If $s = \theta+n$ with $\theta \in (0,1)$ and $n \in \{0,1\}$, then
\[
W^{s}_{p,q}(\dom,w_{\gamma}^{\dom};X) = (W^{n,p}(\dom,w_{\gamma}^{\dom};X),W^{n+1,p}(\dom,w_{\gamma}^{\dom};X))_{\theta,q}.
\]
\end{lemma}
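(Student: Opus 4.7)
The plan is to unfold the definition of $W^s_{p,q}(\dom,w_\gamma^\dom;X)$ as a real interpolation space between $L^p$ and $W^{2,p}$ and then apply a reiteration identity in order to re-express it as a real interpolation space between $W^{n,p}$ and $W^{n+1,p}$. The key input is Proposition~\ref{prop:interp_W}, which identifies $W^{1,p}(\dom,w_\gamma^\dom;X)$ as the midpoint complex interpolation space $[L^p(\dom,w_\gamma^\dom;X),W^{2,p}(\dom,w_\gamma^\dom;X)]_{1/2}$. This matches exactly the setup needed for reiteration.

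Write $A_0 := L^p(\dom,w_\gamma^\dom;X)$ and $A_1 := W^{2,p}(\dom,w_\gamma^\dom;X)$. By the definition \eqref{eq:real_interp_Sobolev_def} we have $W^s_{p,q}(\dom,w_\gamma^\dom;X) = (A_0,A_1)_{s/2,q}$. By Proposition~\ref{prop:interp_W} we have $W^{1,p}(\dom,w_\gamma^\dom;X) = [A_0,A_1]_{1/2}$. Since complex interpolation spaces are of class $1/2$ in the sense of the reiteration theorem (\cite[Theorem~1.10.2]{Tr1} and the remark there concerning interpolation spaces of prescribed class), one has the identities
\begin{equation*}
(A_0, [A_0,A_1]_{1/2})_{\theta,q} = (A_0,A_1)_{\theta/2,q}, \qquad ([A_0,A_1]_{1/2}, A_1)_{\theta,q} = (A_0,A_1)_{(1+\theta)/2,q}
\end{equation*}
for every $\theta \in (0,1)$ and $q \in [1,\infty]$.

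For the case $n=0$ (so $s=\theta \in (0,1)$) one substitutes $W^{1,p} = [A_0,A_1]_{1/2}$ into the left-hand side of the first identity to obtain
\begin{equation*}
(W^{0,p}(\dom,w_\gamma^\dom;X),W^{1,p}(\dom,w_\gamma^\dom;X))_{\theta,q} = (A_0,A_1)_{\theta/2,q} = W^\theta_{p,q}(\dom,w_\gamma^\dom;X),
\end{equation*}
which is exactly the claim. For the case $n=1$ (so $s=1+\theta$ with $\theta \in (0,1)$) one applies the second identity in the same way:
\begin{equation*}
(W^{1,p}(\dom,w_\gamma^\dom;X),W^{2,p}(\dom,w_\gamma^\dom;X))_{\theta,q} = (A_0,A_1)_{(1+\theta)/2,q} = W^{1+\theta}_{p,q}(\dom,w_\gamma^\dom;X).
\end{equation*}

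There is no real obstacle beyond invoking the correct reiteration statement: the substantive content lies entirely in Proposition~\ref{prop:interp_W}. The only point that deserves some care is to ensure that one uses the version of reiteration that permits one of the two endpoint interpolation spaces to be a complex interpolation space rather than a real one; this is exactly the mixed reiteration covered in \cite[Theorem~1.10.2]{Tr1}, which applies because $[A_0,A_1]_{1/2}$ is of class $1/2$ between $A_0$ and $A_1$.
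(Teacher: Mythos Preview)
Your proof is correct and follows exactly the approach the paper takes: the paper's proof consists of the single sentence ``From Proposition~\ref{prop:interp_W} and reiteration (see \cite[Theorem~1.10.2]{Tr1}) we immediately obtain the following,'' and you have simply spelled out what that reiteration step entails.
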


\begin{lemma}\label{lemma:incl_Besov_real_interp_Sobolev}
Let $\dom$ be either $\R^{d}_{+}$ or a bounded $C^{2}$-domain.
Let $p\in (1, \infty)$, $q \in [1,\infty]$, $\gamma\in (p-1, 2p-1)$ and $s \in (0,2)$. Then
\begin{equation}\label{eq:lemma:incl_Besov_real_interp_Sobolev}
B^{s}_{p,q}(\dom,w_{\gamma}^{\dom};X) \hookrightarrow W^{s}_{p,q}(\dom,w_{\gamma}^{\dom};X).
\end{equation}
The inclusion is dense if $q < \infty$.
\end{lemma}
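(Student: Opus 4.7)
The plan is to reduce the embedding to the real interpolation characterization of Besov spaces and then invoke Lemma~\ref{lem:interpWnWnplus1}, which identifies the relevant interpolation space of Sobolev spaces with $W^s_{p,q}$.

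First, for $s \in (0,2) \setminus \{1\}$, I would write $s = n + \theta$ with $n \in \{0,1\}$ and $\theta \in (0,1)$, and use the standard embedding
\[
B^{s}_{p,q}(\dom,w_{\gamma}^{\dom};X) \hookrightarrow (W^{n,p}(\dom,w_{\gamma}^{\dom};X), W^{n+1,p}(\dom,w_{\gamma}^{\dom};X))_{\theta,q},
\]
which is valid for Littlewood--Paley-based Besov spaces with an $A_{\infty}$-weight (and $w_{\gamma}^{\dom}\in A_{\infty}$ since $\gamma>-1$). Combined with Lemma~\ref{lem:interpWnWnplus1}, the right-hand side coincides with $W^{s}_{p,q}(\dom,w_{\gamma}^{\dom};X)$, giving the embedding.

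For the remaining value $s=1$, I would invoke Wolff-type reiteration together with the cases $s$ slightly below and slightly above $1$ already handled, to obtain
\[
B^{1}_{p,q}(\dom,w_{\gamma}^{\dom};X) \hookrightarrow (L^{p}(\dom,w_{\gamma}^{\dom};X), W^{2,p}(\dom,w_{\gamma}^{\dom};X))_{1/2,q} = W^{1}_{p,q}(\dom,w_{\gamma}^{\dom};X),
\]
where the last identity is just the definition of $W^{s}_{p,q}$. For the density assertion when $q<\infty$, I would argue that $C^{\infty}_{c}(\overline{\dom};X)$ is dense in $B^{s}_{p,q}(\dom,w_{\gamma}^{\dom};X)$ by a standard mollification/truncation argument (relying on Lemmas~\ref{lem:densityCcRd} and~\ref{lem:Kufner}), and is also contained in $W^{2,p}(\dom,w_{\gamma}^{\dom};X)$, which in turn is dense in $(L^{p},W^{2,p})_{s/2,q} = W^{s}_{p,q}(\dom,w_{\gamma}^{\dom};X)$ as the upper endpoint of a real interpolation space with finite second parameter (see \cite[Theorem~1.6.2]{Tr1}).

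The principal technical hurdle is justifying the first embedding in the regime $\gamma\in (p-1,2p-1)$, where $w_{\gamma}^{\dom}$ falls outside the $A_{p}$-class. In the $A_{p}$-case the Besov space and the real interpolation space coincide, but for a generic $A_{\infty}$-weight one can only expect the one-sided embedding stated above; this would need verification through the Littlewood--Paley definition of the Besov norm and a direct $K$-functional estimate for the couple $(W^{n,p}, W^{n+1,p})$, perhaps after reducing to $\R^{d}$ via a bounded extension operator for the half-space or local charts for a bounded $C^{2}$-domain.
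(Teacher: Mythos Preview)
Your route through $(W^{n,p},W^{n+1,p})_{\theta,q}$ and Lemma~\ref{lem:interpWnWnplus1} works, but it introduces two unnecessary complications. First, Lemma~\ref{lem:interpWnWnplus1} carries a UMD hypothesis on $X$ (it relies on Proposition~\ref{prop:interp_W}), whereas the lemma you are proving is stated for an arbitrary Banach space $X$; so as written your argument only establishes the embedding under an extra assumption. Second, the detour through the couple $(W^{n,p},W^{n+1,p})$ forces a separate treatment of $s=1$ via Wolff reiteration.

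The paper avoids both issues by interpolating directly between smoothness $0$ and $2$ on the Besov/Triebel--Lizorkin side. Using Bui's real interpolation theorem (valid for $A_\infty$-weights, vector-valued) together with Rychkov's extension operator one has
\[
B^{s}_{p,q}(\dom,w_{\gamma}^{\dom};X) = \bigl(F^{0}_{p,1}(\dom,w_{\gamma}^{\dom};X),\,F^{2}_{p,1}(\dom,w_{\gamma}^{\dom};X)\bigr)_{s/2,\,q},
\]
and then the endpoint embeddings $F^{k}_{p,1}(\dom,w_{\gamma}^{\dom};X)\hookrightarrow W^{k,p}(\dom,w_{\gamma}^{\dom};X)$ for $k=0,2$ from \eqref{eq:TrLWemb} (which hold for any Banach space $X$) give the result for all $s\in(0,2)$ at once, with no UMD assumption and no case split. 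This is precisely the ``first embedding'' you flagged as the principal hurdle, but established via interpolation of the $F^{k}_{p,1}$-scale rather than a $K$-functional computation. Your density argument is essentially the same as the paper's.
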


For $\gamma$ in the $A_{p}$-range $(-1,p-1)$ it holds that $B^{s}_{p,q}(\dom,w_{\gamma}^{\dom};X) = W^{s}_{p,q}(\dom,w_{\gamma}^{\dom};X)$ (which can be obtained from \cite[Proposition~6.1]{MeyVersharp}).
However, the reverse inclusion to \eqref{eq:lemma:incl_Besov_real_interp_Sobolev} does not hold for $\gamma \in (p-1,2p-1)$, see Remark~\ref{rmk:lemma:incl_Besov_real_interp_Sobolev} below.

\begin{proof}
By \cite[Theorem~3.5]{Bui_Weighted_Beov&Triebel-Lizorkin_spaces:interpolation...} and a retraction-coretraction argument using Rychkov's extension operator (see \cite{LindF,Rychk99}),
\begin{align*}
B^{s}_{p,q}(\dom,w_{\gamma}^{\dom};X) = (F^{0}_{p,1}(\dom,w_{\gamma}^{\dom};X),F^{2}_{p,1}(\dom,w_{\gamma}^{\dom};X))_{\frac{s}{2},q};
\end{align*}
these references are actually in the scalar-valued setting, but the arguments remain valid in the vector-valued setting. The inclusion now follows from \eqref{eq:TrLWemb}. Density follows from Lemma \ref{lem:Kufner}, \cite[Theorem 1.6.2]{Tr1} and the fact that $C^{\infty}_{c}(\overline{\dom};X) \subset B^{s}_{p,q}(\dom,w_{\gamma}^{\dom};X)$.
\end{proof}

\begin{lemma}\label{lemma:real_interp_Sobolev_incl_Bessel-pot}
Let $\dom$ be either $\R^{d}_{+}$ or a bounded $C^{2}$-domain.
Let $X$ be a UMD space, $p\in (1, \infty)$, $q \in [1,\infty]$, $\gamma\in (p-1, 2p-1)$ and $s \in (0,2)$. Then
\[
W^{s}_{p,q}(\dom,w_{\gamma}^{\dom};X) \hookrightarrow B^{s-1}_{p,q}(\dom,w_{\gamma-p}^{\dom};X).
\]
\end{lemma}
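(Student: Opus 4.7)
The plan is to use the definition of $W^{s}_{p,q}(\dom,w_{\gamma}^{\dom};X)$ as a real interpolation space and exploit that $w_{\gamma-p}^{\dom}$ lies in the $A_p$-range (since $\gamma-p \in (-1,p-1)$). The main idea is to interpolate endpoint Hardy/Sobolev-type embeddings that each gain one unit of negative smoothness in exchange for passing from $w_{\gamma}^{\dom}$ to $w_{\gamma-p}^{\dom}$. Starting from the defining identity
\[
W^{s}_{p,q}(\dom,w_{\gamma}^{\dom};X) = (L^{p}(\dom,w_{\gamma}^{\dom};X),\,W^{2,p}(\dom,w_{\gamma}^{\dom};X))_{s/2,q},
\]
I would embed each endpoint into a Bessel potential space with the lighter weight.

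For the left endpoint, Proposition~\ref{prop:Sobolev_emb_into_neg_smoothness} with $m=1$ gives
\[
L^{p}(\dom,w_{\gamma}^{\dom};X) \hookrightarrow H^{-1,p}(\dom,w_{\gamma-p}^{\dom};X).
\]
For the right endpoint, Corollary~\ref{cor:lem:Hardy;Sobolev_emb} (applicable since $\gamma > p-1$), combined with Proposition~\ref{prop:HWduality} (using $w_{\gamma-p}^{\dom} \in A_p$ and $X$ UMD), yields
\[
W^{2,p}(\dom,w_{\gamma}^{\dom};X) \hookrightarrow W^{1,p}(\dom,w_{\gamma-p}^{\dom};X) = H^{1,p}(\dom,w_{\gamma-p}^{\dom};X).
\]
Real interpolation of these endpoint embeddings then delivers
\[
W^{s}_{p,q}(\dom,w_{\gamma}^{\dom};X) \hookrightarrow (H^{-1,p}(\dom,w_{\gamma-p}^{\dom};X),\,H^{1,p}(\dom,w_{\gamma-p}^{\dom};X))_{s/2,q}.
\]

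The remaining task is to identify the right-hand side with $B^{s-1}_{p,q}(\dom,w_{\gamma-p}^{\dom};X)$, noting that $-1 + 2\cdot\frac{s}{2} = s-1$. On $\R^{d}$ this is the classical real interpolation identity for $A_p$-weighted Bessel potential spaces in the UMD-valued setting (see the circle of results around \cite{MeyVersharp} used elsewhere in the paper). On $\dom \in \{\R^{d}_{+},\,\text{bounded }C^{2}\text{-domain}\}$ one transfers this identity via Rychkov's universal extension operator, exactly as in the proof of Lemma~\ref{lemma:incl_Besov_real_interp_Sobolev}, since the scale on $\dom$ is a retract of the corresponding scale on $\R^d$ and real interpolation commutes with retractions.

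The main obstacle I anticipate is this last identification: the Besov space $B^{s-1}_{p,q}$ may have negative smoothness (when $s < 1$), so one must ensure that Rychkov's extension operator is bounded uniformly on the whole scale of $A_p$-weighted Bessel potential spaces involved. This is known but requires some care; everything else reduces to assembling the cited results.
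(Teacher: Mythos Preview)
Your proposal is correct and follows essentially the same route as the paper: embed the two endpoints via Proposition~\ref{prop:Sobolev_emb_into_neg_smoothness} and Corollary~\ref{cor:lem:Hardy;Sobolev_emb}, interpolate, and then identify $(H^{-1,p}(\dom,w_{\gamma-p}^{\dom};X),H^{1,p}(\dom,w_{\gamma-p}^{\dom};X))_{s/2,q}$ with $B^{s-1}_{p,q}(\dom,w_{\gamma-p}^{\dom};X)$ via \cite[Proposition~6.1]{MeyVersharp} and a retraction--coretraction argument. The only difference is cosmetic: the paper writes $W^{1,p}$ rather than $H^{1,p}$ in the interpolation couple (these coincide by Proposition~\ref{prop:HWduality}, as you note), and it does not spell out the concern about Rychkov's extension operator on negative-order spaces, treating that as part of the standard retraction--coretraction machinery.
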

\begin{proof}
By Corollary~\ref{cor:lem:Hardy;Sobolev_emb} and Proposition \ref{prop:Sobolev_emb_into_neg_smoothness}
\begin{align*}
W^{s}_{p,q}(\dom,w_{\gamma}^{\dom};X)  & = ((L^p(\dom, w_{\gamma}^{\dom});X), W^{2,p}(\dom,w_{\gamma}^{\dom};X))_{\frac{s}{2},q}
\\ & \hookrightarrow  ((H^{-1,p}(\dom, w_{\gamma-p}^{\dom});X), W^{1,p}(\dom,w_{\gamma-p}^{\dom};X))_{\frac{s}{2},q} \\ & = B^{s-1}_{p,q}(\dom,w_{\gamma-p}^{\dom};X),
\end{align*}
where the last identity follows from \cite[Proposition 6.1]{MeyVersharp} and a retraction-coretraction argument.
\end{proof}

Before we proceed, we recall some trace theory for weighted Besov spaces, for which we refer to \cite{LMV2}. Let $s \in \R$, $p \in (1,\infty)$, $q \in [1,\infty)$ and $\gamma \in (-1,p-1)$.
If $s = \frac{1+\gamma}{p}+k+\theta$ with $k \in \N_{0}$ and $\theta \in (0,1)$, then $\tr_{k}: u \mapsto (\tr u, \ldots,\tr \partial_{1}^{k}u)$ is well-defined on $B^{s}_{p,q}(\R^{d}_{+},w_{\gamma};X)$.
For such $s$ we put
\begin{align*}
B^{s}_{p,q,0}(\R^{d}_{+},w_{\gamma};X) &:= \{ u \in B^{s}_{p,q}(\R^{d}_{+},w_{\gamma};X) : \tr_{k}u = 0 \}.
\end{align*}
For $s<\frac{1+\gamma}{p}$ we put $B^{s}_{p,q,0}(\R^{d}_{+},w_{\gamma};X) := B^{s}_{p,q}(\R^{d}_{+},w_{\gamma};X)$.

The following result follows from \cite{LMV2}.
\begin{lemma}\label{lem:compl_real_interp00_Ap}
Let $X$ be a UMD space, $p \in (1,\infty)$, $q \in [1,\infty]$ and $\gamma \in (-1,p-1)$.
Let $k\in \N$ and $\theta \in (0,1)$ be such that $k\theta \notin \N_{0}+\frac{1+\gamma}{p}$. Then
\begin{align*}
B^{k\theta}_{p,q,0}(\R^{d}_{+},w_{\gamma};X) &= (L^{p}(\R^{d}_{+},w_{\gamma};X),W^{k,p}_{0}(\R^{d}_{+},w_{\gamma};X))_{\theta,q}.
\end{align*}
\end{lemma}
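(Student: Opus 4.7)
The plan is to use extension by zero $E_{0}$ as a retraction/coretraction mechanism to identify the real interpolation space $(L^{p}(\R^{d}_{+},w_{\gamma};X),W^{k,p}_{0}(\R^{d}_{+},w_{\gamma};X))_{\theta,q}$ with a subspace of the whole-space Besov space, and then recognize that subspace as $B^{k\theta}_{p,q,0}(\R^{d}_{+},w_{\gamma};X)$. The full-space interpolation identity $(L^{p}(\R^{d},w_{\gamma};X),W^{k,p}(\R^{d},w_{\gamma};X))_{\theta,q}=B^{k\theta}_{p,q}(\R^{d},w_{\gamma};X)$ is available (for instance via \cite[Proposition~6.1]{MeyVersharp} combined with reiteration), and this serves as the anchor.

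\textbf{The $\hookrightarrow$ inclusion.} By Lemma~\ref{lem:denseApbdr0}, $E_{0}$ is bounded both $L^{p}(\R^{d}_{+},w_{\gamma};X)\to L^{p}(\R^{d},w_{\gamma};X)$ and $W^{k,p}_{0}(\R^{d}_{+},w_{\gamma};X)\to W^{k,p}(\R^{d},w_{\gamma};X)$. Interpolation yields boundedness of
\[
E_{0}:(L^{p}(\R^{d}_{+},w_{\gamma};X),W^{k,p}_{0}(\R^{d}_{+},w_{\gamma};X))_{\theta,q}\longra B^{k\theta}_{p,q}(\R^{d},w_{\gamma};X),
\]
and composing with restriction to $\R^{d}_{+}$ gives boundedness into $B^{k\theta}_{p,q}(\R^{d}_{+},w_{\gamma};X)$. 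To show the image lands in $B^{k\theta}_{p,q,0}$, observe that the second statement of Lemma~\ref{lem:denseApbdr0} gives $W^{k,p}_{0}(\R^{d}_{+},w_{\gamma};X)=\overline{C^{\infty}_{c}(\R^{d}_{+};X)}$, so the image of $W^{k,p}_{0}$ under $E_{0}$ sits inside the closure of $C^{\infty}_{c}(\R^{d}_{+};X)$ in $W^{k,p}(\R^{d},w_{\gamma};X)$; by density in the real interpolation space (\cite[Theorem~1.6.2]{Tr1}), every element $u$ of the interpolation space is approximated by functions supported in $\R^{d}_{+}$, and for those functions the restriction has all well-defined traces $\tr\partial_{1}^{j}u=0$ for $j<k\theta-\tfrac{1+\gamma}{p}$. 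Continuity of the trace operator on $B^{k\theta}_{p,q}(\R^{d}_{+},w_{\gamma};X)$ (available in the $A_{p}$-range from \cite[Proposition~6.1]{MeyVersharp} and standard trace theory) passes the vanishing to the limit, giving membership in $B^{k\theta}_{p,q,0}$.

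\textbf{The $\hookleftarrow$ inclusion.} Here one needs to build, for $u\in B^{k\theta}_{p,q,0}(\R^{d}_{+},w_{\gamma};X)$, a preimage in the real interpolation space. The natural candidate is again extension by zero: I would first establish the auxiliary claim that
\[
E_{0}:B^{k\theta}_{p,q,0}(\R^{d}_{+},w_{\gamma};X)\longra B^{k\theta}_{p,q}(\R^{d},w_{\gamma};X)
\]
is bounded. Granted this, one can approximate $E_{0}u$ by convolutions $\zeta_{n}\ast E_{0}u$ with $\zeta_{n}$ a mollifier supported in $\R^{d}_{-}$ as in Lemma~\ref{lem:denseApbdr0}, giving functions in $C^{\infty}_{c}(\R^{d}_{+};X)\subset W^{k,p}_{0}(\R^{d}_{+},w_{\gamma};X)$; these approximate $u$ in $B^{k\theta}_{p,q}(\R^{d},w_{\gamma};X)=(L^{p}(\R^{d},w_{\gamma};X),W^{k,p}(\R^{d},w_{\gamma};X))_{\theta,q}$, and since the approximants lie in $W^{k,p}_{0}$ their norms dominate the norm of $u$ in the smaller interpolation space $(L^{p}(\R^{d}_{+},w_{\gamma};X),W^{k,p}_{0}(\R^{d}_{+},w_{\gamma};X))_{\theta,q}$ via restriction. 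The boundedness of $E_{0}$ on $B^{k\theta}_{p,q,0}$ itself can be obtained via an atomic/molecular decomposition for weighted Besov spaces (as in \cite{Bui_Weighted_Beov&Triebel-Lizorkin_spaces:interpolation...}) or by a direct estimate using a Hardy-type inequality in the spirit of Lemma~\ref{lem:Hardy}.

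\textbf{Main obstacle.} The hard part is establishing boundedness of $E_{0}$ on the trace-vanishing Besov scale $B^{k\theta}_{p,q,0}$; the exclusion $k\theta\notin\N_{0}+\tfrac{1+\gamma}{p}$ is precisely what is needed, since these are exactly the critical smoothness values where a trace is born and where extension-by-zero fails. Everything else (interpolation of $E_{0}$, density arguments, and identification with a whole-space interpolation space) is routine once this extension property is in hand. In the paper the result is attributed to \cite{LMV2}, where presumably this $E_{0}$-boundedness for trace-zero Besov spaces with power weights is carried out in detail, likely via a decomposition argument separating the dyadic scales near and away from the boundary $x_{1}=0$.
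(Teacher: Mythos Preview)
The paper does not prove this lemma; it defers entirely to the manuscript \cite{LMV2}. So there is no argument in the present paper to compare against, and your sketch has to be judged on its own merits.

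Your overall plan --- anchor to the full-space identity and transport via extension by zero --- is the standard route, and the $\hookrightarrow$ inclusion is essentially fine (the density step via \cite[Theorem~1.6.2]{Tr1} needs a minor patch for $q=\infty$, e.g.\ by dropping to a nearby $\theta'<\theta$ with $q'<\infty$, but this is routine). The gap is in your $\hookleftarrow$ argument. You claim that once $E_{0}u\in B^{k\theta}_{p,q}(\R^{d},w_{\gamma};X)$, mollified approximants $\zeta_{n}*E_{0}u\in C^{\infty}_{c}(\R^{d}_{+};X)$ have ``norms dominating the norm of $u$ in the smaller interpolation space''. But this is exactly the inequality to be proved: boundedness of the approximants in $B^{k\theta}_{p,q}(\R^{d},w_{\gamma};X)$ does not by itself yield a bound in $(L^{p}(\R^{d}_{+},w_{\gamma};X),W^{k,p}_{0}(\R^{d}_{+},w_{\gamma};X))_{\theta,q}$, because $E_{0}$ supplies only the coretraction half of the mechanism. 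You never specify a retraction $R$ from the full-space couple back to the half-space couple with boundary conditions, and restriction alone does not land in $W^{k,p}_{0}$.

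What is missing is a map such as
\[
Rf:=\big(f-\mathcal{E}(f_{|\R^{d}_{-}})\big)\big|_{\R^{d}_{+}},
\]
where $\mathcal{E}$ is a Seeley-type extension from $\R^{d}_{-}$ to $\R^{d}$ that is simultaneously bounded on $L^{p}(w_{\gamma})$ and $W^{k,p}(w_{\gamma})$ (available here since $w_{\gamma}\in A_{p}$ is even). Then $Rf$ vanishes on $\R^{d}_{-}$ as an element of $W^{k,p}(\R^{d},w_{\gamma};X)$, forcing all normal traces at $x_{1}=0$ to vanish, so $R$ genuinely maps into $W^{k,p}_{0}(\R^{d}_{+},w_{\gamma};X)$; moreover $RE_{0}=I$. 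With this retraction in hand, the retraction--coretraction theorem identifies the interpolation space as $\{u:E_{0}u\in B^{k\theta}_{p,q}(\R^{d},w_{\gamma};X)\}$, and then both inclusions reduce cleanly to your auxiliary claim (boundedness of $E_{0}$ on $B^{k\theta}_{p,q,0}$) together with an elementary trace argument. Without such an $R$, or an equivalent direct $K$-functional construction, the $\hookleftarrow$ step does not close.
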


Let ${_{0}}W^{s}_{p,q}(\R^{d}_{+},w_{\gamma};X)$ be defined as the closure of $\{ u \in C^{\infty}_{c}(\overline{\R^{d}_{+}};X) : u_{|\partial\R^{d}_{+}} = 0 \}$ in $W^{s}_{p,q}(\R^{d}_{+},w_{\gamma};X)$. The following identities hold for the real interpolation spaces.

\begin{lemma}\label{lemma:real_interp_Sobolev_Dir}
Let $X$ be a UMD space, $p\in (1, \infty)$, $q \in [1,\infty)$, $\gamma\in (p-1, 2p-1)$ and $s \in (0,2) \setminus \{\frac{1+\gamma}{p}\}$.
Then
\[
{_{0}}W^{s}_{p,q}(\R^{d}_{+},w_{\gamma};X) = (L^{p}(\R^{d}_{+},w_{\gamma};X),W^{2,p}_{\Dir}(\R^{d}_{+},w_{\gamma};X))_{\frac{s}{2},q}
\]
and the map $M$, defined above Lemma~\ref{lem:isomM}, is an isomorphism
\[
M:{_{0}}W^{s}_{p,q}(\R^{d}_{+},w_{\gamma};X) \longra B^{s}_{p,q,0}(\R^{d}_{+},w_{\gamma-p};X).
\]
\end{lemma}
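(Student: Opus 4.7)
The strategy is to reduce everything to the $A_{p}$-weighted case via the multiplier $M$, exploiting two observations: the shifted weight $w_{\gamma-p}$ lies in the $A_{p}$-range $(-1,p-1)$, and for $\gamma \in (p-1,2p-1)$ the spaces $W^{2,p}_{\Dir}(\R^{d}_{+},w_{\gamma};X)$ and $W^{2,p}_{0}(\R^{d}_{+},w_{\gamma};X)$ coincide, since $\frac{1+\gamma}{p}\in(1,2)$ forces only the zeroth-order trace.

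First I will apply Lemma~\ref{lem:isomM} to obtain that $M$ restricts to isomorphisms $L^{p}(\R^{d}_{+},w_{\gamma};X) \to L^{p}(\R^{d}_{+},w_{\gamma-p};X)$ and $W^{2,p}_{\Dir}(\R^{d}_{+},w_{\gamma};X) \to W^{2,p}_{0}(\R^{d}_{+},w_{\gamma-p};X)$. Real interpolation then produces an isomorphism
\[
M:(L^{p}(\R^{d}_{+},w_{\gamma};X),W^{2,p}_{\Dir}(\R^{d}_{+},w_{\gamma};X))_{\frac{s}{2},q} \longra (L^{p}(\R^{d}_{+},w_{\gamma-p};X),W^{2,p}_{0}(\R^{d}_{+},w_{\gamma-p};X))_{\frac{s}{2},q}.
\]
Provided $s$ avoids the exceptional set $\N_{0}+\frac{1+\gamma-p}{p}$, Lemma~\ref{lem:compl_real_interp00_Ap} (with $k=2$ and $\theta=s/2$) identifies the target with $B^{s}_{p,q,0}(\R^{d}_{+},w_{\gamma-p};X)$. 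This yields the second claim of the lemma, once the first identity has been established.

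Next I will identify $(L^{p}(w_{\gamma}),W^{2,p}_{\Dir}(w_{\gamma}))_{\frac{s}{2},q}$ with ${_{0}}W^{s}_{p,q}(\R^{d}_{+},w_{\gamma};X)$ by realising both as completions of a common dense subspace under equivalent norms. By Proposition~\ref{prop:densenonAp}, $C^{\infty}_{c}(\R^{d}_{+};X)$ is dense in $W^{2,p}_{\Dir}(w_{\gamma})=W^{2,p}_{0}(w_{\gamma})$, and hence (as $q<\infty$) in the interpolation space. Since $C^{\infty}_{c}(\R^{d}_{+};X) \subset \{u \in C^{\infty}_{c}(\overline{\R^{d}_{+}};X) : u|_{\partial \R^{d}_{+}}=0\}$, the same set is dense in ${_{0}}W^{s}_{p,q}(w_\gamma)$ by its definition. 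The embedding $(L^{p},W^{2,p}_{\Dir})_{\frac{s}{2},q} \hookrightarrow (L^{p},W^{2,p})_{\frac{s}{2},q} = W^{s}_{p,q}(w_{\gamma})$ gives one direction of the norm comparison; for the reverse I will chain
\[
\|u\|_{(L^{p},W^{2,p}_{\Dir})_{s/2,q}(w_{\gamma})} \eqsim \|Mu\|_{B^{s}_{p,q,0}(w_{\gamma-p})} \leq \|Mu\|_{B^{s}_{p,q}(w_{\gamma-p})} \eqsim \|Mu\|_{W^{s}_{p,q}(w_{\gamma-p})} \lesssim \|u\|_{W^{s}_{p,q}(w_{\gamma})},
\]
invoking the identification $B^{s}_{p,q}(w_{\gamma-p}) = W^{s}_{p,q}(w_{\gamma-p})$ (valid in the $A_{p}$-range and recorded in the remark following Lemma~\ref{lemma:incl_Besov_real_interp_Sobolev}), and the boundedness of $M:W^{s}_{p,q}(w_{\gamma}) \to W^{s}_{p,q}(w_{\gamma-p})$ obtained by real interpolation of the bounded maps $M:L^{p}(w_{\gamma}) \to L^{p}(w_{\gamma-p})$ and $M:W^{2,p}(w_{\gamma}) \to W^{2,p}(w_{\gamma-p})$ from Lemma~\ref{lem:isomM}. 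Passing to completions yields the first identity.

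The hardest point will be the borderline exponent $s=\frac{1+\gamma}{p}-1$, which is not excluded by the hypothesis but lies in the exceptional set of Lemma~\ref{lem:compl_real_interp00_Ap}; I expect this case can be handled by a reiteration argument around nearby non-exceptional exponents together with continuity of the real interpolation spaces in $s$, or alternatively by directly constructing the coretraction for the trace on $B^{s}_{p,q,0}(w_{\gamma-p};X)$ at this borderline value.
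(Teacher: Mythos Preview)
Your approach is essentially the same as the paper's: both reduce to the $A_p$-range via the multiplication operator $M$, invoke Lemma~\ref{lem:isomM} on the endpoints, and identify the resulting real interpolation space through Lemma~\ref{lem:compl_real_interp00_Ap}. The paper organizes the argument slightly differently, as a cycle of three bounded maps
\[
(L^{p},W^{2,p}_{\Dir})_{\frac{s}{2},q}(w_\gamma) \hookrightarrow {_{0}}W^{s}_{p,q}(w_\gamma) \xrightarrow{M} B^{s}_{p,q,0}(w_{\gamma-p}) \xrightarrow{M^{-1}} (L^{p},W^{2,p}_{\Dir})_{\frac{s}{2},q}(w_\gamma),
\]
whose composition is the identity; this yields both statements at once and avoids having to prove that $C^{\infty}_{c}(\R^{d}_{+};X)$ is dense in ${_{0}}W^{s}_{p,q}(w_\gamma)$.

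On that point: your density claim is correct but your justification is not. The inclusion $C^{\infty}_{c}(\R^{d}_{+};X) \subset \{u \in C^{\infty}_{c}(\overline{\R^{d}_{+}};X): u|_{\partial\R^{d}_{+}}=0\}$ does not by itself imply density in the closure. What does work is that any $u$ in the larger set belongs to $W^{2,p}_{\Dir}(\R^{d}_{+},w_{\gamma};X)$, where $C^{\infty}_{c}(\R^{d}_{+};X)$ is dense by Proposition~\ref{prop:densenonAp}, and then the continuous embedding $W^{2,p}_{\Dir}(w_\gamma) \hookrightarrow W^{s}_{p,q}(w_\gamma)$ transfers the approximation.

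Your observation about the borderline value $s=\frac{1+\gamma}{p}-1=\frac{1+(\gamma-p)}{p}$ is accurate: Lemma~\ref{lem:compl_real_interp00_Ap} as stated excludes it, and the paper's proof invokes that lemma in exactly the same way you do, so the concern is shared rather than specific to your argument. In the paper this is implicitly delegated to \cite{LMV2}; a reiteration patch of the kind you sketch would indeed close it.
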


As a consequence of Lemmas \ref{lem:CinftydenserealLW} and \ref{lemma:real_interp_Sobolev_Dir} for $p\in (1, \infty)$, $\gamma\in (p-1, 2p-1)$ and $s\in (0,\frac{\gamma+1}{p})$ we have
\begin{equation}\label{eq:real_interp_Sobolev_Dir}
{_{0}}W^{s}_{p,q}(\R^{d}_{+},w_{\gamma};X) = W^{s}_{p,q}(\R^{d}_{+},w_{\gamma};X)
\end{equation}

\begin{proof}
We first show that
\begin{align}\label{eq:lemma:real_interp_Sobolev_Dir;1}
(L^{p}(\R^{d}_{+},w_{\gamma};X),W^{2,p}_{\Dir}(\R^{d}_{+},w_{\gamma};X))_{\frac{s}{2},q}
\hookrightarrow {_{0}}W^{s}_{p,q}(\R^{d}_{+},w_{\gamma};X).
\end{align}
By Proposition~\ref{prop:densenonAp}, $C^{\infty}_{c}(\R^{d}_+;X) \stackrel{d}{\subset} W^{2,p}_{\Dir}(\R^{d}_{+},w_{\gamma};X)$.
Therefore, $C^{\infty}_{c}(\R^{d}_{+};X)$ is dense in $(L^{p}(\R^{d}_{+},w_{\gamma};X),W^{2,p}_{\Dir}(\R^{d}_{+},w_{\gamma};X))_{\frac{s}{2},q}$ (see \cite[Theorem 1.6.2]{Tr1}).
As
\[
(L^{p}(\R^{d}_{+},w_{\gamma};X),W^{2,p}_{\Dir}(\R^{d}_{+},w_{\gamma};X))_{\frac{s}{2},q}
\hookrightarrow W^{s}_{p,q}(\R^{d}_{+},w_{\gamma};X)
\]
clearly holds, \eqref{eq:lemma:real_interp_Sobolev_Dir;1} follows.

Next we show that $M$ is a bounded operator
\begin{align}\label{eq:lemma:real_interp_Sobolev_Dir;2}
M:{_{0}}W^{s}_{p,q}(\R^{d}_{+},w_{\gamma};X) \longra B^{s}_{p,q,0}(\R^{d}_{+},w_{\gamma-p};X).
\end{align}
Lemma~\ref{lem:isomM} and real interpolation yield that $M$ is a bounded operator
\[
M: W^{s}_{p,q}(\R^{d}_{+},w_{\gamma};X) \longra B^{s}_{p,q}(\R^{d}_{+},w_{\gamma-p};X).
\]
Since
\begin{align*}
M\{ u \in C^{\infty}_{c}(\overline{\R^{d}_{+}};X) : u_{|\partial\R^{d}_{+}} = 0 \}
&\subset \{ u \in C^{\infty}_{c}(\overline{\R^{d}_{+}};X) :
u_{|\partial\R^{d}_{+}} = (\partial_{1}u)_{|\partial\R^{d}_{+}} = 0 \} \\
&\subset B^{s}_{p,q,0}(\R^{d}_{+},w_{\gamma-p};X),
\end{align*}
\eqref{eq:lemma:real_interp_Sobolev_Dir;2} follows.

From a combination of Lemma~\ref{lem:isomM} and Lemma~\ref{lem:compl_real_interp00_Ap} it follows that $M^{-1}$ is a bounded operator
\[
M^{-1}: B^{s}_{p,q,0}(\R^{d}_{+},w_{\gamma-p};X) \longra (L^{p}(\R^{d}_{+},w_{\gamma};X),W^{2,p}_{\Dir}(\R^{d}_{+},w_{\gamma};X))_{\frac{s}{2},q}.
\]
Combining this with \eqref{eq:lemma:real_interp_Sobolev_Dir;1} and \eqref{eq:lemma:real_interp_Sobolev_Dir;2} finishes the proof.
\end{proof}

\begin{lemma}\label{lem:real_interp_Sob_traces}
Let $X$ be a UMD space, $p\in (1, \infty)$, $q \in [1,\infty)$, $\gamma\in (p-1, 2p-1)$ and $s \in (\frac{1+\gamma}{p},2)$. Then $\tr_{\partial\R^{d}_{+}}$ extends to a retraction
\[
W^{s}_{p,q}(\R^{d}_{+},w_{\gamma};X) \longra B^{s-\frac{1+\gamma}{p}}_{p,q}(\R^{d-1};X)
\]
with
\begin{align}\label{eq:lem:real_interp_Sob_traces;kernel}
{_{0}}W^{s}_{p,q}(\R^{d}_{+},w_{\gamma};X) = \{ u \in W^{s}_{p,q}(\R^{d}_{+},w_{\gamma};X) : \tr_{\partial\R^{d}_{+}}u=0 \}.
\end{align}
Moreover, there exists a coretraction $E$ corresponding to $\tr_{\partial\R^{d}_{+}}$ such that
\begin{align}\label{eq:lem:real_interp_Sob_traces;kernel_proj_equiv_norm}
\| u \|_{W^{s}_{p,q}(\R^{d}_{+},w_{\gamma};X)} \eqsim \| u \|_{B^{s}_{p,q}(\R^{d}_{+},w_{\gamma};X)},
\qquad u \in \ker(I-E\circ\tr_{\partial\R^{d}_{+}}).
\end{align}
\end{lemma}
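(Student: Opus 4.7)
The plan is to reduce everything to the $A_p$-setting via the multiplication operator $M:u\mapsto x_1 u$ from Lemma~\ref{lem:isomM}, combined with the embedding of $W^{s}_{p,q}(\R^{d}_{+},w_{\gamma};X)$ into an $A_p$-weighted Besov space. For the boundedness of $\tr_{\partial\R^{d}_{+}}$, I would chain the embedding
\[
W^{s}_{p,q}(\R^{d}_{+},w_{\gamma};X) \hookrightarrow B^{s-1}_{p,q}(\R^{d}_{+},w_{\gamma-p};X)
\]
from Lemma~\ref{lemma:real_interp_Sobolev_incl_Bessel-pot} with the standard trace theory for $A_p$-weighted Besov spaces (see \cite{LMV2}): since $\gamma-p\in(-1,p-1)$ and $(s-1)-\tfrac{1+(\gamma-p)}{p}=s-\tfrac{1+\gamma}{p}>0$, this yields a bounded $\tr_{\partial\R^{d}_{+}}:B^{s-1}_{p,q}(\R^{d}_{+},w_{\gamma-p};X)\longra B^{s-\frac{1+\gamma}{p}}_{p,q}(\R^{d-1};X)$, giving the required boundedness by composition.

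For the coretraction I would take the standard boundary-extension operator for the Besov scale with the $A_\infty$-weight $w_\gamma$ (available for all $\gamma>-1$; see \cite{LMV2}), providing a bounded $E:B^{s-\frac{1+\gamma}{p}}_{p,q}(\R^{d-1};X)\longra B^{s}_{p,q}(\R^{d}_{+},w_{\gamma};X)$ with $\tr_{\partial\R^{d}_{+}}\circ E=I$. Composing with the embedding $B^{s}_{p,q}\hookrightarrow W^{s}_{p,q}$ from Lemma~\ref{lemma:incl_Besov_real_interp_Sobolev} makes $E$ a coretraction into $W^{s}_{p,q}$. For any $u=E\,\tr_{\partial\R^{d}_{+}}u$ the sandwich
\[
\norm{u}_{W^{s}_{p,q}} \lesssim \norm{u}_{B^{s}_{p,q}} = \norm{E\,\tr_{\partial\R^{d}_{+}}u}_{B^{s}_{p,q}} \lesssim \norm{\tr_{\partial\R^{d}_{+}}u}_{B^{s-(1+\gamma)/p}_{p,q}} \lesssim \norm{u}_{W^{s}_{p,q}}
\]
then yields the norm equivalence \eqref{eq:lem:real_interp_Sob_traces;kernel_proj_equiv_norm}.

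The main obstacle is the kernel identification \eqref{eq:lem:real_interp_Sob_traces;kernel}. The inclusion $\subseteq$ is immediate from the definition of ${_{0}}W^{s}_{p,q}$ and continuity of the trace. For $\supseteq$ I would use $M$. By real interpolation of Lemma~\ref{lem:isomM} at the levels $k=0,2$, together with the identification $W^{s}_{p,q}(\R^{d}_{+},w_{\gamma-p};X)=B^{s}_{p,q}(\R^{d}_{+},w_{\gamma-p};X)$ valid in the $A_p$-range $\gamma-p\in(-1,p-1)$, the operator $M$ extends to a bounded map $W^{s}_{p,q}(\R^{d}_{+},w_{\gamma};X)\longra B^{s}_{p,q}(\R^{d}_{+},w_{\gamma-p};X)$. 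Given $u\in W^{s}_{p,q}(\R^{d}_{+},w_{\gamma};X)$ with $\tr_{\partial\R^{d}_{+}}u=0$, I would show $Mu\in B^{s}_{p,q,0}(\R^{d}_{+},w_{\gamma-p};X)$ by approximating $u$ by a sequence $u_n\in C^{\infty}_{c}(\overline{\R^{d}_{+}};X)$ in $W^{s}_{p,q}$—using that $C^{\infty}_{c}(\overline{\R^{d}_{+}};X)\stackrel{d}{\hookrightarrow} W^{2,p}(\R^{d}_{+},w_{\gamma};X)$ by Lemma~\ref{lem:Kufner} and that this density passes to the real interpolation space since $q<\infty$. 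For each smooth $u_n$ a pointwise computation gives $\tr_{\partial\R^{d}_{+}}(Mu_n)=0$ and $\tr_{\partial\R^{d}_{+}}(\partial_{1}Mu_n)=\tr_{\partial\R^{d}_{+}}u_n$; since both traces are continuous on $B^{s}_{p,q}(\R^{d}_{+},w_{\gamma-p};X)$ (available in the $A_p$-Besov setting because $s>\tfrac{1+\gamma}{p}=1+\tfrac{1+(\gamma-p)}{p}$), passing to the limit shows $Mu\in B^{s}_{p,q,0}(\R^{d}_{+},w_{\gamma-p};X)$. Applying Lemma~\ref{lemma:real_interp_Sobolev_Dir}, which asserts that $M$ is an isomorphism from ${_{0}}W^{s}_{p,q}(\R^{d}_{+},w_{\gamma};X)$ onto $B^{s}_{p,q,0}(\R^{d}_{+},w_{\gamma-p};X)$, together with the injectivity of $M$, finally yields $u\in {_{0}}W^{s}_{p,q}(\R^{d}_{+},w_{\gamma};X)$ and completes the argument.
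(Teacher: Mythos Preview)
Your argument is correct. The boundedness of $\tr_{\partial\R^{d}_{+}}$, the coretraction $E$, and the norm equivalence \eqref{eq:lem:real_interp_Sob_traces;kernel_proj_equiv_norm} are obtained via the same commutative diagram as in the paper, chaining Lemmas~\ref{lemma:incl_Besov_real_interp_Sobolev} and~\ref{lemma:real_interp_Sobolev_incl_Bessel-pot} with the $A_\infty$/$A_p$-weighted Besov trace theory. For the kernel identity \eqref{eq:lem:real_interp_Sob_traces;kernel}, however, you take a genuinely different route via $M$ and Lemma~\ref{lemma:real_interp_Sobolev_Dir}. The paper instead argues directly with the projection already built: given $u\in W^{s}_{p,q}$ with $\tr u=0$, it picks $u_n\in B^{s}_{p,q}(\R^{d}_{+},w_{\gamma};X)$ with $u_n\to u$ in $W^{s}_{p,q}$ (density from Lemma~\ref{lemma:incl_Besov_real_interp_Sobolev}) and corrects via $v_n:=u_n-E\,\tr u_n$, so that $v_n\to u$, $\tr v_n=0$, and $v_n$ stays in the Besov scale; density of $\{\phi\in C^{\infty}_{c}(\overline{\R^{d}_{+}};X):\phi|_{\partial\R^{d}_{+}}=0\}$ in the Besov kernel (from \cite{LMV2}) then transfers to the $W^{s}_{p,q}$-kernel. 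The paper's version is shorter and uses only the retraction/coretraction pair already constructed, at the price of one external density input for the $A_\infty$-weighted Besov kernel. Your version avoids that input by exploiting the isomorphism of Lemma~\ref{lemma:real_interp_Sobolev_Dir} already at hand, at the cost of tracking two trace conditions on $Mu$ (which works because $s>\frac{1+\gamma}{p}=1+\frac{1+(\gamma-p)}{p}$) and invoking the elementary injectivity of multiplication by $x_1$ on the ambient space.
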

\begin{proof}
By trace theory of weighted $B$-spaces (see \cite{LMV2} and see \cite[Section~4.1]{lindemulder2017maximal} for the anistropic setting) and Lemmas \ref{lemma:incl_Besov_real_interp_Sobolev} and \ref{lemma:real_interp_Sobolev_incl_Bessel-pot}, there is the commutative diagram
\[
\begin{tikzcd}
B^{s}_{p,q}(\R^{d}_{+},w_{\gamma};X) \arrow[r, "d",hook] \arrow[dr,"E", leftarrow]
& W^{s}_{p,q}(\R^{d}_{+},w_{\gamma};X) \arrow[r, hook]
& B^{s-1}_{p,q}(\R^{d}_{+},w_{\gamma-p};X)  \arrow[d,"\tr_{\partial\R^{d}_{+}}"] \\
 &\:\: B^{s-\frac{1+\gamma}{p}}_{p,q}(\R^{d-1};X) \arrow[r, equal]
& B^{s-1-\frac{1+\gamma-p}{p}}_{p,q}(\R^{d-1};X) \\
\end{tikzcd}
\]
for some extension operator $E$. All statements different from \eqref{eq:lem:real_interp_Sob_traces;kernel} directly follow from this.
Next we claim that
\[
\{ u \in B^{s}_{p,q}(\R^{d}_{+},w_{\gamma};X) : \tr_{\partial\R^{d}_{+}}u=0 \} \stackrel{d}{\hookrightarrow} \{ u \in W^{s}_{p,q}(\R^{d}_{+},w_{\gamma};X) : \tr_{\partial\R^{d}_{+}}u=0 \}.
\]
Indeed, if $u \in W^{s}_{p,q}(\R^{d}_{+},w_{\gamma};X)$ satisfies $\tr_{\partial\R^{d}_{+}}u=0$, then we can find $u_n\in B^{s}_{p,q}(\R^{d}_{+},w_{\gamma};X)$ such that $u_n\to u$ in $W^{s}_{p,q}(\R^{d}_{+},w_{\gamma};X)$. Now it remains to set $v_n = u_n - E \tr_{\partial\R^{d}_{+}} u_n \in B^{s}_{p,q}(\R^{d}_{+},w_{\gamma};X)$ and observe that $\tr_{\partial\R^{d}_{+}}v_n= 0$ and $E \tr_{\partial\R^{d}_{+}} u_n\to 0$ in $W^{s}_{p,q}(\R^{d}_{+},w_{\gamma};X)$.

Since $\{ u \in C^{\infty}_{c}(\overline{\R^{d}_{+}};X) : u_{\partial\R^{d}_{+}} = 0 \}$ is dense in the space on the left hand side by \cite{LMV2}, it follows from the claim that it is also dense in the space on the right hand side. This density implies \eqref{eq:lem:real_interp_Sob_traces;kernel}.
\end{proof}

\begin{proof}[Proof of Proposition~\ref{prop:real_interp_Sob_traces}]
The statement simply follows from Lemma~\ref{lem:real_interp_Sob_traces} by a standard localization argument.
\end{proof}

\begin{proof}[Proof of Proposition~\ref{prop:real_interp_Sob_Dir}]
A combination of \eqref{eq:real_interp_Sobolev_Dir} and Lemma \ref{lem:real_interp_Sob_traces} gives the desired statement for the case $\dom=\R^{d}_{+}$, from the general case follows by a standard localization argument.
\end{proof}

\begin{proof}[Proof of Theorem~\ref{thm:temporal_trace_max-reg}]
Let us first establish the asserted boundedness of $\tr_{t=0}$.
It suffices to consider the case $J=\R_{+}$, where the boundedness statement follows from \cite[Proposition~1.2.2]{Luninterp} or \cite[Section~1.8]{Tr1}.

In order to show that there is a coretraction corresponding to $\tr_{t=0}$,
it suffices to consider the case $\dom=\R^{d}_{+}$ and $J=\R$.
The case $\gamma\in (-1, p-1)$ follows from \cite[Equation (38)]{lindemulder2017maximal}, and therefore it remains to consider $\gamma\in (p-1, 2p-1)$.

Let $\delta = 2(1-\frac{1+\mu}{q})$.
In view of Theorem~\ref{thm:domain}, we can apply \cite[Theorem~1.1]{MeyVerTr} or \cite[Theorem~3.4.8]{PrSi} to $-\DD$ on $L^{p}(\R^{d}_{+},w_{\gamma};X)$, which by Proposition~\ref{prop:real_interp_Sob_Dir} gives an extension operator
\begin{align*}
\mathcal{E}_{\Dir} : W^{\delta}_{p,q,\Dir}(\R^{d}_{+},w_{\gamma};X) \longra &
W^{1,q}(\R,v_{\mu};L^{p}(\R^{d}_{+},w_{\gamma};X)) \\ & \qquad \cap L^{q}(\R,v_{\mu};W^{2,p}_{\Dir}(\R^{d}_{+},w_{\gamma};X)).
\end{align*}

If $\delta < \frac{1+\gamma}{p}$, then $W^{\delta}_{p,q,\Dir}(\R^{d}_{+},w_{\gamma};X) = W^{\delta}_{p,q}(\R^{d}_{+},w_{\gamma};X)$ and we can simply take $\mathcal{E}_{\Dir}$ as the required coretraction.

Finally, let us consider the case $\delta> \frac{1+\gamma}{p}$.
In the notation of Lemma~\ref{lem:real_interp_Sob_traces}, put $\pi:= E \circ \tr_{\partial\R^{d}_{+}}$.
Then
\begin{align}\label{eq:thm:temporal_trace_max-reg;direct_sum}
W^{\delta}_{p,q}(\R^{d}_{+},w_{\gamma};X) = \ker(I-\pi) \oplus W^{\delta}_{p,q,\Dir}(\R^{d}_{+},w_{\gamma};X)
\end{align}
under the projection $\pi$ with the norm equivalence \eqref{eq:lem:real_interp_Sob_traces;kernel_proj_equiv_norm} on $\ker(I-\pi)$.
In view of \cite{LindF}, we can apply \cite[Theorem~1.1]{MeyVerTr} or \cite[Theorem~3.4.8]{PrSi} to the realization of $I-\Delta$ in $B^{0}_{p,1}(\R^{d},w_{\gamma};X)$ with domain $B^{2}_{p,1}(\R^{d},w_{\gamma};X)$, which by real interpolation of weighted $B$-spaces (see \cite[Theorem~3.5]{Bui_Weighted_Beov&Triebel-Lizorkin_spaces:interpolation...}) gives an extension operator
\begin{align*}
\mathcal{E}_{\R^{d}}: B^{\delta}_{p,q}(\R^{d},w_{\gamma};X) \to &
W^{1,q}(\R,v_{\mu};B^{0}_{p,1}(\R^{d},w_{\gamma};X)) \cap L^{q}(\R,v_{\mu};B^{2}_{p,1}(\R^{d},w_{\gamma};X)).
\end{align*}
By extension (using for instance Rychkov's extension operator \cite{Rychk99}) and restriction we obtain an extension operator $\mathcal{E}_{\R^{d}_{+}}$ which maps $B^{\delta}_{p,q}(\R^{d}_{+},w_{\gamma};X)$ into
\begin{align*}
& W^{1,q}(\R,v_{\mu};B^{0}_{p,1}(\R^{d}_{+},w_{\gamma};X)) \cap L^{q}(\R,v_{\mu};B^{2}_{p,1}(\R^{d}_{+},w_{\gamma};X))
\\ & \qquad \hookrightarrow  W^{1,q}(\R,v_{\mu};L^{p}(\R^{d}_{+},w_{\gamma};X))
\cap L^{q}(\R,v_{\mu};W^{2,p}(\R^{d}_{+},w_{\gamma};X)),
\end{align*}
where the embedding follows from \eqref{eq:TrLWemb}.
By \eqref{eq:thm:temporal_trace_max-reg;direct_sum} and \eqref{eq:lem:real_interp_Sob_traces;kernel_proj_equiv_norm}, $\mathcal{E}:=\mathcal{E}_{\R^{d}_{+}}\pi+\mathcal{E}_{\Dir}(I-\pi)$ defines a coretraction corresponding to~$\tr_{\partial\dom}$.
\end{proof}

\begin{remark}\label{rmk:lemma:incl_Besov_real_interp_Sobolev}
Let $p\in (1, \infty)$, $q \in [1,\infty)$, $\gamma\in (-1, 2p-1) \setminus \{p-1\}$ and $s \in (0,2) \setminus \{\frac{1+\gamma}{p}\}$. Then
\begin{equation*}
W^{s}_{p,q}(\R^{d}_{+},w_{\gamma};X)  \hookrightarrow B^{s}_{p,\infty}(\R^{d}_{+},w_{\gamma};X) \quad \Longra \quad \gamma \in (-1,p-1).
\end{equation*}
\end{remark}
\begin{proof}
Assume there is the inclusion $W^{s}_{p,q}(\R^{d}_{+},w_{\gamma};X)  \hookrightarrow B^{s}_{p,\infty}(\R^{d}_{+},w_{\gamma};X)$. Considering the linear mapping $u \mapsto u \otimes x$ for some $x \in X \setminus \{0\}$, we find $W^{s}_{p,q}(\R^{d}_{+},w_{\gamma})  \hookrightarrow B^{s}_{p,\infty}(\R^{d}_{+},w_{\gamma})$. In particular,
\begin{equation}\label{eq:rmk:lemma:incl_Besov_real_interp_Sobolev;1}
W^{s}_{p,q,\Dir}(\R^{d}_{+},w_{\gamma})  \hookrightarrow B^{s}_{p,\infty,\Dir}(\R^{d}_{+},w_{\gamma}).
\end{equation}

Consider the interpolation-extrapolation scale $\{E_{\eta} : \eta \in [-1,\infty)\}$ generated by the operator $(1-\Delta_{\Dir})$ on $L^{p}(\R^{d}_{+},w_{\gamma})$ and the complex interpolation functors $[\,\cdot\,,\,\cdot\,]_{\theta}$, $\theta \in (0,1)$, the interpolation-extrapolation scale $\{E_{\eta,q} : \eta \in [-1,\infty)\}$ generated by the operator $(1-\Delta_{\Dir})$ on $L^{p}(\R^{d}_{+},w_{\gamma})$ and the real interpolation functors $(\,\cdot\,,\,\cdot\,)_{\theta,q}$, $\theta \in (0,1)$, and the interpolation-extrapolation scale $\{F_{\eta,\infty} : \eta \in [-1,\infty)\}$ generated by the operator $(1-\Delta_{\Dir})$ on $B^{0}_{p,\infty}(\R^{d}_{+},w_{\gamma})$ and the complex interpolation functors $[\,\cdot\,,\,\cdot\,]_{\theta}$, $\theta \in (0,1)$ (see \cite[Section~V.1.5]{Am}); the operator $(1-\Delta_{\Dir})$ on $B^{0}_{p,\infty}(\R^{d}_{+},w_{\gamma})$ is considered in \cite{LindF}.
By Proposition~\ref{prop:real_interp_Sob_Dir} and \cite{LindF},
\begin{equation}\label{eq:rmk:lemma:incl_Besov_real_interp_Sobolev;4}
E_{\eta,q} = W^{2\eta}_{p,q,\Dir}(\R^{d}_{+},w_{\gamma}), \qquad \eta \in (0,1) \setminus \left\{\frac{1+\gamma}{2p}\right\}.
\end{equation}
and
\begin{equation}\label{eq:rmk:lemma:incl_Besov_real_interp_Sobolev;5}
F_{\eta,\infty} = B^{2\eta}_{p,\infty,\Dir}(\R^{d}_{+},w_{\gamma}), \qquad \eta \in \left(\frac{1+\gamma}{2p}-1,\frac{1+\gamma}{2p}+1\right) \setminus \left\{\frac{1+\gamma}{2p}\right\},
\end{equation}
respectively.
Now \eqref{eq:rmk:lemma:incl_Besov_real_interp_Sobolev;1}, \eqref{eq:rmk:lemma:incl_Besov_real_interp_Sobolev;4} and \eqref{eq:rmk:lemma:incl_Besov_real_interp_Sobolev;5} imply $E_{\frac{s}{2},q} \hookrightarrow F_{\frac{s}{2},\infty}$ and by lifting we obtain
\begin{equation}\label{eq:rmk:lemma:incl_Besov_real_interp_Sobolev;2}
E_{\eta,q} \hookrightarrow F_{\eta,\infty}, \qquad \eta \in \left[\frac{s}{2} + \Z \right] \cap [-1,\infty).
\end{equation}
By the reiteration property from \cite[Theorem~V.1.5.4]{Am}, $E_{0} = [E_{-1},E_{1}]_{\frac{1}{2}}$.
So, by \cite[Theorem~1.10.3.1]{Tr1},
\begin{equation}\label{eq:rmk:lemma:incl_Besov_real_interp_Sobolev;3}
E_{0} = [E_{-1},E_{1}]_{\frac{1}{2}} \hookrightarrow
(E_{-1},E_{1})_{\frac{1}{2},\infty}.
\end{equation}
Doing a reiteration (\cite[Theorem~1.10.2]{Tr1} and \cite[Theorem~V.1.5.4]{Am}), we find
\begin{align}
(E_{-1},E_{1})_{\frac{1}{2},\infty} &= ((E_{-1},E_{1})_{\frac{s}{4},q},(E_{-1},E_{1})_{\frac{s}{4}+\frac{1}{2},q})_{1-\frac{s}{2},\infty} \nonumber \\
&= ((E_{-1},[E_{-1},E_{1}]_{\frac{1}{2}})_{\frac{s}{2},q},([E_{-1},E_{1}]_{\frac{1}{2}},E_{1})_{\frac{s}{2},q})_{1-\frac{s}{2},\infty} \nonumber \\
&= ((E_{-1},E_{0})_{\frac{s}{2},q},(E_{0},E_{1})_{\frac{s}{2},q})_{1-\frac{s}{2},\infty} \nonumber    \\
&= (E_{\frac{s}{2}-1,q},E_{\frac{s}{2},q})_{1-\frac{s}{2},\infty}   \stackrel{\eqref{eq:rmk:lemma:incl_Besov_real_interp_Sobolev;2}}{\hookrightarrow} (F_{\frac{s}{2}-1,\infty},F_{\frac{s}{2},\infty})_{1-\frac{s}{2},\infty} \nonumber \\
&= ([F_{\frac{s}{2}-1,\infty},F_{\frac{s}{2},\infty}]_{\eta_{0}-\frac{s}{2}+1},[F_{\frac{s}{2}-1},F_{\frac{s}{2}}]_{\eta_{1}-\frac{s}{2}+1})_{\lambda,\infty} \nonumber \\
&= (F_{\eta_{0},\infty},F_{\eta_{1},\infty})_{\lambda,\infty} \label{eq:rmk:lemma:incl_Besov_real_interp_Sobolev;6}
\end{align}
for any $\eta_{0},\eta_{1} \in \R$ and $\lambda \in (0,1)$ with $\frac{s}{2}-1 < \eta_{0} < \eta_{1} < \frac{s}{2}$ and $0=(1-\lambda)\eta_{0}+\lambda\eta_{1}$.
Now pick $\eta_{0},\eta_{1} \in \R$ and $\lambda \in (0,1)$ with $\frac{s}{2}-1 < \eta_{0} < \eta_{1} < \frac{s}{2}$ and $0=(1-\lambda)\eta_{0}+\lambda\eta_{1}$ such that $\eta_{0},\eta_{1} \in (\frac{1+\gamma}{2p}-1,\frac{1+\gamma}{2p})$. Then
\begin{equation}\label{eq:rmk:lemma:incl_Besov_real_interp_Sobolev;7}
(F_{\eta_{0},\infty},F_{\eta_{1},\infty})_{\lambda,\infty} \stackrel{\eqref{eq:rmk:lemma:incl_Besov_real_interp_Sobolev;5}}{=} (B^{2\eta_{0}}_{p,\infty}(\R^{d}_{+},w_{\gamma}),B^{2\eta_{1}}_{p,\infty}(\R^{d}_{+},w_{\gamma}))_{\lambda,\infty} = B^{0}_{p,\infty}(\R^{d}_{+},w_{\gamma})
\end{equation}
by real interpolation of weighted $B$-spaces (see \cite[Theorem~3.5]{Bui_Weighted_Beov&Triebel-Lizorkin_spaces:interpolation...}).
Combining \eqref{eq:rmk:lemma:incl_Besov_real_interp_Sobolev;3},
\eqref{eq:rmk:lemma:incl_Besov_real_interp_Sobolev;6} and \eqref{eq:rmk:lemma:incl_Besov_real_interp_Sobolev;7} gives
\begin{equation}\label{eq:rmk:lemma:incl_Besov_real_interp_Sobolev;8}
L^{p}(\R^{d}_{+},w_{\gamma}) \hookrightarrow B^{0}_{p,\infty}(\R^{d}_{+},w_{\gamma}).
\end{equation}

We finally show that the inclusion \eqref{eq:rmk:lemma:incl_Besov_real_interp_Sobolev;8} implies $\gamma \in (-1,p-1)$. Taking odd extensions  in \eqref{eq:rmk:lemma:incl_Besov_real_interp_Sobolev;8} (see \cite{LindF}) gives
\[
(\mathcal{S}_{\mathrm{odd}}(\R^{d}),\|\,\cdot\,\|_{L^{p}(\R^{d},w_{\gamma})}) \hookrightarrow B^{0}_{p,\infty}(\R^{d},w_{\gamma}).
\]
Now a slight modification of the argument given in \cite[Remark~3.13]{MeyVersharp} gives $C,c>0$ such that
\[
 \frac{1}{|Q|}\int_{Q}w_{\gamma}(x) \,dx \,\cdot\, \left( \frac{1}{|Q|}\int_{Q}w_{\gamma}(x)^{-\frac{1}{p-1}}\,dx \right)^{p-1} \leq C
\]
for all cubes $Q \subset \R^{d}_{+}$ with $|Q| \leq c$. A computation as in \cite[Example~9.1.7]{GrafakosM} shows that $\gamma \in (-1,p-1)$.
\end{proof}

\subsection{Weighted $L^{q}$-$L^{p}$-maximal regularity\label{subsec:weightedLqLpinh}}

Let us first introduce some notation.
Let $\dom$ be a bounded $C^{2}$-domain. Let $p,q\in (1, \infty)$, $v\in A_q(\R)$ and $\gamma\in (-1, 2p-1)$. For an interval $J \subset \R$ we set $\mathbb{D}^{q,p}_{v,\gamma}(J) := L^q(J, v;L^p(\dom,w_{\gamma}^{\dom}))$,
\[
\M^{q,p}_{v,\gamma}(J) := W^{1,q}(J,v;L^p(\dom,w_{\gamma}^\dom))\cap L^{q}(J,v;W^{2,p}(\dom,w_{\gamma}^{\dom}))
\]
and
\[
\B^{q,p}_{v,\gamma}(J) := F^{1-\frac{1}{2}\frac{1+\gamma}{p}}_{q,p}(J,v;L^{p}(\partial\dom)) \cap
L^{q}(J,v;B^{2-\frac{1+\gamma}{p}}_{p,p}(\partial\dom)).
\]
For the power weight $v=v_{\mu}$, with $\mu \in (-1,q-1)$, we simply replace $v$ by $\mu$ in the subscripts: $\mathbb{D}^{q,p}_{\mu,\gamma}(J) := \mathbb{D}^{q,p}_{v_{\mu},\gamma}(J)$, $\M^{q,p}_{\mu,\gamma}(J) := \M^{q,p}_{v_{\mu},\gamma}(J)$ and $\B^{q,p}_{\mu,\gamma}(J)  := \B^{q,p}_{v_{\mu},\gamma}(J)$.

\begin{theorem}[Heat equation]\label{thm:max-reg_bdd_domain_bd-data}
Let $\dom$ be a bounded $C^{2}$-domain.
Let $p,q\in (1, \infty)$, $v\in A_q(\R)$ and $\gamma\in (-1, 2p-1)\setminus\{p-1\}$.
For all $\lambda \geq 0$, $u \mapsto (u'+(\lambda-\Delta)u,\tr_{\partial\dom}u)$ defines an isomorphism of Banach spaces $\M^{q,p}_{v,\gamma}(\R) \longra \mathbb{D}^{q,p}_{v,\gamma}(\R) \oplus \B^{q,p}_{v,\gamma}(\R)$; in particular, for all $\lambda \geq 0$, $f \in \mathbb{D}^{q,p}_{v,\gamma}(\R)$ and $g \in \B^{q,p}_{v,\gamma}(\R)$, there exists a unique solution $u \in \M^{q,p}_{v,\gamma}(\R)$ of the parabolic boundary value problem
\begin{align*}
\left\{\begin{array}{rl}
u' + (\lambda-\Delta) u &= f, \\
\tr_{\partial\dom}u &=g. \\
\end{array}\right.
\end{align*}
Moreover, there are the estimates
\[
\|u\|_{\M^{q,p}_{v,\gamma}(\R)} \eqsim_{p,q,v,\gamma,d,\lambda} \|f\|_{\mathbb{D}^{q,p}_{v,\gamma}(\R)} + \|g\|_{\B^{q,p}_{v,\gamma}(\R)}.
\]
\end{theorem}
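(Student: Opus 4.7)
The plan is to separate the problem into its inhomogeneous boundary part and its homogeneous boundary part, and then combine the trace theorem of Section~\ref{subsec:spacetrace} with the maximal regularity result for the Dirichlet Laplacian from Corollary~\ref{cor:max-reg_bdd_domain}.

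First, let me verify the boundedness of the map $\Phi_\lambda: u \mapsto (u' + (\lambda-\Delta)u,\tr_{\partial\dom}u)$. Boundedness of the first component $\M^{q,p}_{v,\gamma}(\R) \longra \mathbb{D}^{q,p}_{v,\gamma}(\R)$ is immediate from the definitions, while boundedness of the second component $\M^{q,p}_{v,\gamma}(\R) \longra \B^{q,p}_{v,\gamma}(\R)$ is exactly the statement of Theorem~\ref{thm:trace_max-reg_space} applied with $\ell=1$, $k=2$ and $X=\C$. This takes care of one direction of the asserted norm equivalence.

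Next I would construct an inverse. Given $(f,g) \in \mathbb{D}^{q,p}_{v,\gamma}(\R) \oplus \B^{q,p}_{v,\gamma}(\R)$, use the coretraction $\ext_{\partial\dom}:\B^{q,p}_{v,\gamma}(\R) \longra \M^{q,p}_{v,\gamma}(\R)$ from Theorem~\ref{thm:trace_max-reg_space} to set $\tilde{u} := \ext_{\partial\dom}g$, so that $\tr_{\partial\dom}\tilde{u} = g$ and
\[
\|\tilde{u}\|_{\M^{q,p}_{v,\gamma}(\R)} \lesssim \|g\|_{\B^{q,p}_{v,\gamma}(\R)}.
\]
In particular $\tilde{f} := f - \tilde{u}' - (\lambda-\Delta)\tilde{u} \in \mathbb{D}^{q,p}_{v,\gamma}(\R)$ with corresponding norm estimate. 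Now invoke Corollary~\ref{cor:max-reg_bdd_domain} (on $J=\R$ and with $\lambda \geq 0$) to obtain a unique
\[
v \in W^{1,q}(\R,v;L^p(\dom,w_{\gamma}^{\dom})) \cap L^{q}(\R,v;W^{2,p}_{\Dir}(\dom,w_{\gamma}^{\dom}))
\]
solving $v' + (\lambda-\Delta_{\Dir})v = \tilde{f}$, together with the estimate $\|v\|_{\M^{q,p}_{v,\gamma}(\R)} \lesssim \|\tilde{f}\|_{\mathbb{D}^{q,p}_{v,\gamma}(\R)}$. Setting $u := v + \tilde{u} \in \M^{q,p}_{v,\gamma}(\R)$ gives a solution to the inhomogeneous problem satisfying the required estimate.

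Finally I would verify uniqueness. If $u_1, u_2 \in \M^{q,p}_{v,\gamma}(\R)$ both solve the system, then their difference $w=u_1 - u_2$ lies in $\M^{q,p}_{v,\gamma}(\R)$ with $\tr_{\partial\dom}w = 0$, so that $w \in L^{q}(\R,v;W^{2,p}_{\Dir}(\dom,w_{\gamma}^{\dom}))$, and it satisfies the homogeneous equation $w'+(\lambda-\Delta_{\Dir})w = 0$. The uniqueness statement in Corollary~\ref{cor:max-reg_bdd_domain} then forces $w=0$. Combining everything, $\Phi_\lambda$ is a bounded linear bijection with bounded inverse, which is precisely the claimed isomorphism of Banach spaces together with the two-sided estimate.

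The main conceptual obstacle has already been settled in Section~\ref{subsec:spacetrace}: the identification of the optimal trace space $\B^{q,p}_{v,\gamma}(\R)$ for $\gamma \in (p-1,2p-1)$, which is exactly where the novelty of the weighted range appears. Once Theorem~\ref{thm:trace_max-reg_space} is in hand, the reduction to the Dirichlet case is routine. A minor point to take care of is that Theorem~\ref{thm:trace_max-reg_space} is used in the direction providing a coretraction, so no additional compatibility condition is needed at this stage (this is in contrast with the initial-value situation treated in Theorem~\ref{thm:max-reg_bdd_domain_bd-data;interval}, where the compatibility $g(0,\cdot)=0$ enters in the case $\gamma \in (-1,2p-3)$).
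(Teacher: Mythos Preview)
Your proposal is correct and follows essentially the same approach as the paper: boundedness of the map via Theorem~\ref{thm:trace_max-reg_space}, reduction to the homogeneous boundary case $g=0$ using the coretraction $\ext_{\partial\dom}$, solution of the resulting Dirichlet problem via Corollary~\ref{cor:max-reg_bdd_domain}, and uniqueness (injectivity) again from Corollary~\ref{cor:max-reg_bdd_domain}. The paper's proof is slightly more terse but structurally identical.
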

\begin{proof}
The required boundedness of the mapping $u \mapsto (u'+(\lambda-\Delta)u,\tr_{\partial\dom}u)$ follows from Theorem~\ref{thm:trace_max-reg_space} while the injectivity follows from Corollary~\ref{cor:max-reg_bdd_domain}. So it remains to be shown that it has a bounded right-inverse, i.e.\ there is a bounded solution operator to the associated parabolic boundary value problem.
Using Theorem~\ref{thm:trace_max-reg_space} we will reduce to the case $g=0$.
After this reduction, the desired result follows from Corollary~\ref{cor:max-reg_bdd_domain}. Finally, to give the reduction to $g=0$, write $U = u-\ext_{\partial\R^{d}_{+}} g$, where $\ext_{\partial\R^{d}_{+}}:\B^{q,p}_{v,\gamma}(\R)\to \M^{q,p}_{v,\gamma}(\R)$ is the coretraction of $\tr_{\partial\dom}$ of Theorem ~\ref{thm:trace_max-reg_space}. Then $U$ satisfies
$U' + (\lambda-\Delta) U = F$ and $\tr_{\partial\dom}U =0$ where
\[F = f - (\frac{d}{dt} +\lambda-\Delta)\ext_{\partial\R^{d}_{+}} g.\]
Now Corollary~\ref{cor:max-reg_bdd_domain} gives
\[\|U\|_{\M^{q,p}_{v,\gamma}(\R)}\leq C\|F\|_{\mathbb{D}^{q,p}_{v,\gamma}(\R)}\leq C\|f\|_{\mathbb{D}^{q,p}_{v,\gamma}(\R)} + \tilde{C} \|g\|_{\B^{q,p}_{v,\gamma}(\R)}.\]
The corresponding estimate for $u$ follows from this.
\end{proof}

As a consequence of the above theorem we obtain the following corresponding result on time intervals $J=(0,T)$ with $T \in (0,\infty]$ in the case of the power weight $v=v_{\mu}$ (with $\mu \in (-1,q-1)$), where we need to take initial values into account.

For the initial data we need to introduce the space
\[
\mathbb{I}^{q,p}_{\mu,\gamma} := W^{2(1-\frac{1+\mu}{q})}_{p,q}(\dom,w_{\gamma}^{\dom}) \stackrel{\eqref{eq:real_interp_Sobolev_def}}{=} (L^{p}(\dom,w_{\gamma}^{\dom}),W^{2,p}(\dom,w_{\gamma}^{\dom}))_{1-\frac{1+\mu}{q},q}.
\]
Recall from Lemma \ref{lemma:incl_Besov_real_interp_Sobolev} that $B^{s}_{p,q}(\dom,w_{\gamma}^{\dom};X)\hookrightarrow \mathbb{I}^{q,p}_{\mu,\gamma}$ with equality if $\gamma\in (-1, p-1)$.

Concerning the compatability condition in the space of initial-boundary data $\mathbb{IB}^{q,p}_{\mu,\gamma}(J)$ below, let us note the following. Assume $1-\frac{1+\mu}{q} > \frac{1}{2}\frac{1+\gamma}{p}$.
Then, on the one hand, by Proposition~\ref{prop:real_interp_Sob_traces}, there is a well-defined trace operator $\tr_{\partial\dom}$ on $\mathbb{I}^{q,p}_{\mu,\gamma}(J)$; in fact, $\tr_{\partial\dom}$ is a retraction from $\mathbb{I}^{q,p}_{\mu,\gamma}$ to $B^{2(1-\frac{1+\mu}{q})-\frac{1+\gamma}{p}}_{p,q}(\partial\dom;X)$.
On the other hand, as a consequence of \cite[Theorem~1.1]{MeyVerTr}, $\tr_{t=0}:g \mapsto g(0)$ is a well-defined retraction from $\mathbb{B}^{q,p}_{\mu,\gamma}(J)$ to $B^{2(1-\frac{1+\mu}{q})-\frac{1+\gamma}{p}}_{p,q}(\partial\dom;X)$.
Motivated by this we set
\begin{align*}
\mathbb{IB}^{q,p}_{\mu,\gamma}(J) := \left\{ (g,u_{0}) \in \mathbb{B}^{q,p}_{\mu,\gamma}(J) \oplus \mathbb{I}^{q,p}_{\mu,\gamma} : g(0) = \tr_{\partial\mathscr{O}}u_{0} \:\text{when $1-\frac{1+\mu}{q} > \frac{1}{2}\frac{1+\gamma}{p}$} \right\}.
\end{align*}

Now we can state the main result for the initial value problem with inhomogeneous boundary condition.
\begin{theorem}[Heat equation]\label{thm:max-reg_bdd_domain_bd-data;interval}
Let $\dom$ be a bounded $C^{2}$-domain and let $J=(0,T)$ with $T \in (0,\infty]$.
Let $p,q\in (1, \infty)$, $\mu \in (-1,q-1)$ and $\gamma\in (-1, 2p-1)\setminus\{p-1\}$ with $1-\frac{1+\mu}{q} \neq \frac{1}{2}\frac{1+\gamma}{p}$.
For all $\lambda \geq 0$,
\begin{align*}
\M^{q,p}_{\mu,\gamma}(J) \longra \mathbb{D}^{q,p}_{\mu,\gamma}(J) \oplus \mathbb{IB}^{q,p}_{\mu,\gamma}(J),\: u \mapsto (u'+(\lambda-\Delta)u,\tr_{\partial\dom}u,u(0))
\end{align*}
defines an isomorphism of Banach spaces;
in particular, for all $\lambda \geq 0$, $f \in \mathbb{D}^{q,p}_{\mu,\gamma}$ and $g \in \B^{q,p}_{\mu,\gamma}$, there exists a unique solution $u \in \M^{q,p}_{\mu,\gamma}$ of the parabolic initial-boundary value problem
\begin{equation*}\label{eq:thm:max-reg_bdd_domain_bd-data;interval;PBVP}
\left\{\begin{array}{rl}
u' + (\lambda-\Delta) u &= f, \\
\tr_{\partial\dom}u &=g, \\
u(0) &= u_{0}.
\end{array}\right.
\end{equation*}
Moreover, there are the estimates
\[
\|u\|_{\M^{q,p}_{\mu,\gamma}(J)} \eqsim_{p,q,\mu,\gamma,d,\lambda} \|f\|_{\mathbb{D}^{q,p}_{\mu,\gamma}(J)} + \|(g,u_{0})\|_{\mathbb{IB}^{q,p}_{\mu,\gamma}(J)}.
\]
\end{theorem}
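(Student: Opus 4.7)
The plan is to reduce the initial-boundary value problem on $J$ to the problem on $\R$ already solved in Theorem~\ref{thm:max-reg_bdd_domain_bd-data}: first absorb the inhomogeneous initial data using the temporal trace theorem, then close the left endpoint $t=0$ by a causality argument based on the exponential stability of the Dirichlet heat semigroup.

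\textbf{Necessity, uniqueness, and boundedness.} Boundedness of $u \mapsto (u'+(\lambda-\Delta)u,\tr_{\partial\dom}u,u(0))$ into $\mathbb{D}^{q,p}_{\mu,\gamma}(J)\oplus\mathbb{B}^{q,p}_{\mu,\gamma}(J)\oplus\mathbb{I}^{q,p}_{\mu,\gamma}$ follows immediately from Theorems~\ref{thm:trace_max-reg_space} and~\ref{thm:temporal_trace_max-reg}. The compatibility $g(0)=\tr_{\partial\dom}u_{0}$, when the left-hand side is defined, follows by commuting the two traces on a dense class of smooth functions. Uniqueness is a direct consequence of Corollary~\ref{cor:max-reg_bdd_domain}: the difference of two solutions lies in $L^{q}(J,v_{\mu};D(\Delta_{\Dir}))$, satisfies the homogeneous equation with zero initial value, and hence vanishes.

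\textbf{Existence: removing the initial value and extending to $\R$.} Using a coretraction $\mathcal{E}$ of $\tr_{t=0}$ from Theorem~\ref{thm:temporal_trace_max-reg}, set $u_{1}:=\mathcal{E}u_{0}\in \M^{q,p}_{\mu,\gamma}(J)$ and write $u=u_{1}+v$. The residual problem for $v$ is
\[
v'+(\lambda-\Delta)v=\tilde f,\quad \tr_{\partial\dom}v=\tilde g,\quad v(0)=0,
\]
with $\tilde f:=f-u_{1}'-(\lambda-\Delta)u_{1}$ and $\tilde g:=g-\tr_{\partial\dom}u_{1}$; by compatibility, $\tilde g(0)=0$ whenever $\tr_{t=0}$ is defined on $\mathbb{B}^{q,p}_{\mu,\gamma}(J)$, namely when $1-\frac{1+\mu}{q}>\frac{1}{2}\frac{1+\gamma}{p}$. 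If $T<\infty$ I would first extend $\tilde f,\tilde g$ from $(0,T)$ to $(0,\infty)$ by a standard bounded extension operator for the corresponding weighted Triebel--Lizorkin/Besov intersection (taking the identity near $t=0$ so the value there is preserved); for $T=\infty$ this step is vacuous. Then extend by zero to $(-\infty,0)$; this is bounded into $\mathbb{D}^{q,p}_{\mu,\gamma}(\R)$ and $\mathbb{B}^{q,p}_{\mu,\gamma}(\R)$ precisely because either $\tilde g(0)=0$ or the trace at $t=0$ fails to exist in $\mathbb{B}^{q,p}_{\mu,\gamma}$, matching the classical trace-theoretic criterion for zero-extensions. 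Theorem~\ref{thm:max-reg_bdd_domain_bd-data} then yields a unique $V\in \M^{q,p}_{\mu,\gamma}(\R)$.

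\textbf{Causality---the main obstacle.} It remains to prove $V\equiv 0$ on $(-\infty,0)$; once this is in hand, $V(0)=0$ and $u:=u_{1}+V|_{J}$ is the sought solution, with the two-sided norm estimate following from the boundedness in each reduction step combined with the bounded inverse theorem. On $(-\infty,0)$, $V$ is a strong solution of the homogeneous Dirichlet heat equation, so the variation-of-constants formula gives
\[
V(t)=e^{-\lambda(t-s)}\,e^{(t-s)\Delta_{\Dir}}V(s),\qquad s<t<0,
\]
and Corollary~\ref{cor:boundeddomainLaplaceC} provides exponential decay $\|e^{\tau\Delta_{\Dir}}\|\leq Ce^{-\delta\tau}$ for some $\delta>0$. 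Since the $q$-dual of $v_{\mu}$ is locally integrable, Lemma~\ref{lem:traceAp} yields that $V$ is continuous on $(-\infty,0)$ with values in $L^{p}$; combined with $V\in L^{q}((-\infty,0),v_{\mu};L^{p})$ and the non-integrability of $|s|^{\mu}$ at $-\infty$ for $\mu>-1$, one can extract a sequence $s_{n}\to-\infty$ along which $\|V(s_{n})\|_{L^{p}}$ stays bounded, and the semigroup estimate then forces $V(t)=0$ for every $t<0$. Making this causality argument rigorous in the weighted-in-time setting, and checking that the zero-extension of $\tilde g$ exactly matches the compatibility dichotomy, constitutes the main technical content of the proof.
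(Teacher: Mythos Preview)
Your overall architecture matches the paper: boundedness via the two trace theorems, reduction to $u_0=0$ via a coretraction for $\tr_{t=0}$, zero-extension of the remaining data to $\R$, and invocation of the full-line result. The one substantive difference is the causality step, i.e.\ showing the full-line solution vanishes on $(-\infty,0)$.

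The paper does \emph{not} argue causality by letting $s\to-\infty$ in the variation-of-constants formula. Instead it mollifies in time with a kernel supported in $(0,\infty)$: setting $u_n=\phi_n*u$ and $g_n=\phi_n*E_0g$, one has $g_n(0)=0$ and hence $u_n(0)\in D(\Delta_{\Dir})$; the paper then writes down an explicit competitor $v_n$ (equal to $u_n$ minus the semigroup orbit through $u_n(0)$ on $\R_+$, and zero on $\R_-$), checks that $v_n\in{_0}\M^{q,p}_{\mu,\gamma}(\R)$ using exponential stability, and concludes $v_n=u_n$ by uniqueness on $\R$. Passing to the limit gives $u\in{_0}\M^{q,p}_{\mu,\gamma}(\R)$.

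Your argument is more direct and avoids the mollification machinery: on $(-\infty,0)$ the solution is a strong Dirichlet-heat flow, and since $v_\mu$ is locally bounded away from zero there, $V\in W^{1,q}_{\mathrm{loc}}((-\infty,0);L^p)$ is continuous and the semigroup identity $V(t)=e^{(t-s)(\Delta_{\Dir}-\lambda)}V(s)$ holds. The non-integrability of $|s|^\mu$ at $-\infty$ (valid for all $\mu>-1$) forces $\liminf_{s\to-\infty}\|V(s)\|_{L^p}=0$, and exponential stability finishes. This is correct; just note that what you actually obtain is a sequence with $\|V(s_n)\|\to 0$ (not merely bounded), which is in fact what you need, and that the continuity of $V$ on $(-\infty,0)$ follows already from the unweighted local $W^{1,q}$-regularity rather than from Lemma~\ref{lem:traceAp} proper. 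The paper's mollification route has the mild advantage of staying entirely within the $\M$-framework without discussing pointwise values on $(-\infty,0)$, but your approach is shorter and equally rigorous.
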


In the proof of the theorem we will use the following notation:
\[
{_{0}}\mathbb{B}^{q,p}_{\mu,\gamma}(I) := \left\{\begin{array}{ll}
\mathbb{B}^{q,p}_{\mu,\gamma}(I), & 1-\frac{1+\mu}{q} < \frac{1}{2}\frac{1+\gamma}{p}, \\
\{g \in \mathbb{B}^{q,p}_{\mu,\gamma}(I): g(0)=0\},& 1-\frac{1+\mu}{q} > \frac{1}{2}\frac{1+\gamma}{p},
\end{array}\right.
\]
and ${_{0}}\M^{q,p}_{\mu,\gamma}(I) := \{u \in \M^{q,p}_{\mu,\gamma}(I): u(0)=0\}$, where $I \in \{\R_{+},\R\}$. We will furthermore use the following lemma.

\begin{lemma}\label{lemma:thm:max-reg_bdd_domain_bd-data;interval;ext0}
Let the notation and assumptions be as in Theorem~\ref{thm:max-reg_bdd_domain_bd-data;interval}.
Then operator $E_{0}$ of extension by zero from $\R_{+}$ to $\R$ is a bounded linear operator
from ${_{0}}\mathbb{B}^{q,p}_{\mu,\gamma}(\R_{+})$ to $\mathbb{B}^{q,p}_{\mu,\gamma}(\R)$.
\end{lemma}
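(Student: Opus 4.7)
The norm on $\mathbb{B}^{q,p}_{\mu,\gamma}(J)$ is the sum of the norms of the two intersection factors. For the factor $L^{q}(J,v_{\mu};B^{2-\frac{1+\gamma}{p}}_{p,p}(\partial\dom))$, extension by zero is a contraction $L^{q}(\R_{+},v_{\mu};Z)\longra L^{q}(\R,v_{\mu};Z)$ for any Banach space $Z$, so nothing has to be shown there. All the work is concentrated in the factor $F^{s}_{q,p}(\R_{+},v_{\mu};Y)$ with $s := 1-\tfrac{1}{2}\tfrac{1+\gamma}{p}\in(0,1)$ and $Y := L^{p}(\partial\dom)$. The plan is to show that $E_{0}$ maps the subspace ${_{0}}F^{s}_{q,p}(\R_{+},v_{\mu};Y)$ (equal to the full space if $s<\tfrac{1+\mu}{q}$, and to the kernel of $\tr_{t=0}$ if $s>\tfrac{1+\mu}{q}$) boundedly into $F^{s}_{q,p}(\R,v_{\mu};Y)$.

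The strategy is a density-plus-uniform-estimate argument. Using a dilated cut-off $\phi_{n}(t) := \phi(nt)$ of the type employed in the proof of Proposition~\ref{prop:densenonAp} (with $\phi$ smooth, $\phi\equiv 0$ on $[0,1/2]$ and $\phi\equiv 1$ on $[1,\infty)$) followed by mollification in the interior of $\R_{+}$, one first establishes that $C^{\infty}_{c}((0,\infty);Y)$ is dense in ${_{0}}F^{s}_{q,p}(\R_{+},v_{\mu};Y)$. On such test functions $E_{0}$ coincides with trivial extension, and the intrinsic first-order difference characterization of $F^{s}_{q,p}$ with $s\in(0,1)$ together with the fact that the differences only feel points in the support yield $\|E_{0}g\|_{F^{s}_{q,p}(\R,v_{\mu};Y)} \lesssim \|g\|_{F^{s}_{q,p}(\R_{+},v_{\mu};Y)}$ with a constant independent of $g$. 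Combining density and this uniform bound gives the desired boundedness of $E_{0}$.

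The main obstacle is the convergence $\phi_{n}g\to g$ in the trace-zero regime $s>\tfrac{1+\mu}{q}$. By a standard reduction to first-order differences, this reduces to a quantitative Hardy-type bound of the form
\[
\int_{0}^{1/n}\|g(t)\|_{Y}^{q}\,v_{\mu}(t)\,dt \;=\; o\!\left(n^{-(qs-(1+\mu))}\right), \qquad n\to\infty,
\]
which holds precisely because $s>\tfrac{1+\mu}{q}$ and $g(0)=0$. The complementary regime $s<\tfrac{1+\mu}{q}$ is easier: no vanishing at $0$ is required and the corresponding exponent is negative, so a brute-force estimate suffices.

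As a cleaner alternative to the intrinsic argument, one can deduce the lemma directly from the general extension-by-zero theorem for temporal trace spaces in the maximal-regularity framework, applied to any concrete sectorial operator on $Y$ (for instance $1-\Delta_{\partial\dom}$ on the compact boundary manifold), since $F^{s}_{q,p}(\R,v_{\mu};Y)$ is exactly the trace space of the associated maximal-regularity space and extension-by-zero on the trace-zero subspace is one of its standard features; this bypasses the Hardy-type estimate above at the cost of invoking an external reference in the spirit of \cite{MeyVerTr}.
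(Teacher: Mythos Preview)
Your main line (density of $C^\infty_c((0,\infty);Y)$ in ${_0}F^{s}_{q,p}(\R_+,v_\mu;Y)$ plus a uniform bound coming from a first-order difference characterization) is correct in spirit and is essentially what the paper mentions as an alternative in its closing remark, pointing to a difference-norm argument in the style of \cite[Section~2.8.6]{Tri83} together with the weighted Sobolev-type embedding $F^{s}_{q,p}(\R,v_\mu;X)\hookrightarrow L^q(\R,v_{\mu-sq};X)$. The paper's primary proof, however, is more direct and does not go through density: it invokes \cite[Theorem~1.3]{MeyVerpoint}, which says that $\one_{\R_+}$ is a pointwise multiplier on $F^{s}_{q,p}(\R,v_\mu;X)$ for every $s\in(\tfrac{1+\mu}{q}-1,\tfrac{1+\mu}{q})$, and then defers to the argument of \cite{LMV2} for the remaining details. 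The multiplier route bypasses the intrinsic-norm computations altogether.

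Two caveats on your writeup. First, the displayed Hardy bound is correct and follows from the embedding ${_0}F^{s}_{q,p}(\R_+,v_\mu;Y)\hookrightarrow L^q(\R_+,v_{\mu-sq};Y)$ (valid for $s>\tfrac{1+\mu}{q}$ precisely because $g(0)=0$), but the claim that the full convergence $\phi_n g\to g$ in $F^{s}_{q,p}$ ``reduces to'' this single estimate glosses over the difference seminorm, where terms involving $\phi_n'$ (of size $n$) on $[1/(2n),1/n]$ must be balanced against the smallness of $g$ near $0$; this is doable but deserves more care than you indicate. Second, your alternative via \cite{MeyVerTr} has a circularity problem: to lift $g\in{_0}\mathbb{B}^{q,p}_{\mu,\gamma}(\R_+)$ into ${_0}\M^{q,p}_{\mu,\gamma}(\R_+)$ you would need a coretraction for $\tr_{\partial\dom}$ that preserves the vanishing at $t=0$, and constructing one without first extending $g$ to all of $\R$ (which is the very content of the lemma) is not immediate.
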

\begin{proof}
It suffices to show that
\begin{equation*}
E_{0} \in \mathcal{B}({_{0}}F^{1-\frac{1}{2}\frac{1+\gamma}{p}}_{q,p}(\R_{+},v_{\mu};L^{p}(\partial\dom)),
F^{1-\frac{1}{2}\frac{1+\gamma}{p}}_{q,p}(\R,v_{\mu};L^{p}(\partial\dom))).
\end{equation*}
Using \cite[Theorem~1.3]{MeyVerpoint}, which says that $1_{\R_{+}}$ is a pointwise multiplier on $F^{s}_{p,q}(\R,v_{\mu};X)$ for $s \in (\frac{1+\mu}{q}-1,\frac{1+\mu}{q})$ and a Banach space $X$, this can be shown as in \cite{LMV2}. We would like to remark that this pointwise multiplier result could also be proved through a difference norm characterization as in \cite[Section~2.8.6, Proposition~1]{Tri83}, using that $F^{s}_{q,p}(\R,v_{\mu};X) \hookrightarrow L^{q}(\R,v_{\mu-sq};X)$ for $s \in (0,\frac{1+\mu}{q})$ (see \cite{MeyVersharp}).
\end{proof}

\begin{proof}[Proof of Theorem~\ref{thm:max-reg_bdd_domain_bd-data;interval}]
That $u \mapsto (u'+(\lambda-\Delta)u,\tr_{\partial\dom}u,u(0))$ is a bounded operator
\[
\M^{q,p}_{\mu,\gamma}(J) \longra \mathbb{D}^{q,p}_{\mu,\gamma}(J) \oplus \mathbb{B}^{q,p}_{\mu,\gamma}(J) \oplus \mathbb{I}^{q,p}_{\mu,\gamma}
\]
follows from a combination of Theorem~\ref{thm:trace_max-reg_space} and Theorem~\ref{thm:temporal_trace_max-reg}. That it maps to $\mathbb{D}^{q,p}_{\mu,\gamma} \oplus \mathbb{IB}^{q,p}_{\mu,\gamma}(J)$ can be seen as follows.
Of course, we only need to show that
\begin{align}\label{eq:cor:max-reg_bdd_domain_bd-data;interval;comp_cond}
\tr_{t=0}\tr_{\partial\dom}u = \tr_{\partial\dom}\tr_{t=0}u, \qquad u \in \M^{q,p}_{\mu,\gamma}(J),
\end{align}
when $1-\frac{1+\mu}{q} > \frac{1}{2}\frac{1+\gamma}{p}$. So assume $1-\frac{1+\mu}{q} > \frac{1}{2}\frac{1+\gamma}{p}$.
By a standard convolution argument and an extension and restriction argument, we see that $W^{1}_{q}(J,v_{\mu};W^{2}_{p,\gamma}(\dom))$ is dense in $\M^{q,p}_{\mu,\gamma}(J)$, from which \eqref{eq:cor:max-reg_bdd_domain_bd-data;interval;comp_cond} follows.

Injectivity of $u \mapsto (u'+(\lambda-\Delta)u,\tr_{\partial\dom}u,u(0))$ follows from the fact that $\DD$ generates a strongly continuous semigroup (see \cite{EN}) by Theorem~\ref{cor:boundeddomainLaplaceC}. So it remains to be shown that it has a bounded right-inverse, i.e.\ there is a bounded solution operator to the associated parabolic initial-boundary value problem. Using Theorem~\ref{thm:temporal_trace_max-reg} followed by Theorem~\ref{thm:trace_max-reg_space} and \eqref{eq:cor:max-reg_bdd_domain_bd-data;interval;comp_cond}, we may restrict ourselves to the case $u_{0}=0$. Furthermore, by Corollary \ref{cor:max-reg_bdd_domain} we may restrict ourselves to the case $f=0$.
By extension and restriction it is enough to treat the resulting problem for $J=\R_{+}$.
We must show that there is a bounded linear solution operator $\mathscr{S}:{_{0}}\mathbb{B}^{q,p}_{\mu,\gamma}(\R_{+}) \to
{_{0}}\M^{q,p}_{\mu,\gamma}(\R_{+}),\,g \mapsto u$ for the problem
\begin{equation}\label{eq:thm:max-reg_bdd_domain_bd-data;interval;1}
\left\{\begin{array}{rl}
u' + (\lambda-\Delta) u &= 0, \\
\tr_{\partial\dom}u &=g.
\end{array}\right.
\end{equation}
Let $E_{0} \in \mathcal{B}({_{0}}\mathbb{B}^{q,p}_{\mu,\gamma}(\R_{+}),\mathbb{B}^{q,p}_{\mu,\gamma}(\R))$ be the operator of extension by zero (see Lemma~\ref{lemma:thm:max-reg_bdd_domain_bd-data;interval;ext0}) and let $\mathscr{S}_{\R}:\mathbb{B}^{q,p}_{\mu,\gamma}(\R) \to
\M^{q,p}_{\mu,\gamma}(\R),\,g \mapsto u$ be the solution operator for the problem \eqref{eq:thm:max-reg_bdd_domain_bd-data;interval;1} on $\R$ from Theorem~\ref{thm:max-reg_bdd_domain_bd-data}.

It suffices to show that $\mathscr{S}_{\R} \circ E_{0}$ maps to ${_{0}}\mathbb{B}^{q,p}_{\mu,\gamma}(\R_{+}) $ to ${_{0}}\M^{q,p}_{\mu,\gamma}(\R)$; indeed, in that case $\mathscr{S}g:=(\mathscr{S}E_{0}g)_{|\R_{+}}$ is as desired. To do so we follow a modification of an argument given in \cite[Lemma~2.2.7]{Mey_PHD-thesis}.

Let $g \in {_{0}}\mathbb{B}^{q,p}_{\mu,\gamma}(\R_{+})$ and set $u:=\mathscr{S}_{\R}E_{0}g \in \M^{q,p}_{\mu,\gamma}(\R)$.
Pick $\phi \in C^{\infty}_{c}(\R_{+})$ with $\int_{\R}\phi(x)\,dx = 1$ and put $\phi_{n}(x):=n^{d}\phi(nx)$ for each $n \in \N_{1}$. Now consider $g_{n}:=\phi_{n}*E_{0}g \in \mathbb{B}^{q,p}_{\mu,\gamma}(\R) \cap C^{\infty}(\R;L^{p}(\partial\dom))$ and
$u_{n}:=\phi_{n}*u \in W^{\infty,q}(\R,v_{\mu};W^{2,p}(\mathscr{O},w_{\gamma}^{\dom})) \subset \M^{q,p}_{\mu,\gamma}(\R) \cap C^{\infty}(\R;L^{p}(\dom,w_{\gamma}^{\dom}))$.
Then
\begin{equation}\label{eq:thm:max-reg_bdd_domain_bd-data;interval;2}
u = \lim_{n \to \infty}u_{n} \qquad \text{in} \qquad \M^{q,p}_{\mu,\gamma}(\R)
\end{equation}
and
\[
\left\{\begin{array}{rl}
u_{n}' + (\lambda-\Delta) u_{n} &= \phi_{n}*(u' + (\lambda-\Delta) u) = 0, \\
\tr_{\partial\dom}u_{n} &= \phi_{n}*\tr_{\partial\dom}u =\phi_{n}*E_{0}g,
\end{array}\right.
\]
so that $u_{n}=\mathscr{S}_{\R}g_{n}$ by uniqueness of solutions.
Furthermore, $g_{n}(0)=0$, implying that $\tr_{\partial\dom}[u_{n}(0)] = [\tr_{\partial\dom}u_{n}](0) = g_{n}(0)=0$, so that $u_{n}(0) \in W^{2,p}_{\Dir}(\mathscr{O},w_{\gamma}^{\dom})$.
Now, as $\lambda-\Delta_{\Dir}$ is exponentially stable, we may define $v_{n} \in {_{0}}\M^{q,p}_{\mu,\gamma}(\R)$ by
\[
v_{n}(t) := \left\{\begin{array}{lr}
u_{n}(t)-e^{t(\lambda-\Delta_{\Dir})}u_{n}(0), & t \geq 0,\\
0,&  t < 0.
\end{array}\right.
\]
But then $v_{n}$ satisfies
\[
\left\{\begin{array}{rl}
v_{n}' + (\lambda-\Delta) v_{n} &= 0, \\
\tr_{\partial\dom}v_{n} &= g_{n},
\end{array}\right.
\]
so that $v_{n}=\mathscr{S}_{\R}g_{n}=u_{n}$ by uniqueness of solutions. Therefore, $u_{n} \in {_{0}}\M^{q,p}_{\mu,\gamma}(\R)$.
We may thus conclude that $u \in {_{0}}\M^{q,p}_{\mu,\gamma}(\R)$ in view of \eqref{eq:thm:max-reg_bdd_domain_bd-data;interval;2}.
\end{proof}

\begin{remark}
Theorems \ref{thm:max-reg_bdd_domain_bd-data} and \ref{thm:max-reg_bdd_domain_bd-data;interval} also remain valid in the $X$-valued setting as long as $X$ is a UMD space and $\lambda\geq \lambda_0$, where $\lambda_0$ depends on the geometry of $X$.
\end{remark}

\def\cprime{$'$}

\end{document}